\newtheorem{theorem}[equation]{Theorem}
\newtheorem{lemma}[equation]{Lemma}
\newtheorem{proposition}[equation]{Proposition}
\newtheorem{cor}[equation]{Corollary}
\newtheorem{definition}[equation]{Definition}
\newtheorem{remark}[equation]{Remark}
\newtheorem*{thm*}{Theorem}
\newtheorem*{cor*}{Corollary}
\newtheorem*{lem*}{Lemma}
\newtheorem*{prop*}{Proposition}
\theoremstyle{plain}
\numberwithin{equation}{section}
\numberwithin{figure}{section}
\DeclareMathOperator{\Ext}{Ext}
\DeclareMathOperator{\Tor}{Tor}
\DeclareMathOperator{\Hom}{Hom}
\DeclareMathOperator{\Prim}{Prim}
\DeclareMathOperator*{\colim}{colim}
\DeclareMathOperator{\Sq}{Sq}
\DeclareMathOperator{\gr}{gr}
\newcommand{\mC}{{\mathbb C}}
\newcommand{\mF}{{\mathbb F}}
\newcommand{\mM}{{\mathbb M}}
\newcommand{\mN}{{\mathbb N}}
\newcommand{\mR}{{\mathbb R}}
\newcommand{\mZ}{{\mathbb Z}}
\newcommand{\cA}{{\mathcal A}}
\newcommand{\cB}{{\mathcal B}}
\newcommand{\cE}{{\mathcal E}}
\newcommand{\cM}{{\mathcal M}}
\newcommand{\umF}{\underline{\mF}}
\newcommand{\umZ}{\underline{\mZ}}
\newcommand{\0}{\langle 0 \rangle }
\newcommand{\one}{\langle 1 \rangle }
\newcommand{\n}{\langle n \rangle }
\definecolor{green'}{HTML}{009E73}
\definecolor{pink'}{HTML}{CC79A7}
\definecolor{orange'}{HTML}{D55E00}
\definecolor{darkblue'}{HTML}{0072B2}
\definecolor{yellow'}{HTML}{F0E442}
\definecolor{lightblue'}{HTML}{56B4E9}
\definecolor{gold'}{HTML}{E69F00}
\title{A spectrum-level splitting of the $ku_\mathbb{R}$-cooperations algebra}
\author{Guchuan Li, Sarah Petersen, Elizabeth Tatum }
\definecolor{blue}{HTML}{0072B2}
\definecolor{green}{HTML}{009E73}
\definecolor{red}{HTML}{D55E00}
\begin{document}

\begin{abstract}
    In the 1980's, Mahowald and Kane used integral Brown--Gitler spectra to decompose $ku \wedge ku$ as a sum of finitely generated $ku$-module spectra. This splitting, along with an analogous decomposition of $ko \wedge ko,$ led to a great deal of progress in stable homotopy computations and understanding of $v_1$-periodicity in the stable homotopy groups of spheres. In this paper, we construct a $C_2$-equivariant lift of Mahowald and Kane's splitting of $ku \wedge ku$. We also describe the resulting $C_2$-equivariant splitting in terms of $C_2$-equivariant Adams covers and record an analogous splitting for $H\umZ \wedge H \umZ$. Along the way, we give complete computations of the $ku_{\mR}$ and $H \mZ$ operations and cooperations algebras.
\end{abstract}

\maketitle

\tableofcontents

\section{Introduction}\label{sec:introduction}
\subsection{Motivation} In the 1970's, Brown--Gitler and Cohen constructed Brown--Gitler spectra, a family of spectra realizing subcomodules of the dual Steenrod algebra $A_{*}$ \cite{BrownGitler73,cohen1979geometry}. Mahowald subsequently observed that these spectra could be used to decompose the cooperations algebra for integral homology, $H\mathbb{Z} \wedge H\mathbb{Z}$, as a sum of finite $H\mathbb{Z}$-modules. Mahowald and Kane later used integral Brown--Gitler spectra, a family of spectra realizing subcomodules of $H\mF_{2_{*}}H\mathbb{Z}$, to decompose $ku \wedge ku$, the cooperations algebra for connective complex $K$-theory, and $ko \wedge ko$, the cooperations algebra for connective real $K$-theory, in an analogous way \cite{mahowald1981bo, kane1981operations}. 

These splittings of $ku \wedge ku$ and $ko \wedge ko$ proved particularly useful in $ku$- or $ko$-based Adams spectral sequences. As $ku_*ku$ or $ko_*ko$ is not flat over $ku_*$ or $ko_*$, the corresponding $E_2$-page cannot be computed as an $\Ext$ group. One has to start the computation with the $E_1$-page. 
However, Mahowald and Kane's decompositions of $ku \wedge ku$ and $ko \wedge ko$ in terms of finitely generated $ku$- and $ko$-modules, respectively, made computation of the $E_1$-pages of these spectral sequences tractable by splitting the computation of the $E_1$-page into smaller pieces \cite{mahowald1981bo, Davis87, DavisMahowald89, BeaudryBehrensBattacharyaCulverXu20}. With a little more work, these splittings also make computation of the $E_2$-page accessible. This is incredibly useful because even the $E_2$-pages of these spectral sequences provide a wealth of homotopical information. Both the $ku$- and $ko$-based Adams spectral sequences have vanishing lines which result in collapse at the $E_{2}$-page in a large range.

The $ko$-based Adams spectral sequence in particular is a very effective tool for studying $v_{1}$-periodicity in the stable homotopy groups of the sphere, $\pi_{*}S_{(2)}.$ For example, Mahowald used $ko$-based Adams spectral sequences to give a complete identification of the $v_1$-periodic elements in the stable homotopy groups of spheres \cite{Mahowald82}.
He also used the $ko$-based Adams spectral sequence to prove the Telescope Conjecture for height one at the prime $2$ \cite{mahowald1981bo}. Moreover, the $ko$-based Adams spectral sequence is a highly efficient way to compute $\pi_{*}S_{(2)}$ through the $40$-stem \cite{Davis87, DavisMahowald89, BeaudryBehrensBattacharyaCulverXu20}. Additionally, at odd primes, Gonzalez used the $ku$-based Adams spectral sequence to study $v_{1}$-periodicity, as well as to classify stunted lens spaces \cite{Gonzalez00}. 

Given the range of successful applications of Mahowald and Kane's splittings in nonequivariant homotopy theory, one may ask whether analogous splittings exist in equivariant homotopy theory. In this paper, we answer this question affirmatively by constructing a $C_2$-equivariant splitting of $ku_\mR \wedge ku_\mR$ in terms of finitely generated $ku_\mR$-modules. We also show how this splitting leads to a complete computation of the $E_1$-page of the $ku_\mR$-based Adams spectral sequence and give partial information towards computing the $E_2$-page. In future work, we plan to investigate the $E_2$-page and beyond. Here, and throughout the paper, $ku_\mR$ is the equivariant connective cover of $KU_\mathbb{R},$ the spectrum representing Atiyah's real $K$-theory. We also work in the category of $2$-complete spectra, often omitting the $2$-complete notation.

There are several reasons for working with the group of equivariance $G = C_2,$ the cyclic group of order two. First, Mahowald and Kane's nonequivariant splittings involve (integral) Brown--Gitler spectra which realize subcomodules of the dual Steenrod algebra $A_{*},$ or in the case of integral Brown--Gitler spectra, realize subcomodules of $H {\mF_2}_{*}H\mathbb{Z}$. Thus it is reasonable to begin with a group of equivariance where the dual Steenrod algebra has been computed and closely resembles the classical nonequivariant dual Steenrod algebra. This is the case when $G = C_2$ \cite[Theorem~6.41]{HuKriz2001}, and, in an earlier paper, we constructed $C_2$-equivariant integral Brown--Gitler spectra $\cB_0(k)$ realizing subcomodules of $H {\umF_2}_\star H \umZ$ \cite{LiPetersenTatum23}. These $C_2$-equivariant integral Brown--Gitler spectra play an integral role in our construction of a $C_2$-equivariant splitting of $ku_\mR \wedge ku_\mR$.

For cyclic groups of prime order $p \neq 2,$ the dual Steenrod algebra has been computed and is known not to be flat over the coefficients $H {\umF_p}_\star$ \cite{SanW2il022,KuKrizSomberZou23}. This suggests the construction of $C_p$-equivariant (integral) Brown--Gitler spectra, where $p$ is an odd prime, may be more complicated, requiring techniques beyond those developed in \cite{LiPetersenTatum23}. Additionally, odd primary analogue of $BP_\mR$ has not been constructed yet \cite{HHR2011,HahnSengerWilson23}.

Even in the case where $G = C_2,$ constructing a splitting of $ku_\mR \wedge ku_\mR$ is significantly more complicated than in the nonequivariant case. This is largely due to the fact that the coefficients $H {\umF_2}_\star$ form a bigraded ring (described in \cref{sec:C2point}) and that the $C_2$-equivariant Steenrod algebra has the structure of a Hopf algebroid rather than a Hopf algebra. Many of the results in \cref{sec:C2homology}, including our Margolis homology results, are designed to deal with this more complicated algebraic context. Further, much of the work in Sections \ref{sec: height zero} and \ref{sec:height1split}, where we construct the $H \umZ \wedge H \umZ$ and $ku_\mR \wedge ku_\mR$ splittings, lies in keeping careful track of all the bigraded elements. 

We now describe our main results. This is followed by a discussion of problems which are newly accessible with the methods developed in this paper. 

\subsection{Main results} 

The main result of this paper is a lift of Mahowald and Kane's splitting of $ku \wedge ku$ to $C_{2}$-equivariant spectra. In order to state this result, let $\rho$ be the regular representation and $\cB_{0}(k)$ be the $C_2$-equivariant integral Brown–Gitler spectra constructed in \cite{LiPetersenTatum23} and discussed in \cref{remark:intBG}.

\begin{thm*}[\cref{thm:mainTheorem}] 
Up to $2$-completion, there is a splitting of $ku_\mR$-modules
    \[ku_\mR \wedge ku_\mR  \simeq \overset{\infty}{\underset{k=0}{\bigvee}} ku_\mR \wedge \Sigma^{\rho k}\cB_{0}(k).\]
\end{thm*}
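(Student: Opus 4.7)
The plan is to lift Mahowald and Kane's classical argument to the $C_2$-equivariant setting, keeping careful track of the bigraded structure imposed by the coefficient ring $H{\umF_2}_\star$. As in the classical case, the starting point is a description of the mod $2$ homology $H{\umF_2}_\star(ku_\mR)$ as a comodule over the $C_2$-equivariant dual Steenrod algebra $\cA^\mR_\star := H{\umF_2}_\star H\umF_2$ of Hu--Kriz. One expects $H{\umF_2}_\star(ku_\mR)$ to be a bigraded polynomial subalgebra of $\cA^\mR_\star$ on generators $\bxi_1^2$, $\bxi_2^2$, and $\bxi_i$ for $i \geq 3$, entirely analogous to the classical $H_*(ku;\mF_2) = \mF_2[\bxi_1^2, \bxi_2^2, \bxi_3, \ldots]$. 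The K\"unneth isomorphism then expresses $H{\umF_2}_\star(ku_\mR \wedge ku_\mR)$ as a tensor product of two copies of this algebra over $H{\umF_2}_\star$. Before tackling $ku_\mR \wedge ku_\mR$, I would carry out the analogous simpler program for $H\umZ \wedge H\umZ$, whose polynomial algebra is instead generated by $\bxi_1^2$ and $\bxi_i$ for $i \geq 2$; this is the content of \cref{sec: height zero} and serves as both warm-up and technical input for the $ku_\mR$ computation.

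Next, I would equip $H{\umF_2}_\star(ku_\mR)$ with a weight filtration mirroring the classical filtration of Brown--Gitler and Mahowald--Kane, where each polynomial generator contributes a weight proportional to its length. In the $C_2$-equivariant setting the appropriate shift by $k$ units of weight is by $k$ copies of the regular representation $\rho = 1+\sigma$, accounting for the $\Sigma^{\rho k}$ appearing in the main theorem. The equivariant integral Brown--Gitler spectra $\cB_0(k)$ constructed in \cite{LiPetersenTatum23} are by design the spectra whose $H\umF_2$-homology realizes the weight $\leq k$ subcomodule of $H{\umF_2}_\star(H\umZ)$. Their Brown--Gitler universal property produces, for each $k$, a canonical map
\[
\Sigma^{\rho k} \cB_0(k) \longrightarrow ku_\mR \wedge ku_\mR
\]
hitting the $k$-th weight summand of one tensor factor on the right. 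Smashing with $ku_\mR$ and wedging over $k$ yields the comparison map
\[
f \colon \bigvee_{k \geq 0} ku_\mR \wedge \Sigma^{\rho k} \cB_0(k) \longrightarrow ku_\mR \wedge ku_\mR
\]
of $ku_\mR$-modules, which by construction induces an isomorphism on $H{\umF_2}_\star$: each weight summand on the left maps isomorphically to the corresponding summand on the right.

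The final step is to promote this $H{\umF_2}_\star$-isomorphism to a genuine equivalence after $2$-completion. Both sides are bounded-below $C_2$-equivariant spectra that are of finite type in each $RO(C_2)$-bidegree after $2$-completion, so an equivariant version of the standard ``an $H\mF_2$-equivalence of bounded-below, finite-type spectra is a $2$-adic equivalence'' applies, for instance via convergence of the $C_2$-equivariant $H\umF_2$-Adams spectral sequence. The principal obstacle, and the source of most of the technical work in the paper, is the bookkeeping demanded by the bigraded coefficient ring $H{\umF_2}_\star$ together with its classes $a_\sigma$, $u_\sigma$ and negative cone, which have no nonequivariant analogue. Setting up the weight filtration, verifying the Brown--Gitler universal property in this bigraded setting, and ensuring that $f$ respects the full $RO(C_2)$-grading throughout is precisely the work carried out in Sections \ref{sec:C2homology}--\ref{sec:height1split}.
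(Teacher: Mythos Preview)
Your proposal has two genuine gaps.

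First, your description of $H{\umF_2}_\star(ku_\mR)$ is incorrect. You write it as a polynomial algebra on $\bxi_1^2,\bxi_2^2,\bxi_i$ for $i\ge 3$, by direct analogy with the classical $p=2$ story. But the $C_2$-equivariant dual Steenrod algebra has both $\bar{\xi}$- and $\bar{\tau}$-generators, and by \cref{prop:BPRnHomology} one has
\[
H_\star ku_\mR \cong H_\star[\bar{\xi}_1,\bar{\xi}_2,\ldots,\bar{\tau}_2,\bar{\tau}_3,\ldots]/(\bar{\tau}_i^2 = a\bar{\tau}_{i+1}+u\bar{\xi}_{i+1}),
\]
which is $\cA_\star \square_{\cE(1)_\star} H_\star$. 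The relevant structure is that of an $\cE(1)_\star$-comodule, not just an $\cA_\star$-subcomodule cut out by squaring some $\bxi$'s. This matters: the entire argument is organized around $\cE(1)_\star$-comodules and $\Ext_{\cE(1)_\star}$.

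Second, and more seriously, you never actually construct the spectrum-level maps. You invoke a ``Brown--Gitler universal property'' to produce $\Sigma^{\rho k}\cB_0(k)\to ku_\mR\wedge ku_\mR$, but no such universal property is established for the $C_2$-equivariant integral Brown--Gitler spectra of \cite{LiPetersenTatum23}; they are built from a filtration, not from an obstruction-theoretic characterization. The paper explicitly explains in \cref{sec:UnderlyingSplittings} that the classical Mahowald--Kane inductive construction of these maps does \emph{not} adapt to the equivariant setting, because the connectivity arguments it relies on are obstructed by the negative cone of $\mM_2$. Instead, the paper's proof regards each $\theta_k$ as a class in filtration zero of the $ku_\mR$-relative Adams spectral sequence
\[
\Ext_{\cE(1)_\star}\bigl(H_\star \Sigma^{\rho k}\cB_0(k),\,H_\star ku_\mR\bigr)\Longrightarrow [ku_\mR\wedge\Sigma^{\rho k}\cB_0(k),\,ku_\mR\wedge ku_\mR]^{ku_\mR},
\]
and the bulk of the work (Propositions \ref{prop: other splitting}--\ref{prop: spec seq collapse}, Lemmas \ref{lem:k>m}--\ref{lem:k>mNC}) is a detailed computation of $\Ext_{\cE(1)_\star}(L(k),L(m))$ for the lightning flash modules $L(k)$, carried out separately on the positive and negative cones, sufficient to prove the spectral sequence collapses. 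Your final paragraph---deducing a $2$-adic equivalence from an $H\umF_2$-isomorphism---is fine \emph{once a map exists}, but producing that map is exactly the content you have skipped.
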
 

Our proof requires the development of a number of $C_2$-equivariant homology results. Specifically, we give a Whitehead Theorem for Margolis homology in the $C_2$-equivariant setting (\cref{prop:EquivWhitehead}), compute the homology of $C_2$-equivariant mod $2$ and integral Brown--Gitler spectra (Propositions \ref{equiv:BG:free} and \ref{bg lightning split}), and record a computation of $H_\star BP_\mR \langle n \rangle$ which we first learned from Christian Carrick. Here, and throughout the remainder of the paper, we let $H$ denote $H\umF_{2},$ the Eilenberg-MacLane spectrum for the $C_2$-constant Mackey functor $\umF_2.$ 

Just as in the nonequivariant setting, we can also describe $ku_\mR \wedge ku_\mR$ in terms of equivariant Adams covers  $ku_{\mR}^{\langle \nu_{2}(n!) \rangle}$ of $ku_{\mR}$. 

\begin{cor*}[\cref{thm:C2AdamsCovers}]
Up to $2$-completion,    \[ ku_{\mR} \wedge ku_{\mR} \simeq \bigvee\limits_{k=0}^{\infty} \Sigma^{\rho k} ku_{\mR}^{\langle \nu_{2}(n!) \rangle} \vee V,\]
where $V$ is a sum of suspensions of $H\umF_{2}$.
\end{cor*}
Given these splittings, we describe how to obtain the $E_1$-page of the $ku_\mR$-based Adams spectral sequence in \cref{sec:E1kuR}.

Incidentally, in our proof of the main theorem, we describe the cooperations algebra $ku_{{\mR}_\star}ku_\mR$. 
\begin{thm*}[\cref{thm:cooperationsAlg}]
The $ku_\mR$-cooperations algebra ${ku_\mR}_\star ku_\mR$ splits as
    \[
    {ku_\mR}_\star ku_\mR \cong \overset{\infty}{\underset{k=0}{\bigoplus}}  {ku_\mR}_{\star - \rho k} \cB_0(k), 
    \]
    where as a ${ku_\mR}_\star$-module 
    \begin{align*}
        {ku_\mR}_{\star}\cB_{0}(k) 
        & \cong  H \umZ_{\star}\{x_{0}, x_{1}, \ldots, x_{\nu_2(2k!)-1} \} \oplus {ku_\mathbb{R}}_{\star}\{x_{\nu_2(2k!)} \} \oplus V_{k},
    \end{align*}
    with extensions $v_1 x_{i - 1} = \rho x_i$, and where $| x_i | = \rho i$ and $V_k$ is a sum of suspensions of $H_\star$.
\end{thm*}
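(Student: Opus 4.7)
The first isomorphism in the statement is immediate from \cref{thm:mainTheorem}: applying $\pi_\star$ to the $2$-complete splitting $ku_\mR \wedge ku_\mR \simeq \bigvee_{k \geq 0} \Sigma^{\rho k} ku_\mR \wedge \cB_0(k)$ yields
\[
{ku_\mR}_\star ku_\mR \cong \bigoplus_{k \geq 0} {ku_\mR}_{\star - k\rho} \cB_0(k).
\]
This reduces the theorem to determining the ${ku_\mR}_\star$-module structure on ${ku_\mR}_\star \cB_0(k)$ for each fixed $k$.

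To compute ${ku_\mR}_\star \cB_0(k)$, I would run the $C_2$-equivariant Adams spectral sequence converging to ${ku_\mR}_\star \cB_0(k)$, with input the $\cA_\star$-comodule structure on $H {\umF_2}_\star \cB_0(k)$ recorded in \cref{bg lightning split}. The lightning-flash decomposition displays $H {\umF_2}_\star \cB_0(k)$ as a union of $v_1$-towers of finite heights together with one infinite tower and some stray classes linked only by higher Steenrod operations. Passing to the $E_2$-page, each finite tower of height $i$ gives rise to an $H\umZ_\star$-summand generated by a class $x_i$ in bidegree $\rho i$, the unique infinite tower becomes the ${ku_\mR}_\star$-summand generated by $x_{\nu_2(2k!)}$, and the stray classes collect into the $H_\star$-summand $V_k$. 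The count of finite-height towers is exactly $\nu_2(2k!)$, reflecting the $2$-adic valuation of $(2k)!$ and matching the nonequivariant Mahowald--Kane pattern upon restriction to underlying homotopy.

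The hidden extensions $v_1 x_{i-1} = \rho x_i$ then stitch adjacent towers into the stated ${ku_\mR}_\star$-module, and are the $C_2$-equivariant incarnation of Mahowald's classical extensions $v_1 x_{i-1} = 2 x_i$, with the Bredon Euler class $\rho$ replacing the integer $2$. These extensions are forced by two constraints: on the underlying nonequivariant spectrum they must recover Mahowald's $2$-extensions, and among bigraded classes of the correct degree, ${ku_\mR}_\star$-linearity leaves no other candidate on the right-hand side.

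The principal obstacle is the bigraded bookkeeping needed to read off the $E_\infty$-pattern cleanly and to rule out further hidden extensions between the tower generators $x_i$ and the stray summand $V_k$. This requires the $C_2$-equivariant Margolis Whitehead theorem (\cref{prop:EquivWhitehead}) to pass between the homological decomposition and the homotopical one, and the detailed degree-tracking developed in \cref{sec:height1split} to confirm that the only surviving extensions between towers are the $\rho$-extensions as claimed.
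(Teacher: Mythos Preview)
Your overall strategy matches the paper's: reduce via \cref{thm:mainTheorem}, then compute ${ku_\mR}_\star \cB_0(k)$ with an Adams spectral sequence. But the execution has genuine gaps.

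First, the $E_2$-page is neither correctly identified nor computed. The paper uses the $ku_\mR$-relative Adams spectral sequence
\[
\Ext_{\cE(1)_\star}(\mM_2, H_\star \cB_0(k)) \Longrightarrow {ku_\mR}_\star \cB_0(k),
\]
and via \cref{bg lightning split} its $E_2$-page (modulo the free summand $W_k$) is $\Ext_{\cE(1)_\star}(\mM_2, L(\nu_2(k!)))$, which is exactly the content of \cref{k:0}. Your description of the lightning flash as ``a union of $v_1$-towers of finite heights together with one infinite tower and some stray classes'' confuses $L(m)$---a finite $\cE(1)$-module on $2m+1$ generators over $\mM_2$, with no $v_1$ in sight---with its Ext groups. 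The tower structure you describe is the \emph{output} of the inductive computation in \cref{k:0}, not something one reads off from the homology.

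Second, you give no argument for why the spectral sequence collapses. The paper observes that it is a summand of the spectral sequence (\ref{eq:AdamsSs}), so collapse is inherited from \cref{prop: spec seq collapse}.

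Third, the extensions $v_1 x_i = v_0 x_{i+1}$ are not hidden: they are already visible on the $E_2$-page as part of \cref{k:0}, where they were established during the inductive Ext calculation by comparison with the underlying nonequivariant case. Your appeal to \cref{prop:EquivWhitehead} is misplaced---that result detects stable equivalences of $\cE(1)$-modules and plays no role in the extension problem here. What actually remains to check, and what the paper asserts, is that there are no \emph{additional} hidden extensions beyond those on $E_2$; this holds for degree reasons.
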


Here, and throughout this paper, we overload the symbol $\rho$, using it to denote both the $C_2$-regular representation and the nontrivial element in ${H \umF_2}_{\{-\sigma\}}$. This choice is made in order to mirror the notation commonly used in the motivic homotopy theory methods which are also used in this paper. Whether $\rho$ denotes the $C_2$-regular representation or the element in ${H \umF_2}_{\{-\sigma\}}$ will always be clear from context.

The methods we use to compute the $ku_\mR$-cooperations algebra also yield an analogous $\mR$-motivic calculation. Specifically, we have the following corollary.

\begin{cor*}[\cref{cor:kglcooperations}]
     The $kgl$-cooperations algebra $kgl_{*,*} kgl$ is given by 
    \[
        kgl_{*,*} kgl \cong \bigoplus\limits_{k=0}^\infty BPGL \langle 0 \rangle_{*,*} \{ x_0, x_1, \cdots, x_{\nu_2 (2k!) - 1} \} \oplus kgl_{*, *} \{ x_{ \nu_2 (2k!)} \} \oplus V_k,
    \]
    with extensions $v_1 x_{i - 1} = \rho x_i,$ and where $\vert x_i \vert = (2i, i)$ and $V_k$ is a sum of suspensions of $\mM_2^\mR.$
\end{cor*}

While we do not explicitly identify the locations of the summands $H\umF_2$ in $V$, it follows from our computations that there will be exactly one copy of  $H\umF_2$ in degree $(2k,k)$ for each copy of $H\mF_2$ in degree $k$ of $ku \wedge ku$. These can be identified using the techniques of \cite{carlsson1980stable}, which covers the more complicated case of identifying those summands in $ko \wedge ko$ and the $E_1$-page of the $ko$-Adams spectral sequence. 

Here, $kgl$ is the lift of $ku_\mR$ along the Betti realization functor (see for example \cite[Section 4.4]{HellerOrmsby16}), which is a symmetric monoidal left adjoint from the $\mR$-motivic stable homotopy category to the category of genuine $C_2$-spectra. Similarly, $BPGL \langle 0 \rangle$ is the lift along Betti realization of $BP_\mR \langle 0 \rangle.$ Our methods do not yield a splitting of $kgl \wedge kgl$ in terms of Brown--Gitler spectra solely because we do not know of a construction yielding $\mR$-motivic (integral) Brown--Gitler spectra. This is due to the fact that the known $C_2$-equivariant construction relies on a Thom spectrum model, which is not known to exist in the $\mR$-motivic setting. See \cite{MorrisPetersenTatum2025} for a splitting of $kgl \wedge kgl$ constructed in terms of Adams covers. 

Returning to the $C_2$-equivariant setting, we also describe the operations algebra $[ku_\mR, ku_\mR]$. 
\begin{thm*}[\cref{thm:operationsAlgebra}]
     The operations algebra $[ku_\mR, ku_\mR]$ splits as
     \[ [ku_\mR, ku_\mR] \cong \bigoplus\limits_{k=0}^{\infty}[\Sigma^{\rho k}\cB_{0}(k), ku_\mR] .\]
     The Adams spectral sequence \[\Ext_{\cE(1)_{\star}}(H_{\star}\cB_{0}(k), H_{\star}ku_\mR) \Longrightarrow [\Sigma^{\rho k}\cB_{0}(k), ku_\mR]\] collapses at the $E_{2}$-page, and its $E_{2}$-page is described in \cref{lem:k>m}.
\end{thm*}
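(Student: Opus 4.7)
The plan is to derive all three parts of the theorem from the main splitting (Theorem~\ref{thm:mainTheorem}), combined with a standard Adams spectral sequence setup and a bigraded sparsity argument for the collapse.

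For the direct sum decomposition of $[ku_\mR, ku_\mR]$, I would apply the internal function spectrum $F_{ku_\mR}(-,ku_\mR)$ to the $ku_\mR$-module equivalence of Theorem~\ref{thm:mainTheorem}. Using the standard adjunction $F_{ku_\mR}(ku_\mR \wedge X, Y) \simeq F(X, Y)$,
\[ F(ku_\mR, ku_\mR) \simeq F_{ku_\mR}(ku_\mR \wedge ku_\mR, ku_\mR) \simeq \prod_{k \ge 0} F(\Sigma^{\rho k}\cB_0(k), ku_\mR). \]
Since the connectivity of $\cB_0(k)$ grows linearly in $k$ from the construction in Section~\ref{sec: height zero}, in each fixed $RO(C_2)$-bidegree only finitely many factors contribute to $\pi_\star$, so the product degenerates to the stated direct sum.

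Next, for each fixed $k$, I would run the $C_2$-equivariant $H\umF_2$-based Adams spectral sequence converging to $[\Sigma^{\rho k}\cB_0(k), ku_\mR]^\wedge_2$. Its $E_2$-page is an $\Ext$ of $\cA_\star$-comodules, and the identification $H_\star ku_\mR \cong \cA_\star \square_{\cE(1)_\star} H_\star$ (a consequence of the $H_\star BP_\mR\langle 1\rangle$ computation recorded in the paper) combined with standard comodule change-of-rings arguments rewrites this as the $\cE(1)_\star$-$\Ext$ appearing in the statement. The explicit description of the resulting $E_2$-page, together with its split into the two regimes treated separately, is then provided by Lemmas~\ref{lem:k>m} and~\ref{lem:k>mNC}.

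Collapse at $E_2$ should follow from a bigraded sparsity argument mirroring the classical case, where the $ku$-Adams $E_2$ is concentrated in even topological degree and hence admits no odd-parity differentials. In the $C_2$-equivariant setting the analogous constraint is on $RO(C_2)$-bidegrees: since $\cE(1)_\star$ is generated by two elements of explicit bidegree, its $\Ext$ groups sit on a sparse bigraded lattice, and a careful weight count using the descriptions in Lemmas~\ref{lem:k>m} and~\ref{lem:k>mNC} shows that the potential targets of $d_r$ differentials for $r \ge 2$ fall outside the support of $E_2$. The main obstacle I expect is precisely this step: the richer $C_2$-equivariant bigrading admits potential differential targets that are invisible nonequivariantly, such as classes weighted purely by $\tau$ or $\rho$, so the classical parity argument does not transfer directly and must be replaced by a fully bigraded vanishing-line argument obtained by comparing the bidegrees of the $\Ext$ generators with those of $d_r$.
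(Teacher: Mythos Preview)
Your overall architecture is sound and matches the paper's: the splitting of $[ku_\mR,ku_\mR]$ follows from Theorem~\ref{thm:mainTheorem} by adjunction, and the relevant Adams $E_2$-page is indeed governed by Lemmas~\ref{lem:k>m} and~\ref{lem:k>mNC}. Two points of difference are worth noting.

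First, a packaging difference: you set up the absolute $H\umF_2$-Adams spectral sequence and then invoke change-of-rings, whereas the paper works directly with the $ku_\mR$-relative Adams spectral sequence of \cref{prop:relative ASS exists}, whose $E_2$-page is already $\Ext_{\cE(1)_\star}(H_\star\cB_0(k),\mM_2)$ by \cref{prop:relHomology}. Both routes land on the same $E_2$-page, so this is cosmetic.

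Second, and more substantively, your proposed collapse argument has a genuine gap. The claim that the $E_2$-page is ``concentrated in even topological degree'' is false even nonequivariantly: the triangle-formation classes $y_i$ of \cref{lem:k>m}(\ref{classes:positive triangle}) and \cref{lem:k>mNC}(\ref{classes:negative triangle}) live in odd stem, as do the $\rho$-towers and $\rho$-divisible towers on the $(f=0)$-line. So a pure parity/sparsity argument does not suffice. The paper does not attempt a fresh argument here at all; it simply observes that this spectral sequence is (after the decomposition of \cref{prop: other splitting}) a summand of the spectral sequence already analyzed in \cref{prop: spec seq collapse}, and invokes that proof verbatim. That proof is a case analysis combining $v_1$-linearity (to separate torsion from torsion-free), $\rho$-linearity (to rule out differentials on $\rho$-towers and $\rho$-divisible towers), a filtration comparison using $\nu_2(k!)-\nu_2((k-r)!)$ (to rule out differentials into the triangle classes), and a Milnor--Witt parity check (to rule out differentials among the $v_1$-torsion-free classes). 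Your ``fully bigraded vanishing-line argument'' would have to reproduce all of these ingredients; the efficient move is simply to cite \cref{prop: spec seq collapse}.
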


Similarly to the $ku_\mR$-cooperations algebra, the methods we use to compute the $ku_\mR$-\newline operations algebra also yield an analogous $\mR$-motivic calculation. Specifically, the operations algebra $[kgl, kgl]$ can be obtained from \cref{thm:operationsAlgebra} by omitting the negative cone summands. 

Returning again to the $C_2$-equivariant setting, we also record an analogous splitting of $H\umZ \wedge H\umZ$. 
\begin{thm*}[\cref{thm: height zero splitting}]
     Up to $2$-completion there is a splitting
\[ H \umZ \wedge H \umZ \simeq \bigvee\limits_{k=0}^{\infty} H \umZ \wedge \Sigma^{\rho k}\cB_{-1}(k) \]
of $H \umZ$-modules.
\end{thm*}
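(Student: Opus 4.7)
The plan is to adapt Mahowald's classical splitting of $H\mZ \wedge H\mZ$ to the $C_2$-equivariant setting, using the equivariant Brown--Gitler spectra $\cB_{-1}(k)$ and the equivariant Whitehead theorem (\cref{prop:EquivWhitehead}).

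First, I would identify $H{\umF_2}_\star H\umZ$ as a comodule over the $C_2$-equivariant dual Steenrod algebra $\cA^{C_2}_\star$. Using the Hu--Kriz presentation, $H{\umF_2}_\star H\umZ$ is a quotient of $\cA^{C_2}_\star$ analogous to the classical $\cA_\star // \cE(0)$; filtering by the weight of its generators produces a direct-sum decomposition as an $\cA^{C_2}_\star$-comodule. The $k$-th summand should agree, after a shift by $\rho k$, with $H{\umF_2}_\star \cB_{-1}(k)$ by design of the equivariant Brown--Gitler spectra (cf.\ \cref{equiv:BG:free} and \cref{bg lightning split}), yielding
\[
H{\umF_2}_\star H\umZ \;\cong\; \bigoplus_{k \geq 0} H{\umF_2}_\star\bigl( \Sigma^{\rho k} \cB_{-1}(k) \bigr).
\]

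Next, for each $k$ I would realize the summand inclusion by a map $f_k \colon \Sigma^{\rho k}\cB_{-1}(k) \to H\umZ \wedge H\umZ$ of $C_2$-spectra. Its existence follows from an obstruction analysis in the $C_2$-equivariant Adams spectral sequence computing $[\Sigma^{\rho k}\cB_{-1}(k), H\umZ \wedge H\umZ]$, which degenerates for the same reasons as in the classical Mahowald argument (the target's homology is a sum of suspensions of the $H\umZ$-homology, and the source is cohomologically free in the appropriate range). Smashing with $H\umZ$ and wedging these together gives a map of $H\umZ$-module spectra
\[
\Phi \colon \bigvee_{k\geq 0} H\umZ \wedge \Sigma^{\rho k}\cB_{-1}(k) \;\longrightarrow\; H\umZ \wedge H\umZ.
\]

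Finally, to see that $\Phi$ is a $2$-completed equivalence it is enough, by \cref{prop:EquivWhitehead}, to check $\Phi$ induces an isomorphism on $H{\umF_2}_\star$-homology; this reduces directly to the comodule decomposition of the first step. The main obstacle is that first step: verifying the weight filtration on $H{\umF_2}_\star H\umZ$ truly splits as a direct sum of $\cA^{C_2}_\star$-comodules, rather than merely yielding the expected associated graded. This requires precise tracking of the bigraded classes in $H{\umF_2}_\star$, including the $\tau$- and $\rho$-torsion contributions absent classically, and is essentially the technical heart of \cref{sec: height zero}.
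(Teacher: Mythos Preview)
Your outline has the right shape --- produce a comodule splitting in homology, lift each summand inclusion through an Adams spectral sequence, then conclude --- but you have misidentified both the key tool and where the genuine difficulty lies.

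First, \cref{prop:EquivWhitehead} is not a spectrum-level Whitehead theorem: it is an algebraic statement about maps of finitely generated $\cE(1)$-modules that are $\mM_2$-free, detecting stable equivalence via Margolis homology of $M/(a,u)$. It does not say that a map of $C_2$-spectra inducing an $H\umF_2$-homology isomorphism is a $2$-complete equivalence, and in any case the modules in question here are neither $\cE(1)$-modules nor finitely generated. The paper never invokes \cref{prop:EquivWhitehead} for this theorem; once the maps $f_k$ are constructed to realize the homology isomorphism, the equivalence follows from convergence of the relative Adams spectral sequence itself.

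Second, you have the hard and easy parts reversed. The $\cE(0)_\star$-comodule decomposition of $H_\star H\umZ$ by Mahowald weight is established cleanly and generally in \cref{sec:homology splitting} (this is \cref{thm:homology:decomp} for $n=0$); tracking $\tau$ and $\rho$ poses no obstacle there because the coaction preserves weight on the nose. The actual work in \cref{sec: height zero} is showing that the relative Adams spectral sequence
\[
\Ext_{\cE(0)_\star}\bigl(\Sigma^{\rho k}H_\star \cB_{-1}(k),\, H_\star H\umZ\bigr)\ \Longrightarrow\ [H\umZ \wedge \Sigma^{\rho k}\cB_{-1}(k),\, H\umZ \wedge H\umZ]^{H\umZ}
\]
collapses. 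This does \emph{not} degenerate ``for the same reasons as in the classical Mahowald argument'': the negative cone contributes nontrivial classes in positive filtration to $\Ext_{\cE(0)_\star}(\mM_2,\mM_2)$, and ruling out differentials requires computing this $\Ext$ (via the $\rho$-Bockstein, Propositions \ref{ext:0:pos} and \ref{ext:0:neg}) and then arguing via $v_0$-linearity together with a Milnor--Witt parity observation. Your reference to the absolute Adams spectral sequence over $\cA_\star$ would make this harder still; the paper's use of the $H\umZ$-relative spectral sequence, with $E_2$ over $\cE(0)_\star$, is what makes the collapse argument tractable.
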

We also obtain the following corollary, which can be compared with \cref{thm:C2AdamsCovers}.
\begin{cor*}[\cref{cor: height zero simples}] There is a splitting
    \[  H \umZ \wedge H \umZ \simeq H \umZ \vee Z,\]
where $W$ is a sum of suspensions of $H\umF_{2}$.
\end{cor*}

Just as in the nonequivariant setting, $H\umZ$ and $ku_{\mR}$ fit into a larger context of truncated Brown--Peterson spectra.
At the prime $2$, $ku_\mR \simeq BP_\mR \langle 1 \rangle$, while at odd primes, even analogues of $BP_\mathbb{R}$ have not yet been defined. We show that, similarly to the nonequivariant setting, there is a splitting in the homology of $BP_\mR \langle n \rangle \wedge BP_\mR \langle n \rangle$ at all heights. Specifically, we have the following theorem.
\begin{thm*}[\cref{thm:homology:decomp}]
For $n \geq 0$, there exists a family of maps
$$\{\theta_k: H_\star\Sigma^{\rho k} B_{n - 1}(k) \to H_\star BP_\mR \langle n \rangle \, | \, k \in \mN \} $$
such that their sum
$$  \theta : \bigoplus_{k = 0}^\infty  H_{\star - \rho k} B_{n - 1}(k) \to H_\star BP_\mR \langle n \rangle $$
is an isomorphism of $\mathcal{E}(n)_\star$-comodules. 
\end{thm*}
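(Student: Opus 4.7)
The plan is to prove the splitting at the level of $\cE(n)_\star$-comodules by using a weight grading on $H\umF_{2\star}BP_\mR \langle n\rangle$, taking as input Christian Carrick's description of this homology (alluded to in the introduction). Classically, $H_*BP\langle n\rangle$ is a polynomial algebra on $\bxi_i^{\,2}$ for $1 \le i \le n$ and $\bxi_i$ for $i > n$, and the weight $\wt(\bxi_i) = 2^{i-1}$ defines an $\cE(n)_*$-comodule splitting whose weight-$k$ summand is (a shift of) $H_* B_{n-1}(k)$. I would try to mimic this equivariantly, checking that the $C_2$-equivariant coproduct preserves the corresponding weight and that bidegree-doubling accounts for the $\Sigma^{\rho k}$ shifts.

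Concretely: first, record Carrick's identification of $H\umF_{2\star}BP_\mR\langle n\rangle$ as a sub-$H\umF_{2\star}$-algebra of the $C_2$-equivariant dual Steenrod algebra generated by the appropriate classes $\bxi_i$ (squared for $i \leq n$) together with whichever $\btau_j$ classes survive, and define a weight $\wt$ on monomials so that each weight-$k$ piece sits inside $\rho k$-degrees modulo the lower-weight part. Second, compute the $\cE(n)_\star$-coaction using the Hu--Kriz / Li--Shi--Wang--Xu formulas for the $C_2$-equivariant coproduct, and check that coaction is weight-preserving (not merely filtered). Third, invoke the description of $H_\star \cB_{n-1}(k)$ as the weight-$k$ subcomodule of the relevant dual Steenrod subcomodule; for $n = 0$ and $n = 1$ this is essentially \cref{bg lightning split} and \cref{equiv:BG:free}, and for general $n$ the equivariant Brown--Gitler subcomodules are defined precisely so that this identification holds. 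Finally, take $\theta_k$ to be the resulting inclusion into the weight-$k$ summand, shifted by $\Sigma^{\rho k}$; the sum is then tautologically an isomorphism.

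The principal obstacle is the bigraded-coefficient subtlety flagged in the introduction. Unlike the classical coproduct, the $C_2$-equivariant coproduct contains correction terms involving $\rho$ and $\btau_0$, so one has to confirm that an honest weight function (and not only an increasing filtration) can be defined so that these correction terms respect it; the usual remedy is to assign the $\btau_j$ weights so that the classes $\rho\btau_j$ and its companions in the coproduct of $\bxi_{j+1}$ have matching weights. A secondary bookkeeping issue is calibrating the weight of each generator against the $\rho$-graded suspension, since the shift $\Sigma^{\rho k}$ is tracked by topological degree while the Brown--Gitler filtration is naturally indexed by internal/motivic weight; this should come out correctly as soon as one observes that $|\bxi_i| = (2^i - 1)\rho$ and, similarly, that $|\btau_j|$ lies in the expected bidegree so that the weight-$k$ piece lives in total bidegree $\rho k$ plus the Brown--Gitler internal degrees from \cref{bg lightning split}.
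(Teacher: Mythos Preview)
Your approach is the paper's: define Mahowald weight $\wt(\bxi_k)=\wt(\btau_k)=2^k$ on $H_\star BP_\mR\langle n\rangle$, observe that the $\cE(n)_\star$-coaction (given by $\alpha(\bxi_k)=1\otimes\bxi_k$ and $\alpha(\btau_{n+k})=1\otimes\btau_{n+k}+\sum_{i\le n}\btau_i\otimes\bxi_{n+k-i}^{2^i}$) preserves weight on the nose, and then identify the weight-$2k$ summand $M_n(k)$ with $\Sigma^{\rho k}N_{n-1}(k)$. Your worry about $\rho$- and $\btau_0$-correction terms is unfounded here: after projecting the full coproduct along $\cA_\star\to\cE(n)_\star$ those terms do not appear, and the relations $\btau_i^2=a\btau_{i+1}+u\bxi_{i+1}$ are already weight-homogeneous, so a genuine grading (not just a filtration) works.

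The one step your sketch leaves vague is the identification $M_n(k)\cong\Sigma^{\rho k}N_{n-1}(k)$, which is not an inclusion or a bare degree shift. Note that $N_{n-1}(k)$ is the weight-$\le 2k$ piece of $H_\star BP_\mR\langle n{-}1\rangle$ (one height \emph{lower}), while $M_n(k)$ is the weight-exactly-$2k$ piece of $H_\star BP_\mR\langle n\rangle$. The paper writes the isomorphism explicitly as an index shift: a monomial $\bxi_1^{k_1}\bxi_2^{k_2}\cdots\btau_{n}^{\epsilon_n}\btau_{n+1}^{\epsilon_{n+1}}\cdots$ in $N_{n-1}(k)$ goes to $\bxi_1^{a}\bxi_2^{k_1}\bxi_3^{k_2}\cdots\btau_{n+1}^{\epsilon_n}\btau_{n+2}^{\epsilon_{n+1}}\cdots$, with $a$ chosen so that the target has weight exactly $2k$. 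The factor $\bxi_1^{a}$ (with $|\bxi_1|=\rho$) is what produces the $\Sigma^{\rho k}$; checking that this index shift intertwines the two $\cE(n)_\star$-coactions is the actual content, and your proposal would be complete once you write this map down and verify that.
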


\subsection{New directions}
Similarly to the nonequivariant setting, we expect the splitting of $ku_\mR \wedge ku_\mR$ (\cref{thm:mainTheorem}) will make $ku_\mR$-based Adams spectral sequence computations newly accessible. We give some indication of this in \cref{sec:E1kuR} where we describe how the splitting allows us to easily compute the $E_1$-page of the $ku_{\mR}$-based Adams spectral sequence for the sphere. In future work, we plan to investigate whether the $ku_\mR$-based Adams spectral sequence allows us to extend Isaksen--Guillou's computations of $C_2$-equivariant stable stems \cite{GuillouIsaksen24}. Also, similarly to the nonequivariant setting, the $ku_\mR$-based Adams spectral sequence should have a vanishing line allowing one to identify $v_1$-periodicity in the $C_2$-equivariant stable stems. 

One may further wonder whether $ko_{C_2}$-based Adams spectral sequences are even more efficient for computing $C_2$-equivariant stable stems and detecting $v_1$-periodicity. Here, $ko_{C_2}$ denotes the equivariant connective cover of $KO_{C_{2}},$ the spectrum representing the $K$-theory of $C_{2}$-equivariant real vector bundles. Thus one may also be interested in constructing a $C_2$-equivariant splitting for $ko_{C_2} \wedge ko_{C_2}$ in terms of finitely generated $ko_{C_2}$-modules. The techniques and methods of this paper can be used as a stepping stone towards such a construction. 

Additionally, computations with $ku_\mR$-based Adams spectral sequences are closely related to those of $ko_{C_2}$-Adams spectral sequences via the Wood cofiber sequence 
\[
\Sigma^\sigma ko_{C_2} \xrightarrow{\eta} ko_{C_2} \to ku_{\mR},
\]
where $\eta$ is the first $C_2$-equivariant Hopf map. Thus in a precise sense $ku_\mR$-based computations already contain $ko_{C_2}$ information.

Another consequence of our Adams spectral sequence approach is partial progress towards constructing higher height $C_2$-equivariant Brown--Gitler spectra. Specifically, constructing $ku_\mR$ or $ko_{C_2}$-Brown--Gitler spectra and $BP_\mR \langle 2 \rangle$-Brown--Gitler spectra via an obstruction theoretic approach (analogous to the nonequivariant constructions of \cite{goerss1986some,klippenstein1988brown}) requires constructing certain maps $ku_\mR \to H \umZ$ and $BP_\mR \langle 2 \rangle \to ku_\mR,$ respectively. This reduces to studying certain summands in the $H \umZ$ and $ku_\mR$ relative Adams spectral sequences which we describe completely in \cref{sec: height zero} and \cref{sec:height1split}. Finishing the obstruction theoretic construction requires some additional analysis with the $C_2$-equivariant lambda algebra, which may prove difficult due to the complicated nature of the $C_2$-equivariant lambda algebra. However, nonequivariantly, such Brown--Gitler spectra have been useful in height $2$ homotopy calculations \cite{BOSS19} and thus if constructed could prove useful in $C_2$-equivariant homotopy calculations. 

In addition to computational applications of the splitting, the cooperations algebra ${ku_\mR}_\star ku_\mR$ (\cref{thm:cooperationsAlg}) is of interest due to connections with number theory. In the underlying nonequivariant setting, $ku_{*}ku$ can be described in terms of numerical polynomials. In particular, $KU_{*}KU$ can be identified with the ring of finite Laurent series satisfying certain conditions \cite[Theorem~13.4]{Adams74}. This extends to a description of $KU_{*}ku$ in terms of numerical polynomials \cite[Theorem~17.4]{Adams74}, as well as to the torsion-free component of $ku_*ku$ \cite[p.~358]{Adams74}. Using this comparison, one can identify the summands of the splitting of $ku_{*}ku$ in $KU_{*}KU$.  A detailed discussion of this correspondence (and the analogous story for $ko$) can be found in Sections 3.2 and 3.3 of \cite{BOSS19}. Our computation of ${ku_\mR}_\star ku_\mR$ gives a starting point for developing a similar description in the $C_2$-equivariant setting. 

Similarly to the splitting of $ku_\mR \wedge ku_\mR$, the decomposition of $H \umZ \wedge H \umZ$ into a sum of finite $H \umZ$-modules given by \cref{thm: height zero splitting} and \cref{cor: height zero simples} provides a starting point for $H\umZ$-based Adams spectral sequence computations. For example, Burklund-Pstr\k{a}gowski use the nonequivariant analogue of \cref{cor: height zero simples} to give a cleaner and more concise rephrasing of Toda's original obstruction-theoretic approach to the construction of $BP$ in terms of the $H\mZ$-Adams spectral sequence \cite[Theorem~4.41]{burklund2023quiversadamsspectralsequence}. It would be interesting to investigate such an argument in the $C_2$-equivariant case. 

\subsection{Outline of the Paper}

In \cref{sec:UnderlyingSplittings}, we discuss classical nonequivariant splittings and outline the nonequivariant relative Adams spectral sequence argument underlying our equivariant arguments. In \cref{sec: equivariant preliminaries}, we recall equivariant foundations necessary for our main computations. In \cref{sec:C2homology} we introduce $C_2$-equivariant homology results which will later be used in our constructions of splittings. In particular, we describe the homology of $BP_\mR \langle n \rangle,$ state some $C_2$-equivariant Margolis homology results, and compute the homology of $C_2$-equivariant (integral) Brown--Gitler spectra in terms of $C_2$-equivariant lightning flash modules. We also prove a splitting in homology at all heights $n.$ In \cref{sec: height zero}, we construct a $C_2$-equivariant spectrum-level splitting for $H \umZ \wedge H \umZ.$ In \cref{sec:height1split}, we construct the splitting for $ku_\mR \wedge ku_\mR,$ deduce the $ku_\mR$-cooperations and operations algebras, and describe how to compute the $E_1$-page of the $ku_\mR$-Adams spectral sequence for the sphere using the splitting of $ku_\mR \wedge ku_\mR$.

\subsection{Notation}
We will make use of the following notation:
\begin{enumerate}
    \item $H = H \umF_2$ denotes the Eilenberg--MacLane spectrum associated to the $C_2$-constant Mackey functor $\umF_2.$
    \item $\mM_2^\mC = \mF_2[\tau]$ is the motivic homology of $\mC$ with $\mF_2$ coefficients, where $\tau$ has bidegree $(0,-1).$
    \item $\mM_2^\mR = \mF_2 [\tau, \rho]$ is the motivic homology of $\mR$ with $\mF_2$ coefficients, where $\tau$ and $\rho$ have bidegrees $(0,-1)$ and $(-1,-1)$ respectively. 
    \item $\mM_2$ is the bigraded equivariant homology of a point with coefficients in the constant Mackey functor $\umF_2.$ See \cref{sec:C2point} for a description.
    \item $A,$ $\cA^\mC,$ $\cA^\mR$ and $\cA$ are the classical, $\mC$-motivic, $\mR$-motivic, and $C_2$-equivariant mod $2$ Steenrod algebras. 
    \item $E(n)$ is the subalgebra of $A$ generated by $Q_0 = \Sq^1, \, Q_1=\Sq^1 \Sq^2 + \Sq^2 \Sq^1,$ $\cdots, Q_n.$ The analogously defined subalgebras of $\cA^\mC,$ $\cA^\mR,$ and $\cA$ are denoted $\cE^\mC(n),$ $\cE^\mR(n),$ and $\cE(n),$ respectively. 
    \item $E(n)_*,$ $\cE^\mC_\star (n),$ $\cE^\mR_\star (n),$ and $\cE(n)_\star$ (see \cite[Definition~6.5]{Ricka15}) denote the duals of the above subalgebras, respectively.
    \item The square-zero extension $\mM_{2} \cong \mM_{2}^{\mR} \oplus NC$ induces a decomposition \cite[p.8]{GuillouHillIsaksenRavenel2020} 
\[ \Ext_{\cE(n)_\star}(\mM_{2}^{C_2}, \mM_{2}^{C_2}) \cong \Ext_{\cE_{\star}^{\mR}(n)}(\mM_{2}^{\mR}, \mM_{2}^{\mR}) \oplus \Ext_{\cE_{\star}^{\mR}(n)}(NC, \mM_{2}^{\mR})  .\]
We will abuse notation by writing 
\[ \Ext_{\cE_{\star}^{\mR}(n)}(\mM_{2}, \mM_{2}):= \Ext_{\cE_{\star}^{\mR}(n)}(\mM_{2}^{\mR}, \mM_{2}^{\mR}) \]
and 
\[ \Ext_{\cE_{\star}^{NC}(n)}(\mM_{2}, \mM_{2}):= \Ext_{\cE_{\star}^{\mR}(n)}(NC, \mM_{2}^{\mR}) .\]

Furthermore, we will inductively compute $\Ext_{\cE(n)_\star}(M, N)$ for various modules $M$ and $N$, which will split into a ``positive cone summand'' coming from \newline $\Ext_{\cE_\star^{\mR}(n)}(\mM_{2}, \mM_{2})$ and a ``negative cone summand'' coming from \newline $\Ext_{\cE^{NC}_\star(n)}(\mM_{2}, \mM_{2})$. We will denote these summands respectively as \newline $\Ext_{\cE(n)_{\star}^+}(M, N)$ and $\Ext_{\cE(n)_{\star}^-}(M, N)$.

    \item $\frac{\mF_2 [\tau]}{\tau^\infty}$ is the $\mF_2 [\tau]$-module $\colim_n \mF_2 [\tau] / \tau^n.$ 
    We write $\frac{\mF_2 [\tau]}{\tau^\infty} \{x\}$ for the infinitely divisible $\mF_2 [\tau]$-module consisting of elements of the form $\frac{x}{\tau^k}$ for $k\geq 1.$ 
    \item $MU_\mR$ is the spectrum $MU$ with complex conjugation action.
    \item $BP_\mR$ is the $C_2$-equivariant analogue of the nonequivariant Brown--Peterson spectrum $BP$ with complex conjugation action.
    \item $BP_\mR \langle n \rangle$ models the classical truncated Brown–Peterson spectrum $BP \langle n \rangle$ with $C_2$-action via complex conjugation.
    \item $ku_\mR$ is the equivariant connective cover of $KU_\mathbb{R},$ the spectrum representing Atiyah real $K$-theory.
    \item $ko_{C_{2}}$ is the equivariant connective cover of $KO_{C_{2}},$ the spectrum representing the $K$-theory of $C_{2}$-equivariant real vector bundles.
    \item Given a $C_2$-spectrum $X,$ we use $X^e$ to denote the underlying nonequivariant spectrum.
    \item Given a class $x \in H_\star X$ and a representation $V,$ we let $\Sigma^V x$ denote the image of $x$ under the isomorphism $H_{\star}X\cong H_{\star+V}\Sigma^V X$.
 \end{enumerate}

 Grading conventions: Consider the real representation ring of $C_2,$ $RO(C_2) \cong \mZ [\sigma] / (\sigma^2 -1),$ where $\sigma$ is the one-dimensional real sign representation. We express the equivariant degree $i + j\sigma$ according to the motivic convention as $(i + j, j)$ where $i + j$ is the total degree and $j$ is the weight. We will also at times use representation spheres to denote the appropriate suspension. For example, instead of writing $\Sigma^{2,1},$ we will write $\Sigma^\rho$ where $\rho$ is the $C_2$-regular representation. Whether $\rho$ is an element in the homology of the point or the $C_2$-regular representation will be clear from context.  

We grade $\Ext$-groups in the form $(s,f,w)$, where $s$ is the stem, that is, the total degree minus the homological degree, $f$ is the Adams filtration, that is the homological degree, and $w$ is the weight. We will also refer to the Milnor--Witt degree, which is $s-w.$

\subsection*{Acknowledgments}

The authors would like to thank Mark Behrens, Christian Carrick, Mike Hill, and Vesna Stojanoska for enlightening conversations. The first and second authors would also like to thank the Max Planck Institute for Mathematics in Bonn for its hospitality and financial support. The second and third authors would like to thank the Isaac Newton Institute for Mathematical Sciences, Cambridge, for support and hospitality during the program Equivariant homotopy theory in context, where work on this paper was undertaken. This work was supported by EPSRC grant EP/Z000580/1. The third author thanks the Hausdorff Institute in Bonn for its hospitality and financial support, as well as the Knut and Alice Wallenberg Foundation for financial support. All three authors are grateful to the Hausdorff Institute in Bonn for its hospitality during the Spectral Methods in Algebra, Geometry, and Topology trimester program in Fall of 2022, funded by the Deutsche Forschungsgemeinschaft (DFG, German Research Foundation) under Germany's Excellence Strategy- EXC-2047/1-390685813. This material is based upon work supported by the National Science Foundation under Grant No. DMS 2135884.

\section{Underlying nonequivariant splittings} \label{sec:UnderlyingSplittings}
We begin by summarizing nonequivariant splittings involving $ku$ and $ko.$ We then outline a construction of the splitting of $ku \wedge ku$ via an Adams spectral sequence. This argument underlies our approach in Sections \ref{sec: height zero} and \ref{sec:height1split} where we construct $C_2$-equivariant splittings of $H \umZ \wedge H \umZ$ and $ku_\mR \wedge ku_\mR$. 

Before we begin, it is helpful to note some context for splittings involving $ku$. At all primes, $H\mZ \simeq BP\langle 0 \rangle$. At the prime $2$, $ku \simeq BP\one$, and at odd primes, $ku$ splits as a sum of suspensions of $BP\one$. Thus the splitting of $ku \wedge ku$ at the prime $2$ fits into a broader story of splittings of $BP \langle 1 \rangle \wedge BP \langle 1 \rangle$ at all primes, which we describe briefly below.

In order to state these nonequivariant splittings, we first introduce nonequivariant integral Brown--Gitler spectra. 

\subsection{Nonequivariant Brown--Gitler spectra}
Let $A$ denote the classical Steenrod algebra, and let $A_{*} := H\mF_{p_{*}}H\mF_{p}$ denote its dual. When $p=2$, 
\[A_{*} \cong \mF_{2}[\xi_{1}, \xi_{2},\ldots ],
\]
where $| \xi_i | = 2^i - 1$. When $p$ is odd, 
\[A_{*} \cong \mF_{2}[\xi_{1}, \xi_{2},\ldots ] \otimes E(\tau_{0}, \tau_{1}, \ldots),
\]
where $|\xi_i| = 2(p^i -1)$ and $| \tau_i | = 2 p^i - 1$. 

The weight filtration on $A_{*}$ is defined by setting 
\[
wt({\xi}_{k}) = wt({\tau}_{k}) = p^{k}
\]
and 
\[
wt(xy) = wt(x) + wt(y),
\]
where $\bar{\xi}_i$ and $\bar{\tau}_i$ denote the conjugates of the usual generators of the dual Steenrod algebra.
Let $E(n)$ denote the subalgebra $E(n) = E(Q_{0}, Q_{1}, \ldots Q_{n})$. Then
\[H_{*}BP\langle n \rangle \cong A//E(n)_{*},\]
where the right-hand side is the dual of the Hopf algebra quotient $A//E(n)$ \cite[Prop~1.7]{wilson1975omega}. 
In particular, $H_{*}H\mZ \cong A//E(0)_{*}$. 

At each prime $p$, there is a family of integral Brown--Gitler spectra $\{B_{0}(k)|\ k \in \mathbb{N} \}$ such that $H_{*}B_{0}(k) \cong \mF_{p}\{x \in H_{*}H\mathbb{Z} \mid  wt(x) \le k\}$ \cite{goerss1986some}. Later we will use a similar notation, $\mathcal{B}_{0}(k),$ to denote the $C_2$-equivariant integral Brown--Gitler spectra constructed in \cite{LiPetersenTatum23}.

\subsection{Nonequivariant Splittings}
\subsubsection{Literature on nonequivariant splittings}
In \cite{mahowald1981bo}, Mahowald constructed a splitting \[ko \wedge ko \simeq \overset{\infty}{\underset{k=0}{\bigvee}} ko \wedge \Sigma^{4k} B_{0}(k).\]
Subsequently, Kane \cite{kane1981operations} constructed a splitting 
\[ 
BP\one \wedge BP\one \simeq \overset{\infty}{\underset{k=0}{\bigvee}}BP\one \wedge \Sigma^{2k} B_{0}(k) 
\] 
at odd primes.

While a construction of the splitting for $ku$ at the prime $2$ does not appear in the literature, it can be deduced either from Mahowald's construction for $ko$, or by using the same arguments as Kane's odd primary splitting. In \cite{klippenstein_splitting}, Klippenstein also constructed a decomposition of $BP\one \wedge BP\langle n \rangle$ in terms of $BP\one$-module spectra at odd primes, and notes that the same statements hold at the prime $2$.

\subsubsection{Strategies for the nonequivariant splitting} For the purpose of comparison with the equivariant splitting of $ku_\mR \wedge ku_\mR$ in \cref{sec:height1split}, we focus on the nonequivariant splitting of $BP \langle 1 \rangle \wedge BP \langle 1 \rangle.$ We begin with a brief discussion of Kane's approach \cite{kane1981operations}. 

Following Mahowald's strategy for the construction of the $ko$-splitting, Kane used pairings $B_{0}(k) \wedge B_{0}(m) \to B_{0}(k+m)$ to inductively construct maps
\[\tilde{\theta}_{k}: \Sigma^{2k} B_{0}(k) \to BP\one \wedge BP\one \]
such that the sum of the composites
\[ 
\bigvee\limits_{k=0}^{\infty} BP\one \wedge \Sigma^{2k} B_{0}(k) \xrightarrow{1 \wedge \tilde{\theta}_{k}}  BP\one^{\wedge 3} \xrightarrow{\mu \wedge 1} BP\one^{\wedge 2} 
\] is a homotopy equivalence. 

To construct this splitting, assume inductively that appropriate maps 
\[
\tilde{\theta}_{k}: \Sigma^{2k}B_{0}(k) \to BP\one \wedge BP\one
\]
have been constructed. If $k$ is not a power of $p$, take the $p$-adic decomposition $k = k_{0} + k_{1}p + \cdots + k_{r}p^{r}$, and consider the map
\[\Sigma^{2k} B_{0}(1)^{k_{0}} \wedge B_{0}(p)^{k_{1}} \wedge \cdots \wedge B_{0}(p^{r})^{k_{r}} \to B_{0}(k). \]
 (If $k = p ^{s}$ for some $s$, take the same approach but with the decomposition $B_0(p^{s-1})^{\wedge p}$.) Consider the composite
\begin{align*}
    \Sigma^{2k} B_{0}(1)^{k_{0}} \wedge B_{0}(p)^{k_{1}} \wedge \cdots \wedge B_{0}(p^{r})^{k_{r}}  \xrightarrow{\tilde{\theta}_{0}^{k_{0}} \wedge \tilde{\theta}_{1}^{k_{1}} \wedge \tilde{\theta}_{r}^{k_{r}}}  BP\one & \wedge BP\one \wedge  \cdots \wedge BP\one  \\ \xrightarrow{\qquad \qquad} BP\one & \wedge BP\one,
\end{align*}
where the second map is the standard multiplication map. The key step in the construction is to show that this composite factors through $B_{0}(k)$. One can then show that this factorization is the appropriate map $\tilde{\theta}_k: \Sigma^{2k}  B_{0}(k) \to BP\one \wedge BP\one$.

The strategy for showing that the map factors through $B_{0}(k)$ does not lend itself well to the equivariant setting for two reasons: first, it relies in part on connectivity arguments that the negative cone prevents us from replicating. Second, extending this argument to the equivariant setting would require us to deduce that in homology, various complicated cofibers of free $\mM_{2}$-modules are also free. While these obstacles may be surmountable, we have found that the alternative strategy of using an Adams spectral sequence to lift the isomorphism on homology is most readily adaptable to the equivariant setting. Since this argument does not appear in the literature, we present it here. This Adams spectral sequence argument is most similar to Klippenstein's splitting construction \cite{klippenstein_splitting}. However, we use a $ku$-relative Adams spectral sequence as it makes the construction slightly cleaner. We also do not need his comparison with the Universal Coefficient Spectral Sequence as we only address the case $BP \one \wedge BP\one$, not $BP \one \wedge BP\langle n \rangle $. For ease of comparison with the rest of this paper, we present the $2$-primary version and use the notation $ku$. However, the same argument works for $BP\one$ at all primes.  

\subsubsection{Constructing the nonequivariant splitting}
In order to construct the splitting, we start with the following isomorphism of $E(1)_{*}$-comodules. 
\begin{proposition}\cite[Section~12]{kane1981operations}\cite[Section~4.1] {CulverOddp20}\label{nonequiv: homology splitting} There is an isomorphism 
   \[H_{*}ku \cong \bigoplus_{k=0}^{\infty}H_{*}\Sigma^{2k}B_{0}(k)\]
   of $E(1)_{*}$-comodules.
\end{proposition}
Our goal is to lift this isomorphism of $E(1)_*$-comodules to the level of spectra. To that end, we make use of some $ku$-relative homology and $ku$-relative Adams spectral sequence computations. 

In general, if $E$ is an $R$-algebra and $M$ is an $R$-module in spectra, then $E$-homology in the category of $R$-modules is 
\begin{equation*}
    E_*^R (M) := \pi_{*} (E \underset{R}{\wedge} M).
\end{equation*}
Note that
\begin{align}\label{nonequiv: R's cancel}
\begin{split}
     E^R_*(R \wedge M) & = \pi_*(E \underset{R}{\wedge} R \wedge M) \\
    & \cong E_{*}M.
\end{split}
\end{align}

\begin{proposition}\cite[Prop~2.1]{baker2001adams}\label{nonequiv:relative ASS exists}
    Let $E$ be an $R$-algebra spectrum, and let $X,Y$ be $R$-modules. If $E_{*}^{R}X$ is projective as an $E_{*}$-module, then there exists an $E$-based Adams spectral sequence in the category of $R$-modules
\[
E_{2}^{s,f} \cong \Ext_{E_{\star}^{R} E}^{s,f} (E_\star^R X, E_\star^R Y) \Longrightarrow [X,Y]_{\widehat{E}}^{R \, \, s, f}
\]
 where $[X,Y]^{R}_{\widehat{E}}$ denotes the $E$-nilpotent completion of $R$-module maps from $X$ to $Y$, $s$ denotes the stem, and $f$ denotes the homological degree. 
\end{proposition}
In particular, we use the $ku$-relative Adams spectral sequence to lift the isomorphism of \cref{nonequiv: homology splitting} 
to the splitting 
\[ku \wedge ku \simeq ku \wedge \bigvee\limits_{k=0}^{\infty}\Sigma^{2k} B_{0}(k).\] 
It is helpful to observe that \begin{equation}\label{nonequiv:rel steenrod}H\mF_{2_{*}}^{ku}H\mF_{2} \cong E(1)_{*}.\end{equation}
This isomorphism follows from the same argument as its $C_2$-equivariant analogue, \cref{prop:relHomology}.

Furthermore, we can use (\ref{nonequiv: R's cancel}) to rewrite the isomorphism of \cref{nonequiv: homology splitting} in terms of $ku$-relative homology
\begin{equation}\label{nonequiv: spelled out iso} H_{*}^{ku}\left(ku \wedge \bigvee\limits_{k=0}^{\infty}\Sigma^{2k}B_{0}(k)\right)\to H_{*}^{ku}\left(ku \wedge ku \right) .\end{equation}
Then, just as one can use the classical Adams spectral sequence to lift an isomorphism of $A_{*}$-comodules $H_{*}X \to H_{*}Y$ to a map of spectra $X \to Y$, we can use the $ku$-relative Adams spectral sequence to lift the $E(1)_{*}$-comodule  isomorphism of \cref{nonequiv: homology splitting} to a $ku$-module isomorphism.

Specifically, consider the relative Adams spectral sequence 
\begin{equation}\label{nonequiv:ass}
     \Ext_{E(1)_{*}}^{s,f} \left(H_{*} \Sigma^{2k} B_{0}(k), H_{*}ku \right) \Longrightarrow [ku \wedge \Sigma^{2k}B_{0}(k), ku \wedge ku]^{ku},
\end{equation}

and consider $\theta_{k}: H_{*}\Sigma^{2k}B_{0}(k) \to H_{*}ku$ as a class in filtration $f=0$. If we can show that this class survives the spectral sequence for each $k$, then we have constructed a family of maps realizing the $ku$-relative homology equivalence of (\ref{nonequiv: spelled out iso}), and thus will have a $ku$-module splitting
\[ ku \wedge ku \simeq \bigvee\limits_{k=0}^{\infty}ku \wedge \Sigma^{2k}B_{0}(k) .\]

To compute the $E_{2}$-page of (\ref{nonequiv:ass}), it suffices to compute $\Ext_{E(1)_{*}}(H_{*}B_{0}(k), B_{0}(m))$ for all $m$. We make use of the following equivalence of categories (see for example \cite[Theorem 4.7]{BrzezinskiWisbauer03}).

\begin{proposition}\label{prop:classical equiv of cat}
    There is an equivalence of categories between left $E(n)_{*}$-comodules and right $E(n)$-modules.
\end{proposition}

This equivalence of categories is helpful because it is often easier to describe modules rather than comodules. In particular, it is helpful to use the following decomposition of the homology of Brown--Gitler spectra in terms of homological lightning flash modules.

\begin{remark}
    \rm{Since $E(n)$ is a Hopf algebra, it comes with a conjugation map that yields a further equivalence of categories, from right $E(n)$-modules to left $E(n)$-modules. The left $E(n)$-module action is the one used in other references (see for example \cite{Culverp219}). However, in the $C_2$-equivariant setting, no formula for a conjugation map on $\cE(n)$ is known, so we use the right $E(n)$-module structure here for consistency with later sections. See \cite[Section~6]{Boardman82}, as well as \cite{bhattacharya2024steenrod} and \cite[Thm~4.7]{BrzezinskiWisbauer03} for further discussion of these structures and their relationships.}
\end{remark}

\begin{definition}\label{def:classicalLightning}
    The homological lightning flash module is given by 
    $$L(k) = E(1) \{x_1, x_2, \cdots, x_k \, |\,  x_{i + 1} Q_1 =  x_i Q_0,\,  1 \leq i \leq k-1 \}$$
    where $| x_i| = 2i + 1.$ Further define $L(0) = \mF_2.$
\end{definition}

\begin{proposition}\label{nonequiv: homology}
    As an $E(1)_{*}$-comodule, \[H_{*}B_{0}(k) \cong L(\nu_{2}(k!)) \oplus F_{k}\] where $F_{k}$ is a sum of suspensions of copies of $E(1)_{*}$.
\end{proposition}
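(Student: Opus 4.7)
The plan is to exhibit an explicit lightning flash submodule \(L(\nu_2(k!)) \hookrightarrow H_*B_0(k)\) and then use a Margolis homology argument to show the quotient is free over \(\cE^{cl}(1)\), so that the resulting short exact sequence splits by projectivity.

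Recall that \(H_*B_0(k)\) sits inside \(H_*H\mZ \cong \mF_2[\bxi_1^2, \bxi_2, \bxi_3, \ldots]\) as the \(\mF_2\)-span of weight-\(\le k\) monomials, and that the Milnor primitives \(Q_0, Q_1\) act as derivations with \(Q_0(\bxi_n) = \bxi_{n-1}^2\) for \(n \ge 2\), \(Q_1(\bxi_2) = 1\), and \(Q_1(\bxi_n) = \bxi_{n-2}^4\) for \(n \ge 3\), both annihilating \(\bxi_1^2\). Natural lightning flash relations such as
\[ Q_0(\bxi_1^{2j}\bxi_2) \;=\; \bxi_1^{2(j+1)} \;=\; Q_1(\bxi_1^{2(j+1)}\bxi_2), \]
together with their analogues built from products involving higher generators \(\bxi_n\) (\(n \ge 3\)), assemble into an embedding of a lightning flash into \(H_*B_0(k)\); a careful accounting organized by ``arm'' of the zigzag, combined with Legendre's formula \(\nu_2(k!) = \sum_{i \ge 1}\lfloor k/2^i\rfloor\), identifies the maximal length as exactly \(\nu_2(k!)\).

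In parallel, one computes the \(Q_0\)- and \(Q_1\)-Margolis homologies of \(H_*B_0(k)\). Since each \(Q_i\) acts as a derivation preserving the weight filtration, this reduces to a monomial-by-monomial calculation on the polynomial ring, and the resulting dimensions agree with those of \(H(L(\nu_2(k!)); Q_i)\). Finally, one invokes the classical structure theorem of Adams--Margolis for bounded-below \(\cE^{cl}(1)\)-modules, which says that such a module is determined up to a free summand by its Margolis homologies. Applied to the inclusion \(L(\nu_2(k!)) \hookrightarrow H_*B_0(k)\), this forces the quotient to be \(\cE^{cl}(1)\)-free, and the resulting extension splits by projectivity of the free quotient, yielding the claimed decomposition.

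The main obstacle is the combinatorial identification of the maximal lightning flash length as \(\nu_2(k!)\): the zigzag is assembled from contributions across many families of monomials (one for each choice of a ``defining'' generator \(\bxi_n\)), and matching the total count to Legendre's formula requires careful bookkeeping of the weight filtration. Everything else is essentially formal, reducing to a standard invocation of the Adams--Margolis classification once the Margolis homologies have been computed.
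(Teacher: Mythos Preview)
Your proposal is correct and follows essentially the same strategy the paper uses (see the proof of the equivariant analogue, \cref{bg lightning split}): embed an explicit lightning flash into $H_{*}B_{0}(k)$, compute that both sides have one-dimensional $Q_{0}$- and $Q_{1}$-Margolis homology, and invoke the Adams--Margolis structure theorem to conclude the complement is free. The only substantive difference is that the paper writes down the embedded lightning flash in one stroke as the $\cE(1)$-submodule generated by the monomials $\bxi_{i_{1}}\cdots\bxi_{i_{n}}\btau_{j}$ with $j \le i_{1} < \cdots < i_{n}$ (indexed by the $2$-adic digits of $k$), which bypasses your ``arm-by-arm'' bookkeeping and makes the length $\nu_{2}(k!)$ and the Margolis isomorphism immediate; you may find it worthwhile to reformulate your embedding in those terms.
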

Using \cref{nonequiv: homology} and the identification of $H_* ku$ in \cref{nonequiv: homology splitting},  we can rewrite the $E_{2}$-page of the Adams spectral sequence (\ref{nonequiv:ass}) as 
\begin{align} \label{nonequiv:E2}
\begin{split}
    \Ext_{E(1)_{*}}\left(H_{*}\Sigma^{2k}B_{0}(k), H_{*}ku\right) & \cong \bigoplus\limits_{m=0}^{\infty}\Ext_{E(1)_{*}}\left(\Sigma^{2k}L(\nu_{2}(k!)), \Sigma^{2m}L(\nu_{2}(m!)) \right)  \oplus W \\
\end{split}
\end{align}
where $W$ is a sum of suspensions of $\mF_{2}$ in filtration $f=0$. 

Note that $\Ext_{E(0)_{*}}(\mF_{2}, \mF_{2}) \cong \mF_{2}[v_{0},v_{1}]$. The $\Ext_{E(1)_*} (L(k), L(m))$ terms have a nice description as $\mF_{2}[v_{0},v_{1}]$-modules.

\begin{proposition}\label{nonequiv: Ext computation}
    When $k \le m$, 
    \[\Ext_{E(1)_*}^{s,f}(L(k), L(m)) \cong \mF_{2}[v_{0}, v_{1}]\{x_{0}, x_{1}, \ldots x_{m-k}|\ v_{1}x_{i} = v_{0}x_{i+1} \} \oplus \Sigma^{-1-2j}\mF_2\{z_1,\dots,z_k\},\] 
    where $|x_{i}| = (2i, 0)$ and $|z_i| = (-1-2i,0)$.
    
    When $k > m$, \[\Ext_{E(1)_{*}}^{s,f}(L(k), L(m)) \cong \mF_{2}[v_{0}, v_{1}]\left\{x, y_{0}, y_{1}, \ldots, y_{k-m}  \Bigg\vert \begin{array}{l}
         v_{1}y_{i} = v_{0}y_{i+1},\\  v_{0}y_{0} = 0,\\
         v_{1}y_{k-m} = 0 
    \end{array}\right\} \oplus \mF_2\{z_{k-m+1},\dots,z_k\},\]
    where $|x| = (0, k-m)$, $|y_{i}| = (-3 - 2(k-m-i), 0)$, and $|z_i| = (-1-2i,0)$.
\end{proposition}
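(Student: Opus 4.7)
The plan is to compute these Ext groups by constructing a small minimal free resolution of $L(k)$ over the dual algebra $\cE^{cl}(1) = E(Q_0, Q_1)$ and applying $\Hom_{\cE^{cl}(1)}(-, L(m))$, tracking throughout the Yoneda action of $v_0, v_1 \in \Ext^{*,*}_{\cE^{cl}(1)}(\mF_2, \mF_2) = \mF_2[v_0, v_1]$. Since $\cE^{cl}(1)_*$ is a finite-dimensional Hopf algebra, the standard equivalence identifies its comodule Ext with the module Ext over $\cE^{cl}(1)$, so the statement is equivalent to the corresponding one for modules.

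The first step is to write an explicit minimal free resolution $P_\bullet \twoheadrightarrow L(k)$. The zeroth term is generated by the classes $x_1, \ldots, x_k$, and the first syzygies are generated by the staircase relations $x_{i+1} Q_1 - x_i Q_0$ together with the boundary relations coming from the ends of the staircase. Because the lightning flash is narrow and finite in width, each successive syzygy module has the same shape shifted in bidegree, so the resolution can be described explicitly to all orders. Applying $\Hom_{\cE^{cl}(1)}(-, L(m))$ then produces a bicomplex of copies of $L(m)$ whose differentials are right multiplication by $Q_0$ and $Q_1$; its cohomology, together with the Yoneda $v_0, v_1$-action, is the Ext being sought.

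Identifying the cohomology with the claimed answer splits into two cases. When $k \le m$, each generator of $L(k)$ admits $m-k+1$ possible ``slots'' in $L(m)$, producing the classes $x_0, \ldots, x_{m-k}$ related by $v_1 x_i = v_0 x_{i+1}$ as the Yoneda image of the staircase relation. When $k > m$, there are more generators in the source than can be absorbed by the target, yielding on the one hand a free $\mF_2[v_0, v_1]$-summand on an ``unpaired'' generator $x$, and on the other hand a finite staircase on $y_0, \ldots, y_{k-m}$ whose boundary relations $v_0 y_0 = 0$ and $v_1 y_{k-m} = 0$ come from the top and bottom classes of $L(m)$ being annihilated by $Q_0$ and $Q_1$ respectively.

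An alternative organization, which I expect to be cleaner in practice, uses the short exact sequence
\[ 0 \to L(k-1) \to L(k) \to \Sigma^{|x_k|} \cE^{cl}(1)/(Q_1) \to 0 \]
(and its analogue for $m$) to double-induct on $k$ and $m$, combined with the change-of-rings identity $\Ext_{\cE^{cl}(1)}(\cE^{cl}(1)/(Q_1), N) = \Ext_{E(Q_0)}(\mF_2, N|_{E(Q_0)})$, which reduces each inductive step to an Ext computation over a one-generator exterior algebra. Either way, the main obstacle is distinguishing the bounded staircase in the $k > m$ case from an unbounded one: the boundary relations $v_0 y_0 = 0$ and $v_1 y_{k-m} = 0$ appear precisely because of the finite size of $L(m)$, and this must be tracked carefully through the connecting homomorphisms (or equivalently, through the endpoints of the syzygy pattern) to rule out extra free summands.
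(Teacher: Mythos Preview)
The paper does not actually prove this classical proposition: it is stated without proof in Section~2 as part of the background outline of the nonequivariant argument. The paper's real work is the $C_2$-equivariant analogue in Section~6 (the results labeled \cref{k:0}, \cref{m:0}, and Lemmas~\ref{lem:k>m}--\ref{lem:k>mNC}), and that is the natural point of comparison.

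Your ``alternative organization'' via a short exact sequence and induction is essentially the method the paper uses equivariantly, with one difference worth noting. The paper inducts using
\[
0 \to \Sigma^{2} L(k-1) \to L(k) \to \cE^{cl}(1)//\cE^{cl}(0)_{*} \to 0,
\]
which peels off the \emph{bottom} generator $x_1$ (so the cokernel is $\cE^{cl}(1)/(Q_0)$, up to suspension), whereas your sequence peels off the \emph{top} generator $x_k$. Both are valid, but your change-of-rings identification is off by a subscript: $\cE^{cl}(1)/(Q_1) \cong \cE^{cl}(1)\otimes_{E(Q_1)}\mF_2$, so Eckmann--Shapiro gives $\Ext_{\cE^{cl}(1)}(\cE^{cl}(1)/(Q_1),N)\cong \Ext_{E(Q_1)}(\mF_2,N)$, not $\Ext_{E(Q_0)}$. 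This is harmless for the overall strategy but would propagate incorrect degrees if left uncorrected. The paper's version of this step is the ``wrong-side'' change-of-rings in \cref{lem:change of rings}.

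One substantive point you have only gestured at: in the $k>m$ case the paper determines the key connecting differential (the one that truncates the $v_0$-tower and produces the relation $v_0 y_0 = 0$) not by a direct local argument but by running the induction in \emph{both} directions---fixing $m$ and increasing $k$, versus fixing $k$ and increasing $m$---and comparing the two answers at $k=m$, where the result is already known. Your proposal acknowledges that ``this must be tracked carefully through the connecting homomorphisms,'' which is fine as a plan, but you should be aware that pinning down this differential is the one genuinely nontrivial step, and you will need either a concrete cocycle computation or a comparison argument of this kind.

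Your primary approach via an explicit minimal free resolution of $L(k)$ is a genuine alternative not taken in the paper. It is more hands-on but has the advantage of making the $\mF_2[v_0,v_1]$-module structure visible directly rather than assembling it from long exact sequences; for the classical case it is arguably cleaner, though it does not generalize as readily to the equivariant setting where $\mM_2$ is no longer a field.
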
 

\begin{figure}
\centering
\begin{minipage}{.5\textwidth}
\centering
\includegraphics[width=2in, height=2in]{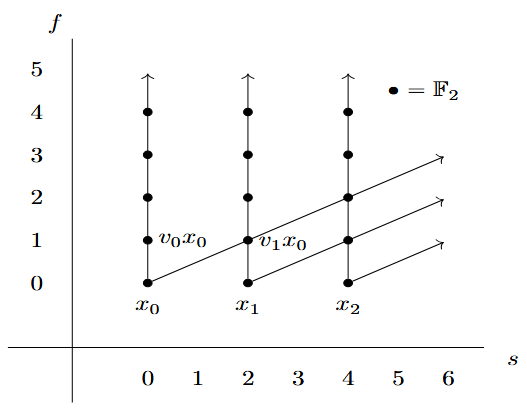}
 \captionof{figure}{\\$\Ext_{E(1)_*} (\mF_2, L(3))$}
  \label{fig:clExtFpL3}
\end{minipage}%
\begin{minipage}{.5\textwidth}
\begin{center}
\includegraphics[width=2.75in,height=2in]{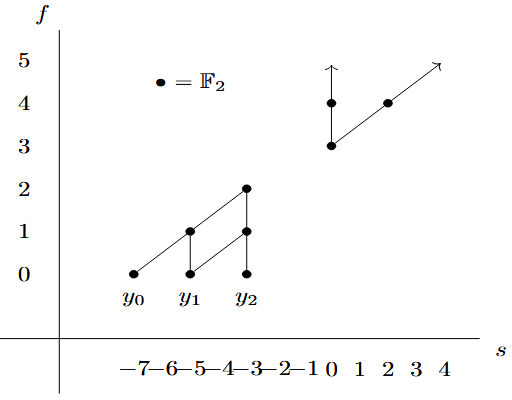}
 \end{center}
  \captionof{figure}{\\ $\Ext_{E(1)_*} (L(3), \mF_2)$}
  \label{fig:clExtL3Fp}
\end{minipage}
\end{figure}

These $\Ext$-terms can be visualized as in the charts given in \cref{fig:clExtFpL3} (where $k \leq m$) and \cref{fig:clExtL3Fp} (where $k > m$).

Furthermore, one can lift the $E(1)_{*}$-comodule isomorphism of \cref{nonequiv: homology} to a splitting of $ku$-module spectra, using a $ku$-relative Adams spectral sequence. Towards this end, it will be helpful to first record a simpler splitting of $ku \wedge B_0(k)$ and $ku \wedge ku$. 

\begin{proposition}\label{nonequiv: other splitting}
There are $ku$-module splittings
    \[ ku \wedge B_{0}(k) \simeq C_{k} \vee V_{k}\]
     \[ ku \wedge ku \simeq C \vee V,\]
     where $V_{k}$ and $ V$ are sums of suspensions of $H\mF_{2}$, and $C_k$ and $C$ are $v_{1}$-torsion-free $ku$-modules.
\end{proposition}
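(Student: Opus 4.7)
The plan is to realize the $\cE^{cl}(1)_*$-comodule decomposition $H_*B_0(k) \cong L(\nu_2(k!)) \oplus F_k$ from \cref{nonequiv: homology} at the level of $ku$-modules, with $V_k$ detecting the free summand $F_k$ and its complement $C_k$ detecting $L(\nu_2(k!))$. Choose a basis for $F_k$ as a free $\cE^{cl}(1)_*$-module with generators in degrees $d_i$, and set $V_k := \bigvee_i \Sigma^{d_i} H\mF_2$. The first step is to produce a $ku$-module map $V_k \to ku \wedge B_0(k)$ realizing the inclusion $F_k \hookrightarrow H_*B_0(k)$ on homology. The $ku$-relative Adams spectral sequence of the form \eqref{nonequiv:ass} applied to $[V_k, ku \wedge B_0(k)]$ has $E_2$-page $\Ext_{\cE^{cl}(1)_*}^{s,f}(H_*V_k, H_*B_0(k))$; since $H_*H\mF_2 \cong \cA^{cl}_*$ is free over $\cE^{cl}(1)_*$, this $E_2$ is concentrated in filtration $f = 0$, so the spectral sequence collapses and the inclusion lifts to the desired map.

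By the analogous argument, the $E_2$-page computing $[ku \wedge B_0(k), V_k]$ is also concentrated in filtration zero, because $H_*V_k$ is a direct sum of free $\cE^{cl}(1)_*$-comodules. The projection $H_*B_0(k) \twoheadrightarrow F_k \xrightarrow{\sim} H_*V_k$ therefore lifts to a $ku$-module map $ku \wedge B_0(k) \to V_k$. The composite $V_k \to ku \wedge B_0(k) \to V_k$ is the identity on homology by construction, and is therefore a self-equivalence: both source and target have free $\cE^{cl}(1)_*$-homology, so maps of bounded-below $ku$-modules between them are detected on homology. Consequently $V_k$ splits off as a retract, and defining $C_k$ to be the complementary summand yields $ku \wedge B_0(k) \simeq C_k \vee V_k$ with $H_*C_k \cong L(\nu_2(k!))$. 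The module $L(\nu_2(k!))$ is $v_1$-torsion-free by direct inspection of the lightning flash structure in \cref{def:classicalLightning}. The second splitting $ku \wedge ku \simeq C \vee V$ then follows immediately by applying the first splitting to each $B_0(k)$ in $ku \wedge ku \simeq \bigvee_k ku \wedge \Sigma^{2k}B_0(k)$ and setting $V := \bigvee_k \Sigma^{2k} V_k$, $C := \bigvee_k \Sigma^{2k} C_k$.

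The main technical point is the filtration zero concentration of the two $\Ext$ groups above, which in the nonequivariant setting is an immediate consequence of the freeness of $H_*H\mF_2$ over $\cE^{cl}(1)_*$. I expect no substantial obstruction here, though the analogous argument in \cref{sec: height zero} and \cref{sec:height1split} is considerably more delicate because the bigraded coefficient ring $\mM_2$ splits into positive and negative cones, and the corresponding freeness and Whitehead-type detection statements require careful bookkeeping across both cones.
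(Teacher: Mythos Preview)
Your argument for the first splitting is essentially the paper's: build $V_k$ out of suspensions of $H\mF_2$, lift the inclusion and projection via the $ku$-relative Adams spectral sequence, and observe that both $E_2$-pages are concentrated in filtration zero because the relevant homology is $\cE^{cl}(1)_*$-free. One small imprecision: in the $ku$-relative spectral sequence the input is $H_*^{ku}V_k \cong \bigoplus \Sigma^{d_i}\cE^{cl}(1)_*$, not the absolute $H_*V_k \cong \bigoplus \Sigma^{d_i}\cA^{cl}_*$; the conclusion is the same since both are free, but the justification you give (``$\cA^{cl}_*$ is free over $\cE^{cl}(1)_*$'') is the wrong freeness. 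Also, the phrase ``$L(\nu_2(k!))$ is $v_1$-torsion-free'' conflates the $\cE^{cl}(1)$-module with its $\Ext$; what you mean is that $\pi_*C_k \cong \Ext_{\cE^{cl}(1)_*}(\mF_2, L(\nu_2(k!)))$ is $v_1$-torsion-free.

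The real issue is your argument for the second splitting. You deduce $ku\wedge ku \simeq C\vee V$ from the Brown--Gitler splitting $ku\wedge ku \simeq \bigvee_k ku\wedge\Sigma^{2k}B_0(k)$, but in the paper's logical order this proposition comes \emph{before} that theorem and is used to prove it: the sentence ``Thus the splitting of the $E_2$-page (\ref{nonequiv:E2}) is in fact a splitting of spectral sequences'' invokes precisely this proposition to separate the $W$-summand from the rest before running the collapse argument in \cref{nonequiv: collapse}. So your deduction is circular. The paper's route (spelled out in the equivariant analogue, \cref{prop: other splitting}) is to argue directly: the homology-level isomorphism $H_*ku \cong \bigoplus_k \Sigma^{2k}H_*B_0(k)$ from the weight decomposition, combined with \cref{nonequiv: homology}, gives $H_*ku \cong \bigoplus_k \Sigma^{2k}L(\nu_2(k!)) \oplus F$ with $F$ free over $\cE^{cl}(1)_*$; then the \emph{same} Adams spectral sequence trick you used for the first splitting (lift inclusion and projection of the free part, both $E_2$-pages in filtration zero) splits off $V$ from $ku\wedge ku$ directly, without appealing to the main theorem.
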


For details of the proof, see the $C_2$-equivariant analogue, \cref{prop: other splitting}.

\begin{proposition}\label{nonequiv: collapse}
    For all $k$, the Adams spectral sequence of (\ref{nonequiv:ass}), 
    \[
    \Ext_{E(1)_{*}}^{s,f} \left(H_{*} \Sigma^{2k} B_{0}(k), H_{*}ku \right) \Longrightarrow [ku \wedge \Sigma^{2k}B_{0}(k), ku \wedge ku]^{ku},
    \]
    collapses.
\end{proposition}
\begin{proof}
\cref{nonequiv: other splitting} implies that the splitting of the $E_{2}$-page (\ref{nonequiv:E2}) is in fact a splitting of spectral sequences. Recall from \cref{nonequiv:rel steenrod} that $H\mF_{2_{*}}^{ku}H\mF_{2} \cong E(1)_{*}$. Since $E(1)_{*}$ is both injective and projective, all summands of the Adams spectral sequence (\ref{nonequiv:ass}) which contain $V_k$ or $V$ in either term are concentrated in filtration $0$ and thus have no differentials. So all potential differentials must occur within the summand
\[
\bigoplus\limits_{m=0}^{\infty} \Sigma^{2(m-k)} \Ext_{E(1)_{*}}\left(L(\nu_{2}(k!)), L(\nu_{2}(m!)) \right).
\]

We will show that no such differentials occur. First, recall that no nontrivial differentials can go from $v_{1}$-torsion classes to $v_{1}$-torsion free classes. Next, note that all $v_{1}$-torsion classes are concentrated in odd stems, and the Adams differential $d_{r}$ has degree $(s,f) = (-1,r)$, so there are no differentials between them. Likewise, all of the $v_{1}$-torsion free classes are concentrated in even stem $s$, so there are no differentials between $v_{1}$-torsion free classes.  Finally, we must consider the possibility of differentials from the $v_{1}$-torsion free classes to $v_{1}$-torsion classes. This is impossible for the following degree reasons. 

 We will now show there are no differentials with source a $v_1$-torsion free class and target a $v_{1}$-torsion class. The $v_1$-torsion classes in filtration $f > 0$ are precisely the classes $\Sigma^{2(m-k)}v_0^iy_{j}$, which only occur in stem $s \le -3$. Since 
\[
\bigoplus\limits_{0 \le k \le m}\Sigma^{2(m-k)}\Ext_{E(1)_*}( L(\nu_{2}(k!) ), L(\nu_{2}(m!) ) )
\]
is contained entirely in stem $s \ge 0$, it only remains to show that there are no differentials originating in $v_1$-torsion free classes of
\[
\bigoplus\limits_{k > m}\Sigma^{2(m-k)}\Ext_{E(1)_*}( L(\nu_{2}(k!) ), L(\nu_{2}(m!) ) )
.\]

%Note that the $v_{1}$-torsion free classes are entirely concentrated in even stem. Fix $n \ge 0$. We will put bounds on the Adams filtration of the classes in stem $-2n$, and bounds on the Adams filtration of classes in stem $-2n-1$. We will use these bounds to conclude that no nontrivial differentials are possible. 

%The only $v_{1}$-torsion free generator of $\Sigma^{2(m-k)} \Ext_{E(1)_{*}}\left(L(\nu_{2}(k!), L(\nu_{2}(m!)\right)$ is $\Sigma^{2(m-k)}x$, in Adams filtration $\nu_2(k!)-\nu_2(m!).$ It follows that in \[
%\bigoplus\limits_{k > m}\Sigma^{2(m-k)}\Ext_{E(1)}( L(\nu_{2}(k!) ), L(\nu_{2}(m!) ) )
%,\]
%the only generator in stem $-2n$ is $\Sigma^{-2n}x$, which has Adams filtration $\nu_{2}(k!) - \nu_{2}((k-n)!)$.

These are all generated by $\Sigma^{2(m-k)}x$. Any target of a differential exiting $\Sigma^{-2n}x$ will be in stem $-2n-1$. One can check that the highest filtration class in stem $-2n-1$ has Adams filtration $\nu_2(k!) - \nu_2((k-n+1)!) - 1$. Thus no $v_1$-torsion classes can be the target of differentials with source a $v_1$-torsion free class, and indeed the Adams spectral sequence collapses. 

\begin{comment}
Now observe that in $ \Ext_{E(1)_{*}}\left(L(\nu_{2}(k!), L(\nu_{2}(m!)\right)$, the highest filtration of any $v_{1}$-torsion class in stem $-3-2i$ is $\nu_{2}(k!)-\nu_{2}(m!)-i-1$. So in $\Sigma^{2(k-m)}\Ext_{\cE(1)_{\star}}\left(L(\nu_{2}(k!), L(\nu_{2}(m!)\right)$, the highest filtration of any $v_{1}$-torsion class in stem $-3-2i - 2(k-m)$ is $\nu_{2}(k!)-\nu_{2}(m!)-i-1$. Therefore, for any $0 \le i \le m < k$ such that 
\[-3 -2i -2(k-m) = -2n -1,\]
the highest filtration class will be at $\nu_{2}(k!) - \nu_{2}\left((k-n+1+i)!\right) - i-1$. So at any stem $-2n-1$, the highest filtration class will be in filtration $\nu_{2}(k!) - \nu_{2}((k-n+1)!) - 1$. 
Thus no $v_1$-torsion classes can be the target of differentials with source a $v_1$-torsion free class, and indeed the Adams spectral sequence collapses.
\end{comment}
\end{proof}

So indeed the isomorphism of \cref{nonequiv: homology splitting} lifts, and the splitting is an immediate consequence.
\begin{theorem}
    Up to $2$-completion, there is a splitting of $ku$-modules
    \[ku \wedge ku \simeq \bigvee\limits_{k=0}^{\infty}ku\wedge \Sigma^{2 k}B_{0}(k).\]
\end{theorem}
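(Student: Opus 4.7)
The plan is to lift the $\cE^{cl}(1)_{*}$-comodule isomorphism
\[
H_{*}ku \;\cong\; \bigoplus_{k=0}^{\infty} H_{*}\Sigma^{2k}B_{0}(k)
\]
from the earlier proposition to a spectrum-level equivalence of $ku$-modules, using the $ku$-relative Adams spectral sequence \eqref{nonequiv:ass} assembled across all $k$.

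First, for each $k \geq 0$, I would view the inclusion of the $k$-th summand
\[
\iota_{k}: \Sigma^{2k}H_{*}B_{0}(k) \hookrightarrow H_{*}ku
\]
as a class in $\Ext_{\cE^{cl}(1)_{*}}^{0,0}\bigl(\Sigma^{2k}H_{*}B_{0}(k),\, H_{*}ku\bigr)$, i.e.\ in filtration $f=0$ on the $E_{2}$-page of \eqref{nonequiv:ass}. By \cref{nonequiv: collapse}, all differentials vanish, so in particular $\iota_{k}$ is a permanent cycle. Choosing a representative gives a map
\[
\theta_{k}: \Sigma^{2k}B_{0}(k) \longrightarrow ku \wedge ku,
\]
and smashing with $ku$ and composing with the multiplication $\mu: ku \wedge ku \to ku$ on the leftmost factor gives a $ku$-module map
\[
\bar{\theta}_{k}: ku \wedge \Sigma^{2k}B_{0}(k) \xrightarrow{\,1 \wedge \theta_{k}\,} ku \wedge ku \wedge ku \xrightarrow{\,\mu \wedge 1\,} ku \wedge ku.
\]

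Next, I would assemble these into a single map
\[
\bar{\theta} := \bigvee_{k=0}^{\infty} \bar{\theta}_{k} : \; \bigvee_{k=0}^{\infty} ku \wedge \Sigma^{2k}B_{0}(k) \longrightarrow ku \wedge ku
\]
of $ku$-modules. By construction, applying $H\mathbb{F}_{2}$-homology (and using that $H_{*}(ku \wedge X) \cong H_{*}ku \otimes H_{*}X$ together with the unit property of $\mu$) recovers precisely the $\cE^{cl}(1)_{*}$-comodule decomposition of $H_{*}ku$. Hence $\bar{\theta}$ is an $H\mathbb{F}_{2}$-homology isomorphism. Since both source and target are connective and $2$-complete, the $H\mathbb{F}_{2}$-Whitehead theorem upgrades this to a weak equivalence.

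The only nontrivial step is the collapse, which is already \cref{nonequiv: collapse}; given that, the rest is bookkeeping. The one point deserving care is the verification that the filtration-$0$ representative $\theta_{k}$, after smashing with $ku$ and multiplying, really induces $\iota_{k}$ on homology rather than some twist of it---this follows immediately from the fact that a filtration-$0$ permanent cycle in the Adams $E_{\infty}$-page detects the underlying comodule map it represents, together with the identity $(\mu \wedge 1) \circ (\eta \wedge 1 \wedge 1) = \mathrm{id}$ used to identify the homology of $\bar{\theta}_{k}$ with that of $\theta_{k}$.
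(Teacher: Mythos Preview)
Your proposal is correct and follows essentially the same approach as the paper: view the comodule inclusions $\iota_k$ as filtration-zero classes in the $ku$-relative Adams spectral sequence \eqref{nonequiv:ass}, invoke \cref{nonequiv: collapse} to conclude they are permanent cycles, and deduce that the resulting maps assemble to an $H\mF_2$-equivalence of $ku$-modules. The only cosmetic difference is that the paper reads the permanent cycle directly as a $ku$-module map $ku \wedge \Sigma^{2k}B_0(k) \to ku \wedge ku$ (since the relative spectral sequence already converges to $ku$-module maps), whereas you pass through a plain map $\theta_k$ and then apply the free--forget adjunction via $(\mu \wedge 1)\circ(1 \wedge \theta_k)$; these are the same map, so your extra step is harmless but unnecessary.
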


This splitting can also be interpreted in terms of Adams covers. Let $ku^{\langle n \rangle}$ denote the $n$-th Adams cover of $ku$, that is, the $n$-th term in a minimal Adams resolution of $ku$ over $H\mF_{2}$ (See \cite[p.2-3]{lellmann1984operations} and Proposition 6.18 of \cite{kane1981operations} for discussion of Adams covers and the proof of the next proposition. Note that their phrasing is slightly different as they discuss cohomology and do not reference relative homology). 

\begin{proposition} \label{nonequiv:kuRelHomology}
The $ku$-relative homology of the $n$-th Adams cover of $ku$ is
    \[
    H_{*}^{ku}ku^{\langle n \rangle} \cong L(\nu_{2}(n)).
    \]
\end{proposition}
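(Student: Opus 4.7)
The plan is induction on $n$, using the cofiber sequences that define the minimal $H$-based Adams resolution of $ku$ in the category of $ku$-modules. In the base case $n = 0$, we have $ku^{\langle 0 \rangle} = ku$ and
\[
H_{*}^{ku}(ku) = \pi_*(H \wedge_{ku} ku) = \pi_* H = \mF_2 = L(0),
\]
matching $L(\nu_2(n))$ under the natural indexing convention for the lowest Adams filtration.

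For the inductive step, the defining cofiber sequence
\[
ku^{\langle n+1 \rangle} \to ku^{\langle n \rangle} \to W_n,
\]
where $W_n$ is a minimal wedge of suspensions of $H$ (as a $ku$-module) realizing the bottom-filtration $ku$-cohomology classes of $ku^{\langle n \rangle}$, yields after applying $H \wedge_{ku} (-)$ a long exact sequence of $\cE^{cl}(1)_*$-comodules. The key input is the identification $H_{*}^{ku}(H) \cong \cE^{cl}(1)_*$, which can be extracted from $H_* ku \cong \cA^{cl}/\!/\cE^{cl}(1)_*$ together with its $\cA^{cl}_*$-comodule structure via a change-of-rings argument. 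One then reads off the effect on $H_*^{ku}(ku^{\langle n \rangle})$ from this long exact sequence.

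The main obstacle is tracking the $Q_0$- and $Q_1$-actions across the connecting homomorphisms carefully enough to verify two things: first, that the staircase relations $x_{i+1} Q_1 = x_i Q_0$ of \cref{def:classicalLightning} persist at each stage, and second, that a new lightning flash generator enters the module precisely when $\nu_2(n)$ jumps, i.e., when $n$ crosses a power of $2$. This combinatorial coincidence between Adams filtration and $2$-adic valuation becomes most transparent on the dual side, where $[ku^{\langle n \rangle}, ku]_{ku}$ is identified with the ideal $(2, v_1)^n \subset \mZ_{(2)}[v_1] \cong ku_*$: its generators $2^i v_1^{n-i}$ are linear duals of the lightning flash generators $x_i$, and the staircase length matches $\nu_2(n)$ since the ranks of the successive quotients $(2,v_1)^n / (2,v_1)^{n+1}$ are controlled by exactly this $2$-adic pattern.

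Alternatively, the result can be deduced from the cohomological computation in Proposition 6.18 of \cite{kane1981operations} (which is phrased in terms of $H$-cohomology rather than $ku$-relative homology) by dualizing, using that both $ku$ and its Adams covers are finite type.
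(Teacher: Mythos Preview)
The paper does not actually prove this proposition; it merely cites \cite[p.~2--3]{lellmann1984operations} and \cite[Proposition~6.18]{kane1981operations} for both the definition and the argument, noting that their phrasing is in terms of cohomology rather than $ku$-relative homology. Your closing paragraph, deducing the result from Kane's Proposition~6.18 by dualizing, is therefore exactly the paper's approach.

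Your inductive sketch is additional, and the underlying idea is sound, but there is a genuine confusion in it that is caused by a typo in the paper's statement. The proposition should read $H_*^{ku} ku^{\langle n \rangle} \cong L(n)$, not $L(\nu_2(n))$: already at $n=1$ the cofiber sequence $ku^{\langle 1 \rangle} \to ku \to H$ yields a three-dimensional $\cE^{cl}(1)_*$-comodule, namely $L(1)$, whereas $\nu_2(1)=0$ would give $L(0)=\mF_2$. The corrected version is also what the paper actually \emph{uses} in the subsequent theorem, where it invokes $H_*^{ku} ku^{\langle \nu_2(n!) \rangle} \cong L(\nu_2(n!))$. Your own $(2,v_1)^n$ argument confirms this: the ideal $(2,v_1)^n \subset \mZ_{(2)}[v_1]$ has the $n+1$ monomial generators $2^i v_1^{n-i}$, giving a staircase of length $n$, not $\nu_2(n)$. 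So your claim that ``the staircase length matches $\nu_2(n)$'' is incorrect, and the remark about $\nu_2(n)$ ``jumping when $n$ crosses a power of $2$'' is an attempt to reconcile the correct computation with the misstated result; it does not make sense as written, since $\nu_2$ is not monotone in $n$.
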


The Adams cover $ku^{\langle n \rangle}$ is uniquely determined up to homotopy by its homology, and the fact that it is a $ku$-module. 

\begin{theorem} Up to $2$-completion, 
    \[ ku \wedge B_{0}(k) \simeq ku^{\langle \nu_{2}(k!) \rangle} \vee V_{k},\]
    where $V_{k}$ is a sum of suspensions of $H\mF_{2}$.
\end{theorem}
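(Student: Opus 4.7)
The plan is to identify the $v_1$-torsion-free summand $C_n$ from the splitting $ku \wedge B_0(n) \simeq C_n \vee V_n$ established in the preceding proposition with the Adams cover $ku^{\langle \nu_2(n!) \rangle}$. Since the Adams cover is characterized up to homotopy as a $ku$-module by its $ku$-relative homology, this reduces to verifying that $H_*^{ku} C_n \cong L(\nu_2(n!))$; the desired splitting then follows by substituting $ku^{\langle \nu_2(n!) \rangle}$ for $C_n$.

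I would first compute $H_*^{ku}(ku \wedge B_0(n)) \cong H_* B_0(n)$, which by the $\cE^{cl}(1)_*$-comodule decomposition stated above is $L(\nu_2(n!)) \oplus F_n$ with $F_n$ a sum of suspensions of the free comodule $\cE^{cl}(1)_*$. Separately, I would compute $H_*^{ku} V_n$: since $V_n$ is a wedge of suspensions of $H\mF_2$ and $H_*^{ku}(H\mF_2) \cong \cE^{cl}(1)_*$ (via the cofiber sequence realizing $H\mF_2 = ku/(2, v_1)$ together with $\Tor^{ku_*}_*(\mF_2, \mF_2) \cong E(\alpha_0, \alpha_1)$ with $|\alpha_0|=1, |\alpha_1|=3$), the module $H_*^{ku} V_n$ is a sum of suspensions of $\cE^{cl}(1)_*$.

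To match the two decompositions, I would use that the lightning flash module $L(\nu_2(n!))$ contains no free $\cE^{cl}(1)_*$-summand, which is visible from the relations $x_{i+1} Q_1 = x_i Q_0$ in its definition. By uniqueness of decomposition into indecomposable $\cE^{cl}(1)_*$-comodules, the entire $F_n$ must be carried by $V_n$, so that $H_*^{ku} V_n \cong F_n$ and consequently $H_*^{ku} C_n \cong L(\nu_2(n!))$. The uniqueness clause for Adams covers then delivers $C_n \simeq ku^{\langle \nu_2(n!) \rangle}$, completing the proof.

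The main obstacle is making the matching step fully rigorous. If indecomposable uniqueness is not directly available in the appropriate comodule category, a safety net is to exploit that $C_n$ is $v_1$-torsion-free as a $ku$-module whereas each suspension of $H\mF_2$ is $v_1$-torsion; this structural separation, combined with dimension counting in each bidegree, forces the identification $H_*^{ku} C_n \cong L(\nu_2(n!))$ independently of any Krull--Schmidt hypothesis.
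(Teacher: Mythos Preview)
Your proposal is correct and follows essentially the same strategy as the paper: identify $H_*^{ku}C_n \cong L(\nu_2(n!))$ and then deduce $C_n \simeq ku^{\langle \nu_2(n!)\rangle}$ as $ku$-modules. The only packaging difference is that you invoke the stated uniqueness characterization of Adams covers as a black box, whereas the paper re-derives the needed equivalence by running the $ku$-relative Adams spectral sequence
\[
\Ext_{\cE^{cl}(1)_*}\bigl(H_*^{ku}C_n,\ H_*^{ku}ku^{\langle \nu_2(n!)\rangle}\bigr)\cong \Ext_{\cE^{cl}(1)_*}\bigl(L(\nu_2(n!)),\ L(\nu_2(n!))\bigr)
\]
and citing the collapse argument of the earlier proposition to lift the homology isomorphism. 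Conversely, you spend more care than the paper justifying $H_*^{ku}C_n \cong L(\nu_2(n!))$; in the paper this is implicit in how the splitting $ku\wedge B_0(k)\simeq C_k\vee V_k$ was constructed (by lifting the decomposition $H_*B_0(k)\cong L(\nu_2(k!))\oplus F_k$), so your Krull--Schmidt and $v_1$-torsion arguments, while correct, are not really needed.
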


\begin{proof}
Consider the $ku$-relative Adams spectral sequence
\begin{equation}
     E_{2}^{s,f} \cong \Ext_{E(1)_{*}}^{s,f} \left(H_{*}^{ku}C_{k}, H_{*}^{ku}ku^{\langle \nu_{2}(k!) \rangle} \right) \Longrightarrow [ku \wedge \Sigma^{2k}B_{0}(k), ku \wedge ku]^{ku},
\end{equation}
where the $C_k$ are as defined in \cref{nonequiv: other splitting}. 
Note that 
\[
\Ext_{E(1)_*}^{s,f} \left( H_*^{ku} C_k, H_*^{ku} ku^{\langle \nu_2 (k!) \rangle} \right) \cong \Ext_{E(1)_*}^{s,f} \left( L(\nu_2(k!) ), L(\nu_2(k!) ) \right).
\]
It follows from the arguments of \cref{nonequiv: collapse} that this spectral sequence collapses at the $E_{2}$-page, and thus we can lift the isomorphism $H_{*}^{ku}C_{k} \to H_{*}^{ku}ku^{\langle \nu_{2}(k!) \rangle}$ of \cref{nonequiv:kuRelHomology} to the level of spectra.
\end{proof}

\begin{cor} Up to $2$-completion, 
    \[ ku \wedge ku \simeq \bigvee_{k=0}^{\infty} \Sigma^{2k} ku^{\langle \nu_{2}(k!) \rangle} \vee V,  \]
    where $V$ is a sum of suspensions of $H\mF_{2}$.
\end{cor}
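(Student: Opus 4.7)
The plan is to combine the two preceding results in a straightforward way. By the splitting theorem for $ku \wedge ku$, we have up to $2$-completion a $ku$-module equivalence
\[
ku \wedge ku \simeq \bigvee_{k=0}^{\infty} ku \wedge \Sigma^{2k} B_{0}(k).
\]
The plan is then to substitute, for each $k$, the identification
\[
ku \wedge B_{0}(k) \simeq ku^{\langle \nu_{2}(k!) \rangle} \vee V_{k}
\]
from the previous theorem, where each $V_k$ is a sum of suspensions of $H\mF_2$. Suspending by $\Sigma^{2k}$ and wedging over $k$ gives
\[
ku \wedge ku \simeq \bigvee_{k=0}^{\infty} \Sigma^{2k} ku^{\langle \nu_{2}(k!) \rangle} \vee \bigvee_{k=0}^{\infty} \Sigma^{2k} V_{k}.
\]

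The first wedge summand is exactly what we want, so setting $V := \bigvee_{k=0}^\infty \Sigma^{2k} V_k$ yields the claimed decomposition. The only point to verify is that $V$ is again a sum of suspensions of $H\mF_2$: this is immediate since each $V_k$ is, by hypothesis, a wedge of suspensions of $H\mF_2$, and an arbitrary wedge of such is still a wedge of suspensions of $H\mF_2$.

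There is essentially no obstacle here; this is a formal consequence of the two preceding propositions. The only thing worth flagging is that one should verify the coproduct of the individual splittings assembles into a bona fide equivalence after $2$-completion, but this follows from the fact that smashing with $ku$ and suspension commute with wedge sums, and the $2$-completion hypothesis has already been absorbed into each input equivalence.
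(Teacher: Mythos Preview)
Your proposal is correct and matches the paper's intent: the corollary is stated without proof, as an immediate consequence of the preceding theorem $ku \wedge B_{0}(k) \simeq ku^{\langle \nu_{2}(k!) \rangle} \vee V_{k}$ together with the splitting $ku \wedge ku \simeq \bigvee_{k} ku \wedge \Sigma^{2k} B_{0}(k)$. Your argument spells out exactly this substitution.
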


\section{Equivariant preliminaries}\label{sec: equivariant preliminaries}

\subsection{The $C_2$-equivariant homology of a point} \label{sec:C2point} A detailed description of the coefficients $H_\star$ will be useful for our calculations. Our description closely follows that of \cite[Section~2.1]{GuillouHillIsaksenRavenel2020}, which in turn is a reinterpretation of results in \cite[Proposition~6.2]{HuKriz2001}. Throughout, we frequently denote  the coefficients $H_\star$ as $\mM_2.$

Additively, $\mM_2$ is
\begin{enumerate}
    \item $\mF_2$ in degree $(s,w)$ if $s \leq 0$ and $w \leq s.$
    \item $\mF_2$ in degree $(s,w)$ if $s \geq 0$ and $w \geq s + 2.$
    \item $0$ otherwise.
\end{enumerate}

This additive structure is represented by the dots in \cref{fig:M2}. The non-zero element in degree $(0,-1)$ is called $\tau,$ and the non-zero element in degree $(-1,-1)$ is called $\rho.$ Sometimes in the equivariant literature, the element $\tau$ is called $u$ or $u_\sigma,$ and $\rho$ is called $a$ or $a_\sigma.$ We choose to use the names $\tau$ and $\rho$ common in motivic literature so that we can easily write $\mM_2$ as a square-zero extension of $\mM_2^\mR,$ the motivic homology of $\mR$ with $\mF_2$-coefficients. 

\begin{remark}
    \rm{We also use $\rho$ to denote the $C_2$-regular representation, writing $\Sigma^\rho$ for suspension by the regular representation in the $RO(C_2)$ grading. Whether $\rho$ is an element in the homology of the point or the $C_2$-regular representation will be clear from context.  }
\end{remark} 

\begin{figure}[ht]
    \centering
    \includegraphics[width=3in,height=3in]{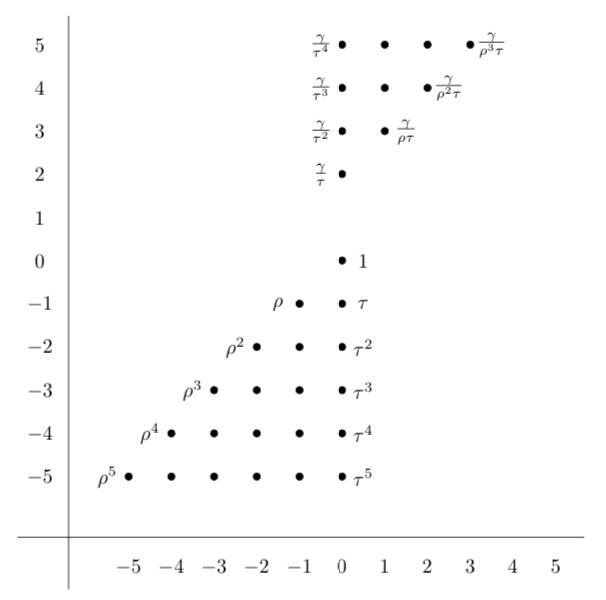}
    \caption{$\mM_2$}
    \label{fig:M2}
\end{figure}

The ``positive cone'' refers to the part of $\mM_2$ in degrees $(s,w)$ with $w \leq 0$. (This may seem backwards, but the reason for this is that in the cohomology of a point, which is isomorphic to the homology of a point and is also often denoted $\mM_{2}$ in the literature, the ``positive cone'' portion of $\mM_{2}$ is in positive degrees). The positive cone is isomorphic to the $\mR$-motivic homology ring $\mM^\mR_2$ of a point. Multiplicatively, the positive cone is a polynomial ring on two variables, $\rho$ and $\tau.$

The ``negative cone'' $NC$ refers to the part of $\mM_2$ in degrees $(s,w)$ with $w \ge 2.$ Multiplicatively, the product of any two elements of $NC$ is zero, so $\mM_2$ is a square-zero extension of $\mM^\mR_2.$ Multiplications by $\rho$ and $\tau$ are non-zero in $NC$ whenever they make sense. Thus elements of $NC$ are infinitely divisible by both $\rho$ and $\tau.$ We use the notation $\frac{\gamma}{\rho^j \tau^k}$ for the non-zero element in degree $(j, 1+j+k),$ which is consistent with the described multiplicative properties. The symbol $\gamma,$ which does not correspond to an actual element of $\mM_2$,  has degree $(0,1).$

 The $\mF_2[\tau]$-module structure on $\mM_2$ is essential for calculations filtered by powers of $\rho.$ Thus we describe the $\mF_2[\tau]$-module structure on $NC$ in further detail. We define $\mF_2 [\tau]/\tau^\infty$ to be the $\mF_2[\tau]$-module $\colim_n \mF_2[\tau] / \tau^k$. Following \cite[Remark~2.1]{GuillouHillIsaksenRavenel2020}, we write $\frac{\mF_2 [\tau^{k}]}{\tau^\infty} \{x\}$ for the infinitely divisible $\mF_2[\tau]$-module consisting of elements of the form $\frac{x}{\tau^k}$ for $k \geq 1.$ Note that $x$ itself is not an element of $\frac{\mF_2 [\tau]}{\tau^\infty} \{x\}.$ The idea is that $x$ represents the infinitely many relations $\tau^k \cdot \frac{x}{\tau^k} = 0$ that define $\frac{\mF_2 [\tau]}{\tau^\infty} \{x\}.$

 With this notation in place, $\mM_2$ is equal to 
 \[
 \mM^{\mR}_2 \oplus NC = \mM^\mR_2 \oplus \underset{s \geq 0}{\bigoplus} \frac{\mF_2[\tau]}{\tau^\infty} \left\{ \frac{\gamma}{\rho^s} \right\}
 \]
 as an $\mF_2[\tau]$-module.

 \subsection{Comparison with nonequivariant homology} 

 Suppose $X$ is a $C_2$-spectrum. The cofiber of $\rho$,
 \[
 S^{-\sigma} \hookrightarrow S^0 \to C\rho,
 \] 
is stably given by $C\rho \simeq \Sigma^{1-\sigma} {C_2}_+$. Smashing $H \wedge X $ with the transfer map
\[
S^{1-\sigma} \xrightarrow[]{\mathrm{tr}} \Sigma^{1-\sigma}{C_2}_+\simeq C\rho
\]
gives a map
 \[
 H \wedge X \xrightarrow{\mathrm{tr} \wedge \mathrm{id}_X} \Sigma^{1-\sigma}H \wedge X \to H \wedge X \wedge C\rho.
 \]

 Applying $\pi_V^{C_2}$ to this map gives a homomorphism 
 \[
 \Phi^e: H_V(X) \to H_{\vert V \vert} (X^e),\]
 which can be used to compare the $C_2$-equivariant homology with the underlying nonequivariant homology of the nonequivariant spectrum underlying $X.$

 We can use the same technique to compare the ${ku_\mR}_{\star}$-module structure of ${ku_\mR}_{\star}X$ to the $ku_{*}$-module structure of $ku_{*}X^{e}$. Specifically, smash $ku_{\mR} \wedge X$ with the transfer map, and apply $\pi_{V}^{C_2}$ to construct a homomorphism
 \[ \Phi^{e}: ku_{{\mR}_{\star}}X \to ku_{*}X^{e} .\]
 This will allow us to deduce $ku_{{\mR}_{\star}}$-module extensions in \cref{sec:height1split}.

 \subsection{Equivariant connective covers}
Throughout this paper, we make use of equivariant connective covers so we give their definition here. Suppose $X$ is a $C_2$-spectrum. The equivariant connective cover $X \langle 0 \rangle \xrightarrow{q} X$ is a $C_2$-spectrum such that:
\begin{enumerate}
    \item the restriction of $q$ is the connective cover of the underlying spectrum $X^e$, and
    \item the categorical fixed points of $q$ is the connective cover of the categorical fixed points of $X.$ 
\end{enumerate}
See \cite[p. 25]{GuillouHillIsaksenRavenel2020} for a more detailed description of the restriction and categorical fixed points functors (cf. \cite[Proposition~3.3]{Lewis95}).

\subsection{The $C_2$-equivariant Dual Steenrod Algebra} In \cite{HuKriz2001}, Hu--Kriz computed the $C_2$-equivariant dual Steenrod algebra
$$\cA_\star \cong H_\star [{\xi}_1, {\xi}_2, \cdots, {\tau}_{0}, {\tau}_{1}\, \cdots ] \slash {({\tau}_i}^2 = a {\tau}_{i + 1} + (u + a {\tau}_0) \xi_{i + 1} ),$$
where $\vert \xi_{i} \vert = (2^i - 1)\rho,$ $\vert \tau_i \vert = 2^i \rho - \sigma,$ and 
$$\psi(\xi_i) = \sum_{0 \leq j \leq i} \xi_{i - j}^{2^j} \otimes \xi_j \qquad \qquad \psi(\tau_i) = \tau_i \otimes 1 + \sum_{0 \leq j \leq i} \xi_{i - j}^{2^j} \otimes \tau_j.  $$

We denote the images of the generators $\xi_i$ and $\tau_i$ under the conjugation map 
$$c: \cA_\star^{C_2} \to \cA_\star^{C_2}$$ 
by $\bar{\xi}_i$ and $\bar{\tau}_i$ respectively. 

The coproduct formulas on the conjugates are 
$$\psi(\bar{\xi}_i) = \sum_{0 \leq j \leq i} \bar{\xi}_j \otimes \bar{\xi}_{i - j}^{2^j} \qquad \qquad \psi (\bar{\tau}_i ) = 1 \otimes \bar{\tau}_i  + \sum_{0 \leq j \leq i} \bar{\tau_j} \otimes \bar{\xi}_{i - j}^{2^j}.$$

\subsection{Relative homology and Adams spectral sequences} \label{sec:Relative}
We will make use of $RO(C_2)$-graded relative homology and relative Adams spectral sequences in our computations. Analogously to the nonequivariant setting, if $E$ is an $R$-algebra and $M$ is an $R$-module in spectra, then $E$-homology in the category of $R$-modules is 
$$E_\star^R (M) := \pi_\star (E \underset{R}{\wedge} M).$$
Note that
$$E^R_\star(R \wedge M) = \pi_\star(E \underset{R}{\wedge} R \wedge M) = E_{\star}M.$$

In \cite[Prop~2.1]{baker2001adams}, Baker--Lazarev introduce a relative Adams spectral sequence in the category of $R$-modules. The $RO(C_2)$-graded version exists by the same construction.

\begin{proposition}\label{prop:relative ASS exists}
    Let $E$ be a $C_{2}$-equivariant $R$-algebra spectrum, and let $X,Y$ be $R$-modules. If $E_{\star}^{R}X$ is projective as an $E_{\star}$-module, then there exists an $E$-based Adams spectral sequence in the category of $R$-modules
\[
E_{2}^{s,f,w} \cong \Ext_{E_{\star}^{R} E}^{s,f,w} (E_\star^R X, E_\star^R Y) \Longrightarrow [X,Y]_{\widehat{E}}^{R \, \, s, w}
\]
 where $[X,Y]^{R}_{\widehat{E}}$ denotes the $E$-nilpotent completion of $R$-module maps from $X$ to $Y$, $s$ denotes the stem, $f$ denotes the homological degree, and $w$ denotes the weight.
\end{proposition}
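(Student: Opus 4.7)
The plan is to imitate the standard Adams tower construction, but perform everything internally to the category of $R$-modules and track the extra weight grading. Let $\bar{E}$ denote the fiber of the unit map $R \to E$ in $R$-modules; since $E$ is an $R$-algebra, this fiber inherits an $R$-module structure and sits in a cofiber sequence $\bar{E} \to R \to E$ of $R$-modules. Smashing iteratively over $R$ with $\bar{E}$, I would build the canonical $E$-Adams tower
\[
Y = Y_0 \leftarrow Y_1 \leftarrow Y_2 \leftarrow \cdots, \qquad Y_s = \bar{E}^{\wedge_R s} \wedge_R Y,
\]
with cofibers $K_s := E \wedge_R Y_s$. Applying $[X, -]^R$ to this tower produces an exact couple in the usual way, and hence a trigraded spectral sequence indexed by $(s, f, w)$ converging (conditionally) to $[X, Y]^R_{\widehat{E}}$, where the target is by definition $\pi_\star$ of the homotopy limit of the partial totalizations, i.e., the $E$-nilpotent completion.

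The bulk of the argument is identifying the $E_2$-page with $\Ext_{E_\star^R E}(E_\star^R X, E_\star^R Y)$. The $E_1$-page is
\[
E_1^{s,*,*} = [X, K_s]^R = [X, E \wedge_R \bar{E}^{\wedge_R s} \wedge_R Y]^R,
\]
and the hypothesis that $E_\star^R X$ is projective over $E_\star$ lets me identify this mapping group with
\[
\Hom_{E_\star}\bigl(E_\star^R X,\ E_\star^R(\bar{E}^{\wedge_R s} \wedge_R Y)\bigr)
\]
via the usual universal coefficient/Kunneth argument in $R$-modules: write $E \wedge_R (-)$ as an $E$-module, use that $E_\star^R X$ is projective to split off a free $E_\star$-resolution, and conclude that $[X,-]^R$ computes $\Hom_{E_\star}(E_\star^R X, -)$ on $E$-module spectra whose homotopy is flat. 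Once projectivity is in hand, the same computation shows $E_\star^R(\bar{E}^{\wedge_R s} \wedge_R Y) = (\overline{E_\star^R E})^{\otimes_{E_\star} s} \otimes_{E_\star} E_\star^R Y$, so the $E_1$-complex is precisely the normalized cobar complex for the $E_\star^R E$-comodules $E_\star^R X$ and $E_\star^R Y$. Its cohomology is $\Ext_{E_\star^R E}^{s,*,*}(E_\star^R X, E_\star^R Y)$, giving the claimed $E_2$-page. The $RO(C_2)$-grading is carried through formally: every smash, cofiber, and mapping group is taken in the $C_2$-equivariant stable $R$-module category, and the extra weight $w$ is an additional bookkeeping index preserved by all constructions.

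The main potential obstacle is the projectivity/flatness bookkeeping in the $E_1$-identification; all the tower and convergence machinery is formal and works verbatim from Baker--Lazarev in the equivariant setting, but one must verify that the projectivity of $E_\star^R X$ over $E_\star$ really does suffice to compute $[X, E \wedge_R M]^R$ as $\Hom_{E_\star}(E_\star^R X, E_\star^R M)$ for the relevant $M = \bar{E}^{\wedge_R s} \wedge_R Y$. This follows by the standard argument: choose a free resolution of $E_\star^R X$ by $E_\star$-modules, realize it by a wedge of suspensions of $E$-modules, and use that projectivity makes the resulting spectral sequence collapse. Convergence to the $E$-nilpotent completion is then the usual statement for a tower whose cofibers are $E$-modules, so no additional equivariant input is needed.
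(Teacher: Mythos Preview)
Your proposal is correct and follows exactly the approach the paper invokes: the paper does not give an independent proof but simply cites Baker--Lazarev's construction and remarks that the $RO(C_2)$-grading is carried along formally, which is precisely the Adams tower/exact couple argument you have written out in detail. Your sketch is therefore more explicit than the paper's treatment, but the underlying construction is the same.
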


\begin{remark} 
    \rm{We use several types of Adams spectral sequences in this paper, so let us take a moment to clarify what they are and how we will refer to them. In \cref{sec: height zero}, we will take $E = H\umF_{2}$, and $R = H\umZ$, and we will refer to the $H\umF_{2}$-based Adams spectral sequence in the category of $H\umZ$-modules simply as ``the relative Adams spectral sequence''. In \cref{sec:height1split}, we will take $E = H\umF_{2}$, and $R = ku_{\mR}$, and we will refer to the $H\umF_{2}$-based Adams spectral sequence in the category of $ku_{\mR}$-modules simply as ``the relative Adams spectral sequence''. Finally, the primary motivation for constructing the splitting of $ku_{\mR} \wedge ku_\mR$ is to facilitate computations with the $ku_{\mR}$-based Adams spectral sequence for the sphere: this means taking $E = ku_{\mR}$, and $R = \mathbb{S}$. We will refer to this spectral sequence as ``the $ku_{\mR}$-based Adams spectral sequence". This spectral sequence only appears in \cref{sec:introduction} and in \cref{sec:E1kuR}. }
\end{remark}

\section{{$C_2$}-equivariant homology results} \label{sec:C2homology}

We will make use of the following relative homology computation. 

\begin{proposition}\label{prop:relHomology} The $BP_\mR\0$-relative dual Steenrod algebra is
\begin{align*}
    H_{\star}^{BP_\mR\0}H & \cong \cE(0)_{\star} 
\end{align*}
and the $BP_\mR\one$-relative dual Steenrod algebra is 
    \[
    H_{\star}^{BP_\mR\one}H \cong \cE(1)_{\star}.
    \]
\end{proposition}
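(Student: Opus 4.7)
The plan is to compute $\pi_\star(H \wedge_{BP\n} H)$ by recognizing $H$ as an iterated cofiber on the generators $v_i$ in the category of $BP\n$-modules, and splitting the resulting sequences after base change to $H$.

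First, I would use the cofiber sequence of $BP\n$-modules
\[
\Sigma^{|v_i|} BP\n \xrightarrow{v_i} BP\n \longrightarrow BP\n / v_i,
\]
where $v_0 = 2$ lies in degree $0$ and $v_1 \in \pi_\rho^{C_2} ku_\mR$. Smashing over $BP\n$ with $H$ turns $v_i$ into a null map, since the unit $BP\n \to H$ carries each $v_i$ to $0 \in \pi_\star H$, so the cofiber sequence splits:
\[
H \wedge_{BP\n} BP\n / v_i \simeq H \vee \Sigma^{|v_i|+1} H,
\]
with the wedge summand contributing a class $\tau_i$ of degree $|v_i| + 1 = 2^i \rho - \sigma$, matching the Hu--Kriz generator.

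Second, I would iterate. For $n=0$ this immediately gives $H \wedge_{H\umZ} H \simeq H \vee \Sigma H$, so $\pi_\star(H \wedge_{H\umZ} H) \cong H_\star\{1,\tau_0\}$; the relation $\tau_0^2 = 0$ holds for degree reasons (there is no nonzero element of $H_\star$ in bidegree $(2,0)$), identifying the result with $\cE(0)_\star$. For $n=1$, using that $(v_1, 2)$ is a regular sequence on $ku_\mR$ with $H \simeq (ku_\mR/v_1)/2$, a change-of-base-ring argument along $ku_\mR \to H\umZ \to H$ gives
\[
H \wedge_{ku_\mR} H \simeq (H \wedge_{ku_\mR} H\umZ) \wedge_{H\umZ} H \simeq (H \vee \Sigma^{\rho+1} H) \wedge_{H\umZ} (H \vee \Sigma H),
\]
which splits as a wedge of four suspensions of $H$ and hence on homotopy is a free $H_\star$-module on $\{1, \tau_0, \tau_1, \tau_0 \tau_1\}$.

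Third, I would identify the full multiplicative structure with $\cE(1)_\star$. The hidden extensions $\tau_0^2 = \rho \tau_1$ and $\tau_1^2 = 0$ are transported from the Hu--Kriz relations $\tau_i^2 = \rho \tau_{i+1} + (\tau + \rho \tau_0)\xi_{i+1}$ in $\cA_\star$ along the natural map $H \wedge H \to H \wedge_{ku_\mR} H$, which on homotopy kills $\xi_1, \xi_2$ and $\tau_j$ for $j > 1$.

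The main obstacle is pinning down the hidden multiplicative extension $\tau_0^2 = \rho \tau_1$: this relation is invisible to the additive splitting built from cofiber sequences, but is forced by the ambient Hu--Kriz relation in $\cA_\star$ once $\xi_1$ has been killed. Ensuring the splittings in the equivariant stable category behave as expected and carefully tracking bigradings is also essential.
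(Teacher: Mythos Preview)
Your approach is essentially the same as the paper's: both compute the additive structure by smashing the cofiber sequence $\Sigma^{|v_i|}BP\n \xrightarrow{v_i} BP\n \to BP\n/v_i$ with $H$ over $BP\n$ and observing that $v_i$ becomes null, then read off the multiplicative structure from the algebra map $\pi_\star(H \wedge H) \to \pi_\star(H \wedge_{BP\n} H)$. The paper is terser (it only treats $n=1$ and leaves $n=0$ to the reader, and does not spell out the change-of-base iteration), but the ideas line up exactly.
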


To prove this, we need to fix some notation. Recall there is a classifying map $L \to {MU_\mR}_\star$ that is an isomorphism in degrees $\rho k,$ where $L \cong \mZ[a_1, a_2, \cdots]$ is the Lazard ring, and $\vert a_i \vert = \rho i$ \cite[Theorem~2.28]{HuKriz2001}. Working $2$-locally, $MU_\mR$ splits as a sum of shifts of the spectrum
\[
BP_\mR := MU_\mR / (a_i | i \neq 2^k -1).
\]
As is standard, let $v_i := a_{2^i - 1}$ and
\[
BP_\mR \langle m \rangle = BP_\mR / (v_{m + 1}, v_{m + 2}, \cdots ).
\]

\begin{proof}[Proof of \cref{prop:relHomology}]
We give an argument for the second statement and leave the first to the reader. Consider the cofiber sequence 
\[
 BP_\mR \one \xrightarrow{{v}_1} BP_\mR \one \xrightarrow{} H \umZ.
 \]
Smashing $H  \underset{BP_\mR \one}{\wedge} \text{---} \,$ with the cofiber sequence yields a splitting
 \[
 H \underset{BP_\mathbb{R}\one}{\wedge} H \umZ \simeq H  \wedge (S^0 \vee S^{\rho + 1}),
 \]
 which suffices to determine the additive structure of $H_{\star}^{BP_\mR \one}H\umZ$. 
 
 The multiplicative structure can be deduced from the algebra map 
 \[
 \pi_{\star}\big(H \wedge H \big) \to \pi_{\star}\big(H \underset{BP_\mR \one}{\wedge} H \big).
 \]
\end{proof}

\subsection{The homology of $\mathbf{BP_\mR \langle n \rangle}$}  The authors first learned of a computation of $H_\star BP_\mathbb{R} \langle n \rangle,$ 
from Christian Carrick in the context of Carrick, Hill, and Ravenel's work on the homological slice spectral sequence in motivic and Real bordism \cite{CarrickHillRavenel24}. While Carrick--Hill--Ravenel did indeed compute the equivariant homology groups $H_\star BP_\mathbb{R} \langle n \rangle,$ the result does not appear in the final version of their paper, so we include an argument that we learned from  Carrick here.

Recall that $\pi_{*\rho}BP_\mR \cong \mathbb{Z}[{v}_1, {v}_2, \cdots]$ \cite[Theorem~6.18]{HuKriz2001}. 
Consider the cofiber sequence
\[
H \wedge \Sigma^{(2^i-1)\rho} BP_\mR \xrightarrow{\cdot{v}_i} H \wedge BP_\mR \xrightarrow{} H \wedge BP_\mR / {v}_i.
\]
Note that ${v}_i = 0 \in H_\star BP_\mR$, so applying homotopy yields a short exact sequence
\[
0 \to H_\star BP_\mR \to H_\star (BP_\mR / {v}_i) \to H_{\star - ((2^i-1)\rho + 1)} BP_\mR \to 0
.\]
This sequence splits as a sum of $H_\star BP_\mR$-modules, so
\[
H_\star (BP_\mR / {v}_i) \cong H_\star BP_\mR \oplus H_{\star - ((2^i - 1)\rho + 1)} BP_\mR.
\]
In particular, $H_\star (BP_\mR / {v}_i)$ is a free $H_\star BP_\mR$-module. 

\begin{proposition}
   Let $g: BP_\mR \to H$ be the real orientation. The map 
    \[
    g_{\star}: H_\star (BP_\mR / {v}_i) \to H_\star H
    \]
    is injective, with image the free $H_\star BP_\mR$-submodule generated by $1$ and $\bar{\tau}_{i}$. 
\end{proposition}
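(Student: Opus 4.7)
The plan is to construct an explicit lift $f: BP_\mR/\bar{v}_i \to H$ of the canonical map $BP_\mR \to H$ and then analyze the induced map on $H$-homology. Since $\bar{v}_i \in \pi_\star BP_\mR$ maps to zero in $\pi_\star H$ (indeed it is already zero in $H_\star BP_\mR$), the composite $BP_\mR \xrightarrow{\cdot \bar{v}_i} BP_\mR \to H$ is nullhomotopic, so choosing a nullhomotopy produces a lift $f$ along the defining cofiber sequence of $BP_\mR/\bar{v}_i$.

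I would next analyze $f_\ast: H_\star(BP_\mR/\bar{v}_i) \to H_\star H$ summand by summand, using the $H_\star BP_\mR$-module splitting $H_\star(BP_\mR/\bar{v}_i) \cong H_\star BP_\mR \oplus H_{\star - ((2^i - 1)\rho + 1)}BP_\mR\{x\}$ established above. The restriction of $f_\ast$ to the first summand is the canonical inclusion sending $H_\star BP_\mR$ isomorphically onto the polynomial subalgebra $H_\star[\bar{\xi}_1, \bar{\xi}_2, \ldots] \subset \cA_\star$. The generator $x$ of the second summand lies in bidegree $(2^i - 1)\rho + 1 = 2^i \rho - \sigma$, matching the bidegree of $\bar{\tau}_i$, so maps to some element $z \in H_\star H$ of this bidegree.

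The key step, and the main obstacle, is to show that the nullhomotopy can be chosen so that $z = \bar{\tau}_i$. The freedom in choosing the lift is parametrized by $[\Sigma^{(2^i-1)\rho+1}BP_\mR, H]$, and $z$ correspondingly varies over a coset in the $(2^i\rho - \sigma)$-component of $H_\star H$. I would pin down $z = \bar{\tau}_i$ by exploiting that $\bar{\tau}_i$ is dual to the Milnor primitive $Q_i$, which behaves as the Bockstein associated with $\bar{v}_i$ and so nontrivially connects the two cells in the cohomology of $BP_\mR/\bar{v}_i$. This step requires extra bookkeeping compared to the classical nonequivariant analogue due to the richness of the coefficient ring $\mM_2$; in particular, one must rule out spurious contributions from monomials involving $\bar{\tau}_k$ with $k < i$ paired against $\bar{\xi}$-monomials of the right total bidegree, which can be handled by comparison with the underlying nonequivariant result via the restriction functor $\Phi^e$ together with the fact that the ambiguity is $H_\star$-linear.

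With $f_\ast(1) = 1$ and $f_\ast(x) = \bar{\tau}_i$ in hand, the image of $f_\ast$ is the $H_\star BP_\mR$-submodule $H_\star BP_\mR \oplus \bar{\tau}_i \cdot H_\star BP_\mR \subset H_\star H$. This submodule is free of rank $2$ over $H_\star BP_\mR$ because the monomials $\bar{\xi}^I$ and $\bar{\tau}_i\bar{\xi}^I$ together form part of the $H_\star$-basis of $\cA_\star$ described earlier, and are therefore $H_\star BP_\mR$-linearly independent. Injectivity of $f_\ast$ is then immediate.
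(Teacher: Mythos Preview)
Your approach diverges from the paper's, and the step you flag as ``the main obstacle'' is a genuine gap that your outline does not close.

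The paper never tries to compute $z$ directly. Instead it exploits that $f_\ast$ is an $\cA_\star$-\emph{comodule} map. First, if $x$ mapped into $H_\star BP_\mR$, the $H_\star BP_\mR$-module splitting of $H_\star(BP_\mR/\bar v_i)$ would be a comodule splitting; running the $H$-Adams spectral sequence for $BP_\mR/\bar v_i$ off that splitting would then force $\bar v_i\neq 0$ in $\pi_\star(BP_\mR/\bar v_i)$, a contradiction. So $z\not\in H_\star BP_\mR$. Second, because $f_\ast$ is a comodule map and the first summand already lands in $H_\star BP_\mR$, the image of $x$ in $\cA_\star/H_\star BP_\mR$ is necessarily a comodule \emph{primitive}. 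These primitives are computed as $\Ext^{1}_{\cA_\star}(H_\star,H_\star BP_\mR)\cong\Ext^{1}_{\cE(n)_\star}(H_\star,H_\star)$, and an equivariant extension of Hill's calculation (including the negative cone) shows there is exactly one nonzero class in the relevant bidegree. Since $\bar\tau_i$ is such a primitive, $z\equiv\bar\tau_i\pmod{H_\star BP_\mR}$, and injectivity follows.

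Your proposed route via the $Q_i$--Bockstein plus comparison along $\Phi^e$ does not get there. Asserting that ``$Q_i$ nontrivially connects the two cells'' is essentially the statement you are proving; to use it you would need an independent argument that $\bar v_i$ is detected by $Q_i$ in the equivariant Adams filtration, which you do not supply. Even granting that, knowing $\Phi^e(z)=\bar\tau_i$ nonequivariantly leaves $z$ undetermined modulo $\ker\Phi^e$, which in $\mM_2$ contains all of $(\rho)$ and the entire negative cone. So you have not excluded contributions of the form $c\cdot\bar\tau_k\bar\xi^I$ with $k<i$ and $c$ a $\rho$-multiple or a negative-cone class---and those are \emph{not} in $H_\star BP_\mR$, so they would spoil both the description of the image and the freeness claim. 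The remark that ``the ambiguity is $H_\star$-linear'' does not help, since these unwanted terms are themselves $H_\star$-linear. What actually kills them is the primitivity constraint coming from the comodule structure, together with the uniqueness of the primitive in that bidegree; this is precisely the argument in the paper, and it is the ingredient missing from your sketch.
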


\begin{proof}
We can write the $H_{\star}BP_{\mR}$-module splitting above as \[H_{\star}BP_{\mR}/{v}_{i}\cong H_{\star}BP_{\mR}\{x_{0}\} \oplus  H_{\star}BP_{\mR}\{x_{1}\}, \] where $|x_{0}| = 0$ and $|x_{1}| = (2^{i}-1)\rho + 1$. Note that everything in the first summand has degree a multiple of $\rho$, while $\vert \tau_i \vert = (2^i)\rho - \sigma$ $=(2^i-1)\rho + 1$. So in order to show that the map $g_{\star}: H_\star (BP_\mR / {v}_i) \to \cA_{\star} $ is injective, it will suffice to show that $g_{\star}(x_{1})$ is nonzero.

Note that since $g_{\star}$ is a map of $\cA_{\star}$-comodules, $ker\ g_{\star}$ is an $A_{\star}$-subcomodule of $H_{\star}BP_{\mR}/{v}_{i}$. Suppose towards a contradiction that $g_{\star}(x_{1}) = 0$. This implies that $H_{\star}BP_{\mR}\{x_{1}\}$ is a sub-$A_{\star}$-comodule of $H_{\star}BP_{\mR}/{v}_{i}$, and so the $H_{\star}BP_{\mR}$-splitting 
\[
H_{\star}BP_{\mR}/{v}_{i}\cong H_{\star}BP_{\mR}\{x_{0}\} \oplus H_{\star}BP_{\mR}\{x_{1}\} 
\]
would furthermore be an $\cA_{\star}$-comodule splitting. If we then run the Adams spectral sequence
\[
\Ext_{\cA_{\star}}\left(\mM_{2_{\star}}, H_{\star}BP_{\mR}/v_{i} \right) \Longrightarrow \pi_{\star}BP_{\mR}/{v}_{i},
\] 
we get 
\[
\pi_{\star}BP_{\mR}/{v}_{i} \cong \pi_{\star}BP_{\mR} \oplus \Sigma^{(2^i-1)\rho + 1}\pi_{\star}BP_{\mR}.
\]
In particular, this tells us that ${v}_{i}$ is nonzero in $\pi_{\star}BP_{\mR}/{v}_{i}$. This is a contradiction, so indeed $g_{\star}x_{1} \neq 0$.

Now we will show that $g_{\star}(x_{1}) \equiv \tau_{i}$ mod $H_{\star}BP_{\mR}$. The generator $x_{1}$ is primitive, so $g_{\star}(x_{1})$ must be a nonzero $A_{\star}$-comodule primitive. Note that $\bar{\tau}_{i}$ is a primitive in ${\cA}^{C_2}_\star / H_\star BP_\mR$ and has the right degree. So to show that $g_{\star}(x_{1}) \equiv \bar{\tau}_{i}$, it will suffice to show that ${\Prim}({\cA}^{C_2}_\star / H_\star BP_\mR)$ contains only one element in degree $(2^{i}-1)\rho + 1$. We will accomplish this by first showing that 
\[ 
{\Prim}^{2^{i+1}\rho + 1}({\cA}^{C_2}_\star / H_\star BP_\mR) \cong \Ext_{\cE_{\star}}^{2^{i+1}-2,1,2^{i}-1}(\mathbb{M}_{2},\mathbb{M}_{2}) ,
\]
and then analyzing this $\Ext$ group.
Recall that 
\[
{\Prim}({\cA}_\star / H_\star BP_\mR) \cong \Hom_{\cA_{\star}}\left(\mM_{2}, {\cA}_\star / H_\star BP_\mR\right) .\]
To analyze this $\Hom$ group, consider the short exact sequence \[ 0 \to H_{\star}BP_{\mR} \to \cA_{\star} \to \cA_{\star}/H_{\star}BP_{\mR} \to 0 \]
and induced long exact sequence 
\[ 0 \to \Ext_{\cA_{\star}}^{s,0,w}(\mathbb{M}_{2},H_{*}BP_{\mR}) \to  \Ext_{\cA_{\star}}^{s,0,w}(\mathbb{M}_{2}, \cA_{\star}) \to   \Ext_{\cA_{\star}}^{s,0,w}(\mathbb{M}_{2}, \cA_{\star}/H_{\star}BP_{\mR})\]
\[ \to \Ext_{\cA_{\star}}^{s-1,1,w}(\mathbb{M}_{2},H_{\star}BP_{\mR}) \to \Ext_{\cA_{\star}}^{s-1,1,w}(\mathbb{M}_{2}, \cA_{\star}) \to \cdots.\]

First note that 
$\Ext_{\cA_{\star}}^{*,1,*}(\mathbb{M}_{2}, \cA_{\star}) = 0$ because $\cA_{\star}$ is free. Let $\cE = \cE(Q_{0}, Q_{1}, \ldots)$ and note that $H_{\star}BP_{\mR} \cong A//\cE_{\star}$. By change-of-rings,
\[ \Ext^{s,f,w}_{{\cA}^{C_2}_\star} (\mM_{2}, H_\star BP_\mR ) \cong \Ext^{s,f,w}_{\cE_{\star}} (\mM_{2}, \mM_{2}).\] 
 Observe that $\Ext^{2^{i+1}-1,0,2^{i}-1}_{\cE_\star}(\mathbb{M}_{2}, \mathbb{M}_{2}) = 0$, so indeed
 \[ \Ext^{2^{i+1}-1,0,2^{i}-1}_{\cA_{\star}}(\mathbb{M}_{2}, \cA_{\star}/H_{\star}BP_{\mR}) \cong \Ext_{\cE_{\star}}^{2^{i+1}-2,1,2^{i}-1}(\mathbb{M}_{2},\mathbb{M}_{2}) .\]
 
 Now we will show that $\Ext_{\cE_{\star}}^{2^{i+1}-2,1,2^{i}-1}(\mathbb{M}_{2},\mathbb{M}_{2})$ contains at most one element.

Recall from \cite[Prop~3.1,Rmk~3.3]{GuillouHillIsaksenRavenel2020} that there exists a $\rho$-Bockstein spectral sequence
\[ E_{1} \cong \Ext_{gr_{\rho}\cE}(gr_{\rho}\mM_{2}, gr_{\rho}\mM_{2})[\rho] \Longrightarrow \Ext_{\cE}(\mM_{2},\mM_{2}) .\]
Furthermore, the spectral sequence decomposes into a sum of two spectral sequences: a ``positive cone summand''
\[ E_{1}^{+} \cong \Ext_{\cE^{\mC}}(\mM_{2}^{\mC},\mM_{2}^{\mC})[\rho]\Longrightarrow  \Ext_{\cE^{\mR}}(\mM_{2}^{\mR},\mM_{2}^{\mR}),\]
and a ``negative cone summand''
\[ E_{1}^{-} \cong \Ext_{gr_{\rho}\cE^{\mR}}(gr_{\rho}NC,gr_{\rho}\mM_{2}^{\mR})[\rho]\Longrightarrow  \Ext_{\cE^{\mR}}(NC,\mM_{2}^{\mR}),\]
where $NC$ is the negative cone summand in $\mM_{2}$. 
In \cite{Hill11}, Hill computes 
\[
\Ext_{\cE^{\mC}}(\mM_{2}^{\mC},\mM_{2}^{\mC}) \cong \mF_{2}[\rho, \tau, v_{0}, v_{1}, \ldots],
\]
where $|\rho| = (-1,0,-1)$, $|\tau| = (0,0,-1)$, and $|v_{i}| = (2^{i+1} - 2, 1, 2^{i}-1)$ for all $i$ \cite[Thm~3.1]{Hill11}. So the only element in $E_{1}^{+}$ in degree $(2^{i+1}-2,1,2^{i}-1)$ is $v_{i}$. Combining the fact that $\Ext_{\cE^{\mC}}(\mM_{2}^{\mC},\mM_{2}^{\mC})$ is free over $\mM_2$ with \cite[Prop~3.1,Rmk~3.3]{GuillouHillIsaksenRavenel2020} tells us that
\[ 
E_{1} \cong \Ext_{gr_{\rho}\cE^{\mR}}(gr_{\rho}NC,gr_{\rho}\mM_{2}^{\mR}) \cong \bigoplus\limits_{s=0}^{\infty}\left\{\frac{\gamma}{\rho^{s}}\right\} \underset{\mM_{2}^{\mathbb{C}}}{\otimes}\Ext_{\cE^{\mC}}(\mM_{2}^{\mC},\mM_{2}^{\mC})[\rho]  .\]
Since $|\frac{\gamma}{\rho^{m}\tau^{n}}| = (m+n,0,n)$, there are no elements in degree $(2^{i+1} - 2, 1, 2^{i}-1)$ in $E_{1}^{-}$. Therefore there is at most one element in $\Ext_{\cE}^{2^{i+1} - 2, 1, 2^{i}-1}(\mM_{2},\mM_{2})$ and thus $\bar{\tau}_{i}$ is the only primitive element of degree $2^{i-1}\rho + 1$ in $A_{\star}/H_{\star}BP_{\mR}$. So indeed $g_{\star}(x_{1}) \equiv \bar{\tau}_{i}$ mod $H_{\star}BP_{\mR}$ and therefore the image of $g_{\star}$ is generated as an $H_{\star}BP_{\mR}$-submodule by $1$ and $\bar{\tau}_{i}$.

\end{proof}

By freeness, for any $i, \, j,$ the K\"{u}nneth spectral sequence 
    \[
    \Tor_{*, \star}^{H_\star BP_\mR} (H_\star (BP_\mR / \bar{v}_i), H_\star (BP_\mR / \bar{v}_j)) \Longrightarrow H_\star (BP_\mR / (\bar{v}_i, \bar{v}_j))
    \]
    collapses to give an isomorphism
    \[
    H_\star (BP_\mR / (\bar{v}_i, \bar{v}_j)) \cong H_\star (BP_\mR / \bar{v}_i) \otimes_{H_\star BP_\mR} H_\star (BP_\mR / \bar{v}_j). 
    \]
We can iterate this process to compute  $H_\star BP_\mR / (\bar{v}_{n + 1}, \cdots, \bar{v}_{n + m})$. Taking the colimit of $H_\star BP_\mR / (\bar{v}_{n + 1}, \cdots, \bar{v}_{n + m})$ over $m$ yields the following proposition.
\begin{proposition} \label{prop:BPRnHomology}
    The map $H_\star BP_\mR \langle n \rangle \to H_\star H$ is injective with image the sub-$H_\star$-algebra
    \[
    H_\star BP_\mR \langle n \rangle \cong H_\star [ \bar{\xi}_1, \bar{\xi}_2, \bar{\xi}_3, \cdots , \bar{\tau}_{n + 1}, \bar{\tau}_{n + 2}, \bar{\tau}_{n + 3}, \cdots] / (\bar{\tau}_i^2 = a \bar{\tau}_{i + 1} + u \bar{\xi}_{i + 1})
    \]
    In particular,
    \begin{equation}\label{eqn: homlogy of truncated}
    H_\star BP_\mR \langle n \rangle \cong \cA_\star \underset{\mathcal{E} (n)_{\star}}{\square} H_\star.
    \end{equation}
\end{proposition}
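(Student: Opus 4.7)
The plan is to iterate the Künneth collapse from the previous proposition and then pass to a sequential colimit that identifies $BP_\mR \langle n \rangle \simeq \colim_m BP_\mR / (\bar v_{n+1}, \ldots, \bar v_{n+m})$. Since equivariant homology commutes with sequential homotopy colimits, it suffices to compute the homology at each finite stage, track how each finite stage embeds into $H_\star H$, and then understand what happens in the limit.

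At stage $m$, iterated application of the Künneth collapse yields
\[
H_\star \bigl( BP_\mR / (\bar v_{n+1}, \ldots, \bar v_{n+m}) \bigr) \cong H_\star (BP_\mR / \bar v_{n+1}) \underset{H_\star BP_\mR}{\otimes} \cdots \underset{H_\star BP_\mR}{\otimes} H_\star (BP_\mR / \bar v_{n+m}),
\]
a free $H_\star BP_\mR$-module on the squarefree monomials $\prod_{j \in S} \bar \tau_{n+j}$ with $S \subseteq \{1, \ldots, m\}$. The previous proposition embeds each factor into $H_\star H$ with image generated over $H_\star BP_\mR$ by $1$ and $\bar \tau_{n+j}$, and the Künneth identification places this tensor product inside $H_\star H$ as the $H_\star BP_\mR$-span of these monomials. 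Passing to the colimit produces an injection $H_\star BP_\mR \langle n \rangle \hookrightarrow H_\star H$ whose image is the $H_\star BP_\mR$-submodule generated by all finite products of the $\bar \tau_{n+j}$ with $j \geq 1$. Because every $\bar \tau_i$ with $i > n$ eventually appears, the Hu--Kriz relation $\bar \tau_i^2 = a \bar \tau_{i+1} + u \bar \xi_{i+1}$ now closes the image under the full multiplication of $H_\star H$, so the image is precisely the sub-$H_\star$-algebra on $\bar \xi_1, \bar \xi_2, \ldots$ and $\bar \tau_{n+1}, \bar \tau_{n+2}, \ldots$ modulo the stated quadratic relations.

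For the cotensor identification (\ref{eqn: homlogy of truncated}), I would identify $\cE(n)_\star$ with the quotient Hopf algebra of $\cA_\star^{C_2}$ obtained by killing all $\bar \xi_i$ and all $\bar \tau_i$ with $i > n$, so that a direct cobar computation of $\cA_\star^{C_2} \square_{\cE(n)_\star} H_\star$ returns exactly the subalgebra described above. The main obstacle is tracking multiplicative structure across the colimit: at each finite stage, the image is only an $H_\star BP_\mR$-submodule and is not closed under the full multiplication of $H_\star H$, since $\bar \tau_{n+m}^2 = a \bar \tau_{n+m+1} + u \bar \xi_{n+m+1}$ involves the generator $\bar \tau_{n+m+1}$ which has not yet been adjoined. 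Only in the colimit do the tower of squaring relations close up, and one must additionally verify that the standard ring structure on $BP_\mR \langle n \rangle$ is compatible with the colimit presentation so that the embedding into $H_\star H$ is multiplicative and respects the $\cA_\star^{C_2}$-comodule structure used in the cotensor identification.
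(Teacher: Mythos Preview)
Your proposal is correct and follows essentially the same route as the paper: iterate the K\"unneth collapse to compute $H_\star\bigl(BP_\mR/(\bar v_{n+1},\ldots,\bar v_{n+m})\bigr)$ as a free $H_\star BP_\mR$-module on squarefree monomials in the $\bar\tau_{n+j}$, then take the colimit over $m$. The paper's own proof is in fact just the one-line remark ``Taking the colimit of $H_\star BP_\mR / (\bar{v}_{n + 1}, \cdots, \bar{v}_{n + m})$ over $m$ yields the following proposition,'' so your discussion of the multiplicative closure in the colimit and the cobar identification of $\cA_\star^{C_2} \square_{\cE(n)_\star} H_\star$ is more explicit than what the paper provides, but not a different argument.
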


\begin{remark}
    \rm{The description of $H_{\star}BP_{\mR}\n$ given in (\ref{eqn: homlogy of truncated}) is analogous to the nonequivariant isomorphism $H_{*}BP\n \cong A//E(n)_{*}$. Likewise, the motivic homology of the $2$-complete algebraic Johnson-Wilson spectra $BPGL \langle n \rangle$ over $p$-adic fields \cite[Theorem~3.9]{Ormsby2011} has the same form, with the appropriate motivic analogues substituted for $A$ and $E(n)$.}
\end{remark}
The left $\mathcal{E}(i)_\star$-coaction on $\cA_\star \underset{\mathcal{E} (n)_\star}{\square} H_\star$ is given by
\begin{align} \alpha: \cA_\star \underset{\mathcal{E} (n)_\star}{\square} H_\star \overset{\psi \otimes 1}\longrightarrow \cA_\star \otimes \left(\cA_\star \underset{\mathcal{E} (n)_\star}{\square} H_\star\right) \overset{\pi \otimes 1}\longrightarrow \mathcal{E} (n)_\star \otimes \left(\cA_\star \underset{\mathcal{E} (n)_\star}{\square} H_\star \right), \label{eq:Ecoaction}
\end{align}
which on generators $\bar{\xi}_k$ and $\bar{\tau}_k$ is 
\begin{align*} 
    \alpha (\bar{\xi}_k) & = 1 \otimes \bar{\xi}_k \\
    \alpha(\bar{\tau}_{n + k}) & = 1 \otimes \bar{\tau}_{n + k} + \sum_{0 \leq i \leq n} \bar{\tau}_{i} \otimes \bar{\xi}_{n + k -i}^{2^i}.
\end{align*}

Let $M$ be a left $\cE(i)_{\star}$-comodule. Then there is an induced right $\cE(i)$-module action $\lambda: M \otimes \cE(i)  \to M$, defined by 
\begin{equation} \label{eq:inducedRightAction}
    \lambda(x,\theta) =  (\theta\otimes \mathrm{Id}_{M}) \circ \alpha(x)
\end{equation}
where $\alpha(x) = \Sigma_{i}\theta_{i} \otimes x_{i}$ (see \cite[\textsection~6]{Boardman82}).

Since $\bar{\tau}_{i}$ is the dual of $Q_{i}$, the right $\mathcal{E} (1)$-module action on $H_\star BP_\mR \langle 1\rangle$ is
\begin{align*}
    \bar{\xi}_{j}Q_i  & = 0 \\
     \bar{\tau}_{k}Q_0  & = \bar{\xi}_{k} \\
     \bar{\tau }_{k}Q_1 &  = \bar{\xi}_{k-1}^2.
\end{align*}

Further, since $\cE(i)_{\star}$ is finitely generated and projective over $\mM_{2}$ \cite{Ricka15}, we obtain the following proposition through standard algebra (see for example \cite[Theorem 4.7]{BrzezinskiWisbauer03}). 
\begin{proposition}\label{prop:equiv of cat}
    There is an equivalence of categories between left $\cE(n)_{\star}$-comodules and right $\cE(n)$-modules.
\end{proposition}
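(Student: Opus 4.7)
The plan is to apply the standard duality between comodules over a coalgebra and modules over its linear dual, the only substantive input being the hypothesis that $\cE(n)_\star$ is finitely generated and projective over $\mM_2 = H_\star$. Since the equivariant setting here does not interfere with the purely algebraic argument, the proof amounts to writing down two functors and checking they are mutually inverse.

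First, I would make precise the functor from left $\cE(n)_\star$-comodules to right $\cE(n)$-modules using the formula stated in the paragraph preceding the proposition. Given a coaction $\alpha: M \to \cE(n)_\star \otimes_{\mM_2} M$ with $\alpha(x) = \sum_i \theta_i \otimes x_i$, define the right action of $\theta \in \cE(n) = \Hom_{\mM_2}(\cE(n)_\star, \mM_2)$ by
\[
\lambda(x, \theta) \; = \; (\theta \otimes \mathrm{Id}_M) \circ \alpha(x) \; = \; \sum_i \theta(\theta_i)\, x_i .
\]
Coassociativity of $\alpha$ translates directly to associativity of $\lambda$ under the algebra structure on $\cE(n)$ dual to the coproduct of $\cE(n)_\star$, and counitality of $\alpha$ translates to unitality of $\lambda$. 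A $\cE(n)_\star$-colinear map becomes an $\cE(n)$-linear map for formal reasons.

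Second, I would construct the inverse functor. Because $\cE(n)_\star$ is finitely generated and projective over $\mM_2$, one can choose a (bigraded) dual basis $\{e_i\} \subset \cE(n)_\star$ and $\{e_i^\ast\} \subset \cE(n)$ satisfying $c = \sum_i e_i^\ast(c)\, e_i$ for all $c \in \cE(n)_\star$. Given a right $\cE(n)$-module $M$, define
\[
\alpha(x) \; = \; \sum_i e_i \otimes (x \cdot e_i^\ast) .
\]
Standard manipulations with the dual basis identity show that $\alpha$ is independent of the choice of basis, is coassociative, counital, and natural in $M$, so it defines the desired functor.

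The remaining step — and really the only computation — is checking that these two functors are mutually inverse on both objects and morphisms. In both directions, this reduces to a single application of the dual basis identity $c = \sum_i e_i^\ast(c) e_i$, which collapses the double sum back to the original coaction or action. The only potential obstacle is bookkeeping in the $RO(C_2)$-graded setting, but since $\mM_2$ is a bigraded commutative ring and $\cE(n)_\star$ is a bigraded Hopf algebra that is finitely generated and projective as a bigraded $\mM_2$-module by \cite{Ricka15}, all constructions and verifications respect the bigrading, and the general argument of \cite[Theorem~4.7]{BrzezinskiWisbauer03} applies verbatim.
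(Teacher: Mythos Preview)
Your proposal is correct and follows exactly the approach the paper takes: the paper simply observes that $\cE(n)_\star$ is finitely generated and projective over $\mM_2$ by \cite{Ricka15} and then invokes \cite[Theorem~4.7]{BrzezinskiWisbauer03} as standard algebra, whereas you have written out the content of that standard theorem in detail. There is no substantive difference in strategy.
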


\subsection{A Splitting in Homology}\label{sec:homology splitting}
Define a weight filtration on $\cA_\star$ by 
$$wt(\bar{\xi}_k) = 2^k = wt(\bar{\tau}_k) \qquad \text{ and } \qquad wt(xy) = wt(x) + wt(y).$$ 
For all $i \ge -1$, the $j^{th}$ Brown--Gitler comodule $N_i ( j)$ is the subspace of $\cA//\cE(i)_{\star}$ spanned by monomials of weight less than or equal to $j$. (Here, we abuse notation by defining $\cA//\cE(-1)_{\star} := \cA_{\star}$)  

Define $M_i (j)$ to be the $\mM_{2}$-submodule of $\cA_\star \square_{\mathcal{E} (i)_{\star}} H_\star$ spanned by monomials of weight exactly $j.$ Observe from the coaction on the generators $\bar{\tau}_{j}$ and $\bar{\xi}_{j}$ (\ref{eq:Ecoaction}) that the $\cE(i)_{\star}$-coaction on $\cA//\cE(n)_{\star}$ preserves Mahowald weight, so $M_{i}(j)$ is an $\cE(i)_{\star}$-subcomodule of $\cA//\cE(n)_{\star}$. The next proposition follows immediately, and is analogous to the underlying nonequivariant statement \cite[Proposition~3.3]{Culverp219}. 

\begin{proposition}\label{prop:wt-decomp}
There is a natural isomorphism
$$\cA_\star \underset{\mathcal{E} (i)_\star}{\square} H_\star \cong \bigoplus_{k \geq 0} M_i(k)$$
of left $\mathcal{E}(i)_\star$-comodules. 
\end{proposition}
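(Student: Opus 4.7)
The plan is to exploit the explicit algebra-with-coaction description of $\cA_\star^{C_2} \underset{\mathcal{E}(i)_\star}{\square} H_\star \cong H_\star BP_\mR\langle i\rangle$ coming from \cref{prop:BPRnHomology}, together with the fact, already noted in the paragraph preceding the proposition, that the Mahowald weight is preserved by the $\cE(i)_\star$-coaction.

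First I would set up the monomial basis. As an $\mM_2$-module, $\cA_\star^{C_2} \square_{\cE(i)_\star} H_\star$ is free on monomials in $\bar{\xi}_j$ ($j \geq 1$) and $\bar{\tau}_k$ ($k \geq i+1$), modulo the relations $\bar{\tau}_k^2 = a\bar{\tau}_{k+1} + u\bar{\xi}_{k+1}$. Each generator has weight a power of $2$, so each monomial has a well-defined even non-negative weight, and the defining relations are homogeneous of weight $2^{k+1}$ on both sides. Thus the weight filtration gives a direct sum decomposition $\bigoplus_{k \geq 0} M_i(k)$ as $\mM_2$-modules, where $M_i(k)$ consists of elements of weight exactly $2k$.

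The main content is then to verify that this is a decomposition of left $\cE(i)_\star$-comodules; equivalently, $\alpha(M_i(k)) \subseteq \cE(i)_\star \otimes M_i(k)$. Since $\alpha$ is multiplicative and weight is additive under multiplication, it is enough to check on the algebra generators $\bar{\xi}_j$ and $\bar{\tau}_{n+k}$ (with $n = i$). From the formulas of \eqref{eq:Ecoaction},
\[\alpha(\bar{\xi}_j) = 1 \otimes \bar{\xi}_j, \qquad \alpha(\bar{\tau}_{n+k}) = 1 \otimes \bar{\tau}_{n+k} + \sum_{0 \leq j \leq n} \bar{\tau}_j \otimes \bar{\xi}_{n+k-j}^{2^j},\]
the right-tensor factor in every summand has weight equal to that of the input: $2^j$ for $\bar{\xi}_j$, and $2^{n+k}$ for $\bar{\tau}_{n+k}$, using $\mathrm{wt}(\bar{\xi}_{n+k-j}^{2^j}) = 2^j \cdot 2^{n+k-j} = 2^{n+k}$.

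Naturality of the decomposition is automatic since $M_i(k)$ is characterized intrinsically as the weight-$2k$ piece. The main (minor) obstacle is just bookkeeping around the relation $\bar{\tau}_k^2 = a\bar{\tau}_{k+1} + u\bar{\xi}_{k+1}$ to ensure the monomial basis is consistent with weight, but once this is settled the result follows immediately.
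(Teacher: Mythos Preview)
Your proposal is correct and follows exactly the same approach as the paper. In fact, the paper does not give a proof at all: it simply states in the paragraph before the proposition that the $\cE(i)_\star$-coaction preserves Mahowald weight (by inspection of \eqref{eq:Ecoaction}) and then declares that the proposition ``follows immediately''; your write-up just unpacks that sentence by checking the coaction on generators and noting that the defining relations $\bar{\tau}_k^2 = a\bar{\tau}_{k+1} + u\bar{\xi}_{k+1}$ are weight-homogeneous.
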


Just as in the nonequivariant case \cite[Lemma~3.4]{Culverp219},\cite[Lemma~4.10]{CulverOddp20}, we also have the following $\cE(i)_{\star}$-comodule isomorphism. 
\begin{proposition}
    For $n \ge 0$, there is an isomorphism of $\cE(n)_{\star}$-comodules
    \[\Sigma^{\rho k} N_{n-1}(k) \xrightarrow{\cong} M_{n}(2k)\]
for all $k$, where 
$$ \bar{\xi}_1^{k_1} \bar{\xi}_2^{k_2} \cdots \bar{\xi}_{n+1}^{k_{n+1}} \bar{\tau}_{n + 1}^{\epsilon_{n +1}} \bar{\xi}_{n + 2}^{k_{n + 2}} \bar{\tau}_{n + 2}^{\epsilon_{n + 2}} \cdots \quad  \mapsto \quad \bar{\xi}_1^{a} \bar{\xi}_2^{k_1} \cdots \bar{\xi}_{n + 2}^{k_{n+1}} \bar{\tau}_{n + 2}^{\epsilon_{n +1}} \bar{\xi}_{n + 3}^{k_{n + 2}} \bar{\tau}_{n + 3}^{\epsilon_{n + 2}}\cdots $$
and
$a = \frac{1}{2}\left[k - wt\left(\bar{\xi}_2^{k_1} \cdots \bar{\xi}_{n + 2}^{k_{n+1}} \bar{\tau}_{n + 2}^{\epsilon_{n +1}} \bar{\xi}_{n + 3}^{k_{n + 2}} \bar{\tau}_{n + 3}^{\epsilon_{n + 2}}\cdots\right)\right].$
\end{proposition}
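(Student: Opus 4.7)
I will read the displayed formula as a two-step recipe: apply the index-shift $\phi$ sending $\bar\xi_i \mapsto \bar\xi_{i+1}$ and $\bar\tau_i \mapsto \bar\tau_{i+1}$, then multiply by $\bar\xi_1^{\,k-w}$, where $w$ denotes the Mahowald weight of the input monomial $m \in N_{n-1}(k)$. Since $\phi$ doubles Mahowald weight, the image has weight $2(k-w) + 2w = 2k$ and hence lies in $M_n(k)$, while the $RO(C_2)$-degree shifts by $w\rho$ under $\phi$ (each $\bar\xi_i \mapsto \bar\xi_{i+1}$ contributes $2^i\rho$, which sum to $w\rho$; similarly for $\bar\tau_i$) and by $(k-w)\rho$ under multiplication by $\bar\xi_1^{k-w}$, for a total shift of $k\rho$ consistent with the $\Sigma^{\rho k}$ suspension on the source. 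Bijectivity is then immediate because every monomial $x \in M_n(k)$ factors uniquely as $\bar\xi_1^a \phi(m)$ with $a \geq 0$ and $m$ built from the higher-indexed generators, and the weight identity $2k = 2a + 2\,\mathrm{wt}(m)$ pins down the weight of the preimage $m$.

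\textbf{Equivariance.} The $\cE(n)_\star$-coaction on $\cA_\star^{C_2} \square_{\cE(n-1)_\star} H_\star$, obtained by restricting the $\cA_\star^{C_2}$-coaction along the quotient $\cA_\star^{C_2} \twoheadrightarrow \cE(n)_\star$, preserves Mahowald weight (by direct inspection of \eqref{eq:Ecoaction}), so the map splits as a direct sum of weight-homogeneous pieces. On the weight-$w$ piece it factors as $\phi$ followed by multiplication by $\bar\xi_1^{k-w}$; since $\bar\xi_1$ is $\cE(n)_\star$-primitive (i.e.\ $\alpha(\bar\xi_1) = 1\otimes\bar\xi_1$), multiplication by $\bar\xi_1^{k-w}$ is comodule-equivariant in the comodule algebra $\cA_\star^{C_2}\square_{\cE(n)_\star}H_\star$. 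To see that $\phi$ itself intertwines coactions, compare \eqref{eq:Ecoaction} with $m = n-1$ and $m = n$: applying $\mathrm{id}\otimes\phi$ to the source formula
\[
\alpha(\bar\tau_{n-1+j}) = 1 \otimes \bar\tau_{n-1+j} + \sum_{0 \leq i \leq n} \bar\tau_i \otimes \bar\xi_{n-1+j-i}^{2^i}
\]
shifts every $\bar\xi$-index up by one, yielding exactly $\alpha(\bar\tau_{n+j}) = \alpha(\phi(\bar\tau_{n-1+j}))$ in the target; since $\alpha(\bar\xi_r) = 1 \otimes \bar\xi_r$ is primitive in both source and target, and $\alpha$ is multiplicative, $\phi$ is an $\cE(n)_\star$-comodule map on all of $\cA_\star^{C_2}\square_{\cE(n-1)_\star}H_\star$, hence on $N_{n-1}(k)$.

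\textbf{Main obstacle.} The hard step is the coaction-intertwining computation for $\phi$: one must verify that the ``boundary'' summand indexed by $i = n$ appearing in the target coaction formula is produced exactly as the shifted image of the $i = n-1$ term together with the absorption of the extra $\bar\tau_n \otimes 1$ contribution coming from $\cA_\star^{C_2}\square_{\cE(n-1)_\star}H_\star$ viewed as an $\cE(n)_\star$-comodule, with no leftover ``correction'' terms. The uniform form of the Hu--Kriz coproduct makes this work cleanly, but the indexing must be tracked carefully. All other steps of the proof are routine combinatorial bookkeeping with Mahowald weights and $RO(C_2)$-bigradings.
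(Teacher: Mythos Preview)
Your overall plan --- index-shift via $\phi$, pad with $\bar\xi_1^{k-w}$, check degrees, bijectivity, and the comodule structure --- is the correct one, and it is essentially what the paper has in mind (the paper does not actually prove this statement; it simply cites the nonequivariant analogues in Culver's papers). The degree and bijectivity checks are fine.

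However, there is a genuine error in your equivariance argument. You assert that the $\cE(n)_\star$-coaction on $\cA_\star^{C_2}\,\square_{\cE(n-1)_\star}\,H_\star$ preserves Mahowald weight, and then use this to verify separately that $\phi$ is a comodule map and that multiplication by $\bar\xi_1^{k-w}$ is equivariant on each fixed-weight piece. Both subordinate claims are false. Unlike the target $\cA_\star^{C_2}\,\square_{\cE(n)_\star}\,H_\star$ (to which the paper's weight-preservation remark applies), the source contains $\bar\tau_n$, and here
\[
\alpha(\bar\tau_n)\;=\;1\otimes\bar\tau_n \;+\; \bar\tau_n\otimes 1 \;+\; \sum_{i=0}^{n-1}\bar\tau_i\otimes\bar\xi_{n-i}^{\,2^i},
\]
whose term $\bar\tau_n\otimes 1$ has second factor of weight $0\neq 2^n$. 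Applying $\mathrm{id}\otimes\phi$ leaves this term as $\bar\tau_n\otimes 1$, whereas $\alpha(\phi(\bar\tau_n))=\alpha(\bar\tau_{n+1})$ contains $\bar\tau_n\otimes\bar\xi_1^{\,2^n}$ instead; so $\phi$ alone is not a comodule map.

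The repair is to check the full map $F(m)=\bar\xi_1^{\,k-\mathrm{wt}(m)}\phi(m)$ directly rather than factorwise. On the problematic term, $(1\otimes F)(\bar\tau_n\otimes 1)=\bar\tau_n\otimes F(1)=\bar\tau_n\otimes\bar\xi_1^{\,k}$, while the corresponding term in $\alpha(F(\bar\tau_n))=(1\otimes\bar\xi_1^{\,k-2^n})\cdot\alpha(\bar\tau_{n+1})$ is $\bar\tau_n\otimes\bar\xi_1^{\,k-2^n}\bar\xi_1^{\,2^n}=\bar\tau_n\otimes\bar\xi_1^{\,k}$, and these agree. The weight-dependence of $F$ is exactly what compensates for the failure of $\phi$ at the boundary generator, so the two steps cannot be separated. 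Your ``Main obstacle'' paragraph gestures at this phenomenon but the preceding ``Equivariance'' paragraph handles it incorrectly.
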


\begin{theorem}\label{thm:homology:decomp}
For $n \geq 0$, there exists a family of maps
$$\{\theta_k: H_\star\Sigma^{\rho k} B_{n - 1}(k) \to H_\star BP_\mR \langle n \rangle \, | \, k \in \mN \} $$
such that their sum
$$  \theta : \bigoplus_{k = 0}^\infty  H_{\star - \rho k} B_{n - 1}(k) \to H_\star BP_\mR \langle n \rangle $$
is an isomorphism of $\mathcal{E}(n)_\star$-comodules. 
\end{theorem}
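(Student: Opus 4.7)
The plan is to assemble the statement directly from the three propositions immediately preceding it: \cref{prop:BPRnHomology}, \cref{prop:wt-decomp}, and the unnumbered proposition identifying $\Sigma^{\rho k} N_{n-1}(k)$ with $M_n(k)$. There is very little new content to produce; the work has already been done, and the theorem is essentially a reformulation.

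First I would invoke \cref{prop:BPRnHomology} to rewrite $H_\star BP_\mR \langle n \rangle$ as the cotensor product $\cA_\star^{C_2} \,\square_{\mathcal{E}(n)_\star}\, H_\star$, so that the target of the desired map has an explicit $\mathcal{E}(n)_\star$-comodule presentation. Then, applying \cref{prop:wt-decomp} to $i = n$ gives a direct-sum decomposition
\[
\cA_\star^{C_2} \,\underset{\mathcal{E}(n)_\star}{\square}\, H_\star \;\cong\; \bigoplus_{k \geq 0} M_n(k)
\]
of left $\mathcal{E}(n)_\star$-comodules, indexed by Mahowald weight.

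Next I would use the weight-preserving isomorphism $\Sigma^{\rho k} N_{n-1}(k) \xrightarrow{\cong} M_n(k)$ established in the preceding proposition to identify each summand with a shifted Brown--Gitler comodule. Since the $C_2$-equivariant (integral) Brown--Gitler spectra $B_{n-1}(k)$ constructed in \cite{LiPetersenTatum23} satisfy $H_\star B_{n-1}(k) \cong N_{n-1}(k)$ by design, the composite
\[
\theta_k \colon \Sigma^{\rho k} H_\star B_{n-1}(k) \;\cong\; \Sigma^{\rho k} N_{n-1}(k) \;\xrightarrow{\cong}\; M_n(k) \;\hookrightarrow\; \cA_\star^{C_2} \,\underset{\mathcal{E}(n)_\star}{\square}\, H_\star \;\cong\; H_\star BP_\mR \langle n \rangle
\]
is an $\mathcal{E}(n)_\star$-comodule map whose total sum is an isomorphism.

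There is no substantive obstacle here beyond the prior propositions; the only thing one might worry about is checking that all three identifications are compatible as $\mathcal{E}(n)_\star$-comodule maps (rather than merely as $\mM_2$-modules), but each of the cited results was stated at the level of comodules, so nothing needs to be reproven. I would therefore keep the proof to a handful of lines, citing \cref{prop:BPRnHomology}, \cref{prop:wt-decomp}, and the preceding proposition in sequence, and defining $\theta_k$ as the composite above.
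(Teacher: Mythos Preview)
Your approach is exactly the one the paper intends: the theorem is stated as ``immediate'' from the preceding propositions, and you have correctly identified and chained together \cref{prop:BPRnHomology}, \cref{prop:wt-decomp}, and the isomorphism $\Sigma^{\rho k} N_{n-1}(k) \cong M_n(k)$.

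One small caution: you invoke \cite{LiPetersenTatum23} to identify $H_\star B_{n-1}(k) \cong N_{n-1}(k)$ for all $n$, but that paper only constructs the spectra $\cB_{-1}(k)$ and $\cB_{0}(k)$ (so $n = 0, 1$). For general $n \geq 0$ the statement is purely algebraic and the symbol $B_{n-1}(k)$ in the theorem should be read as the comodule $N_{n-1}(k)$; the paper itself flags this immediately after the corollary by noting that realizing these comodules as spectra is the ``first hurdle'' at higher heights. This does not affect the validity of your argument --- just drop the appeal to the existence of spectra and work directly with $N_{n-1}(k)$.
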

The following analogue of \cite[Rmk~4.12]{CulverOddp20} follows formally from the fact that the natural projection $\cA_{\star} \to \cE(n)_{\star}$ is a map of Hopf algebroids, and $H_{\star}BP_{\mR}\langle n \rangle \cong A//\cE(n)_{\star}$.

\begin{cor}
    There are $\cA_{\star}$-comodule isomorphisms
    \[H_{\star}( BP_{\mR}\n \wedge BP_{\mR}\n ) \cong \bigoplus \limits_{k=0}^{\infty}H_{\star}BP_{\mR}\n \otimes \Sigma^{\rho k}N_{n - 1}(k) .\]
\end{cor}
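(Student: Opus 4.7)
The plan is to upgrade the $\mathcal{E}(n)_\star$-comodule isomorphism of \cref{thm:homology:decomp} to an $\cA_\star$-comodule isomorphism by applying the exact functor $\cA_\star \square_{\cE(n)_\star} (-)$ and identifying both sides with homology groups of smash products.

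First, I would record the key identification: by \cref{prop:BPRnHomology}, $H_\star BP_\mR \langle n \rangle \cong \cA_\star \square_{\cE(n)_\star} H_\star$, so for any $\cA_\star$-comodule $X_\star$ that is free over $H_\star$ (viewed as an $\cE(n)_\star$-comodule via restriction), the standard change-of-rings identification gives
\[
H_\star BP_\mR \langle n \rangle \otimes_{H_\star} X_\star \;\cong\; \cA_\star \square_{\cE(n)_\star} X_\star
\]
as $\cA_\star$-comodules, where the $\cA_\star$-coaction on the right comes from the left $\cA_\star$-coaction on $\cA_\star$. Applying this with $X_\star = H_\star BP_\mR \langle n \rangle$ and using the K\"unneth isomorphism (which applies because $H_\star BP_\mR \langle n \rangle$ is free over $H_\star$ by \cref{prop:BPRnHomology}) yields
\[
H_\star (BP_\mR \langle n \rangle \wedge BP_\mR \langle n \rangle) \;\cong\; \cA_\star \square_{\cE(n)_\star} H_\star BP_\mR \langle n \rangle
\]
as $\cA_\star$-comodules.

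Second, I would feed \cref{thm:homology:decomp} into this formula. Since $\cA_\star$ is extended (hence flat) as a right $\cE(n)_\star$-comodule — for instance via $\cA_\star \cong H_\star BP_\mR \langle n \rangle \otimes_{H_\star} \cE(n)_\star$ as right $\cE(n)_\star$-comodules — the cotensor $\cA_\star \square_{\cE(n)_\star} (-)$ is exact and commutes with direct sums. Therefore the $\cE(n)_\star$-comodule splitting $H_\star BP_\mR \langle n \rangle \cong \bigoplus_k \Sigma^{\rho k} N_{n-1}(k)$ of \cref{thm:homology:decomp} transports to an $\cA_\star$-comodule splitting
\[
\cA_\star \square_{\cE(n)_\star} H_\star BP_\mR \langle n \rangle \;\cong\; \bigoplus_{k \ge 0} \cA_\star \square_{\cE(n)_\star} \Sigma^{\rho k} N_{n-1}(k).
\]
Applying the change-of-rings identification in reverse identifies each summand with $H_\star BP_\mR \langle n \rangle \otimes_{H_\star} \Sigma^{\rho k} N_{n-1}(k)$, which is the claimed description (up to fixing the notation on the right-hand side of the stated corollary).

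The proof is essentially formal once the cotensor/tensor change-of-rings dictionary is in place, so the only genuine step to justify is the flatness of $\cA_\star$ as a right $\cE(n)_\star$-comodule. This is the standard consequence of \cref{prop:BPRnHomology}, which exhibits $\cA_\star$ as $\cE(n)_\star \otimes_{H_\star} H_\star BP_\mR \langle n \rangle$ (the extended comodule structure). Once that is noted, exactness of the cotensor and its commutation with direct sums reduces the statement to \cref{thm:homology:decomp} without further computation. The only potential obstacle is bookkeeping the weight shifts $\Sigma^{\rho k}$ through both change-of-rings isomorphisms and confirming the $\cA_\star$-coactions match, but this is straightforward given the explicit formulas for the coaction in \eqref{eq:Ecoaction}.
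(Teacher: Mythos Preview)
Your proposal is correct and is precisely the standard way to make this corollary rigorous. In the paper the statement is declared an ``immediate consequence'' of \cref{thm:homology:decomp} with no further argument, so your write-up via the cotensor/change-of-rings dictionary (using \cref{prop:BPRnHomology} to identify $H_\star(BP_\mR\langle n\rangle \wedge BP_\mR\langle n\rangle)\cong \cA_\star\, \square_{\cE(n)_\star} H_\star BP_\mR\langle n\rangle$ and then transporting the $\cE(n)_\star$-splitting through the exact functor $\cA_\star\, \square_{\cE(n)_\star}(-)$) is exactly the intended justification spelled out in full.
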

This naturally leads one to ask for which heights $n$ these isomorphisms can be realized. The first hurdle is to construct spectra realizing the Brown--Gitler comodules $N_{i}(k)$.

\subsection{{$C_2$-equivariant} (integral) Brown--Gitler spectra}
In \cite{BehrensWilson18}, Behrens--Wilson give an equivalence
\[
(\Omega^\rho S^{\rho + 1})^\mu \simeq H 
\]
of $C_2$-spectra. In \cite{HW2020}, Hahn--Wilson observe that the left hand side of this equivalence carries a natural filtration, which produces a filtration of $H$ by spectra. This filtration is analogous to the May-Milgram filtration of $H\mF_{2}$, which Mahowald observed and Cohen proved could be used to construct Brown--Gitler spectra \cite{cohen1985immersion, Mahowald1977}. In \cite{HW2020}, Hahn--Wilson point out that one can simply define equivariant Brown--Gitler spectra using this filtration. In \cite[Proposition 3.4]{LiPetersenTatum23}, we confirm that these spectra, which we will denote $\cB_{-1}(k)$, indeed have the property that
\[ H_{\star}\cB_{-1}(k) \cong \mM_{2}\{x \in \cA_{\star}\, |\, wt(x) \le k \}.\]

In \cite{CDGM1988}, Cohen-Davis-Goerss-Mahowald give a construction of nonequivariant integral Brown--Gitler spectra. 
In \cite[Thm~6.1]{LiPetersenTatum23}, we construct $C_{2}$-equivariant lifts of these spectra, that is, finite spectra $\{\cB_{0}(k)|\ k \in \mN\}$ such that 
\[
H_{\star}\cB_{0}(k) \cong \mM_{2}\{ x \in H_{\star}H\mZ|\ wt(x) \le k \}.
\]

\begin{remark} \label{remark:intBG}
    \rm{There is a small difference between our notation for the integral Brown--Gitler spectra here and in \cite{LiPetersenTatum23}. Here we use  $\cB_{0}(k)$ to denote the Brown--Gitler spectrum realizing the subcomodule $H_{\star}H\mZ$ having weight at most $\mathbf{k}$. In \cite{LiPetersenTatum23}, we used $\cB_{0}(k)$ to denote the Brown--Gitler spectrum realizing the subcomodule $H_{\star}H\mZ$ having weight at most $\mathbf{2k}$. Since every element of $H_{\star}H\mZ$ has weight divisible by $2$, these are indeed the same family of spectra. The convention in this paper makes the results of \cref{sec:homology splitting} easier to state, and generally makes the formulas in \cref{sec:height1split} cleaner and more readable.}
\end{remark}

\subsection{Margolis homology and free {$\cE(n)$}-modules}\label{sec: Margolis homology}
The goal of this subsection is to give a Whitehead Theorem for Margolis homology in the $C_{2}$-equivariant setting. To this end, we first establish some $C_2$-equivariant freeness criteria. These criteria are $C_2$-equivariant analogues of the $\mR$-motivic freeness criteria studied in \cite[Section~2]{BGL2022}, and we follow the techniques developed there closely. The motivating idea is that by requiring the homology be free over $\mM_2,$ we can reduce to simpler $\mR$- and $\mC$-motivic calculations. Our first aim will be to prove the following proposition. 

\begin{proposition} \label{prop:mFree}
A finitely generated $\cE(n)$-module $M$ is free if and only if
\begin{enumerate}
    \item $M$ is free as an $\mM_2$-module and
    \item $\mF_2 {\otimes}_{\mM_2} M$ is free as an $E(n)$-module.
\end{enumerate}
\end{proposition}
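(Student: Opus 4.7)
The forward implication is essentially formal. If $M \cong \bigoplus_i \Sigma^{|x_i|} \cE(n)$ as an $\cE(n)$-module, then (1) follows from the fact recalled just before \cref{prop:equiv of cat} that $\cE(n)_\star$ is finitely generated and projective (hence free) as an $\mM_2$-module, and (2) follows because $\mF_2 \otimes_{\mM_2} -$ commutes with direct sums and sends $\cE(n)$ to $\mF_2 \otimes_{\mM_2} \cE(n) \cong E(Q_0, \ldots, Q_n)$, over which $\mF_2 \otimes_{\mM_2} M$ is visibly free on the reductions of the $x_i$.

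For the reverse direction, my plan is to choose a homogeneous $\cE(n)$-basis $\{\bar{x}_i\}$ of the free $\cE(n)$-module $\mF_2 \otimes_{\mM_2} M$ supplied by (2), lift each $\bar{x}_i$ to an element $x_i \in M$ of the same bidegree, and assemble the $\cE(n)$-linear map
\[
\phi \colon F := \bigoplus_i \Sigma^{|x_i|} \cE(n) \longrightarrow M
\]
that sends the $i$-th free generator to $x_i$. Both $F$ and $M$ are free $\mM_2$-modules, the former because $\cE(n)$ is free over $\mM_2$ and the latter by hypothesis (1). Thus $\phi$ is a map between free $\mM_2$-modules, and by construction the reduction $\mF_2 \otimes_{\mM_2} \phi$ is an isomorphism onto $\mF_2 \otimes_{\mM_2} M$. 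It will then suffice to show that a map of finitely generated free $\mM_2$-modules which becomes an isomorphism after applying $\mF_2 \otimes_{\mM_2} -$ is itself an isomorphism; this is a $C_2$-equivariant Nakayama lemma for free $\mM_2$-modules.

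The main obstacle is precisely this bigraded Nakayama step, because $\mM_2$ is not a local ring in the classical sense: the negative cone $NC$ consists of elements that are infinitely $\tau$- and $\rho$-divisible, so the ideal $(\tau, \rho, NC)$ is not nilpotent and does not fit the hypotheses of the standard Nakayama lemma. I expect to handle this by exploiting the square-zero decomposition $\mM_2 = \mM_2^\mR \oplus NC$ together with the fact that $F$ and $M$ are already known to be free $\mM_2$-modules. Concretely, a free $\mM_2$-module has its bigraded rank detected by the dimensions of the $\mF_2$-vector space $\mF_2 \otimes_{\mM_2}(-)$ in each bidegree, so the two sides of $\phi$ have identical bigraded $\mM_2$-ranks; injectivity and surjectivity can then be read off by observing that the cokernel (resp.\ kernel) of $\phi$ is a finitely presented $\mM_2$-module whose positive-cone part is killed by the classical Nakayama lemma for the local ring $\mM_2^\mR = \mF_2[\tau, \rho]$, while its negative-cone part must vanish by compatibility with the $\tau$- and $\rho$-multiplication from the positive cone (using that every class in $NC$ lies in the image of multiplication by arbitrarily high powers of $\tau$ and $\rho$ from other $NC$-classes). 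This mirrors the $\mR$-motivic freeness criterion of \cite{BGL2022}*{Section~2}, with the additional $NC$ bookkeeping the only new ingredient.
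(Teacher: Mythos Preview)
Your approach is sound but genuinely different from the paper's, and your Nakayama step needs sharpening. The paper does not attempt a one-shot lift from $M/(\rho,\tau)$; instead it factors the reduction in two stages. First it proves \cref{lem:mfree}: $M$ is $\cE(n)$-free iff $M$ is $\mM_2$-free and $M/(\rho)$ is free over $\cE(n)/(\rho)\cong\cE^{\mC}(n)$, by lifting a basis of $M/(\rho)$ and running an $a$-adic five-lemma argument. Then it invokes the $\mC$-motivic Margolis criterion (\cref{lem:MargC}) to pass from freeness of $M/(\rho)$ over $\cE^{\mC}(n)$ to vanishing of $Q_i$-Margolis homology of $M/(\rho,\tau)$, which by the classical Adams--Margolis theorem is the same as condition~(2). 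So the paper's route is $C_2\rightsquigarrow\mC$-motivic$\rightsquigarrow$classical, and the Margolis detour is what lets it import the $\mR$-motivic argument of \cite{BGL2022} essentially verbatim; as a bonus it yields the Margolis-homology form of the criterion and the Whitehead theorem (\cref{prop:EquivWhitehead}) with no extra work.

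Your direct Nakayama argument can be completed, but not quite as you sketch it: the cokernel of $\phi$ is not a free $\mM_2$-module, so it has no intrinsic ``positive-cone part'' and ``negative-cone part'' to speak of. The clean fix is to use the square-zero ideal $NC\subset\mM_2$ itself rather than trying to decompose $\mathrm{coker}\,\phi$. First reduce modulo $NC$: since $F$ and $M$ are $\mM_2$-free, $F/NC\!\cdot\!F$ and $M/NC\!\cdot\!M$ are free $\mM_2^{\mR}$-modules, and $\phi\bmod NC$ becomes an isomorphism after $\mF_2\otimes_{\mM_2^{\mR}}(-)$, hence is an isomorphism by graded Nakayama over the graded-local ring $\mM_2^{\mR}$. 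Then observe that, because $NC\cdot NC=0$, the restriction $\phi|_{NC\cdot F}$ identifies with $\mathrm{id}_{NC}\otimes_{\mM_2^{\mR}}(\phi\bmod NC)$ under the natural isomorphism $NC\cdot F\cong NC\otimes_{\mM_2^{\mR}}(F/NC\cdot F)$; hence it too is an isomorphism, and the five-lemma finishes. This is more elementary than the paper's argument and avoids the $\mC$-motivic literature, at the cost of not directly producing the Margolis-homology reformulation that the paper needs downstream.
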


To prove \cref{prop:mFree}, we will make use of the following lemma. 

\begin{lemma}[\cite{BGL2022} Lemma 2.1] \label{lem:mfree}
A finitely generated $\cE(n)$-module $M$ is free if and only if 
\begin{enumerate}
    \item $M$ is a free $\mM_2$-module and
    \item $M / (\rho)$ is a free $\cE(n) / (\rho)$-module.
\end{enumerate}
\end{lemma}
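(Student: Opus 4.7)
The forward direction is routine: if $M \cong \bigoplus_i \Sigma^{n_i} \cE(n)$, then $M$ is $\mM_2$-free since $\cE(n)$ is (cf.~\cite{Ricka15}), and $M/(a) \cong \bigoplus_i \Sigma^{n_i} \cE(n)/(a)$ is manifestly $\cE(n)/(a)$-free.

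For the converse, the plan is to reduce to the $\mR$-motivic statement \cite[Lemma~2.1]{BGL2022} via the square-zero decomposition $\mM_2 \cong \mM_2^\mR \oplus NC$. Two facts from \cref{sec:C2point} drive the reduction: (i) $NC$ is a square-zero ideal ($NC \cdot NC = 0$); and (ii) every element of $NC$ is divisible by $a$---indeed $\gamma/(a^s \tau^k) = a \cdot \gamma/(a^{s+1}\tau^k)$---so $NC \subseteq a \cdot \mM_2$. Fact (ii) implies the canonical identifications $\mM_2/(a) \cong \mM_2^\mR/(a)$ and $\cE(n)/(a) \cong \cE^\mR(n)/(a)$.

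Setting $M^\mR := M/(NC \cdot M) \cong M \otimes_{\mM_2} \mM_2^\mR$, hypothesis (1) makes $M^\mR$ an $\mM_2^\mR$-free module, and hypothesis (2), via the above identifications, makes $M^\mR/(a) \cong M/(a)$ an $\cE^\mR(n)/(a)$-free module. The $\mR$-motivic lemma \cite[Lemma~2.1]{BGL2022} then yields that $M^\mR$ is $\cE^\mR(n)$-free. Lifting an $\cE^\mR(n)$-basis of $M^\mR$ to a family $m_1, \ldots, m_k$ in $M$, define $\phi : F := \bigoplus_i \Sigma^{|m_i|} \cE(n) \to M$ by $e_i \mapsto m_i$, so that the induced $\bar\phi : F^\mR \to M^\mR$ is an isomorphism by construction. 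The short exact sequences $0 \to NC \cdot F \to F \to F^\mR \to 0$ and $0 \to NC \cdot M \to M \to M^\mR \to 0$ form a commutative ladder with $\phi$ in the middle column. Using the $\mM_2$-freeness of $F$ and $M$ together with fact (i), one identifies $NC \cdot F \cong NC \otimes_{\mM_2^\mR} F^\mR$ and $NC \cdot M \cong NC \otimes_{\mM_2^\mR} M^\mR$; under these identifications the left-hand vertical map becomes $\operatorname{id}_{NC} \otimes \bar\phi$, which is an isomorphism. The $5$-lemma then forces $\phi$ to be an isomorphism.

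The main obstacle is the failure of a direct $a$-adic Nakayama argument: because of the negative cone, $\mM_2$ is not $a$-adically separated ($\bigcap_n a^n \mM_2 = NC \neq 0$), so one cannot mimic the $\mR$-motivic proof verbatim and conclude that a map of finitely generated $\cE(n)$-modules between $\mM_2$-free modules which is an isomorphism mod $a$ is itself an isomorphism. The square-zero structure of $NC$ together with its $a$-divisibility turn this apparent obstruction into a feature: the $a$-divisibility collapses the mod-$a$ reduction onto the $\mR$-motivic setting where BGL2022 applies, and the square-zero condition reduces the lifting step to a single application of the $5$-lemma rather than an iterated filtration argument.
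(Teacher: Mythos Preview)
Your proof is correct, and it takes a genuinely different route from the paper's. The paper argues directly with the $a$-adic filtration: it lifts an $\cE(n)/(a)$-basis of $M/(a)$ to a map $f:F\to M$ from a free $\cE(n)$-module and then runs a five-lemma induction on the short exact sequences $0\to F/(a^{n-1})\to F/(a^n)\to F/(a)\to 0$ to conclude that $f$ induces $F/(a^n)\cong M/(a^n)$ for all $n$. Your approach instead passes through the square-zero decomposition $\mM_2\cong\mM_2^\mR\oplus NC$: you reduce to $M^\mR=M\otimes_{\mM_2}\mM_2^\mR$, invoke \cite[Lemma~2.1]{BGL2022} there, and then lift the resulting $\cE^\mR(n)$-basis back to $M$ using a single five-lemma step on $0\to NC\cdot M\to M\to M^\mR\to 0$.

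Your diagnosis of the obstacle is on point and is in fact a step the paper leaves implicit. Because $NC\subset a^n\mM_2$ for every $n$, one has $F/(a^n)=F^\mR/(a^n)$ and $M/(a^n)=M^\mR/(a^n)$, so the paper's induction only directly yields that $f^\mR:F^\mR\to M^\mR$ is an isomorphism; the passage from ``$f$ is an isomorphism modulo $a^n$ for all $n$'' to ``$f$ is an isomorphism'' still needs exactly the square-zero/$NC$ argument you supply. What your approach buys is that this point is made explicit and the $\mR$-motivic input is cited rather than reproved; what the paper's approach buys is that it is self-contained and makes visible that the $a$-adic induction is the engine behind the $\mR$-motivic lemma as well. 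Either way, the two arguments converge once one observes $NC\cdot F\cong NC\otimes_{\mM_2^\mR}F^\mR$ and that the restriction of $f$ there is $\mathrm{id}_{NC}\otimes f^\mR$.
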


Since condition (1) makes $M$ a free $\mM_2$-module, the proof of \cref{lem:mfree} is the same as the $\mR$-motivic case described in \cite[Lemma~2.1]{BGL2022}. In particular, no additional complications arise from the negative cone. 

We will also make use of the following $\mC$-motivic lemma which is an immediate consequence of \cite[Theorem~B~(i)]{HK2018}.

\begin{lemma}\cite[Theorem~B]{HK2018}\label{lem:mfreeoverC}
A finitely generated $\cE^{\mC}(n)$-module is free if and only if
\begin{enumerate}
    \item $M$ is a free $\mM^{\mC}_{2}$-module and
    \item $M/(\tau)$ is a free $\cE^{\mC}(n)/(\tau)$-module.
\end{enumerate}
\end{lemma}

\begin{proof}[Proof of \cref{prop:mFree}]
If a finitely generated $\cE(n)$-module $M$ is free, the two conditions are immediately satisfied. Thus we consider $M$ where the two conditions are satisfied and show $M$ is a free $\cE(n)$-module. First, note that condition (2) of \cref{prop:mFree} is equivalent to saying that $(M/(\rho))/(\tau)$ is free over $\cE^{\mC}(n)/(\tau)$. \cref{lem:mfreeoverC} then implies that $M/(\rho)$ is a free $\cE^{\mC}(n)$-module, which is equivalent to saying that $M/(\rho)$ is a free $\cE(n)/(\rho)$-module. So by \cref{lem:mfree}, $M$ is indeed a free $\cE(n)$-module.
\end{proof}

Recall the following freeness criteria for modules over $E(n)$.
\begin{proposition}\cite[Thm~18.8]{margolis2011spectra}\label{classical freeness}
Let $M$ be a bounded-below free $E(n)$-module. Then $M$ is free if and only if $\cM_{*}(M, Q_{i}) = 0$ for all $0 \le i \le n$. 
\end{proposition}
Note that a map of $\cE(n)$-modules $f:M \to N$ is said to be a stable equivalence if there exist free $\cE(n)$-modules $P$, $Q$ such that the map $M \oplus P \to N \oplus Q$ induced by $f$ is an isomorphism \cite[p.205]{margolis2011spectra}. 

Combining \cref{classical freeness} with \cref{prop:mFree} yields a $C_{2}$-equivariant analogue of the Whitehead Theorem for Margolis homology. The proof proceeds in exactly the same way as its classical analogue, (see \cite[Thm~18.8~ii]{margolis2011spectra}, or the $\mathbb{C}$-motivic analogue in \cite[Cor~4.10]{GheorgheIsaksenRicka18} for a version with more modern writing). 
\begin{proposition}[Whitehead Theorem]\label{prop:EquivWhitehead} 
Let $M$ and $N$ be finitely generated $\cE(1)$-modules that are $\mM_2$-free, and let $f: M \to N$ be an $\cE(1)$-module map. Then $f$ is a stable equivalence if and only if $f/(\rho,\tau):M/(\rho,\tau) \to N/(\rho,\tau)$ induces an isomorphism in Margolis homologies with respect to $Q_0$ and $Q_1.$
\end{proposition}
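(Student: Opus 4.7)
The plan is to derive Proposition~\ref{prop:EquivWhitehead} as a direct consequence of Proposition~\ref{prop:mFree}. First I would translate the stable equivalence statement into one about the mapping cone: a map $f:M\to N$ between finitely generated, $\mM_2$-free $\cE(1)$-modules is a stable equivalence precisely when the cofiber $C(f)$ is a free $\cE(1)$-module, since in the stable module category of $\cE(1)$ the free modules are exactly the zero objects. So it suffices to show that, under the hypothesis that $M$ and $N$ are $\mM_2$-free, the cofiber $C(f)$ is $\cE(1)$-free if and only if $f/(a,u)$ is a $Q_0$- and $Q_1$-Margolis homology isomorphism.

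Next I would verify the two hypotheses of Proposition~\ref{prop:mFree} applied to $C(f)$. Because $M$ and $N$ are $\mM_2$-free, the short exact sequence $0\to N\to C(f)\to \Sigma M\to 0$ splits over $\mM_2$, so $C(f)$ is automatically $\mM_2$-free, which handles condition (1). For condition (2), the reduction functor $-\otimes_{\mM_2}\mF_2$ is exact on $\mM_2$-free modules, so applying it produces an exact triangle
\[
M/(a,u)\xrightarrow{f/(a,u)} N/(a,u)\to C(f)/(a,u)
\]
of modules over $\cE(1)/(a,u)\cong \cE^{\mC}(1)/(\tau)$. Passing to the long exact sequences in $Q_0$- and $Q_1$-Margolis homology, one sees that $\cM_{*}(C(f)/(a,u),Q_i)=0$ for $i=0,1$ if and only if $f/(a,u)$ induces an isomorphism on $Q_0$- and $Q_1$-Margolis homology.

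Combining these observations with the corollary of Lemma~\ref{lem:MargC}, condition (2) of Proposition~\ref{prop:mFree} applied to $C(f)$ is exactly the Margolis-homology isomorphism hypothesis on $f/(a,u)$. Hence $C(f)$ is $\cE(1)$-free, i.e.\ $f$ is a stable equivalence, precisely when $f/(a,u)$ induces an isomorphism on $Q_0$- and $Q_1$-Margolis homology. I anticipate no substantial obstacle: all the technical work has been absorbed into Proposition~\ref{prop:mFree} and the $\mC$-motivic input from Lemma~\ref{lem:MargC}, and the only point to check is that the mapping cone construction is compatible with reduction modulo $(a,u)$, which is automatic since $M$ and $N$ are already $\mM_2$-free.
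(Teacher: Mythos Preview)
Your proposal is correct and follows the same approach as the paper: both derive the Whitehead theorem as a corollary of Proposition~\ref{prop:mFree} (combined with the $\mC$-motivic input from Lemma~\ref{lem:MargC}). The paper simply asserts this is a corollary without spelling out the mapping-cone argument, so your proof is in fact more detailed than the paper's own treatment.
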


\subsection{The homology of mod-$2$ Brown--Gitler spectra}
Recall that $H_{\star}B_{-1}(0)\cong \mM_{2}$. We now show that $H_{\star}B_{-1}(k)$ is a free and injective $\cE_{\star}(0)$-comodule for all $k>0$. To do so, we need a few propositions.

\begin{proposition}\label{equiv:free:inj}
If $M$ is a free $\cE(n)_{\star}$-comodule, then $M$ is an injective $\cE(n)_{\star}$-comodule. 
\end{proposition}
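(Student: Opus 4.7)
My plan is to verify injectivity via the cofree-forgetful adjunction between $\cE(n)_\star$-comodules and $\mM_2$-modules. Since $\cE(n)_\star$ is finitely generated and free over $\mM_2$ (as noted in the discussion preceding \cref{prop:equiv of cat}), the forgetful functor $U \colon \cE(n)_\star\text{-comod} \to \mM_2\text{-mod}$ is exact. By general adjunction nonsense, its right adjoint, the cofree functor $F(V) = V \otimes_{\mM_2} \cE(n)_\star$ equipped with the natural extended coaction, therefore preserves injectives.

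The second step is the observation that any free $\cE(n)_\star$-comodule is in fact cofree: if $M = \bigoplus_\alpha \Sigma^{d_\alpha} \cE(n)_\star$, then $M \cong F(V)$ where $V = \bigoplus_\alpha \Sigma^{d_\alpha} \mM_2$. So it suffices to verify that this $V$ is injective as an $\mM_2$-module, which reduces to checking that $\mM_2$ is self-injective (and that the relevant direct sums remain injective, a point which is automatic for finite sums and requires slightly more care for infinite ones).

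To confirm self-injectivity of $\mM_2$, I would work from the decomposition $\mM_2 \cong \mM_2^\mR \oplus NC$ recalled in \cref{sec:C2point}. The negative cone $NC \cong \bigoplus_{s \geq 0} \frac{\mF_2[\tau]}{\tau^\infty}\{\gamma/\rho^s\}$ is built from infinitely $\tau$- and $\rho$-divisible modules which play the role of injective hulls in this bigraded setting, and combined with Baer's criterion one can check self-injectivity by extending maps from finitely generated bigraded ideals.

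The main obstacle I anticipate is precisely this handling of the base ring $\mM_2$: unlike the classical case over a field, where it is automatic that free comodules over a finite Hopf algebra are injective (because any such algebra is Frobenius and a field is trivially self-injective), here the mixed positive/negative cone structure must be navigated carefully. As an alternative, more structural route, one could translate via \cref{prop:equiv of cat} to the category of right $\cE(n)$-modules and directly invoke that $\cE(n)$ is Frobenius over $\mM_2$: the top class $Q_0 Q_1 \cdots Q_n$ provides a non-degenerate $\mM_2$-bilinear pairing giving $\cE(n) \cong \Hom_{\mM_2}(\cE(n),\mM_2)$ (up to a shift) as a right $\cE(n)$-module, which identifies projective and injective $\cE(n)$-modules and thereby yields the desired conclusion.
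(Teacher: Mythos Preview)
Your proposal is correct, and your alternative route is close in spirit to the paper's argument, though the mechanics differ. The paper also passes through \cref{prop:equiv of cat} to work with right $\cE(n)$-modules and reduces to showing $\cE(n)$ is self-injective; however, rather than invoking the Frobenius structure to reduce this to self-injectivity of $\mM_2$, the paper directly adapts May's Baer-criterion proof that $\mM_2$ is self-injective to $\cE(n)$, observing that the graded ideals of $\cE(n)$ are those of $\mM_2$ together with the additional generators $Q_i$. Your adjunction route and your Frobenius route both cleanly isolate the single non-formal input (self-injectivity of $\mM_2$) and then deduce the result by general homological algebra; the paper's approach bypasses the reduction step but requires one to redo May's ideal-by-ideal verification for $\cE(n)$. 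One small caution: your remark that infinite direct sums ``require slightly more care'' is genuine, since $\mM_2$ is not Noetherian; but the paper's phrasing has the same implicit scope, and in all applications in the paper (e.g., \cref{equiv:BG:free}) the modules in question are finitely generated, so this point does not cause trouble.
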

\begin{proof}
    First, recall from \cref{prop:equiv of cat} that we can instead work with $\cE(n)$-modules. Since every free $\cE(n)$-module is a sum of copies of $\cE(n)$, it will suffice to show that $\cE(n)$ is self-injective. This follows from modifying May's proof that $\mM_{2}$ is self-injective \cite[Appendix]{May20}. In particular, the graded ideals of $\cE(n)$ are just those of $\mM_{2}$, with the addition of various $Q_{i}$. 
\end{proof}

\begin{proposition}\label{equiv:BG:free}
For all $k>0$, $H_{\star}B_{-1}(k)$ is a free and injective $\cE(0)_{\star}$-comodule.
\end{proposition}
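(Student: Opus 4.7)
The plan is to apply the equivariant freeness criterion \cref{prop:mFree} in order to reduce the question to a classical computation of nonequivariant Brown--Gitler homology; injectivity will then follow immediately from \cref{equiv:free:inj}.

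First, I would pass from $\cE(0)_\star$-comodules to $\cE(0)$-modules via the equivalence of categories (\cref{prop:equiv of cat}), so that it suffices to show $H_\star \cB_{-1}(k)$ is free as a right $\cE(0)$-module. Since $H_\star \cB_{-1}(k) = \mM_2 \{ x \in \cA^{C_2}_\star : wt(x) \leq k\}$ is finitely generated, \cref{prop:mFree} applies and reduces freeness to verifying two conditions: (i) $\mM_2$-freeness of $H_\star \cB_{-1}(k)$, and (ii) $\cE(0)/(a,u)$-freeness of the reduction $\mF_2 \otimes_{\mM_2} H_\star \cB_{-1}(k)$.

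Condition (i) is immediate from the above defining description. For (ii), one identifies $\mF_2 \otimes_{\mM_2} \cA^{C_2}_\star \cong \cA^{cl}_\star$, since the defining relation $\tau_i^2 = a\tau_{i+1} + (u + a\tau_0)\xi_{i+1}$ collapses to $\tau_i^2 = 0$ after setting $a = u = 0$, and the weight filtration is preserved. Consequently $\mF_2 \otimes_{\mM_2} H_\star \cB_{-1}(k)$ is naturally identified with the classical mod-$2$ Brown--Gitler homology $H_* B_{-1}(k)$, and $\cE(0)/(a,u) \cong E[Q_0]$. Condition (ii) thus reduces to the classical fact that $H_* B_{-1}(k)$ is $E[Q_0]$-free for $k \geq 1$, equivalently, that its $Q_0$-Margolis homology vanishes.

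The main obstacle is verifying this vanishing of $Q_0$-Margolis homology. Using the decomposition $\cA^{cl}_\star = R \oplus R \tau_0$ with $R = \mF_2[\xi_i \mid i \geq 1] \otimes E[\tau_i \mid i \geq 1]$, the contracting homotopy $m \mapsto \tau_0 m$ handles $Q_0$-cycles of weight strictly less than $k$ via the identity $(\tau_0 m) Q_0 = m + \tau_0 (m Q_0)$. The delicate case is that of $Q_0$-cycles of weight exactly $k$, where this homotopy fails to land in $H_* B_{-1}(k)$; here one must produce weight-$k$ null-homotopies using the derivation identities $\tau_j Q_0 = \xi_j$ for $j \geq 1$ (for example, $\xi_1^2 = (\xi_1 \tau_1) Q_0$ exhibits $\xi_1^2$ as a boundary using an element of the same weight). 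Once this vanishing is in place, freeness of $H_\star \cB_{-1}(k)$ follows from \cref{prop:mFree}, and injectivity is then immediate from \cref{equiv:free:inj}.
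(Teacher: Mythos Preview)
Your proof is correct and follows essentially the same route as the paper: apply the freeness criterion \cref{prop:mFree} to reduce to the $\mM_2$-reduction, identify that reduction with the classical Brown--Gitler homology, and invoke the classical fact that $H_*B_{-1}(k)$ is $E[Q_0]$-free for $k\ge 1$; then conclude injectivity from \cref{equiv:free:inj}. The only difference is that you sketch the contracting-homotopy argument for the classical $Q_0$-Margolis vanishing (including the boundary case at weight exactly $k$), whereas the paper simply asserts this as a known classical fact without further comment.
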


\begin{proof} Recall from \cref{prop:equiv of cat} that we can regard left $E(n)_*$-comodules as right $E(n)$-modules via the induced right action of \cref{eq:inducedRightAction}. We use \cref{prop:mFree} to show that $H_\star B_{-1}(k) $ is a free $\cE(0)$-module for all $k>0$. First, note that $H_\star B_{-1}(k)$ is a free $\mM_{2}$-module. Then observe that  \[  \mF_2 \otimes_{\mM_2} H_{\star}B_{-1}(k) \cong \mF_2 \{x =  \bar{\xi}_{1}^{i_{1}} \bar{\xi}_{2}^{i_{2}}\cdots \bar{\xi}_{r}^{i_{r}} \bar{\tau}_{0}^{\epsilon_{0}} \bar{\tau}_{1}^{\epsilon_{1}} \cdots \bar{\tau}_{s}^{\epsilon_{s}}\mid wt(x) \le n\}, \]
which is exactly the homology of the classical Brown--Gitler spectrum of weight $k$. The homology of the classical Brown--Gitler spectrum of weight $k$ is a free $E(0)$-module. Thus $H_\star B_{-1}(k) $ is a free $\cE(0)$-module. So by \cref{equiv:free:inj}, we know that $H_\star B_{-1}(k) $ is a free and injective $\cE(0)_{\star}$-comodule.  \end{proof}

\subsection{The homology of integral Brown--Gitler spectra}
\begin{definition}\label{def: lightning}
    The homological lightning flash module is given by 
    $$L(k) = \mathcal{E}(1) \{x_1, x_2, \cdots, x_k \, |\,  x_{i + 1} Q_1 =  x_i Q_0,\, 1 \leq i \leq k-1 \}$$
    where $| x_i| = i \rho + 1.$ Further define $L(0) = \mM_2.$
\end{definition}

These lightning flash modules can be easily visualized as in \cref{fig:HomLightFlash}, which depicts $L(4)$ using the motivic grading convention that the total topological degree (number of sign plus trivial real $C_2$-representations) is plotted along the horizontal axis and the weight (number of sign representations) is plotted along the vertical axis.  Here, a point denotes a copy of $\mM_2$, straight arrows indicate the
(non-trivial) operation of $Q_0,$ and curved arrows indicate the (non-trivial) operation of $Q_1.$  

\begin{figure}[ht]\label{fig:lightning flash}
    \centering
    \resizebox{2in}{1in}{
    \begin{tikzpicture}
    \draw (3,1) -- (2,1); % horizontal line
    \node at (2.5,1) {\textup{$<$}}; % arrow
    \draw (5,2) -- (4,2); % horizontal line
    \node at (4.5,2) {\textup{$<$}}; % arrow
    \draw (7,3) -- (6,3); % horizontal line
    \node at (6.5,3) {\textup{$<$}}; % arrow
    \draw (9,4) -- (8,4); % horizontal line
    \node at (8.5,4) {\textup{$<$}}; % arrow

    \draw (0,0)  arc (180:90:2); % first lightning flash
    \draw (2,2) arc (90:0:1); % first lightning flash 
    \node at (2,2) {\textup{$<$}}; % arrow
    \node at (3.25,0.85) {\textup{$x_1$}}; % label 
    
    \draw (2,1) arc (180:270:1); % second lightning flash
    \draw (3 , 0 ) arc (270:360:2); % second lightning flash
    \node at (3,0) {\textup{$<$}}; % arrow
    \node at (5.33,2) {\textup{$x_2$}}; % label

    \draw (4,2)  arc (180:90:2); % third lightning flash
    \draw (6,4) arc (90:0:1); % third lightning flash 
    \node at (6,4) {\textup{$<$}}; % arrow
    \node at (7.24,2.85) {\textup{$x_3$}}; % label
    
    \draw (6,3) arc (180:270:1); % fourth lightning flash
    \draw (7 , 2 ) arc (270:360:2); % fourth lightning flash 
    \node at (7,2) {\textup{$<$}}; % arrow
    \node at (9.25, 4.1) {\textup{$x_4$}}; %label

    \node at (0,0) {\textup{$\bullet$}}; % point
    \node at (2,1) {\textup{$\bullet$}}; % point
    \node at (3,1) {\textup{$\bullet$}}; % point
    \node at (5,2) {\textup{$\bullet$}}; % point
    \node at (4,2) {\textup{$\bullet$}}; % point
    \node at (7,3) {\textup{$\bullet$}}; % point
    \node at (6,3) {\textup{$\bullet$}}; % point
    \node at (9,4) {\textup{$\bullet$}}; % point
    \node at (8,4) {\textup{$\bullet$}}; % point

    \end{tikzpicture}}
    \caption{Homological lightning flash module: $L(4)$}
    \label{fig:HomLightFlash}
\end{figure}

\begin{proposition}\label{bg lightning split}
    For $k \ge 0$, there is an isomorphism 
    \[H_{\star}B_{0}\left( k \right) \cong L(\nu_{2}(k!)) \oplus W_{k}\]
    of $\cE(1)_\star$-comodules where $W_{k}$ is a sum of suspensions of $\cE(1)_{\star}$.
\end{proposition}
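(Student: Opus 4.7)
The plan is to construct an explicit $\cE(1)_\star$-comodule map $\phi: L(\nu_{2}(k!)) \to H_{\star}B_{0}(k)$ realizing the lightning flash inside the Brown--Gitler homology, verify that $\phi$ is a stable equivalence using the $C_2$-equivariant Whitehead theorem (\cref{prop:EquivWhitehead}), and then extract the splitting via a Krull--Schmidt / cancellation argument enabled by self-injectivity of $\cE(1)_\star$ (\cref{equiv:free:inj}).

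First, I would identify the lightning flash generators inside $H_{\star}B_{0}(k)$. Using the description of $H_{\star}B_{0}(k)$ as the $\mM_{2}$-span of monomials of weight $\le k$ in $H_{\star}H\umZ$, together with the right $\cE(1)$-module structure $\bar{\tau}_{j}Q_{0} = \bar{\xi}_{j}$, $\bar{\tau}_{j}Q_{1} = \bar{\xi}_{j-1}^{2}$, and $\bar{\xi}_{j}Q_{i} = 0$, I would write down $\nu_{2}(k!)$ explicit elements $y_{1},\ldots,y_{\nu_{2}(k!)}$ whose $Q_{0}$- and $Q_{1}$-images interlock as in the staircase pattern of \cref{fig:HomLightFlash}. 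This parallels the classical construction underlying \cref{nonequiv: homology}. Define $\phi(x_{i}) := y_{i}$ and extend $\cE(1)$-linearly; the map is well-defined provided the relations $x_{i+1}Q_{1} = x_{i}Q_{0}$ hold on the nose among the chosen $y_{i}$.

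Next, to apply \cref{prop:EquivWhitehead}, note that both $L(\nu_{2}(k!))$ and $H_{\star}B_{0}(k)$ are finitely generated and $\mM_{2}$-free (the former by definition, and the latter since it has an $\mM_{2}$-basis of weight-bounded monomials). Under reduction modulo $(a,u)$, the identifications
\[
L(\nu_{2}(k!))/(a,u) \cong L^{cl}(\nu_{2}(k!)) \qquad \text{and} \qquad H_{\star}B_{0}(k)/(a,u) \cong H_{*}B_{0}^{cl}(k)
\]
convert $\phi/(a,u)$ into the classical lightning flash inclusion. By \cref{nonequiv: homology}, the latter induces isomorphisms on Margolis homologies with respect to both $Q_{0}$ and $Q_{1}$, hence so does $\phi/(a,u)$. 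The Whitehead theorem then shows $\phi$ is a stable equivalence, so there exist free $\cE(1)_\star$-comodules $F_{1},F_{2}$ with $L(\nu_{2}(k!)) \oplus F_{1} \cong H_{\star}B_{0}(k) \oplus F_{2}$. Since $L(\nu_{2}(k!))$ contains no free $\cE(1)_\star$-comodule summand (which one can verify by descent to the classical case, where $L^{cl}(\nu_{2}(k!))$ is indecomposable up to freeness, or by computing its $\mM_{2}$-rank directly), and since $\cE(1)_\star$-comodules admit Krull--Schmidt decompositions via \cref{equiv:free:inj}, cancellation yields $H_{\star}B_{0}(k) \cong L(\nu_{2}(k!)) \oplus W_{k}$ with $W_{k}$ free over $\cE(1)_\star$.

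The main obstacle I anticipate is the explicit identification of the lightning flash generators $y_{i}$ inside $H_{\star}B_{0}(k)$, and in particular ensuring that the equivariant relations $\bar{\tau}_{i}^{2} = a\bar{\tau}_{i+1} + u\bar{\xi}_{i+1}$ in $H_{\star}H\umZ$ do not disturb the staircase pattern. These relations have no classical analog, so their interaction with the chosen generators must be checked by hand; once $\phi$ is in place, however, the Whitehead theorem reduces everything else to the already-known classical statement.
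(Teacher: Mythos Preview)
Your approach is essentially the paper's: build an explicit $\cE(1)$-module map from $L(\nu_2(k!))$ into $H_\star B_0(k)$ and then invoke \cref{prop:EquivWhitehead} after reducing mod $(a,u)$. The paper carries out exactly the step you flag as the main obstacle, writing the lightning flash down as the explicit submodule $S(k)=\cE(1)\{\bar{\xi}_{i_1}\cdots\bar{\xi}_{i_n}\bar{\tau}_j \mid j\le i_1<\cdots<i_n\}$; your worry about the relations $\bar{\tau}_i^2=a\bar{\tau}_{i+1}+u\bar{\xi}_{i+1}$ evaporates because the Margolis check is performed modulo $(a,u)$, where those terms vanish and you are literally looking at the classical picture.

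One small gap: self-injectivity (\cref{equiv:free:inj}) does not by itself furnish a Krull--Schmidt theorem, so your cancellation step is not justified as written. You do not need it, though. Since your $\phi$ is the inclusion of a submodule it is injective; the stable equivalence then forces the cokernel $W_k$ to be free, and the short exact sequence $0\to L(\nu_2(k!))\to H_\star B_0(k)\to W_k\to 0$ splits because free $\cE(1)_\star$-comodules are injective. This is the cleaner route and is implicitly what the paper is doing when it passes through the inclusion $S(k)\hookrightarrow H_\star B_0(k)$.
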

\begin{proof}
We will use \cref{prop:EquivWhitehead} to prove this isomorphism, so we start by computing the relevant Margolis homologies. 
First, observe from looking at the figure above that
\[\cM_{*}\big(L(\nu_{2}(k!))/(\rho,\tau), Q_{0}\big) \cong \mF_{2}\{1\}\] \[\cM_{*}\big(L(\nu_{2}(k!))/(\rho,\tau), Q_{1}\big) \cong \mF_{2}\{x_{\nu_{2}(k!)}Q_{0} \}.\]  Next we must compute the Margolis homology of $H_{\star}\cB_{0}(k)$. It is easier to first compute the Margolis homology for $H_{\star}ku_{\mR}$,
\begin{align*}
    \cM_{*}\left(H_{\star}ku_{\mR}/(\rho,\tau), \, Q_{0}\right) & \cong \mF_{2}\{1\} \\
    \cM_{*}\left(H_{\star}ku_{\mR}/(\rho,\tau),  \,Q_{1} \right) & \cong \mF_{2}\{\bar{\xi}_{1}^{\epsilon_{1}}\bar{\xi}_{2}^{\epsilon_{2}}\ldots \bar{\xi}_{n}^{\epsilon_{n}}|\ 0 \le \epsilon_{i} \le 1 .\}
\end{align*} (This is exactly the computation of the Margolis homology of $H_{*}ku$ in \cite[Lemma~16.9]{Adams74}, although the notation differs slightly from ours). 
 By \cref{thm:homology:decomp},
 \[H_{\star}ku_{\mR} \cong \bigoplus\limits_{k=0}^{\infty}\Sigma^{\rho k}H_{\star}\cB_{0}(k),\]
 Since $H_{\star}ku_{\mR} $ is of finite-type, we can apply this decomposition to the corresponding Margolis homology and get
\begin{align*}
    \cM_{*}\left(H_{\star}\cB_{0}\left( k \right)/(\rho,\tau), \, Q_{0}\right) & \cong \mF_{2}\{1\} \\
    \cM_{*}\left(H_{\star}\cB_{0}\left( k \right)/(\rho,\tau),  \,Q_{1} \right) & \cong \mF_{2}\{\bar{\xi}_{i_{1}}\bar{\xi}_{i_{2}}\ldots \bar{\xi}_{i_{n}}\},
\end{align*}
where $i_{1} < i_{2} < \cdots < i_{n}$ and $2^{i_{1}} + 2^{i_{2}}+\cdots + 2^{i_{n}}$ is the $2$-adic expansion of $k$. 

Note that $|x_{\nu_{2}(k!)}| = |\bar{\xi}_{i_{1}}\bar{\xi}_{i_{2}}\cdots \bar{\xi}_{i_{n}}|$. So if we construct an $\cE(1)$-module map 
\[
L(\nu_{p}(k!)) \rightarrow H_{\star}\cB_{0}(k)
\]
realizing the isomorphism, then we can apply \cref{prop:EquivWhitehead} to conclude the proof. 
Let $S(k)$ denote the submodule of $L(k)$
\[S(k) = \cE(1)\{ \bar{\xi}_{i_{1}}\bar{\xi}_{i_{2}}\ldots \bar{\xi}_{i_{n}}\bar{\tau}_{j}|j \le i_{1} < i_{2} < \cdots < i_{n} \}.\]
Observe that $S(k)$ is naturally isomorphic to $L(\nu_{2}(k!))$, and so the inclusion $S(k) \to H_{\star}B_{0}(k)$ is exactly the map we are looking for.
\end{proof}

\section{Splitting {$H \umZ \wedge H \umZ$}}\label{sec: height zero}

In this section, we construct a family of $H \umZ$-module maps
\[ \Tilde{\theta}_k: \Sigma^{\rho k} H \umZ \wedge \cB_{-1}(k) \rightarrow H \umZ \wedge H\umZ \] 

such that
    \[ \bigvee\limits_{k=0}^{\infty}\Tilde{\theta}_k: \Sigma^{\rho k} H \umZ \wedge \cB_{-1}(k) \xrightarrow{\simeq} H \umZ \wedge H \umZ \]
is an equivalence (up to $2$-completion).

\subsection{Strategy} \label{sec:heightzerostrategy} \cref{thm:homology:decomp} gives a family of maps
$$\{\theta_k: H_\star \Sigma^{\rho k} \cB_{ - 1} (k) \to H_\star H \umZ \, | \, k \in \mN \} $$
such that their sum
$$ \bigoplus_{k = 0}^\infty \theta_k : H_\star \Sigma^{\rho k} \cB_{ - 1} (k) \to H_\star H \umZ $$
is an isomorphism of $\mathcal{E}(0)_\star$-comodules. 

Using $H \umZ$-relative homology (discussed in \cref{sec:Relative}), we can think of the family of $\mathcal{E}(0)_{\star}$-comodule maps
$$ \bigoplus_{k = 0}^\infty \theta_k : H_{\star} \Sigma^{\rho k} B_{-1} (k) \to H_{\star} H \umZ $$
as a family of maps
$$ \bigoplus_{k = 0}^\infty \theta_k : H_{\star}^{H\umZ} (H \umZ_{\star} \wedge \Sigma^{\rho k}  B_{-1} (k)) \to H_{\star}^{H \umZ} (H \umZ \wedge H \umZ)$$
and consider the $H\umZ$-relative Adams spectral sequence 
\begin{align} \label{eq:heightZeroAss}
\begin{split}
E_2^{s,f,w}  = \Ext_{\cE(0)_{\star}}^{s,f,w} (H_{\star} \Sigma^{\rho k} B_{-1}(k), H_{\star} H \umZ ) 
\implies [H \umZ \wedge \Sigma^{\rho k}B_{-1} (k), H\umZ \wedge H \umZ]^{H \umZ}. 
\end{split}
\end{align}
We grade $\Ext$-groups in the form $(s,f,w)$, where $s$ is the stem, that is, the total degree minus the homological degree, $f$ is the Adams filtration, that is the homological degree, and $w$ is the weight. Consider $\theta_k$ as a class in filtration $f = 0$ so $|\theta_k| = (0,0,0)$. The Adams differential $d_{r}$ decreases stem by $1$, increases filtration by $r$, and preserves motivic weight.

Since $E_{2}^{s,f,w}$ is finite in each degree, the spectral sequence converges \cite[Thm~15.6, Thm~7.1]{boardman1999conditionally}. Thus constructing the desired maps $\Tilde{\theta}_k : H \umZ \wedge \Sigma^{\rho k} B_{-1} (k) \to H \umZ \wedge H \umZ$ for all $k \in \mN$ is the same as showing the $\theta_k$ survive the spectral sequence. In fact, we will show that the $H \umZ$-relative Adams spectral sequence of (\ref{eq:heightZeroAss}) collapses at the $E_{2}$-page.

\subsection{Analyzing the $E_{2}$-page}
\subsubsection{Starting the analysis}

\begin{proposition}\label{E2:zero:split}
The $E_2$-page of the Adams spectral sequence (\ref{eq:heightZeroAss}) has the form
\[ E_2^{s, f, w} = \Ext_{\cE(0)_{\star}}^{s,f, w} (H_{\star} \Sigma^{\rho k} B_{-1}(k), H_{\star} H \umZ ) \cong \Ext_{E(0)_{\star}}^{s,f,w} (\mM_2, \mM_2) \oplus V, \]
where $V$ is an $\mM_{2}$-vector space concentrated in Adams filtration $f=0$. 
\end{proposition}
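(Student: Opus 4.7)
The plan is to exploit the $\cE(0)_\star$-comodule splitting
\[H_\star H\umZ \cong \bigoplus_{m=0}^\infty \Sigma^{\rho m} H_\star B_{-1}(m)\]
provided by \cref{thm:homology:decomp} specialized to $n=0$, together with the injectivity of $H_\star B_{-1}(m)$ for $m \geq 1$ coming from \cref{equiv:BG:free}, to reduce the $E_2$-page to a single main summand plus filtration-zero $\Hom$ contributions.

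Carrying this out, additivity of $\Ext$ in the second variable yields
\[E_2^{s,f,w} \cong \bigoplus_{m=0}^\infty \Ext_{\cE(0)_\star}^{s,f,w}(\Sigma^{\rho k} H_\star B_{-1}(k),\, \Sigma^{\rho m} H_\star B_{-1}(m)).\]
For each $m \geq 1$, the target is an injective $\cE(0)_\star$-comodule by \cref{equiv:BG:free} combined with \cref{equiv:free:inj}, so the corresponding summand vanishes in positive filtration and its $\Hom$ part is absorbed into $V$. The $m=0$ contribution is $\Ext_{\cE(0)_\star}(\Sigma^{\rho k} H_\star B_{-1}(k),\, \mM_2)$. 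When $k=0$ this is precisely $\Ext_{\cE(0)_\star}(\mM_2,\mM_2)$, the named main summand. When $k > 0$, the source $H_\star B_{-1}(k)$ is free as an $\cE(0)$-module (\cref{equiv:BG:free}); since $\cE(0)$ is Frobenius over $\mM_2$, freeness coincides with projectivity in the module category, and via the equivalence of \cref{prop:equiv of cat} this transports to projectivity of $H_\star B_{-1}(k)$ in the $\cE(0)_\star$-comodule category, forcing higher Ext to vanish and leaving only a filtration-zero $\Hom$ piece which is again absorbed into $V$.

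The main technical point to check is the passage from freeness to projectivity in the comodule category, which rests on $\cE(0)$ being Frobenius over the base ring $\mM_2$. This follows readily because $\cE(0)$ is an exterior algebra generated by $Q_0$ over $\mM_2$, and the $\mM_2$-projectivity of $\cE(0)_\star$ used in \cref{prop:equiv of cat} guarantees the equivalence of categories preserves projectives and injectives. With this in place, $V$ is identified explicitly as the direct sum of filtration-zero $\Hom$ groups indexed by $m \geq 1$ (and, for $k > 0$, also by $m=0$), yielding an $\mM_2$-vector space concentrated in $f = 0$ as claimed.
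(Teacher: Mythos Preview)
Your proof is correct and follows essentially the same route as the paper: split $H_\star H\umZ$ via \cref{thm:homology:decomp}, then use the freeness and injectivity of $H_\star B_{-1}(m)$ for $m>0$ (from \cref{equiv:BG:free} and \cref{equiv:free:inj}) to force every summand with $k$ or $m$ nonzero into filtration $f=0$. Your explicit separation into the injective-target case ($m\ge 1$) and the projective-source case ($m=0$, $k>0$), together with the Frobenius justification, is just a more detailed unpacking of what the paper compresses into the single phrase ``free and injective.''
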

\begin{proof}
Using the isomorphism 
\[
H_\star H \umZ \cong \bigoplus_{m = 0}^\infty H_\star \Sigma^{\rho m}  B_{- 1}(m) 
\]
of $\cE(0)_\star$-comodules given by \cref{thm:homology:decomp} yields
\begin{align*}
E_2^{s, f, w} & \cong \Ext_{\cE(0)_{\star}}^{s, f, w} (H_{\star} \Sigma^{\rho k} B_{-1}(k), H_{\star} H \umZ )  \\
&  \cong \bigoplus\limits_{m=0}^{\infty} \Ext_{\cE(0)_{\star}}^{s,f,w} \big( H_{\star} \Sigma^{\rho k} B_{-1}(k), H_{\star} \Sigma^{\rho m} B_{-1}(m) \big).
\end{align*}

Since $H_{\star}B_{-1}(k)$ is a free and injective $\cE(0)_{\star}$-comodule when $k > 0$ by \cref{equiv:BG:free}, any summand $\Ext_{\cE(0)_{\star}}^{s,f,w} \big(H_{\star} \Sigma^{\rho k} B_{-1}(k), H_{\star} \Sigma^{\rho m} B_{-1}(m) \big)$ with $k$ or $m$ nonzero must be concentrated on the ($f=0$)-line. Then since $H_{\star}B_{-1}(0) \cong \mM_{2}$ by definition,
\[ E_2^{s, f,w} \cong \Ext_{E(0)_{\star}}^{s,f,w} (\mM_2, \mM_2) \oplus V, \]
where $V$ is an $\mM_{2}$-free module concentrated in filtration $f=0$. 
\end{proof}

We now compute $\Ext_{\cE(0)_{\star}}^{s,f,w} (\mM_2, \mM_2).$ Our computation closely follows that of \newline $\Ext_{\cE(1)_{\star}}^{s,f,w} (\mM_2, \mM_2) $ in \cite{GuillouHillIsaksenRavenel2020}, which uses simpler $\mC$- and $\mR$-motivic calculations as stepping stones. In that vein, we view $C_2$-equivariant coefficients as tensored up from $\mR$-motivic coefficients. In particular, 
\[ \cE(n)_\star \cong \mM_{2} \otimes_{{\mM}_{2}^{\mR} }\cE_{\star}^{\mR}(n)\]
(see \cite[Equation~2.4]{GuillouHillIsaksenRavenel2020} for more details).

Let $NC$ denote the negative cone, so that $\mM_2 \cong \mM_{2}^{\mR} \oplus NC$ as an $\mF_{2}[\tau]$-module. Then the square-zero extension $\mM_{2} \cong \mM_{2}^{\mR} \oplus NC$ induces a decomposition \cite[page~8]{GuillouHillIsaksenRavenel2020} 
\[ \Ext_{\cE(n)_\star}(\mM_{2}, \mM_{2}) \cong \Ext_{\cE_{\star}^{\mR}(n)}(\mM_{2}^{\mR}, \mM_{2}^{\mR}) \oplus \Ext_{\cE_{\star}^{\mR}(n)}(NC, \mM_{2}^{\mR})  \]
and one can use the $\rho$-Bockstein spectral sequence to analyze each of these summands. 

\begin{proposition}[\cite{GuillouHillIsaksenRavenel2020} Proposition~3.1] \label{prop:rhoBockstein}
  There is a $\rho$-Bockstein spectral sequence
  \[ E_{1} = \Ext_{\gr_{\rho} \cE(n)_\star}(\gr_{\rho} \mM_2, \gr_{\rho} \mM_2) \Longrightarrow \Ext_{\cE(n)_\star}(\mM_{2}, \mM_{2}) ,\]
  such that the differential $d_{r}$ sends a class in degree $(s,f,w)$ to a class of degree $(s-1, f+1, w)$. Furthermore, the spectral sequence decomposes into the following two pieces:
  \[ E_{1}^{+} = \Ext_{\mC}[\rho] \Longrightarrow \Ext_{\cE_{\star}^{\mR}(n)}(\mM_{2}^{\mR}, \mM_{2}^{\mR}) \]
  and 
  \[ E_{1}^{-} \cong \bigoplus\limits_{s=0}^{\infty} \frac{\mM_{2}^{\mC}}{\tau^{\infty}} \Bigg{\{ } \frac{\gamma}{\rho^{s}} \Bigg{ \} } \underset{\mM_{2}^{\mC}}{\otimes} \Ext_{\cE_\star^{\mC}(n)} (\mM_{2}^{\mC}, \mM_{2}^{\mC})   \Longrightarrow \Ext_{\cE_{\star}^{\mR}(n)}(NC, \mM_{2}^{\mR}). \]
\end{proposition}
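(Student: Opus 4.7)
The plan is to adapt the construction from \cite{GuillouHillIsaksenRavenel2020} Proposition~3.1, where the spectral sequence is built for $\cE(1)$, to the case of general $\cE(n)$. First, I would filter a minimal injective $\cE(n)_\star$-comodule resolution of $\mM_2$ by powers of $\rho$. Since $\rho \in \mM_2$ is central in the Hopf algebroid $\cE(n)_\star$ (as $\eta_R(\rho) = \rho$ by \cite[Theorem~6.41]{HuKriz2001}), the filtration is compatible with the cobar differential. Passing to the associated graded yields a spectral sequence whose $E_1$-page is $\Ext_{\gr_\rho \cE(n)_\star}(\gr_\rho \mM_2, \gr_\rho \mM_2)$. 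The differential degree $(s, f, w) \mapsto (s - 1, f + 1, w)$ is the standard degree of a Bockstein associated to a filtration by an element of bidegree $(-1, -1)$: a class represented by $x$ with $d(x) = \rho^r y$ gives $d_r[x] = [y]$, and the arithmetic of $|\rho| = (-1, -1)$ accounts for the tridegree shift.

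Next, I would exploit the square-zero decomposition $\mM_2 \cong \mM_2^\mR \oplus NC$ of \cref{sec:C2point}. Because $NC$ is a square-zero ideal, this is an additive splitting of $\cE(n)_\star$-comodules, and the $\rho$-Bockstein spectral sequence decomposes compatibly into a positive cone and a negative cone piece. For the positive cone, $\gr_\rho \mM_2^\mR \cong \mM_2^\mC[\rho]$, so a change-of-rings argument identifies $E_1^+$ with $\Ext_{\cE^\mC(n)_\star}(\mM_2^\mC, \mM_2^\mC)[\rho]$, converging to $\Ext_{\cE^\mR(n)_\star}(\mM_2^\mR, \mM_2^\mR)$. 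For the negative cone, the explicit presentation
\[
NC \cong \bigoplus_{s \ge 0} \frac{\mF_2[\tau]}{\tau^\infty}\Bigl\{\frac{\gamma}{\rho^s}\Bigr\}
\]
of \cref{sec:C2point} directly gives the claimed form of $E_1^-$ once one applies a second change-of-rings to pull the $\mM_2^\mC$-tensor factor outside of $\Ext$.

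The main obstacle is the convergence statement for the negative cone piece, since $NC$ is $\rho$-divisible rather than $\rho$-adically complete, so the usual strong convergence theorems for filtrations by powers of a central element do not directly apply. This is resolved exactly as in GHIR: in each fixed tridegree, only finitely many summands $\frac{\mF_2[\tau]}{\tau^\infty}\{\gamma/\rho^s\}$ contribute, so the spectral sequence is finite in each tridegree and converges strongly there. Beyond this, the extension from $n = 1$ to general $n$ is formal, as the construction relies only on $\cE(n)_\star$ being a flat, finitely generated $\mM_2$-Hopf algebra with $\rho$ central.
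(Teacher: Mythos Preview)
The paper does not give its own proof of this proposition: it is stated with attribution to \cite[Proposition~3.1]{GuillouHillIsaksenRavenel2020} and used as a black box, so there is no in-paper argument to compare against. Your outline is a faithful sketch of how the GHIR construction goes (filter by powers of the central element $\rho$, use the square-zero splitting $\mM_2 \cong \mM_2^\mR \oplus NC$ to separate positive and negative cone pieces, and identify the associated graded via change of rings), and the passage from $n=1$ to general $n$ is indeed formal since nothing in the construction is specific to $\cE(1)$. One small wording issue: the $\rho$-Bockstein is typically set up by filtering the cobar complex itself by powers of $\rho$, rather than by filtering a minimal injective resolution; these produce the same spectral sequence, but the cobar description is what makes the differential degree and the $E_1$-identification transparent.
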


\subsubsection{Analyzing $E_{1}^{+}$}\label{sec: positive cone HZ} In \cite{Hill11}, Hill gives a complete calculation of $\Ext_{\cE^{\mR}(n)}(\mM_{2}^\mR, \mM_{2}^\mR).$ We are only working over $\cE(0)$ right now, so we state that portion of the result here. 
\begin{proposition}[\cite{Hill11} Thm~3.2]
    There exists a $\rho$-Bockstein spectral sequence 
\[ E_{1} = \Ext_{\cE^{\mC}(0)}(\mM_{2}^{\mC}, \mM_{2}^{\mC})[\rho] = \mF_{2}[\tau, v_{0}, \rho] \Longrightarrow Ext_{\cE^{\mR}(0)}(\mM_{2}^{\mR}, \mM_{2}^{\mR}),\]
with differential $d_{1}(\tau) = \rho v_{0}$.
\end{proposition}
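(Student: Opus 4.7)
The plan is to realize the $\rho$-Bockstein spectral sequence as the one associated to the $\rho$-adic filtration on the cobar complex computing $\Ext_{\cE^{\mR}(0)_{\star}}(\mM_{2}^{\mR}, \mM_{2}^{\mR})$, identify the $E_{1}$-page by base change to the $\mC$-motivic situation, and read off $d_{1}(\tau)$ from the Hu--Kriz formula for the right unit.

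Since $\rho$ is a nonzero divisor on $\mM_{2}^{\mR} = \mF_{2}[\tau, \rho]$ and acts centrally ($\eta_{R}(\rho) = \rho$), the powers of $\rho$ give a multiplicative filtration of the cobar complex $\Omega^{*}(\cE^{\mR}(0)_{\star}; \mM_{2}^{\mR})$. Reduction modulo $\rho$ identifies $\cE^{\mR}(0)_{\star}$ with $\cE^{\mC}(0)_{\star}$ and $\mM_{2}^{\mR}$ with $\mM_{2}^{\mC}$, so the associated graded is $\Omega^{*}(\cE^{\mC}(0)_{\star}; \mM_{2}^{\mC}) \otimes \mF_{2}[\rho]$. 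Taking cohomology yields $E_{1} \cong \Ext_{\cE^{\mC}(0)_{\star}}(\mM_{2}^{\mC}, \mM_{2}^{\mC})[\rho]$. The $\mC$-motivic Ext is easy to evaluate: $\cE^{\mC}(0)_{\star}$ is a genuine Hopf algebra over $\mM_{2}^{\mC}$, exterior on the coproduct-primitive generator $\tau_{0}$ (the relation $\tau_{0}^{2} = \tau\xi_{1}$ from the full dual Steenrod algebra becomes trivial in the quotient), so its cobar complex has zero differential and $\Ext \cong \mM_{2}^{\mC}[v_{0}] = \mF_{2}[\tau, v_{0}]$, producing $E_{1} \cong \mF_{2}[\tau, v_{0}, \rho]$ as claimed.

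For the differential, the crucial input is the Hu--Kriz formula $\eta_{R}(\tau) = \tau + \rho\tau_{0}$ in the $C_{2}$-equivariant (equivalently, $\mR$-motivic) dual Steenrod algebra, which persists in the sub-Hopf-algebroid $\cE^{\mR}(0)_{\star}$ because $\tau_{0}$ survives in the quotient. Lifting the class $\tau$ to a cochain in the $\mR$-motivic cobar complex and applying the cobar differential gives $\eta_{R}(\tau) - \eta_{L}(\tau) = \rho\tau_{0}$, whose $\rho$-filtration is exactly one; hence $d_{1}(\tau) = \rho v_{0}$. The remaining $E_{1}$-generators are $d_{1}$-cycles: $\rho$ is a central coefficient and $v_{0} = [\tau_{0}]$ is a cobar cycle because $\tau_{0}$ is coproduct-primitive. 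The bidegrees match as well: $\tau$ sits in $(s,f,w) = (0,0,-1)$ while $\rho v_{0}$ sits in $(-1,1,-1)$, consistent with the stated $d_{1}$-shift $(s,f,w) \to (s-1,f+1,w)$.

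The main obstacle I anticipate is recognizing that $\cE^{\mR}(0)_{\star}$ is a Hopf algebroid rather than a Hopf algebra over $\mM_{2}^{\mR}$, with $\eta_{L} \ne \eta_{R}$ precisely because of the $\rho\tau_{0}$-correction on $\tau$. This distinction is exactly what produces the nonzero Bockstein differential; without it one would conclude (falsely) that $\tau$ is a permanent cycle in $\Ext^{0}$. Once this structure is internalized, the argument collapses to the simple $\mC$-motivic Ext computation together with a single explicit coboundary calculation coming from the right unit.
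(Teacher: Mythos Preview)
Your proof is correct and follows the standard approach to this computation. Note, however, that the paper does not actually prove this proposition: it is stated with a citation to \cite{Hill11}, and the authors simply quote Hill's result without reproducing the argument. Your write-up is essentially a reconstruction of Hill's proof --- setting up the $\rho$-adic filtration on the cobar complex, identifying the associated graded with the $\mC$-motivic cobar complex tensored with $\mF_{2}[\rho]$, and then reading off $d_{1}(\tau)$ from the right-unit formula $\eta_{R}(\tau) = \tau + \rho\tau_{0}$ --- which is exactly the expected argument.
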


The values of $\Ext_{\cE^{\mR}(0)}(\mM_{2}^{\mR}, \mM_{2}^{\mR})$ follow immediately.

\begin{proposition}[\cite{Hill11} Thm~3.1]\label{ext:0:pos} There is an isomorphism
    \[ \Ext_{\cE^{\mR}(0)}(\mM_{2}^{\mR}, \mM_{2}^{\mR}) \cong \mF_{2}[\rho, \tau^{2}, v_{0}]/(\rho v_{0}) ,\]

    where $|v_{0}| = (0,1,0)$, $|\tau^{2}| = (0,0,-2)$ and $|\rho| = (-1,0,-1)$.
\end{proposition}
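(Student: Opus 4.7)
The plan is to run the $\rho$-Bockstein spectral sequence supplied by the previous proposition, whose $E_1$-page is the polynomial algebra $\mF_2[\tau, v_0, \rho]$ and whose $d_1$ is determined by the single formula $d_1(\tau) = \rho v_0$. Both $\rho$ and $v_0$ sit in $\rho$-filtration zero and are detected by genuine classes in $\Ext_{\cE^{\mR}(0)}(\mM_2^\mR,\mM_2^\mR)$ (the former coming from the coefficient ring, the latter from the nontrivial class in $\Ext^1_{\cE^\mC(0)}$), so they are permanent cycles and in particular $d_1$-cycles.

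The first step is to compute $d_1$ on all monomials using the Leibniz rule. Since $d_1(\rho)=d_1(v_0)=0$, multiplicativity gives
\[
d_1(\tau^k \rho^a v_0^b) \;=\; k\,\tau^{k-1}\rho^{a+1}v_0^{b+1},
\]
which in characteristic two vanishes for $k$ even and equals $\tau^{k-1}\rho^{a+1}v_0^{b+1}$ for $k$ odd. Consequently the kernel of $d_1$ has $\mF_2$-basis the monomials $\tau^{2n}\rho^{a}v_0^{b}$ with $n,a,b\geq 0$, and its image has basis the monomials $\tau^{2n}\rho^{A}v_0^{B}$ with $n\geq 0$ and $A,B\geq 1$. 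Passing to $E_2$ therefore gives the algebra
\[
E_2 \;\cong\; \mF_2[\tau^2,\rho,v_0]/(\rho v_0),
\]
with the claimed tridegrees $|\tau^2|=(0,0,-2)$, $|\rho|=(-1,0,-1)$, $|v_0|=(0,1,0)$ read off from the degrees on the $E_1$-page.

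The second step is to show the spectral sequence collapses at $E_2$. Each algebra generator survives: $\rho$ and $v_0$ are permanent by the remark above, while $\tau^2$ is detected by the actual element $\tau^2\in\mM_2^\mR$ (the relation $\Sq^1(\tau) = \rho$ in $\mathcal{A}^\mR$ accounts for the truncation $d_1(\tau)=\rho v_0$, but $\tau^2$ is $\Sq^1$-closed and hence genuinely lifts). Since every class in $E_2$ is a product of permanent cycles, all higher differentials vanish by multiplicativity, and the extension problem from $E_\infty=E_2$ to $\Ext_{\cE^\mR(0)}(\mM_2^\mR,\mM_2^\mR)$ is trivial because the associated graded is already concentrated in a single $\rho$-filtration in each tridegree.

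The main obstacle is the collapse argument: showing $\tau^2$ is a permanent cycle is the only nonformal input, since one has to argue that the relation $d_1(\tau)=\rho v_0$ exhausts all Bockstein differentials on powers of $\tau$. An alternative, and perhaps cleaner, route would be to bypass this by exhibiting $\mF_2[\tau^2,\rho,v_0]/(\rho v_0)$ as a lower bound via an explicit minimal free resolution of $\mM_2^\mR$ over $\cE^\mR(0)_\star$ and matching it against $E_2$ for size, which forces $d_r=0$ for all $r\geq 2$.
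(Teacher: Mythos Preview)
Your proposal is correct and follows the same approach as the paper: both deduce the result from the $\rho$-Bockstein spectral sequence of the preceding proposition with $d_1(\tau)=\rho v_0$. The paper simply records that the answer ``follows immediately'' and cites Hill, whereas you have written out the Leibniz-rule computation of $E_2$ and the collapse argument explicitly; your added justification that $\tau^2$, $\rho$, and $v_0$ are permanent cycles (and that each tridegree is concentrated in a single $\rho$-filtration, ruling out hidden extensions) is exactly the content behind that ``immediately.''
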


\subsubsection{Analyzing $E_{1}^{-}$}\label{sec: negative cone HZ}
It remains to calculate the $E_{1}^{-}$ summand in the $\rho$-Bockstein spectral sequence. By \cref{prop:rhoBockstein} (\cite[Proposition~3.1]{GuillouHillIsaksenRavenel2020}), 
\[
E_{1}^{-} \cong \bigoplus\limits_{s=0}^{\infty} \frac{\mM_{2}^{\mC}}{\tau^{\infty}} \Bigg{\{ } \frac{\gamma}{\rho^{s}} \Bigg{ \} } \underset{\mM_{2}^{\mC}}{\otimes} Ext_{E^{\mC}(0)} (\mM_{2}^{\mC}, \mM_{2}^{\mC})   \Longrightarrow Ext_{\cE^{\mR}(0)}(NC, \mM_{2}^{\mR}).
\]
To determine the differentials, we use the strategy described in \cite[\textsection~7.2]{GuillouHillIsaksenRavenel2020}. The argument relies heavily on the $\rho$-Bockstein differential for $E^+$ and uses that $E^{-}$ is an $E^{+}$-module. In particular, $E_{1}^{-}$ is generated over $E_{1}^{+}$ by the elements $\frac{\gamma}{\rho^{j}\tau^{k}}$.

The differentials in $E^{-}$ are infinitely divisible by $\rho$, meaning that if $d_{r}(x) = y$, then $d_{r}(\frac{x}{\rho^{j}}) = \frac{y}{\rho^{j}} $ for all $j \ge 0$. So all differentials in the $E^{-}$-summand of the $\rho$-Bockstein spectral sequence are determined by the differential 
\[
d_{1}\left(\frac{\gamma}{\rho\tau^{2k+1}}\right) = \frac{\gamma}{\tau^{2k+2}}v_{0},
\]
in combination with the Leibnitz rule, $E^{+}$-module structure, and infinite-$\rho$-divisibility \newline \cite[\textsection 7.2, Proposition 7.7]{GuillouHillIsaksenRavenel2020}.

Thus using the $\rho$-Bockstein spectral sequence, we have the following computation. 

 We will use the notation
\[ \Ext_{\cE^{+}_{\star}(0)}(\mM_{2}, \mM_{2}) := \Ext_{\cE^{\mR}_{\star}(0)}(\mM_{2}^{\mR}, \mM_{2}^{\mR})\]
\[ \Ext_{\cE^{-}_{\star}(0)}(\mM_{2}, \mM_{2}) := \Ext_{\cE^{\mR}_{\star}(0)}(NC, \mM_{2}^{\mR}).\]

\begin{proposition}\label{ext:0:neg} 
The summand $\Ext_{\cE^{-}_{\star}(0)}(\mM_2, \mM_{2})$ consists of two components:

\begin{enumerate}
    \item elements of the form $\frac{\gamma}{\tau^{2n}}\left[\frac{1}{\rho}\right]$, which are $v_{0}$-torsion and concentrated on the $(f=0)$-line, and 

    \item $v_{0}$-towers, of the form $\mF_{2}[v_{0}]\big{ \{ }\frac{\gamma}{\tau^{2n + 1}} \big{ \} }$. 
\end{enumerate}
\end{proposition}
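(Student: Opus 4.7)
The plan is to run the $\rho$-Bockstein spectral sequence identified in the preceding proposition to its $E_\infty$-page. Because $E_1^-$ is a module over the positive-cone spectral sequence $E_1^+$, every Bockstein differential is determined by $d_1(\tau) = \rho v_0$ together with the Leibniz rule, $\mF_2[v_0]$-linearity (since $v_0$ is a permanent cycle), and the infinite $\rho$-divisibility characterizing $NC$.

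A direct Leibniz calculation gives $d_1(\tau^{-k}) = k\rho v_0 \tau^{-k-1}$, which in characteristic $2$ vanishes when $k$ is even and equals $\rho v_0 \tau^{-k-1}$ when $k$ is odd. Thus for $s \geq 1$,
\[
d_1\!\left(\frac{\gamma v_0^j}{\rho^s \tau^{2m+1}}\right) = \frac{\gamma v_0^{j+1}}{\rho^{s-1}\tau^{2m+2}} \qquad \text{and} \qquad d_1\!\left(\frac{\gamma v_0^j}{\rho^s \tau^{2m}}\right) = 0.
\]
On the $s=0$ summand the formal Leibniz target involves $\rho$ applied to a generator of the $s=0$ layer of $NC$, which is zero by the very definition of $NC$; hence $d_1$ vanishes identically when $s=0$. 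The main point requiring care is precisely this edge case at $s=0$, which is forced by the module structure of $NC$ rather than by the Leibniz formula itself.

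Reading off the survivors column by column then yields the claim. For each $s \geq 1$, every class with an odd $\tau$-power in the denominator supports a $d_1$-differential and every $v_0$-positive class with an even $\tau$-power in the denominator is a $d_1$-boundary, so what survives on that column are exactly the $v_0$-torsion classes $\frac{\gamma}{\rho^s \tau^{2n}}$ on the $(f=0)$-line. For $s=0$ nothing supports a $d_1$, and the boundaries arriving from $s=1$ eliminate precisely the $v_0$-positive classes $\frac{\gamma v_0^{j+1}}{\tau^{2n}}$ with $j \geq 0$; what remains are the $v_0$-torsion classes $\frac{\gamma}{\tau^{2n}}$ in filtration zero together with the full $v_0$-towers $\mF_2[v_0]\{\frac{\gamma}{\tau^{2n+1}}\}$. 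Assembling over all $s$ gives the two summands in the statement, and collapse at $E_2$ follows by degree reasons: higher Bockstein differentials $d_r$ ($r \geq 2$) would need to jump by $r \geq 2$ in filtration, but the $v_0$-torsion survivors are confined to filtration zero and each $v_0$-tower is $v_0$-linearly tethered to its permanent-cycle base in filtration zero, leaving no possible targets compatible with both the trigrading and $v_0$-linearity.
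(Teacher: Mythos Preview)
Your argument is correct and follows exactly the paper's approach: run the negative-cone summand $E_1^-$ of the $\rho$-Bockstein, propagate the single differential $d_1(\tau)=\rho v_0$ via the Leibniz rule, the $E^+$-module structure, and infinite $\rho$-divisibility, and read off the survivors. Your $E_2$ computation is in fact more explicit than the paper's, which simply asserts the outcome.

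One imprecision worth flagging in your collapse argument: the sentence ``higher Bockstein differentials $d_r$ ($r\ge 2$) would need to jump by $r\ge 2$ in filtration'' conflates the $\rho$-Bockstein filtration with the Adams filtration $f$. In this spectral sequence every $d_r$ raises $f$ by exactly $1$ (cf.\ the paper's statement that $d_r$ has tridegree $(-1,1,0)$); what changes with $r$ is the hidden $\rho$-filtration. The correct degree argument is: at $E_2^-$ the only classes in $f\ge 1$ are the $v_0$-towers, which sit at $\rho$-filtration $0$ and hence (since $d_r$ raises $\rho$-filtration by $r$) can neither support nor receive a $d_r$ with $r\ge 2$ --- any putative source $\frac{\gamma}{\rho^s\tau^{2n}}$ has stem $s$, so its $d_r$-target has stem $s-1$, whereas the tower classes all have stem $0$, forcing $s=1$ and thus $r=1$. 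Your $v_0$-linearity remark is not needed once this is said.
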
  

The summand $\Ext_{\cE^{-}_{\star}(0)}(\mM_2, \mM_{2})$ is illustrated by the chart appearing in \cref{fig:Ext_M2NC}. Throughout we use the color orange to depict classes which belong to representation-graded shifts of $\Ext_{\cE(0)_\star}(\mM_2, \mM_{2})$. This is particularly helpful in \cref{sec:height1split} where we use a long exact sequence involving both $\Ext_{\cE(0)_\star}(\mM_2, \mM_{2})$ and $\Ext_{\cE(1)_\star}(\mM_2, \mM_{2})$. Though the color is not essential to viewing these charts, we do find it benefits the readability and take care to choose a color palette  optimized for color-blind individuals \cite{Wong2011}.  

\begin{center}
\begin{figure}[ht]
\includegraphics[width=2in,height=2in]{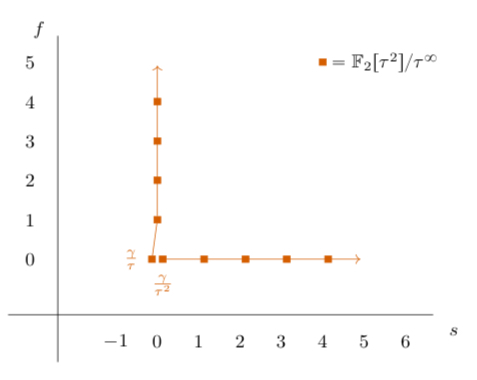}
\caption{$\Ext_{\cE^{-}_{\star}(0)}(\mM_2^\mR, \mM_{2}^{\mR})$} 
\label{fig:Ext_M2NC}
\end{figure}
\end{center}
Now that we have both the positive cone summand $\Ext_{\cE^{+}_{\star}(0)}(\mM_2^\mR, \mM_2^\mR)$ and negative cone summand $\Ext_{\cE^{-}_\star(0)}(\mM_2^\mR, \mM_{2}^{\mR})$ of $\Ext_{\cE_{\star}(n)}(\mM_{2}, \mM_{2})$, we are ready to show that the Adams spectral sequence (\ref{eq:heightZeroAss}) collapses.
\subsubsection{Running the Adams spectral sequence}
\begin{proposition}
The Adams spectral sequence 
\begin{align*}
 E_2^{s, f,w} = \Ext_{\cE(0)_{\star}}^{s,t} (H_{\star} \Sigma^{\rho k} \cB_{-1}(k), H_{\star} H \umZ ) \implies [H \umZ \wedge \cB_{-1} (k), H \umZ \wedge H \umZ]^{H \umZ}
\end{align*}
collapses at the $E_2$-page.\end{proposition}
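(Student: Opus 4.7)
The plan is to classify each class on the $E_2$-page as either $v_0$-torsion or $v_0$-torsion-free and then rule out all differentials by a combination of a positional argument on stem/filtration and the standard algebraic torsion obstruction. Using \cref{E2:zero:split} together with \cref{ext:0:pos} and \cref{ext:0:neg}, I first write
\[
E_2 \cong \Ext^{PC}_{\cE(0)_\star}(\mM_2,\mM_2)\ \oplus\ \Ext^{NC}_{\cE(0)_\star}(\mM_2,\mM_2)\ \oplus\ V,
\]
where the positive cone summand is $\mF_2[\rho,\tau^2,v_0]/(\rho v_0)$, the negative cone summand consists of the $v_0$-torsion family $\tfrac{\gamma}{\tau^{2n}}\bigl[\tfrac{1}{\rho}\bigr]$ on the $f=0$ line together with the $v_0$-towers $\mF_2[v_0]\bigl\{\tfrac{\gamma}{\tau^{2n+1}}\bigr\}$, and $V$ is concentrated on $f=0$.

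The two key positional facts are: every $v_0$-torsion-free class lives in stem $s=0$ (namely the PC classes $v_0^k\tau^{2j}$ and the NC classes $v_0^k\cdot\tfrac{\gamma}{\tau^{2n+1}}$), and every $v_0$-torsion class lives on the line $f=0$ (namely the PC classes $\rho^i\tau^{2j}$ with $i\geq 1$, the NC classes $\tfrac{\gamma}{\rho^e\tau^{2n}}$, and all of $V$, since $v_0$ raises Adams filtration by one while $V$ sits entirely in $f=0$).

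With these in hand, the collapse follows in two steps. Since $d_r:(s,f,w)\mapsto(s-1,f+r,w)$ with $r\geq 2$, every possible differential target has $f\geq 2$, hence is $v_0$-torsion-free. For a $v_0$-torsion source $x$, choose $N$ with $v_0^N x=0$; then $v_0^N d_r(x)=d_r(v_0^N x)=0$, and torsion-freeness of the target forces $d_r(x)=0$. For a $v_0$-torsion-free source, the source sits at $s=0$ and the target must sit at $s=-1$ with $f\geq 2$. The positive cone has no classes in bidegrees $(s,f)$ with $s=-1$, $f\geq 1$ because the relation $\rho v_0=0$ eliminates every such product; the negative cone has all of its classes in stems $s\geq 0$; and $V$ lies in $f=0$. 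Thus no targets exist, and every $d_r$ vanishes.

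The main obstacle is the bookkeeping inside the negative cone: one must verify carefully from the $\rho$-Bockstein analysis leading to \cref{ext:0:neg} that the only surviving $\Ext^{NC}$ classes in positive Adams filtration are the $v_0$-towers on $\tfrac{\gamma}{\tau^{2n+1}}$ sitting at $s=0$, and in particular that nothing survives in $(s,f)$ with $s=-1$ and $f\geq 1$. Once this is read off, the positional $v_0$-torsion analysis completes the collapse argument.
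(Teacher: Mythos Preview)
Your argument is correct. The $v_0$-linearity step is the same idea the paper uses, but you apply it uniformly to every $v_0$-torsion class on the $f=0$ line (the PC classes $\rho^i\tau^{2j}$ with $i\geq 1$, the NC classes $\tfrac{\gamma}{\rho^e\tau^{2n}}$, and all of $V$), whereas the paper only states it explicitly for $V$. For the remaining $v_0$-torsion-free sources you give a stem-based argument: every such class sits at $s=0$, so a target would have to lie at $s=-1$ with $f\geq 2$, and you check directly that no such class exists. The paper instead dispatches these remaining differentials with a Milnor--Witt parity argument: all classes in filtration $f>0$ of $\Ext_{\cE(0)_\star}(\mM_2,\mM_2)$ have even Milnor--Witt degree $s-w$, and since $d_r$ shifts this by $-1$ there are no differentials among them. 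Your stem argument is a bit more elementary (it avoids tracking the weight grading), while the paper's parity argument would generalize more readily to situations where the torsion-free classes are spread across several stems; in this particular case both do the job.
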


\begin{proof}
We showed in \cref{E2:zero:split} that the $E_{2}$-page of the Adams spectral sequence has the form
\begin{align*}
E_2^{s, f, w} \cong & \Ext_{\cE(0)_{\star}}^{s,f, w} (H_{\star} \Sigma^{\rho k} \cB_{-1}(k), H_{\star} H \umZ ) \\
\cong & \Ext_{\cE(0)_{\star}}^{s,f,w} (\mM_2, \mM_2) \oplus V,
\end{align*}
where $V$ is an $\mM_{2}$-vector space concentrated in Adams filtration $f=0$. 

First, we note that no nontrivial differentials are possible from $v_{0}$-torsion elements to $v_{0}$-torsion-free elements, because the Adams differential is $v_{0}$-linear. We will continue to use this fact throughout the paper. Thus no differentials are possible from $V$ to $\Ext_{\cE(0)_{\star}}^{s,f,w} (\mM_2, \mM_2)$. Next, note that no differentials are possible from $V$ to $V$ or from $\Ext_{\cE(0)_{\star}}^{s,f,w} (\mM_2, \mM_2)$ to $V$, as the Adams differential increases filtration degree.  

Therefore the only possible nonzero differentials are those from $\Ext_{E(0)_{\star}}^{s,f,w} (\mM_2, \mM_2)$ to itself. However, $\Ext_{E(0)_{\star}}^{s,f,w} (\mM_2, \mM_2)$ is concentrated in even Milnor--Witt degree, where Milnor--Witt degree is defined to be $s-w$, that is stem minus motivic weight. Since the Adams differential has Milnor--Witt degree $-1$, this rules out any nontrivial differentials. 
\end{proof}

Thus we can lift the maps $\theta_k : \Sigma^{\rho k} H_{\star} \cB_{-1} (k) \to H_{\star} H \umZ$ to maps
\[
\Tilde{\theta}_k : H \umZ \wedge \Sigma^{\rho k} \cB_{-1} (k) \to H \umZ \wedge H \umZ
\]
for all $n \in \mN$, and we have proved a $C_{2}$-equivariant analogue of Mahowald's splitting of $H \umZ \wedge H \umZ$.

\begin{theorem}\label{thm: height zero splitting} Up to $2$-completion there is a splitting
\[ 
H \umZ \wedge H \umZ \simeq \bigvee\limits_{k=0}^{\infty} H \umZ \wedge \Sigma^{\rho k}\cB_{-1}(k) 
\]
of $H \umZ$-modules.
\end{theorem}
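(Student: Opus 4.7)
The plan is to assemble the machinery developed throughout Section~\ref{sec: height zero}. The starting point is \cref{thm:homology:decomp}, which provides $\cE(0)_\star$-comodule maps $\theta_k : \Sigma^{\rho k} H_\star \cB_{-1}(k) \to H_\star H\umZ$ whose direct sum is an isomorphism. Passing through the relative-homology dictionary of \cref{sec:Relative}, each $\theta_k$ represents a class in tridegree $(0,0,0)$ on the $E_2$-page of the $H\umZ$-relative Adams spectral sequence \eqref{eq:heightZeroAss}. Producing a spectrum-level map $f_k : H\umZ \wedge \Sigma^{\rho k} \cB_{-1}(k) \to H\umZ \wedge H\umZ$ realizing $\theta_k$ then amounts to proving that this class is a permanent cycle.

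Next, I would invoke the decomposition of \cref{E2:zero:split},
\[
E_2^{s,f,w} \cong \Ext_{\cE(0)_\star}^{s,f,w}(\mM_2,\mM_2) \oplus V,
\]
where $V$ sits entirely in Adams filtration $f=0$. The $\rho$-Bockstein computations of \cref{ext:0:pos,ext:0:neg} identify every class in $\Ext_{\cE(0)_\star}(\mM_2,\mM_2)$ of filtration $f>0$ as lying in a $v_0$-tower, either inside the positive-cone piece $\mF_2[\rho,\tau^2,v_0]/(\rho v_0)$ or inside one of the negative-cone towers $\mF_2[v_0]\{\gamma/\tau^{2n+1}\}$.

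The key step is collapse. Potential differentials originating in $V$ are ruled out by $v_0$-linearity: any target of positive filtration lies in a $v_0$-tower, so if $x\in V$ and $d_r(x)=y\neq 0$, then $v_0 y = v_0\, d_r(x) = d_r(v_0 x) = 0$, contradicting $v_0 y\neq 0$ in a tower. The only remaining differentials are internal to $\Ext_{\cE(0)_\star}(\mM_2,\mM_2)$, but every class of positive filtration there is concentrated in even Milnor--Witt degree $s-w$, while each $d_r$ shifts Milnor--Witt degree by $-1$; this parity obstruction rules out all higher differentials. The main obstacle I anticipate is the bookkeeping of the negative cone — confirming the Milnor--Witt parity of the infinitely-divisible towers generated by $\gamma/\tau^{2n+1}$ and checking that the $v_0$-action interacts correctly with infinite $\rho$-divisibility in each of the summands.

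Once each $\theta_k$ is shown to be a permanent cycle, convergence of the spectral sequence (guaranteed by the finiteness of $E_2^{s,f,w}$ in each tridegree, as noted in \cref{sec:heightzerostrategy}) yields the spectrum-level maps $f_k$. Their wedge $\bigvee_k f_k$ induces the $\cE(0)_\star$-comodule isomorphism of \cref{thm:homology:decomp} on $H_\star$, and a standard Whitehead-type argument — valid after $2$-completion since both sides are $H\umZ$-modules of finite type — promotes this homology equivalence to the claimed equivalence of $H\umZ$-modules.
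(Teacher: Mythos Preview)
Your proposal is correct and follows essentially the same route as the paper: lift the $\cE(0)_\star$-comodule isomorphism of \cref{thm:homology:decomp} by showing the classes $\theta_k$ survive the $H\umZ$-relative Adams spectral sequence \eqref{eq:heightZeroAss}, using the decomposition of \cref{E2:zero:split} together with the $v_0$-linearity argument for $V$ and the Milnor--Witt parity argument for $\Ext_{\cE(0)_\star}(\mM_2,\mM_2)$ to force collapse at $E_2$. The only cosmetic difference is that the paper leaves the final Whitehead step implicit, simply concluding the splitting once the $\theta_k$ are known to lift.
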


\begin{cor}\label{cor: height zero simples} Up to $2$-completion there is a splitting
    \[  H \umZ \wedge H \umZ \simeq H \umZ \vee Z,\]
where $Z$ is a sum of suspensions of $H$.
\end{cor}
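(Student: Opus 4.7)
My plan is to extract the corollary directly from the splitting
\[
H\umZ \wedge H\umZ \;\simeq\; \bigvee_{k \geq 0} H\umZ \wedge \Sigma^{\rho k} \cB_{-1}(k)
\]
of \cref{thm: height zero splitting} by analyzing each summand separately. The $k = 0$ term is immediate: $\cB_{-1}(0)$ realizes the weight-zero subcomodule $\mM_2 \subseteq \cA_\star$, so $\cB_{-1}(0) \simeq S^0$ and this summand contributes a single copy of $H\umZ$. The task therefore reduces to proving that, for every $k \geq 1$, the $H\umZ$-module $H\umZ \wedge \cB_{-1}(k)$ is, up to $2$-completion, a wedge of suspensions of $H$.

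For $k \geq 1$, \cref{equiv:BG:free} tells us that $H_\star \cB_{-1}(k)$ is free, hence injective, as an $\cE(0)_\star$-comodule; fix an isomorphism $H_\star \cB_{-1}(k) \cong \bigoplus_i \Sigma^{n_i} \cE(0)_\star$. Combining this with the identification $H_\star^{H\umZ} H \cong \cE(0)_\star$ of \cref{prop:relHomology}, the natural candidate equivalence is an $H\umZ$-module map
\[
f_k : \bigvee_i \Sigma^{n_i} H \;\longrightarrow\; H\umZ \wedge \cB_{-1}(k)
\]
realizing the chosen basis on $H\umZ$-relative homology.

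To construct $f_k$ and verify it is an equivalence, I would run the $H\umZ$-relative, $H$-based Adams spectral sequence of \cref{prop:relative ASS exists} computing $[\bigvee_i \Sigma^{n_i} H,\; H\umZ \wedge \cB_{-1}(k)]^{H\umZ}$, exactly as in the proof of the main splitting in \cref{sec: height zero}. Its $E_2$-page has the form
\[
\Ext^{s,f,w}_{\cE(0)_\star}\!\left( \textstyle\bigoplus_i \Sigma^{n_i} \cE(0)_\star,\; H_\star \cB_{-1}(k) \right),
\]
which vanishes in positive Adams filtration because $\cE(0)_\star$ is free, and hence projective, as a comodule over itself. The spectral sequence therefore collapses at $E_2$ with everything concentrated in filtration zero, producing the map $f_k$ whose induced map on $H\umZ$-relative homology is the prescribed $\cE(0)_\star$-comodule isomorphism. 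Convergence of the relative Adams spectral sequence, ensured by the finiteness of $\cB_{-1}(k)$ together with the $2$-completion, then upgrades this homology isomorphism to an equivalence of $H\umZ$-modules.

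The principal obstacle will be the same kind of bookkeeping that dominates \cref{sec: height zero}: organizing the bigraded suspensions $n_i$ across the positive and negative cones of $\mM_2$, and checking that the positive-filtration portion of the $\Ext$ groups genuinely vanishes once $\cE(0)_\star$-freeness is in hand. Both are expected to go through by direct analogy with the filtration and collapse arguments already carried out in the preceding section.
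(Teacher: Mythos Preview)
Your proposal is correct and follows essentially the same route as the paper: identify the $k=0$ summand with $H\umZ$ via $\cB_{-1}(0)\simeq S^0$, and for $k>0$ use the freeness of $H_\star\cB_{-1}(k)$ over $\cE(0)_\star$ from \cref{equiv:BG:free} to recognize each remaining summand as a wedge of suspensions of $H$. The paper's proof records only the homology identification and leaves the spectrum-level lift implicit; your relative Adams spectral sequence argument is exactly the standard way to supply that lift (and is the same mechanism used in \cref{prop: other splitting}), so you have simply made explicit what the paper takes for granted. Your closing paragraph about bookkeeping across the positive and negative cones is unnecessary here: once the first variable in $\Ext_{\cE(0)_\star}$ is free, the higher $\Ext$ vanishes on the nose, with no cone analysis required.
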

\begin{proof}
    By definition $\cB_{-1}(0) \simeq S^{0}$, so $H \umZ \wedge \cB_{-1}(0) \simeq H \umZ$. When $k > 0$, $H_{\star}\cB_{-1}(k)$  is a sum of suspensions of $\cE(0)_{\star}$. Since $H_{\star}^{H\umZ}H\cong \cE(0)_{\star}$, so we can choose $Z_{k}$ to be a finite sum of suspensions of $H$ such that $H_{\star}^{H\mZ}Z_{k} \cong H_{\star}\cB_{-1}(k)$. We can then use the relative Adams spectral sequence \[
    \Ext_{\cE(0)_{\star}}\left(H_{\star}\cB_{-1}(k)),  H_\star^{H\umZ} Z_{k} \right) \Longrightarrow [H \umZ \wedge \cB_{-1}(0), Z_{k}]^{H\umZ} 
    \]  
    to lift the isomorphism to an equivalence of $H\umZ$-module spectra. 
    
\end{proof}

\section{Splitting {$ku_\mR \wedge ku_\mR$}}\label{sec:height1split} Let $\cB_0(k)$ denote the $C_2$-equivariant integral Brown--Gitler spectrum defined in \cite{LiPetersenTatum23}. In this section we construct a family of $ku_\mR$-module maps
\[
\Tilde{\theta}_k: \Sigma^{\rho k} ku_\mR \wedge \cB_0(k)\to ku_\mR \wedge ku_\mR
\]
such that 
\[
\bigvee_{k=0}^\infty \Tilde{\theta}_k: \Sigma^{\rho k} ku_\mR \wedge \cB_0(k) \xrightarrow{\simeq} ku_\mR \wedge ku_\mR
\]
is an equivalence (up to 2-completion). 

\subsection{Strategy}\label{sec:heightonestrategy} Our strategy is similar to that of \cref{sec:heightzerostrategy} where we constructed a splitting of $H \umZ \wedge H \umZ.$ We begin by considering the $\mathcal{E}(1)_\star$-comodule isomorphism
$$ \bigoplus_{k = 0}^\infty \theta_k : H_\star \Sigma^{k \rho} B_{0} (k) \to H_\star ku_\mR $$
given by \cref{thm:homology:decomp} in the case where $n = 1$. We write these as a family of maps
$$ \bigoplus_{k = 0}^\infty \theta_k : H_\star^{ku_\mR} \left(ku_\mR \wedge \Sigma^{\rho k} \cB_{0} (k)\right) \to H_\star^{ku_\mR} (ku_\mR \wedge ku_\mR).$$
and then study the
Adams spectral sequence
\begin{equation}\label{eq:AdamsSs}
     E_2^{s, f, w} = \Ext_{\mathcal{E}(1)_\star} \left(H_\star \Sigma^{\rho k} \cB_0(k), H_\star ku_\mR \right) \implies [ku_\mR \wedge \Sigma^{\rho k} \cB_{0} (k),ku_\mR \wedge ku_\mR]^{ku_\mR}.
\end{equation} 
Since $E_{2}^{s,f,w}$ is finite in each degree, the spectral sequence converges \cite[Thm~15.6, Thm~7.1]{boardman1999conditionally}. Moreover, each $\theta_k$ is in $E_2^{0,0,0}$, so constructing maps 
\[
\Tilde{\theta}_k : ku_\mR \wedge \Sigma^{k \rho} \cB_{0} (k)\to ku_\mR \wedge ku_\mR
\]
for all $k \in \mN$ is the same as showing that the $\theta_k$ survive the spectral sequence.

On our way to showing the $\theta_k$ survives the Adams spectral sequence, it will be helpful to first record a simpler splitting of $ku_{\mR}\wedge \cB_{0}(k)$ and $ku_{\mR} \wedge ku_{\mR}$. This will allow us to decompose the Adams spectral sequence (\ref{eq:AdamsSs}) into 
a sum of four separate spectral sequences.

\begin{proposition}\label{prop: other splitting}
    There are $ku_\mR$-module splittings
    \[ ku_\mR \wedge \cB_{0}(k) \simeq C_{k} \vee V_{k} ,\]
    \[ ku_\mR \wedge  ku_\mR\simeq C \vee V, \]
 where $V_{k}$ and $ V$ are sums of suspensions of $H$, and $C_{k}$ and $ C$ contain no $H$-summands.
\end{proposition}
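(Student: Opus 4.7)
The plan is to use the decomposition in \cref{bg lightning split} to split off a wedge of suspensions of $H$, then repeat the argument for $ku_\mR \wedge ku_\mR$ using \cref{thm:homology:decomp}. Recall from \cref{bg lightning split} that $H_\star B_0(k) \cong L(\nu_2(k!)) \oplus W_k$, with $W_k$ a sum of suspensions of $\cE(1)_\star$. For each free generator $w_i$ of $W_k$, I would construct a $ku_\mR$-module map $\Sigma^{|w_i|} H \to ku_\mR \wedge B_0(k)$ realizing $w_i$ on $ku_\mR$-relative homology. Using the $ku_\mR$-relative Adams spectral sequence (\cref{prop:relative ASS exists})
\[
E_2^{s,f,w} = \Ext_{\cE(1)_\star}^{s,f,w}(H_\star^{ku_\mR} H,\, H_\star B_0(k)) \Longrightarrow [H, ku_\mR \wedge B_0(k)]^{ku_\mR, s, f, w}_{\widehat{H}},
\]
along with the identification $H_\star^{ku_\mR} H \cong \cE(1)_\star$ from \cref{prop:relHomology}, the $E_2$-page reduces to $H_\star B_0(k)$ in filtration zero (and vanishes in higher filtration, since $\cE(1)_\star$ is cofree over itself and thus $\mathrm{Cotor}^{\geq 1}_{\cE(1)_\star}(\cE(1)_\star,-)=0$), so every $w_i$ survives. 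Wedging the lifts produces a $ku_\mR$-module map $V_k \to ku_\mR \wedge B_0(k)$ from a wedge $V_k$ of suspensions of $H$, realizing the inclusion $W_k \hookrightarrow H_\star B_0(k)$.

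Let $C_k$ denote the cofiber. The short exact sequence $0 \to W_k \to H_\star B_0(k) \to L(\nu_2(k!)) \to 0$ of $\cE(1)_\star$-comodules splits, so $H_\star^{ku_\mR} C_k \cong L(\nu_2(k!))$. I would then show the connecting map $\partial\colon C_k \to \Sigma V_k$ is null, which supplies the desired splitting. The relevant $ku_\mR$-relative Adams spectral sequence has $E_2$-page $\Ext_{\cE(1)_\star}(L(\nu_2(k!)), \Sigma W_k)$; since $\Sigma W_k$ is a free and thus injective (by \cref{equiv:free:inj}) $\cE(1)_\star$-comodule, higher Ext groups vanish, and $\partial$ is determined by its action on $H_\star^{ku_\mR}$, which is zero by the splitting of the short exact sequence above. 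Hence $ku_\mR \wedge B_0(k) \simeq C_k \vee V_k$.

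For the ``no $H$-summands'' claim, any $H$-summand of $C_k$ as a $ku_\mR$-module would contribute a free $\cE(1)_\star$-comodule summand to $H_\star^{ku_\mR} C_k \cong L(\nu_2(k!))$. Inspecting \cref{def: lightning}, the relations $x_i Q_0 = x_{i+1} Q_1$ couple consecutive generators so that no element of $L(\nu_2(k!))$ generates a rank-four free $\cE(1)$-summand; hence $C_k$ has no $H$-summands. The analogous splitting $ku_\mR \wedge ku_\mR \simeq C \vee V$ is then obtained by running the same argument on the decomposition of $H_\star ku_\mR$ given by \cref{thm:homology:decomp} (with $n = 1$): its total free $\cE(1)_\star$-comodule summand is $\bigoplus_k \Sigma^{\rho k} W_k$, whose lift furnishes $V$, and the cofiber $C$ has $H_\star^{ku_\mR} C \cong \bigoplus_k \Sigma^{\rho k} L(\nu_2(k!))$, which still contains no free summand.

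The main subtlety I anticipate is the rigorous verification that $L(\nu_2(k!))$ contains no free $\cE(1)$-summand. While this is visually apparent from the lightning flash picture, a careful proof may require either an $\mM_2$-rank comparison ruling out a sub-$\cE(1)$-module of the right shape, or an explicit examination of how a hypothetical free generator would interact with the defining relations of \cref{def: lightning}. A secondary point is ensuring that the $ku_\mR$-relative Adams spectral sequence converges strongly to the relevant mapping groups, but this is standard given that $B_0(k)$ is finite and $ku_\mR$ is connective.
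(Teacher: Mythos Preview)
Your proposal is correct and uses the same ingredients as the paper: \cref{bg lightning split}, the identification $H_\star^{ku_\mR}H\cong\cE(1)_\star$ from \cref{prop:relHomology}, and the fact that $\cE(1)_\star$ is free and injective so the relevant $ku_\mR$-relative Adams spectral sequences collapse to filtration zero. The only tactical difference is in how the splitting is extracted. The paper runs \emph{two} spectral sequences, one for $[V_k,\,ku_\mR\wedge\cB_0(k)]^{ku_\mR}$ and one for $[ku_\mR\wedge\cB_0(k),\,V_k]^{ku_\mR}$, lifting both the inclusion $W_k\hookrightarrow H_\star\cB_0(k)$ and the projection $H_\star\cB_0(k)\twoheadrightarrow W_k$ directly; the composite $V_k\to ku_\mR\wedge\cB_0(k)\to V_k$ then realizes the identity on homology and hence is an equivalence, giving the retraction without ever forming a cofiber. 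Your route instead lifts only the inclusion, takes the cofiber $C_k$, and shows the connecting map $\partial$ is null via a third spectral sequence. Both arguments hinge on the same collapse, but the paper's section--retraction maneuver is slightly more economical: it avoids having to identify $H_\star^{ku_\mR}C_k$ from a long exact sequence before running the spectral sequence for $\partial$. Your more explicit treatment of the ``no $H$-summands'' claim, which the paper leaves implicit, is a worthwhile addition.
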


\begin{proof}
In \cref{bg lightning split} we showed that \[H_{\star}\cB_{0}(k) \cong L(\nu_{p}(k!)) \oplus W_{k},\] where $W_{k}$ is a finite sum of suspensions of $\cE(1)_{\star}$. In \cref{prop:relHomology}, we also showed that $H_{\star}^{ku_\mR}H \cong \cE(1)_{\star}$. Take $V_{k}$ to be a sum of suspensions of $H$ such that $H_{\star}^{ku_{\mR}}V_{k} \cong W_{k}$. We can use the Adams spectral sequences
\[ E_{2}^{s,f,w} \cong \Ext_{\cE(1)_{\star}}(H_{\star}^{ku_{\mR}}V_{k}, H_{\star}\cB_{0}(k)) \Longrightarrow [V_{k}, ku_\mR \wedge \cB_{0}(k) ]^{ku_{\mR}} \]
\[ E_{2}^{s,f,w} \cong \Ext_{\cE(1)_{\star}}( H_{\star}\cB_{0}(k), H_{\star}^{ku_{\mR}}V_{k}) \Longrightarrow [ ku_\mR \wedge \cB_{0}(k), V_{k} ]^{ku_{\mR}} \]
to lift the homology splitting $H_{\star}B_{0}\left( k \right) \cong L(\nu_{2}(k!)) \oplus W_{k}$ to a splitting of $ku_\mR$-module spectra by viewing the inclusion $i: H_{\star}^{ku_{\mR}}V_{k} \cong W_k \hookrightarrow H_{\star}\cB_{0}(k)$ as a class in filtration zero in the first $E_{2}$-page, and the projection $j: H_{\star}\cB_{0}(k) \to W_k \cong H_{\star}^{ku_{\mR}}V_{k}$ as a class in filtration zero of the second $E_{2}$-page. Since $\cE(1)_\star$ is a free and injective module (see \cite{Ricka15} and also \cref{equiv:free:inj}), both spectral sequences are entirely concentrated in filtration $f=0$. Therefore, there are no differentials and both the class of the inclusion and the class of the projection lift to maps of $ku_{\mR}$-module spectra
\[V_{k} \to ku_{\mR} \wedge \cB_{0}(k) \to V_{k},\]
and we have a splitting $ku_{\mR} \wedge B_{0}(k) \simeq V_{k} \vee C_{k}$.

To prove $ku_\mR \wedge ku_\mR \simeq C \vee V,$
consider the $\cE(1)_\star$-comodule isomorphism
\[
H_\star ku_\mR \cong \bigoplus_{k = 0}^\infty \Sigma^{\rho k} H_\star \cB_{0}(k) 
\]
of \cref{thm:homology:decomp}. Composing this isomorphism with the isomorphism $H_{\star}\cB_{0}\left( k \right) \cong L(\nu_{2}(k!)) \oplus W_{k}$ of \cref{bg lightning split} yields
\[H_{\star}ku_\mR \cong \bigoplus_{k=0}^{\infty}\Sigma^{\rho k}L(\nu_{p}(k!)) \oplus H_{\star}V,\] 
where $V$ is a finite-type sum of copies of $H$. By the same spectral sequence argument as the first splitting, 
\[ ku_\mR \wedge ku_\mR \simeq ku_\mR \vee V .\]
\end{proof}

As an immediate consequence, we get a decomposition of the relative Adams spectral sequence (\ref{eq:AdamsSs}).

\begin{proposition} \label{prop:splitInto4}
   The Adams spectral sequence  
\[
 E_2^{s, t} = \Ext_{\mathcal{E}(1)_\star} \left(H_\star \Sigma^{\rho k} B_0(k), H_\star ku_\mR \right) \implies [ku_\mR \wedge \Sigma^{\rho k} B_{0} (k),ku_\mR \wedge ku_\mR]^{ku_\mR}
\]decomposes into a sum of four separate spectral sequences, listed below. All but the first is concentrated on the $(f=0)$-line.
\[ \Ext_{\cE(1)_{\star}}(H_{\star}\Sigma^{\rho k}C_{k}, H_{\star}C) \Longrightarrow [\Sigma^{\rho k}C_{k}, C], \]
\[ \Ext_{\cE(1)_{\star}}(H_{\star}\Sigma^{\rho k}C_{k}, H_{\star}V) \Longrightarrow [\Sigma^{\rho k}C_{k}, V], \]
\[ \Ext_{\cE(1)_{\star}}(H_{\star}\Sigma^{\rho k}V_{k}, H_{\star}C) \Longrightarrow [\Sigma^{\rho k}V_{k}, C], \]
\[ \Ext_{\cE(1)_{\star}}(H_{\star}\Sigma^{\rho k}V_{k}, H_{\star}V) \Longrightarrow [\Sigma^{\rho k}V_{k}, V] .\]
\end{proposition}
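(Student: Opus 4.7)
The plan is to feed the $ku_\mR$-module splittings from \cref{prop: other splitting} into the $ku_\mR$-relative Adams spectral sequence on both sides. Writing $ku_\mR \wedge \Sigma^{\rho k} B_0(k) \simeq \Sigma^{\rho k} C_k \vee \Sigma^{\rho k} V_k$ and $ku_\mR \wedge ku_\mR \simeq C \vee V$ as $ku_\mR$-modules, the hom-set on the abutment splits tautologically as a direct sum of the four hom-sets listed, because $[-,-]^{ku_\mR}$ takes wedges in each slot to products (hence, for a finite wedge, direct sums). Correspondingly, applying the $ku_\mR$-relative Adams spectral sequence of \cref{prop:relative ASS exists} separately to each wedge summand yields four spectral sequences whose direct sum is the spectral sequence under consideration. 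One should verify that the original $E_2$-page splits as the direct sum of the four listed $E_2$-pages: this follows because the $ku_\mR$-module splittings induce $\cE(1)_\star$-comodule splittings of $H_\star B_0(k) \cong L(\nu_2(k!)) \oplus W_k$ (\cref{bg lightning split}) and of $H_\star ku_\mR$ (by composing with the splitting of \cref{thm:homology:decomp}), so that $\Ext$ distributes over the direct sums on both arguments.

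The remaining claim is that the three spectral sequences involving $V_k$ or $V$ are concentrated on the $(f=0)$-line. Since $V_k$ and $V$ are by construction sums of suspensions of $H$, \cref{prop:relHomology} gives that $H_\star^{ku_\mR} V_k$ and $H_\star^{ku_\mR} V$ are sums of suspensions of $\cE(1)_\star$. By \cref{equiv:free:inj}, $\cE(1)_\star$ is a free (hence injective) $\cE(1)_\star$-comodule, so any $\Ext_{\cE(1)_\star}$-group whose source or target is a sum of suspensions of $\cE(1)_\star$ vanishes in positive Adams filtration. Applied to each of the latter three spectral sequences in turn, this forces them to be concentrated in filtration $f=0$, and there are no differentials to worry about.

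The main obstacle, such as it is, is simply bookkeeping: one has to be careful that the splitting of the abutment in \cref{prop: other splitting} is compatible (through the Hurewicz edge map) with the $\cE(1)_\star$-comodule splittings on the $E_2$-page, so that the four spectral sequences genuinely split off as spectral sequences and not merely as graded pieces of $E_2$ and $E_\infty$. Since both splittings ultimately descend from the same decomposition $H_\star B_0(k) \cong L(\nu_2(k!)) \oplus W_k$ lifted to $ku_\mR$-modules via the argument in the proof of \cref{prop: other splitting}, this compatibility is automatic. No further input is required, and the proposition follows.
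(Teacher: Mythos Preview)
Your proposal is correct and follows essentially the same approach as the paper, which in fact offers no separate proof and simply records the proposition as ``an immediate consequence'' of \cref{prop: other splitting}. The details you supply---splitting the abutment via the $ku_\mR$-module decompositions, splitting the $E_2$-page via the induced $\cE(1)_\star$-comodule decompositions, and using freeness/injectivity of $\cE(1)_\star$ (as in the proof of \cref{prop: other splitting}) to force the three summands involving $V_k$ or $V$ into filtration zero---are exactly the intended justification.
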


Since each summand containing $V$ or $V_{k}$ consists solely of copies of suspensions of $\mM_{2}$ on the filtration $(f=0)$-line, we will focus on the spectral sequence 
\[ \Ext_{\cE(1)_{\star}}(H_{\star}\Sigma^{\rho k}C_{k}, H_{\star}C) \Longrightarrow [\Sigma^{\rho k}C_{k}, C] .\]

By construction, 
\begin{align*}
    H_{\star}^{ku_{\mR}} C_{k} & \cong L(\nu_{2}(k!)), \\
    H_{\star}^{ku_{\mR}} C & \cong \bigoplus_{m=0}^{\infty}\Sigma^{\rho m}L(\nu_{2}(m!)).
\end{align*}
Thus in order to calculate $\Ext_{\cE(1)_{\star}}(H_{\star}\Sigma^{\rho k}C_{k}, H_{\star}C)$, we inductively compute 
\[
\Sigma^{\rho(m-k)}\Ext_{\mathcal{E}(1)_\star} (L(\nu_2(k!)), L(\nu_2(m!)))
\]
using the long exact sequences induced by applying  $\Ext_{\cE(1)_\star} (\text{---}, \, L(m))$ and \newline $\Ext_{\cE(1)_\star} (L(m), \, \text{---})$ to the short exact sequence
\begin{align} \label{eq:ses}
    0 \to \Sigma^\rho L(k - 1) \to L(k) \to \cE(1)// \cE(0)_\star \to 0.
\end{align}
We will phrase our long exact sequence computations in terms of the spectral sequence associated to each long exact sequence.

Since $L(0) \cong \mM_{2}$, the base case for this inductive computation is $\Ext_{\cE(1)_{\star}}(\mM_{2}, \mM_{2})$. This $\Ext$ term is also the $E_{2}$-page of the Adams spectral sequence for $\pi_{\star}ku_{\mathbb{R}}$ and is computed in \cite{GuillouHillIsaksenRavenel2020}. In order to build inductively on their result, we describe $\Ext_{\cE(1)_{\star}}(\mM_{2}, \mM_{2})$.

Similarly to the height zero case, the square-zero extension $\mM_{2} \cong \mM_{2}^{\mR} \oplus NC$ induces a decomposition 
\[ \Ext_{\cE(1)_{\star}}(\mM_{2}, \mM_{2}) \cong \Ext_{\cE_{\star}^{\mR}(1)}(\mM_{2}^{\mR}, \mM_{2}^{\mR}) \oplus \Ext_{\cE_{\star}^{\mR}(1)}(NC, \mM_{2}^{\mR}) \]
\cite[Proposition~2.2]{GuillouHillIsaksenRavenel2020}. We will use the notation 
\[ \Ext_{\cE^{+}_{\star}(1)}(\mM_{2}, \mM_{2}) := \Ext_{\cE^{\mR}_{\star}(1)}(\mM_{2}^{\mR}, \mM_{2}^{\mR})\]
\[ \Ext_{\cE^{-}_{\star}(1)}(\mM_{2}, \mM_{2}) := \Ext_{\cE^{\mR}_{\star}(1)}(NC, \mM_{2}^{\mR}).\]

In \cite[Theorem~3.1]{Hill11}, Hill computed $\Ext_{\cE^{+}_{\star}(1)}(\mM_{2}, \mM_{2}) .$ In the notation of \cite[Proposition 6.3]{GuillouHillIsaksenRavenel2020},
\begin{equation}\label{eqn: Ext_mR}
    \Ext_{\cE^{+}_{\star}(1)}(\mM_{2}, \mM_{2})  \cong \mF_2 [\rho, \tau^4, v_0, \tau^2 v_0, v_1]/(\rho v_0, \rho^3 v_1, (\tau^2 v_0)^2 + \tau^4 v_0^2)
,
\end{equation}

where the stem $s,$ filtration $f,$ and motivic weight $w$ of the generators are given in \cref{tab:ExtM2M2gens}. The negative cone summand, $\Ext_{\cE_\star^{-} (1)} (\mM_2, \mM_2):= \Ext_{\cE_{\star}^{\mR} (1)} (NC, \mM_2^{\mR})$, is a module over $\Ext_{\cE_\star^{+} (1)} (\mM_2, \mM_2)$. We list the generators of $\Ext_{\cE_\star^{-} (1)} (\mM_2, \mM_2)$ below the horizontal line in \cref{tab:ExtM2M2gens}.  There are relations of the form $v_{1}\frac{\gamma}{\tau^{4i + 3}} = v_{0}\frac{\gamma}{\rho^{2}\tau^{4i + 2}}$ for all $i$. The other relations will be clear from the names of the generators (for example, $\tau^{2}v_{0}\frac{\gamma}{\tau^{3}} = \frac{\gamma}{\tau} v_{0}$).
\begin{table}[ht]
    \centering
    \begin{tabular}{l l l}
          $(s,\, f,\, w)$ & Element & \\
            \hline
             $\,$ & $\,$ \\
             $(-1, 0, -1)$ & $\rho$ \\
             $(0, 1, 0)$ & $v_0$ \\
            $(2, 1, 1)$ & $v_1$ \\
            $(0, 1, -2)$ & $\tau^2 v_0$ \\
            $(0, 0, -4)$ & $\tau^4$ \\
           $\,$ & $\,$ \\
           \hline
           \, & \, \\
           $(0,0, 4j+2)$ & $\frac{\gamma}
           {\tau^{4j + 1}}$ & $0 \le j$\\
           $\left(2,0,4(j+1) + 1\right)$ & $\frac{\gamma}
           {\tau^{4j + 2}\rho^{2}}$ & $0 \le j$ \\
           $(0,0,4j + 4)$ & $\frac{\gamma}
           {\tau^{4j + 3}}$ & $0 \le j$ \\
           $(i,0,  4j  + i + 1)$ & $\frac{\gamma}
           {\tau^{4j}\rho^{i}}$ & $0 < j, 0 \le i$ \\
           \label{ExtM2gens}
    \end{tabular}
    \caption{Generators for $\Ext_{\cE (1)_\star} (\mM_2, \mM_2)$. The generators above the horizontal line are the generators of the ring $\Ext_{\cE^{+}_\star (1)} (\mM_2, \mM_2)$. The elements below the horizontal line generate $\Ext_{\cE^{-}_\star (1)} (\mM_2, \mM_2)$ as an $\Ext_{\cE^{+}_\star (1)} (\mM_2, \mM_2)$-module.}
    \label{tab:ExtM2M2gens}
\end{table}

In charts, $\Ext_{\cE(1)_\star} (\mM_2, \mM_2)$ is given in Figure \ref{fig:ExtM2M2}. The horizontal axis is the stem ($s$), and the vertical axis is the Adams filtration $(f)$. Note that the motivic weight ($w$) is suppressed in this depiction. We will use this grading convention in each of the following charts. Vertical lines denote $v_{0}$-multiplication, and horizontal lines denote $\rho$-multiplication. For readability, we follow the convention of \cite{GuillouHillIsaksenRavenel2020} and split the charts into a Part A and Part B for each positive and negative cone summand of each $\Ext$-group. 

If a generator of a $v_{1}$-tower in stem $2n$ or $2n-1$ has motivic weight congruent to $n$ mod $4$, we assign it to Part A of \cref{fig:ExtM2M2}. If a generator of a $v_{1}$-tower in stem $2n$ or $2n-1$ has motivic weight congruent to $n-2$ mod $4$, we assign it to Part B of \cref{fig:ExtM2M2}. Likewise, $v_{1}$-torsion generators are assigned so that in Part A, any generator of a $\rho$-tower in stem $2n$ or stem $2n-1$ is in motivic weight congruent to $n$ or $n+3$ mod $4$. In Part B, any generator of a $\rho$-tower in stem $2n$ or stem $2n-1$ is in motivic weight congruent to $n+2$ or $n+1$ mod $4$. We will call a class the ``base'' of a $\rho$-divisible tower if the rest of the tower can be obtained by $\rho$-division, and we assign these towers to Part A and Part B using the same rules for the bases. Within each chart, every possible $v_1$-extension occurs. Note that for degree reasons, no $v_{1}$-extensions are possible between the charts. The only extensions between the charts are action by $v_{0}\tau^{2}$. These extensions should be clear from the names of the generators. This organizes the charts in such a way that no differentials in subsequent $\Ext$ and Adams spectral sequence computations are possible between the charts. 

In \cref{fig:ExtM2M2} and our remaining charts, we also make use of color. Specifically, we use blue to depict classes in Parts A of the positive and negative cone summands of representation-graded shifts of $\Ext_{\cE(1)_\star}(\mM_2, \mM_2)$. Similarly, we use green to depict classes in Parts B of the positive and negative cone summands of representation-graded shifts of $\Ext_{\cE(1)_\star}(\mM_2, \mM_2)$. We find this increases the readability of subsequent figures, particularly those involving representation-graded shifts of both $\Ext_{\cE(0)_\star}(\mM_2, \mM_2)$ and $\Ext_{\cE(1)_\star}(\mM_2, \mM_2)$. 

\begin{center}
\begin{figure}[!ht]
\includegraphics[width=5in, height=4in]{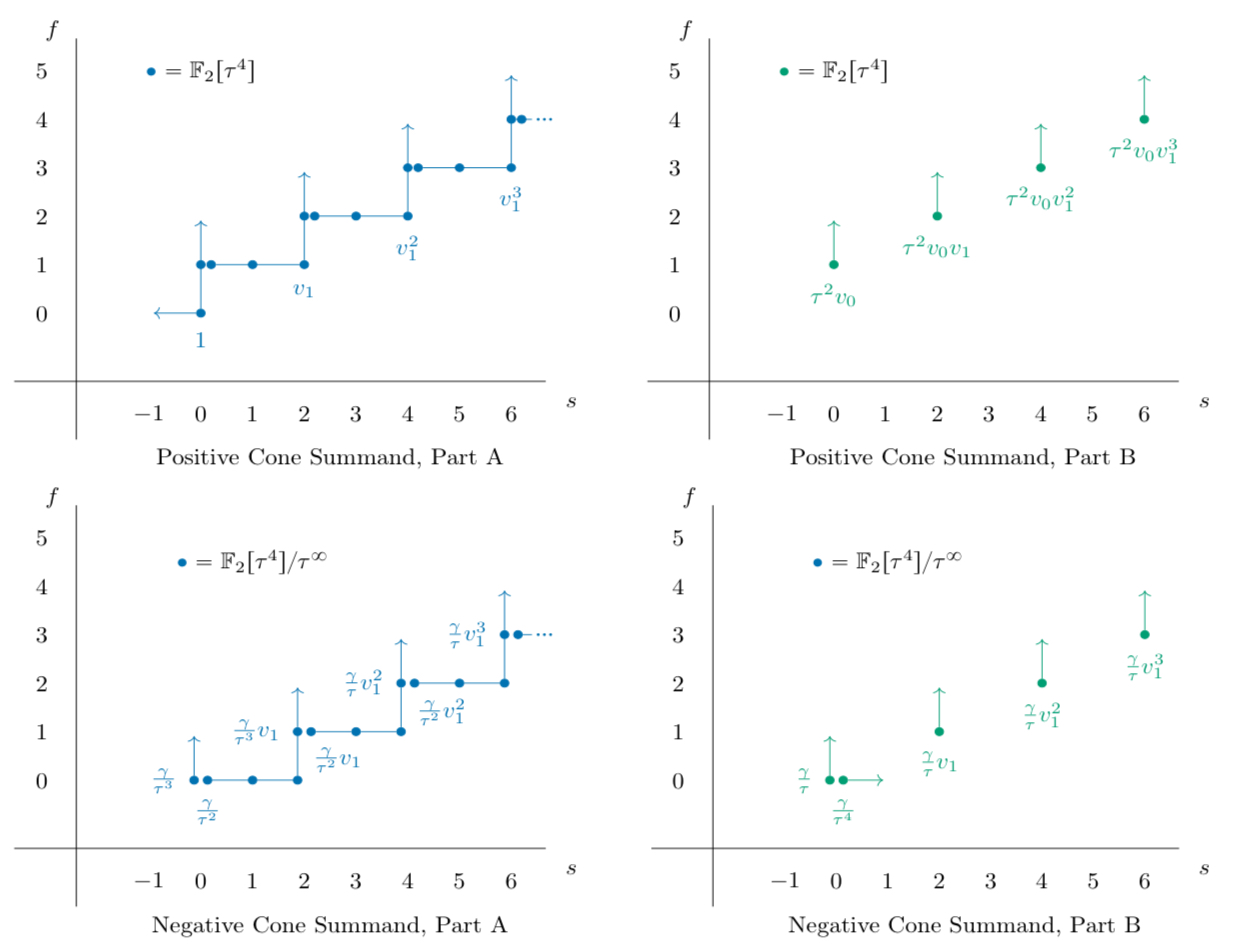}
\caption{$\Ext_{\mathcal{E}_\star(1)} (\mM_2, \mM_2)$} 
\label{fig:ExtM2M2}
\end{figure}
\end{center}

Since $L(k)$ and $\cE(1)_{\star}$ are free over $\mM_{2}$, there is a natural $\Ext_{\cE^{+}_\star (1)}(\mM_{2}, \mM_{2})$-module structure on $\Ext_{\cE(1)_{\star}}(L(k), L(m))$ \cite[p.211-212]{cartan1999homological}.
In particular, we will use the fact that the differential $d$ is a map of $\Ext_{\cE^{\mR}(1)_{\star}}^{+}(\mM_{2}, \mM_{2})$-modules, and thus is $v_{0}$, $v_{1}$, $\rho$, $\tau^{4}$, and $v_{0}\tau^{2}$-linear.

Moreover, we will also need to consider $\Ext_{\cE(0)_\star}^{s,f,w}(\mM_2, \mM_2)$ as an $\Ext_{\cE^+_\star(1)}^{f,s,w}(\mM_2, \mM_2)$-module in our calculations. To that end, the following proposition is helpful. 
\begin{proposition} \label{prop:ExtE0asE1}
    As an $\Ext_{\cE^{+}_\star (1)}(\mM_2, \mM_2)$-module,
\begin{align*}
    \Ext_{\cE^{+}_\star (0)}(\mM_2, \mM_2)  \cong & \Ext_{\cE^{+}_\star (1)}^{f,s,w}(\mM_2, \mM_2)/ (v_1) \{1, \tau^2\} \\
    & \oplus \Ext_{\cE^{+}_\star (1)}^{f,s,w}(\mM_2, \mM_2) / (v_1) \left\{\frac{\gamma}{\tau^{4i}\rho^{j}},\frac{\gamma}{\tau^{4i-2}\rho^{j}}, \frac{\gamma}{\tau^{4i-1}},\frac{\gamma}{\tau^{4i-3}}\right\}
,
\end{align*}
where $i \ge 1, j\ge 0$.
\end{proposition}
The module structure is the natural one given by the change-of-rings isomorphism 
\[
\Ext_{\cE(0)_\star}^{s,f,w}(\mM_2, \mM_2) \cong \Ext_{\cE(1)_\star}^{s,f,w}(\mM_2, \cE(1)//\cE(0)_{\star} ).
\]
\cref{fig:ExtE0asE1Module} depicts this structure, according to our chart conventions for $\Ext_{\cE(1)_{\star}^{+}}(\mM_{2}, \mM_{2})$-modules described above. 

\begin{center}
\begin{figure}[ht]
\includegraphics[width=4in,height=4in]{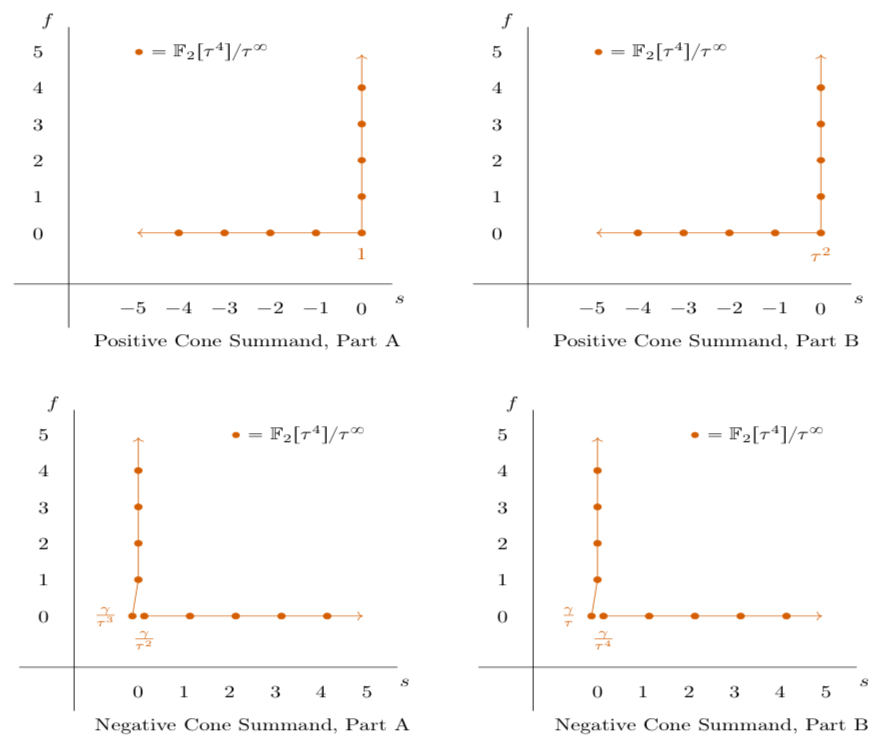}
\caption{$\Ext_{\cE(0)_\star}^{s,f,w}(\mM_2, \mM_2) $ as an $\Ext_{\cE^\mR_\star(1)}(\mM_2, \mM_2)$-module} 
\label{fig:ExtE0asE1Module}
\end{figure}
\end{center}

Computing inductively, we find that $\Ext_{\cE(1)_{\star}}^{s,f,w}(\mM_{2}, L(m) )$ has the structure that the reader familiar with the non-equivariant case might hope for: up to $v_{1}$-extensions, it consists of a shifted copy of $\Ext_{\cE(1)_{\star}}^{s,f,w}(\mM_{2}, \mM_2 )$, along with a sum of shifted copies of $\Ext_{\cE(0)_{\star}}^{s,f,w}(\mM_{2}, \mM_{2} )$. The equivariant $v_{1}$-extensions also parallel the nonequivariant case.
\begin{proposition}\label{k:0}
As an $\Ext_{\cE(1)_{\star}}^{s,f,w}(\mM_{2}, \mM_{2} )$-module, 
\[\Ext_{\cE(1)_{\star}}^{s,f,w}(\mM_{2}, L(m) ) \cong \Ext_{\cE(0)_\star}(\mM_2, \mM_2)\{x_{0},\ldots, x_{m-1}\} \oplus  \Ext_{\cE(1)_\bigstar}(\mM_2, \mM_2) \{x_m\},\] 
where $|x_{i}| = (2i,\, 0,\, i)$ and there are extensions $v_{1}x_{i} = v_{0}x_{i+1}$ for each $i$. 
\end{proposition}
\begin{proof} We proceed by induction on $m$. For the base case $m=0$, we have $L(0) = \mM_{2}$ and so indeed $\Ext_{\cE(1)_{\star}}^{s,f,w}(\mM_{2}, L(0)) \cong \Ext_{\cE(1)_{\star}}^{s,f,w}(\mM_{2}, \mM_{2})\{x_{0}\}$ with $|x_{0}| = (0,0,0)$. To begin the induction, suppose that $\Ext_{\cE(1)_{\star}}^{s,f,w}(\mM_{2}, L(m') )$ satisfies the above description when $m' < m$ and consider the long exact sequence 
\begin{align*}\label{eqn: les for IH on m}
   \cdots \rightarrow \Ext_{\cE(1)_{\star}}^{s,f,w}(\mM_{2}, L(m) ) \rightarrow  \Ext_{\cE(1)_{\star}}^{s,f,w}(\mM_{2}, \cE(1)//\cE(0)_{\star} ) \\
   \xrightarrow{d} \Ext_{\cE(1)_{\star}}^{s-1,f+1,w}(\mM_{2}, \Sigma^{\rho}L(m-1) ) \rightarrow \cdots.
\end{align*}
induced by the short exact sequence 
\[
 0 \to \Sigma^\rho L(m - 1) \to L(m) \to \cE(1)// \cE(0)_\star \to 0
\]
of (\ref{eq:ses}).

First, we use change-of-rings to write 
\[ 
\Ext_{\cE(1)_{\star}}^{s,f,w}(\mM_{2}, \cE(1)//\cE(0)_{\star} ) \cong \Ext_{\cE(0)_{\star}}^{s,f,w}(\mM_{2}, \mM_{2} ).
\]
Then the induction hypothesis together with the description of $\Ext_{\cE(0)_{\star}}^{s,f,w}(\mM_{2}, \mM_{2} )$ in \cref{prop:ExtE0asE1} implies that the differential
\[
d: \Ext_{\cE(0)_{\star}}^{s,f,w}(\mM_{2},\cE(1)//\cE(0)_{\star} ) \rightarrow \Ext_{\cE(1)_{\star}}^{s-1,f+1,w}(\mM_{2}, \Sigma^{\rho}L(m-1) )
\] must be zero for degree reasons. For example, this can be observed in \cref{fig:ExtM2L3-E1}. Note that in \cref{fig:ExtM2L3-E1}, the classes have been relabeled as described at the end of the proof.

Therefore, 
\begin{align*}
    \Ext_{\cE(1)_{\star}}^{s,f,w}(\mM_{2}, L(m) ) & \cong \Ext_{\cE(0)_{\star}}^{s,f,w}(\mM_{2},\cE(0)_{\star} ) \oplus \Ext_{\cE(1)_{\star}}^{s,f,w}(\mM_{2}, \Sigma^{\rho}L(m-1) ) \\
    & \cong \Ext_{\cE(0)_{\star}}^{s,f,w}(\mM_{2},\cE(0)_{\star} ) \oplus \Sigma^{\rho} \Ext_{\cE(1)_{\star}}^{s,f,w}(\mM_{2}, L(m-1) ),
\end{align*}
up to extensions. 

Let $x$ denote the generator of $\Ext_{\cE(0)_{\star}}^{s,f,w}(\mM_{2},\mM_{2} )$ as an $\Ext_{\cE(1)_\star}(\mM_2, \mM_2)$-module. By comparison with the underlying calculation (\cref{nonequiv: Ext computation}), there must be an extension $v_1 x = v_0 (\Sigma^\rho x_0).$ There are no other $\Ext_{\cE(1)_{\star}}(\mM_{2}, \mM_{2})$-module extensions for degree reasons.

Relabeling $x$ by $x_{0}$ and $\Sigma^{\rho}x_{i}$ by $x_{i+1}$ finishes the proof.
\end{proof}

\begin{center}
\begin{figure}[ht]
\includegraphics[width=5in,height=4in]{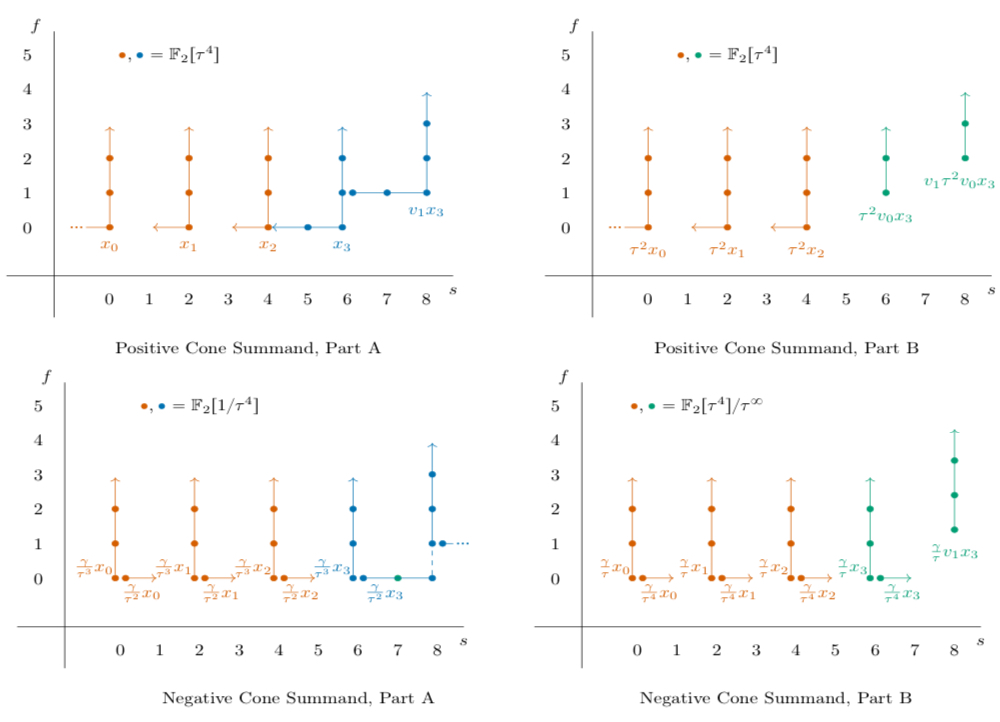}
\caption{$\Ext_{\mathcal{E}(1)_\star} (\mM_2, L(3))$} \label{fig:ExtM2L3-E1}
\end{figure} 
\end{center}
We now introduce two lemmas that will be helpful for computing $\Ext_{\cE(1)_{\star}}(L(k), L(m))$ when $k >0.$  
\begin{lemma}\label{lem:change of rings}
    There is a `wrong-side' change-of-rings isomorphism 
    \[\Ext_{\cE(1)_{\star}}^{s,f,w}( \cE(1)//\cE(0)_{\star} ,\, - \,) \cong \Sigma^{-\rho - 1} \Ext_{\cE(0)_{\star}}^{s,f,w}(\mM_{2}, \, - \,).\]
\end{lemma}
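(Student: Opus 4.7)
The plan is to prove the lemma via an explicit short exact sequence of $\cE(1)_*$-comodules. First I would verify that $\cE(1)//\cE(0)_* \cong \mM_2\{1, \tau_1\}$, with $\cE(1)_*$-coaction given by $1 \mapsto 1 \otimes 1$ and $\tau_1 \mapsto \tau_1 \otimes 1 + 1 \otimes \tau_1$. The subspace $\mM_2\{1\}$ is a trivial subcomodule, and in the quotient $\mM_2\{\tau_1\}$ the $\tau_1 \otimes 1$ term projects to zero, so the quotient is $\Sigma^{\rho+1}\mM_2$ with trivial coaction. This yields the short exact sequence
\[
0 \to \mM_2 \to \cE(1)//\cE(0)_* \to \Sigma^{\rho+1}\mM_2 \to 0,
\]
whose extension class in $\Ext^{1}_{\cE(1)_*}(\Sigma^{\rho+1}\mM_2, \mM_2) \cong \Sigma^{-\rho-1}\Ext^1_{\cE(1)_*}(\mM_2, \mM_2)$ is $v_1$, since $v_1$ corresponds under the standard identification of $\Ext^1$ with Hopf-algebra primitives to the primitive $\tau_1 \in \cE(1)_*$, which is precisely what obstructs the splitting of the sequence.

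Next I would apply $\Ext_{\cE(1)_*}(-, N)$ to obtain the long exact sequence
\[
\cdots \to \Sigma^{-\rho-1}\Ext^{s}_{\cE(1)_*}(\mM_2, N) \to \Ext^{s}_{\cE(1)_*}(\cE(1)//\cE(0)_*, N) \to \Ext^{s}_{\cE(1)_*}(\mM_2, N) \xrightarrow{\cdot v_1} \Sigma^{-\rho-1}\Ext^{s+1}_{\cE(1)_*}(\mM_2, N) \to \cdots
\]
in which the connecting homomorphism is multiplication by $v_1$. To turn this into the desired identification with $\Sigma^{-\rho-1}\Ext_{\cE(0)_*}(\mM_2, N)$, I would use the Hopf algebra decomposition $\cE(1)_* \cong \cE(0)_* \otimes E(\tau_1)$, where $E(\tau_1)$ is the exterior Hopf algebra on the primitive $\tau_1$ of degree $\rho+1$. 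A Cartan--Eilenberg style change-of-rings along $\cE(0)_* \subset \cE(1)_*$ realizes $\Ext_{\cE(1)_*}(\mM_2, N)$ as an $\mM_2[v_1]$-module in which the generator $v_1$ corresponds to the $E(\tau_1)$-factor; the cokernel and kernel of $v_1$-multiplication both compute $\Ext_{\cE(0)_*}(\mM_2, N)$. Inserting this into the long exact sequence collapses it to the claimed isomorphism, with the shift $\Sigma^{-\rho-1}$ inherited from the degree of $\tau_1$.

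The main obstacle is carrying out the $v_1$-analysis rigorously with the bigraded $\mM_2$-coefficients, since one must separately control the positive cone $\Ext^{PC}_{\cE(1)_*}(\mM_2, -)$ and negative cone $\Ext^{NC}_{\cE(1)_*}(\mM_2, -)$ summands. However, $v_1$-multiplication respects the square-zero extension $\mM_2 \cong \mM_2^{\mR} \oplus NC$, so the argument reduces to checking the classical-style identity $\Ext_{\cE(1)}(\cE(1)//\cE(0), \mF_2) \cong \Sigma^{-|\tau_1|}\Ext_{\cE(0)}(\mF_2, \mF_2)$ on each cone separately, with the underlying nonequivariant shift $|\tau_1| = 3$ replaced by the equivariant $\rho + 1$.
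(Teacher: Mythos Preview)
Your approach is genuinely different from the paper's and, as written, has a real gap. The paper gives a two-line argument: pass from $\cE(1)_\star$-comodules to right $\cE(1)$-modules via \cref{prop:equiv of cat}, observe the Frobenius-algebra self-duality $\cE(1)//\cE(0)_\star \cong \Sigma^{\rho+1}\cE(1)//\cE(0)$ as $\cE(1)$-modules, and then apply the ordinary induction--restriction adjunction $\Ext_{\cE(1)}(\cE(1)\otimes_{\cE(0)}\mM_2,\,N)\cong \Ext_{\cE(0)}(\mM_2,N)$. No spectral sequences or long exact sequences are needed, and the argument is insensitive to the bigraded $\mM_2$-coefficients.

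The gap in your proposal is the sentence ``the cokernel and kernel of $v_1$-multiplication both compute $\Ext_{\cE(0)_*}(\mM_2, N)$.'' This is false in general. For instance, whenever $\Ext_{\cE(1)_\star}(\mM_2,N)$ is $v_1$-torsion-free (e.g.\ $N=\mM_2$) the kernel is zero, not $\Ext_{\cE(0)_\star}(\mM_2,N)$; and when $N$ carries a free $Q_1$-action (so that $\Ext_{\cE(1)_\star}(\mM_2,N)$ is entirely $v_1$-torsion) neither the kernel nor the cokernel alone gives the right answer. What your long exact sequence actually computes is the \emph{derived} cofiber of $v_1$ on $\RHom_{\cE(1)_\star}(\mM_2,N)$, and identifying that with $\Sigma^{-\rho-1}\RHom_{\cE(0)_\star}(\mM_2,N)$ is exactly the content of the lemma --- so you cannot invoke it as an input. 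Making this step rigorous would require either a careful Cartan--Eilenberg collapse argument (with control over hidden $v_1$-extensions over $\mM_2$) or the self-duality observation the paper uses, at which point the short exact sequence detour is unnecessary.
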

\begin{proof} 
   By the equivalence of categories between left $\cE(n)_\star$-comodules and right $\cE(n)$-\newline modules (\cref{prop:equiv of cat}),
   \[
   \Ext_{\cE(1)_{\star}}^{s,f,w}( \cE(1)//\cE(0)_{\star} , \, - \,) \cong \Ext_{\cE(1)}^{s,f,w}( \cE(1)//\cE(0)_{\star} , \, - \, ).
   \]

    As an $\cE(1)$-module, $\cE(1)//\cE(0)_{\star} \cong \Sigma^{\rho + 1}\cE(1)//\cE(0)$. By ordinary change-of-rings, 
    \[ \Ext_{\cE(1)}^{s,f,w}( \Sigma^{\rho + 1}\cE(1)//\cE(0) , \, - \, ) \cong \Sigma^{-\rho - 1} \Ext_{\cE(0)}^{s,f,w}( \mM_{2} , \, - \,).\]
    Applying the equivalence of categories again concludes the proof.
\end{proof}

\begin{lemma}\label{homs lemma}
Suppose $m > k$. Then if $s<0$, and $x \in\Hom_{\cE(1)_{\star}}^{2s,w}(L(k), L(m))$ for any $w$, then $\rho$ divides $x$. 
\end{lemma}
This is a straightforward computation and is best checked by drawing the relevant lightning flash modules.

We are now ready to compute $\Ext_{\cE(1)_{\star}}(L(k), L(m))$ when $k > 0.$ We proceed by fixing $m \geq 0$ and inducting on $k.$ Similarly to the nonequivariant setting (\cref{nonequiv: Ext computation}), the resulting $\Ext$ groups have different forms depending on whether $k$ is greater than $m.$ Thus we first describe $\Ext_{\cE(1)_{\star}}(L(k), L(m))$ when $k \le m$ before moving on to the case where $k > m.$
\begin{proposition}\label{m:0}
    Suppose $k \le m$. Then 
    \[\Ext_{\cE(1)_{\star}}(L(k), L(m)) \cong \Ext_{\cE(1)_{\star}}(\mM_{2}, L(m-k)) \oplus V,\] where $V$ consists of $v_{0}$ and $v_{1}$-torsion concentrated on the $(f=0)$-line, specifically: \begin{enumerate}
        \item $\rho$-towers in odd stem $s$ and filtration $f=0$, and
        \item infinitely $\rho$-divisible towers in filtration $f=0$.
    \end{enumerate}
\end{proposition}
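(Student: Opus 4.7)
The plan is to induct on $k$ with $m$ fixed. The base case $k=0$ holds trivially since $L(0)=\mM_2$ and the claimed $V$-summand is empty. For the inductive step, apply the contravariant functor $\Ext_{\cE(1)_\star}(-, L(m))$ to the short exact sequence
\[
0 \to \Sigma^\rho L(k-1) \to L(k) \to \cE(1)//\cE(0)_\star \to 0
\]
of \eqref{eq:ses} to obtain a long exact sequence. Using \cref{lem:change of rings} one identifies the $\cE(1)//\cE(0)_\star$ terms with $\Sigma^{-\rho-1}\Ext_{\cE(0)_\star}(\mM_2, L(m))$, and by induction the $\Sigma^\rho L(k-1)$ terms are $\Sigma^{-\rho}\big(\Ext_{\cE(1)_\star}(\mM_2, L(m-k+1)) \oplus V_{k-1}\big)$.

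The next step is to decompose $L(m)$ as an $\cE(0)$-module. From the lightning flash relations $x_i Q_0 = x_{i+1} Q_1$, each pair $(x_i, Q_0 x_i)$ spans a free $\cE(0)$-summand, while the "tail" generator $Q_1 x_1$ (on which $Q_0$ vanishes since $Q_0 Q_1 x_1 = Q_1^2 x_2 = 0$) contributes a trivial $\mM_2$-summand. Thus $L(m) \cong \bigoplus_{i=1}^m \cE(0)\{x_i\} \oplus \mM_2\{Q_1 x_1\}$ as $\cE(0)$-modules (up to suspensions). Since free $\cE(0)$-modules are injective by \cref{equiv:free:inj}, this yields
\[
\Ext_{\cE(0)_\star}(\mM_2, L(m)) \cong \Ext_{\cE(0)_\star}(\mM_2, \mM_2)\{Q_1 x_1\} \oplus W,
\]
where $W$ is concentrated in Adams filtration $f=0$ and, under the full LES, accounts for the $\rho$-towers part of $V_k$.

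It remains to control the connecting differential $d$, which preserves stem and weight and raises filtration by one. Via \cref{homs lemma} and Milnor--Witt parity arguments analogous to the proof of \cref{k:0}, $d$ vanishes on the $V_{k-1}$ summand and on most of $\Sigma^{-\rho}\Ext(\mM_2, L(m-k+1))$ for degree reasons. The crucial case is that $d$ is nonzero on the $v_0$-tower emanating from the top generator $x_{m-k+1}$ (which, unlike $x_0,\ldots,x_{m-k}$, carries a full copy of $\Ext_{\cE(1)_\star}(\mM_2, \mM_2)$ rather than only $\Ext_{\cE(0)_\star}(\mM_2, \mM_2)$). This must be verified by comparison with the underlying nonequivariant computation, where the analogous differential implements the shift from $\Ext(\mM_2, L(m-k+1))$ down to $\Ext(\mM_2, L(m-k))$. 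Combined with $v_0$-linearity, this identifies the image of $d$ as a shift of the positive-cone tower in $\Sigma^{-\rho-1}\Ext_{\cE(0)_\star}(\mM_2, \mM_2)\{Q_1 x_1\}$. The surviving classes assemble into exactly the three pieces of the claimed decomposition: a copy of $\Ext(\mM_2, L(m-k))$, the infinitely $\rho$-divisible towers from the negative cone, and the $\rho$-towers in odd stem from $W$. Multiplicative extensions $v_1 x_i = v_0 x_{i+1}$ are inherited from those in \cref{k:0}; no new extensions arise by comparison of degrees.

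The main obstacle is the precise analysis of $d$ on the negative cone part of $\Sigma^{-\rho-1}\Ext_{\cE(0)_\star}(\mM_2, \mM_2)$. Unlike for the positive cone, pure Milnor--Witt parity arguments do not immediately rule out interactions of $d$ with the infinitely $\rho$-divisible classes, since those classes live in unbounded filtration. One must therefore explicitly track the $\rho$-Bockstein structure, as done in \cite{GuillouHillIsaksenRavenel2020}, to confirm that the negative-cone towers survive to produce the infinitely $\rho$-divisible part of $V_k$.
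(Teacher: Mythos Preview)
Your overall strategy matches the paper's: induct on $k$, apply $\Ext_{\cE(1)_\star}(-, L(m))$ to the short exact sequence \eqref{eq:ses}, use \cref{lem:change of rings} to identify the $\cE(1)//\cE(0)_\star$ term, and analyze the connecting differential. Your $\cE(0)$-decomposition of $L(m)$ is correct and makes explicit what the paper leaves implicit when it speaks of ``the generator of the $v_0$-tower in $\Sigma^{-\rho-1}\Ext_{\cE(0)_\star}(\mM_2, L(m))$.''

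However, you have identified the wrong source for the key differential. It is $\Sigma^{-\rho}x_0$ (the \emph{bottom} generator) that supports the differential, not $\Sigma^{-\rho}x_{m-k+1}$. Indeed, $\Sigma^{-\rho}x_{m-k+1}$ lies in stem $2(m-k) \ge 0$, whereas every filtration-positive class in $\Sigma^{-\rho-1}\Ext_{\cE(0)_\star}(\mM_2, L(m))$, in both the positive and negative cone, lies in stem $-3$; there is simply no target for a differential out of $\Sigma^{-\rho}x_{m-k+1}$. By contrast, $\Sigma^{-\rho}x_0$ sits at $(s,f) = (-2,0)$, and \cref{homs lemma} says directly that $\Ext^{-2,0,*}(L(k),L(m)) = 0$, forcing $d(\Sigma^{-\rho}x_0) = \Sigma^{-\rho-1}v_0 y$. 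Killing $x_0$ rather than $x_{m-k+1}$ is also what produces the correct shape: the surviving $\Sigma^{-\rho}x_1, \ldots, \Sigma^{-\rho}x_{m-k+1}$ relabel as the new $x_0, \ldots, x_{m-k}$, with the top generator still carrying a full copy of $\Ext_{\cE(1)_\star}(\mM_2, \mM_2)$ as required by \cref{k:0}.

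This also dissolves your stated ``main obstacle.'' Once $d(\Sigma^{-\rho}x_0)$ is pinned down, $\Ext_{\cE(1)_\star}(\mM_2,\mM_2)$-linearity of $d$ determines \emph{all} remaining differentials, including those involving negative-cone classes; no separate $\rho$-Bockstein analysis is needed. The paper's appeal to \cref{homs lemma} is thus both sharper than comparison with the underlying nonequivariant computation and sufficient to close the argument in one stroke.
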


\begin{center}
\begin{figure}[ht]
\includegraphics[width=5in,height=4in]{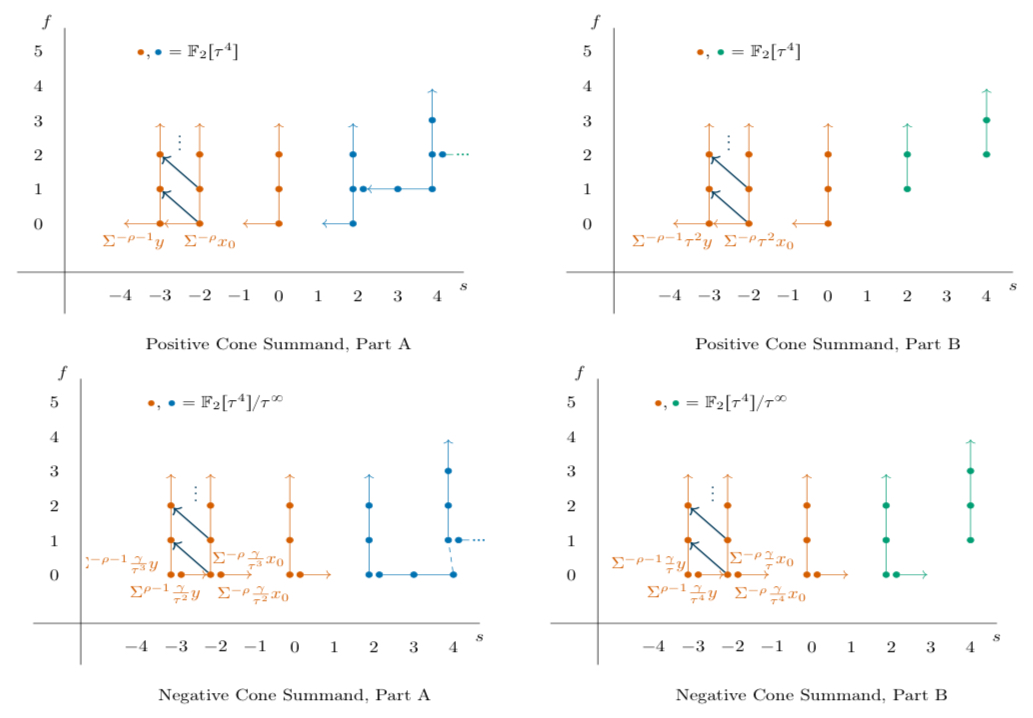}
\caption{$E_1$-page computing $\Ext_{\cE(1)_\star}(L(1), L(2))$}
\label{fig:E1ExtL1L2}
\end{figure}
\end{center}

\begin{proof}
    The base case of the induction, when $k=0$, holds by \cref{k:0}. Suppose that the claim holds for $k-1$ and consider the long exact sequence 
\begin{align}\label{eqn:les of k<m}\begin{split}
   \cdots \rightarrow \Ext_{\cE(1)_{\star}}^{s,f,w}(L(k), L(m) ) \rightarrow  \Ext_{\cE(1)_{\star}}^{s,f,w}(\Sigma^{\rho}L(k-1) , L(m)) \\
   \xrightarrow{d} \Ext_{\cE(1)_{\star}}^{s-1,f+1,w}( \cE(1)//\cE(0)_{\star} , L(m) ) \rightarrow \cdots.
   \end{split}
\end{align}
induced by the short exact sequence 
\[
 0 \to \Sigma^\rho L(k - 1) \to L(k) \to \cE(1)// \cE(0)_\star \to 0.
\]

By the induction assumption,
\[ \Ext_{\cE(1)_{\star}}^{s,f,w}(\Sigma^{\rho}L(k-1) , L(m)) \cong  \Ext^{s,f,w}_{\cE(1)_{\star}}(\mM_{2}, L(m-k+1)) \oplus V.\]
The `wrong-side' change of rings isomorphism (\cref{lem:change of rings}) gives
\[ \Ext_{\cE(1)_{\star}}^{s,f,w}( \cE(1)//\cE(0)_{\star} , L(m) ) \cong \Sigma^{-\rho - 1}\Ext_{\cE(0)_{\star}}^{s,f,w}( \mM_{2}, L(m) ) .\]
So we can rewrite the spectral sequence associated to the long exact sequence (\ref{eqn:les of k<m}) as
\begin{align}\label{eqn: spec seq for k > m}
    \Sigma^{-\rho - 1}\Ext_{\cE(0)_{\star}}^{s,f,w}( \mM_{2}, L(m) ) \oplus \Ext_{\cE(1)_{\star}}^{s,f,w}(\Sigma^{\rho}L(k-1) , L(m))
    \implies  \Ext_{\cE(1)_{\star}}(L(k), L(m)).
\end{align}

\begin{center}
\begin{figure}[ht]
\includegraphics[width=5in,height=4in]{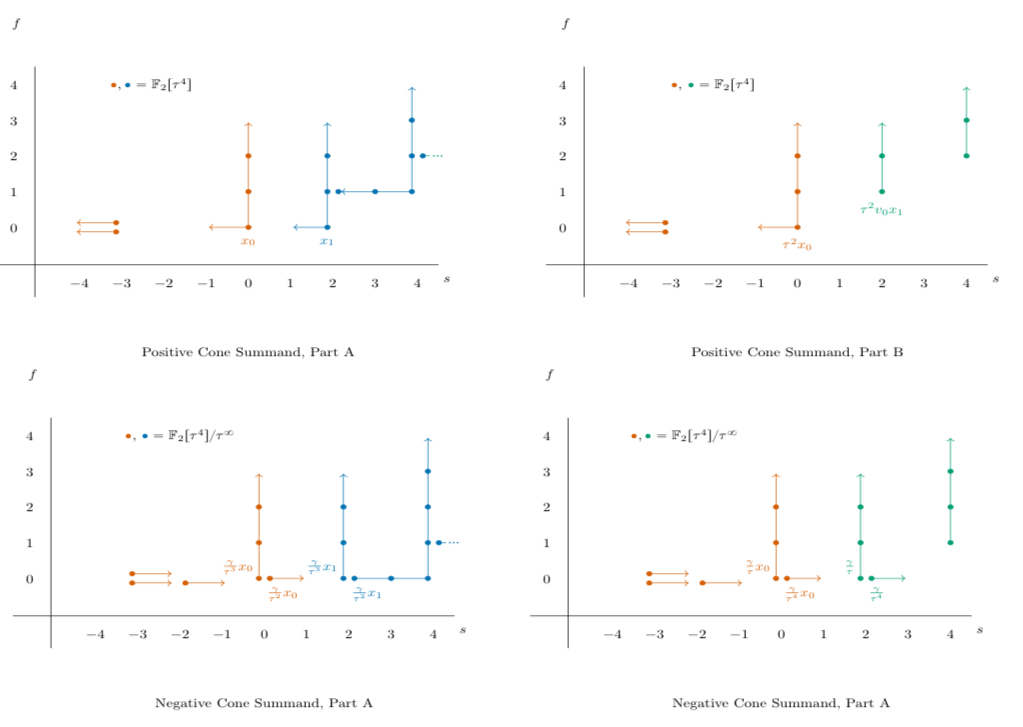}
\caption{$\Ext_{\cE(1)_\star}(L(1), L(2))$}
\label{fig:ExtL1L2}
\end{figure}
\end{center}

Note that as an $\cE(0)_{\star}$-comodule,
\[
L(m) \cong \mM_{2} \oplus \Sigma^{\rho + 1}\cE(0)_{\star} \oplus \cdots \oplus \Sigma^{m\rho+1}\cE(0)_{\star}.
\] 
This implies \begin{align}\label{eqn:Ext0(0,m)}\Ext_{\cE(0)_{\star}}\left(\mM_{2}, L(m) \right)\{y \} \cong \Ext_{\cE(0)_{\star}}\left(\mM_{2}, \mM_{2} \right) \oplus W,\end{align}
where $|y| = (0,0,0)$ and $W$ is a sum of odd-stem suspensions of $\mM_{2}$. 

Observe that the $v_{0}$-towers of $\Ext_{\cE(1)_{\star}}(\Sigma^{\rho}L(k-1), L(m))$ have the following generators in stem $-2$: $\Sigma^{-\rho}\tau^{2i}x_{0}$ and $\Sigma^{-\rho}\frac{\gamma}{\tau^{2i + 1}}{x_{0}}$ for $i \ge 0$. Likewise, the $v_{0}$-towers in stem $-3$ in $\Sigma^{-\rho - 1}\Ext_{\cE(0)_{\star}}^{*,*,*}( \mM_{2}, L(m))$ are generated by $\Sigma^{-\rho - 1}\tau^{2i}y$ and $\Sigma^{-\rho - 1}\frac{\gamma}{\tau^{2i + 1}}y$ for $i \ge 0$ (see \cref{fig:E1ExtL1L2}, for example). For each $\Sigma^{-\rho}\tau^{2i}x_{0}$, the only potential nonzero differential from $\Sigma^{-\rho}x_{0}$ goes to $\Sigma^{-\rho - 1}\tau^{2i}v_{0}y$. Likewise, the only potential nonzero differential from $\Sigma^{-\rho}\frac{\gamma}{\tau^{2i + 1}}{x_{0}}$ goes to $\Sigma^{-\rho - 1}\frac{\gamma}{\tau^{2i + 1}}v_{0}y$. \cref{homs lemma} implies that $\Ext^{-2,0,*}( L(k), L(m) )$ must be zero, so indeed
\begin{align*}
    & d(\Sigma^{-\rho}\tau^{2i}x_{0}) = \Sigma^{-\rho - 1}\tau^{2i}v_{0}y \\
    & d(\Sigma^{-\rho}\frac{\gamma}{\tau^{2i + 1}}{x_{0}}) = \Sigma^{-\rho - 1}v_{0}\frac{\gamma}{\tau^{2i + 1}}y
\end{align*} 
for all $i \ge 0$.  

Since the differential $d$ is $v_{0}$-linear, this implies 
\begin{align*}
    d(\Sigma^{-\rho}v_{0}^{j}\tau^{2i}x_{0}) & = \Sigma^{-\rho - 1}\tau^{2i}v_{0}^{j+1}y \\
    d(\Sigma^{-\rho}v_{0}^{j}\frac{\gamma}{\tau^{2i + 1}}{x_{0}}) & = \Sigma^{-\rho - 1}v_{0}^{j+1}\frac{\gamma}{\tau^{2i + 1}}y
\end{align*} and for all $i,j \ge 0$. For degree reasons, no other nontrivial differentials are possible. There is no room for other $\Ext_{\cE^{\mR}(1)_{\star}}(\mM_{2}, \mM_{2})$-module extensions. Thus
\[\Ext_{\cE(1)_{\star}}(L(k), L(m)) \cong \Ext_{\cE(1)_{\star}}(\mM_{2}, L(m-k)) \oplus V,\]
as illustrated in Figures \ref{fig:E1ExtL1L2} and \ref{fig:ExtL1L2} which depict the case where $m =2$ and one inducts from $k=0$ to $k = 1.$
\end{proof}

Having computed $\Ext_{\cE(1)_{\star}}(L(k), L(m))$ for fixed $m \geq 0$ by induction on $k \le m,$ we are now ready to continue the induction for $k > m.$ 

\begin{lemma}\label{lem:k>m}
The positive cone $\Ext_{\cE^{+}_\star(1)}^{*,*,*}(L(k), L(m))$ when $k > m$ consists of: 
\begin{enumerate}
\item a triangle formation consisting of \label{classes:positive triangle}
\[
\Ext_{\cE^{+}_{\star}(1)}\left(\mM_{2}, \mM_{2}\right) \left\{y_{0}, \ldots, y_{m-k-1} \right\}
\]
with relations $v_{1}y_{i} = v_{0}y_{i+1},$ $v_{0}y_{0} = 0,$ and $v_{1}y_{m-k-1} = 0$ and 
\[
\Ext_{\cE^{+}_{\star}(1)}\left(\mM_{2}, \mM_{2}\right) \left\{ \tau^{2}y_{0}, \ldots, \tau^{2}y_{m-k-1} \right\}
\]
with relations $v_{1} \tau^{2}y_{i} = v_{0}\tau^{2}y_{i+1},$ $v_{0}^{2}\tau^{2}y_{0} = 0,$ and $v_{1}\tau^2 y_{m-k-1} = 0$ where \newline $|y_{i}| = \big(-2(k-m - i)-1, 0, -(k-m - i)\big),$ 

     \item infinite $\rho$-towers generated in odd stem

      \item a copy of $v_{1}\Ext_{\cE^{+}_{\star}(1)}(\mM_{2}, \mM_{2})$, with generator denoted $x$ and $|x| = (0, k-m, 0),$\label{component 3 of pos cone}

    \item $\rho$-pairs: \[\mF_{2}[\rho, v_{1}] \left\{ v_1 b \, | \, v_{1}^{m-k-1}b = \rho x, \, \rho^{2}b = 0\right\}, \] where $|b| = (2(m-k)-1, \, 0, \, m-k-1)$. 
    \end{enumerate}

    The negative cone $\Ext_{\cE^{-}_\star(1)}^{*,*,*}(L(k), L(m))$ when $k > m$ consists of:

\begin{enumerate} 
\item a triangle formation consisting of \label{classes:negative triangle}
\[ 
\Ext_{\cE^{+}_{\star}(1)}\left(\mM_{2}, \mM_{2}\right) \left\{ \frac{\gamma}{\tau^{4j + 3}}y_0, \cdots, \frac{\gamma}{\tau^{4j + 3}}y_{m-k-1} \right\} 
\]
with relations $v_{1}\frac{\gamma}{\tau^{4j + 3}}y_{i} = v_{0}\frac{\gamma}{\tau^{4j + 3}}y_{i+1},$ $v_{1}\frac{\gamma}{\tau^{4j + 3}}y_{m-k-1} = 0,$ and $v_{0}\frac{\gamma}{\tau^{4j + 3}}y_{0} = 0,$ and 
\[
\Ext_{\cE^{+}_{\star}(1)}\left(\mM_{2}, \mM_{2}\right) \left\{ \frac{\gamma}{\tau^{4j + 1}}y_0, \cdots, \frac{\gamma}{\tau^{4j + 1}}y_{m-k-1} \right\}
\]
with relations $v_{1}\frac{\gamma}{\tau^{4j + 1}} = v_{0}\frac{\gamma}{\tau^{4j + 1}}y_{i+1},$ $v_{1}\frac{\gamma}{\tau^{4j + 1}}y_{m-k-1} = 0,$ and $v_{0}\frac{\gamma}{\tau^{4j + 1}}y_{0} = 0$ where $|y_{i}| = \big(-2(k-m - i)-1, 0, -(k-m - i)\big),$ 
\item infinite $\rho$-divisible towers in filtration $f = 0$, as well as copies of $\mF_{2}[\tau^{2}]/\tau^{\infty}$ in odd stem,
\item a copy of the $\Ext_{\cE^{+}_{\star}(1)}(\mM_{2}, \mM_{2})$-submodule of $\Ext_{\cE^{-}_{\star}(1)}(\mM_{2}, \mM_{2})$ generated by \newline $\mF_{2}\frac{[\tau^{4}]}{\tau^{\infty}} \left\{ \frac{\gamma}{\rho^{2}\tau^{2}} \right\}$, with generator denoted by $x'$ and shifted so that $|x'| = (0, k-m-1, 4),$ 
\item $\rho$-pairs, with generator denoted by $c$ and $|c| = (2(m-k) +1, 0, m-k + 1)$ for $m-k > 1$
\[
\frac{\mF_2[\tau^4]}{\tau^\infty}[\rho, v_{1}]\{c | \rho^{2}c = 0,\ v_1^nc = x'\}.
\]
\end{enumerate}
\end{lemma}

Before proving \cref{lem:k>m}, we introduce some charts illustrating $\Ext_{\cE(1)_\star} (L(k), L(m))$ in the cases where $k = 1$ and $m = 0$ (\cref{fig:ExtL1M2}) and where $k =2$ and $m = 0$ (\cref{fig:ExtL2M2}). In both figures, one can see the triangle formation begin to emerge in stems $s \leq -3$. The shape of the triangle formation becomes more apparent as the difference between $k$ and $m$ grows. 

Examples of infinite $\rho$-towers can be found starting in filtration $f = 0$ and stem $s = -3$ in the positive cone summands. As multiplication by $\rho$ is represented by a horizontal line to the left, these $\rho$ towers appear as arrows on the zero line pointing to the left. Examples of infinite $\rho$-divisible towers can be found starting in filtration $f = 0$ and stem $s = -3$ in the negative cone summands. As divisibility by $\rho$ is represented by a horizontal line to the right, these $\rho$-divisible towers appear as arrows on the zero line pointing to the right. 

To identify the copies of $v_{1}\Ext_{\cE^{+}_{\star}(1)}(\mM_{2}, \mM_{2})$ in the positive cone summands of Figures \ref{fig:ExtL1M2} and \ref{fig:ExtL2M2}, it may be helpful to compare with the chart for $\Ext_{\cE^{+}_{\star}(1)}(\mM_{2}, \mM_{2})$ in \cref{fig:ExtM2M2}. Specifically, the generator of $v_{1}\Ext_{\cE^{+}_{\star}(1)}(\mM_{2}, \mM_{2})$ in \cref{fig:ExtL1M2} can be found in stem $s = 0$ and filtration $f = 1.$ Similarly, the generator of $v_{1}\Ext_{\cE^{+}_{\star}(1)}(\mM_{2}, \mM_{2})$ in \cref{fig:ExtL2M2} can be found in stem $s = 0$ and filtration $f = 2$. 

To identify the copies of the $\Ext_{\cE^{+}_{\star}(1)}(\mM_{2}, \mM_{2})$-submodule of $\Ext_{\cE^{+}_{\star}(1)}(\mM_{2}, \mM_{2})$ generated by $\mF_{2}\frac{[\tau^{4}]}{\tau^{\infty}} \left\{ \frac{\gamma}{\rho^{2}\tau^{2}} \right\}$, with generator denoted by $x'$ and shifted so that $|x'| = (0, k-m-1, 4)$ in the negative cone summands of Figures \ref{fig:ExtL1M2} and \ref{fig:ExtL2M2}, it may be helpful to compare with the chart for $\Ext_{\cE^{-}_{\star}(1)}(\mM_{2}, \mM_{2})$ in \cref{fig:ExtM2M2}. Specifically, the generator of the $\Ext_{\cE^{+}_{\star}(1)}(\mM_{2}, \mM_{2})$-submodule in \cref{fig:ExtL1M2} can be found in stem $s = 0$ and filtration $f = 0$. Similarly, the generator of the $\Ext_{\cE^{+}_{\star}(1)}(\mM_{2}, \mM_{2})$-submodule in \cref{fig:ExtL2M2} can be found in stem $s = 0$ and filtration $f = 1.$ 

These submodules generated by $x'$ in the negative cone summands appear very similar to $v_{1}\Ext_{\cE^{+}_{\star}(1)}(\mM_{2}, \mM_{2})$ in the positive cone summands (compare with \cref{fig:ExtM2M2}). The only differences are that the negative cone summands have infinitely-$\tau^{4}$-divisible towers rather than $\tau^{4}$-towers, and that the negative cone summand is shifted down by $1$ in filtration $f$, and shifted down by $\rho$ in stem in comparison with the positive cone summand. 

The $\rho$-pairs only appear when $k - m \geq 2.$ As such, $\rho$-pairs can be seen in  \cref{fig:ExtL2M2}. Specifically, there is a $\rho$-pair in the positive cone summand consisting of two blue dots in filtration $f = 1$ and stems $s = -3$ and $s = -4$ linked by a $\rho$-multiplication. Another $\rho$-pair can be seen in the negative cone summand in filtration $f=0$ and stems $s = -3$ and $s = -4$. Specifically, the $\rho$-pair is the two green dots in filtration $f = 1$ and stems $s = -4,$ and $s = -3$ linked by divisibility by $\rho.$

While proceeding through the inductive proof of \cref{lem:k>m}, it is helpful to keep similar charts of the $\Ext$-groups $\Ext_{\cE(1)_\star} (L(k), L(m))$ in mind.

\begin{center}
\begin{figure}[ht]
\includegraphics[width=5in,height=4in]{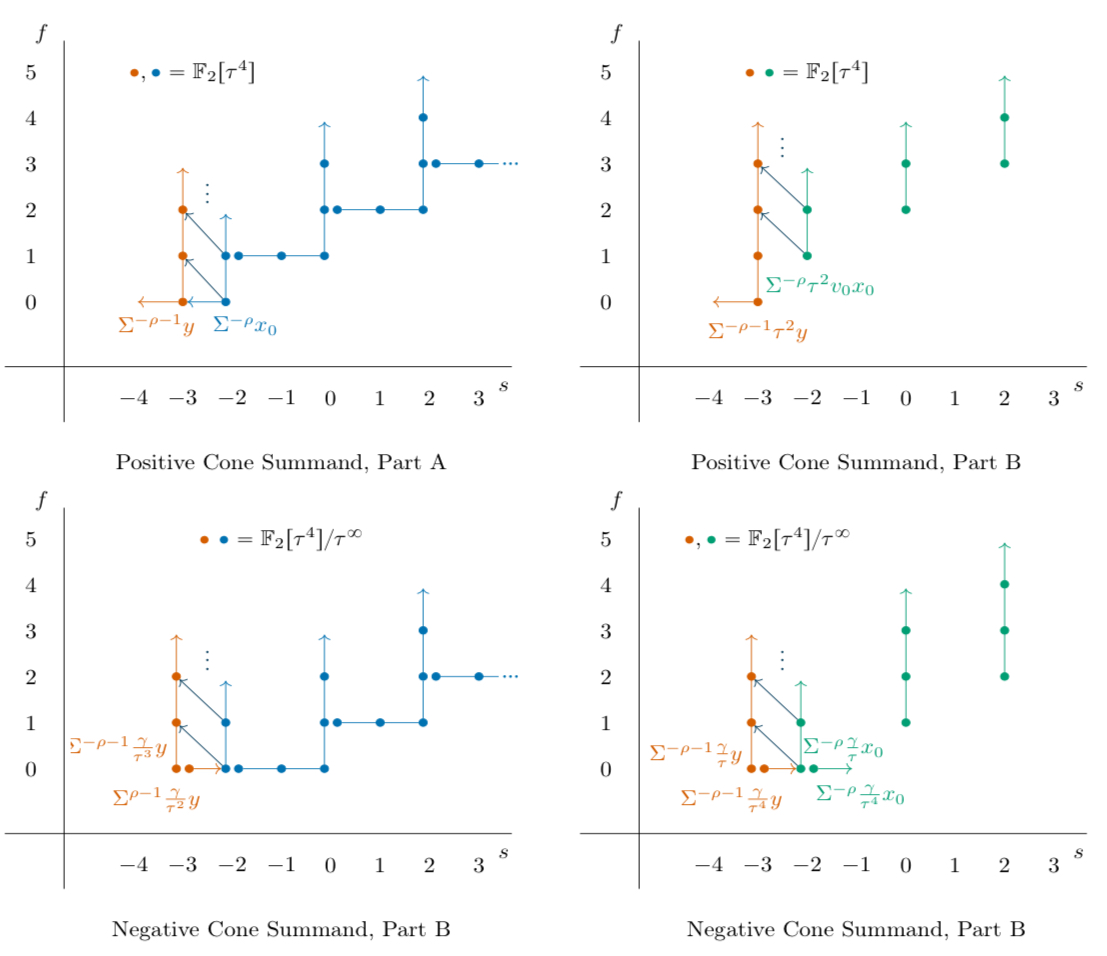}
\caption{$E_1$-page computing $\Ext_{E(1)_\star} (L(1), \mM_2)$}\label{fig:ExtL1M2-E1}
\end{figure}
\end{center}

\begin{center}
\begin{figure}[ht]
\includegraphics[width=5in,height=4in]{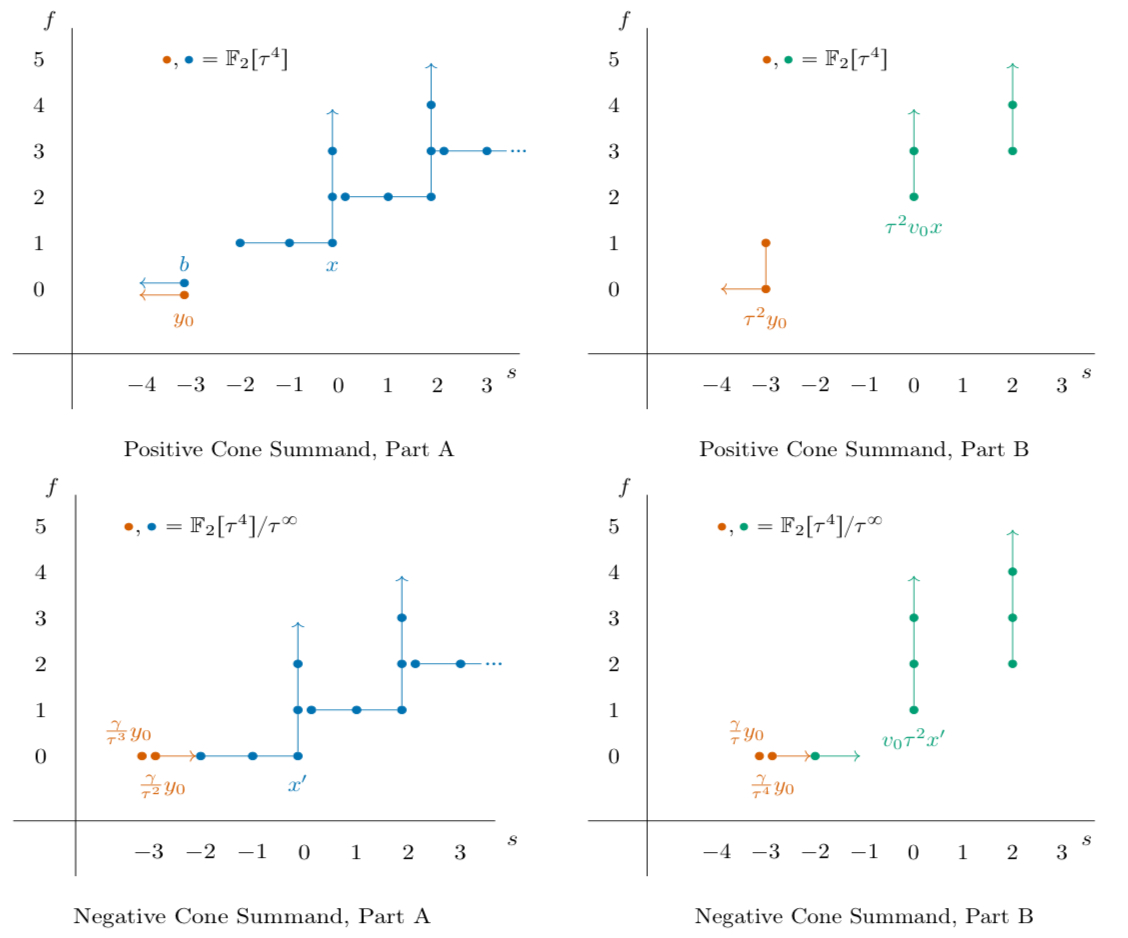}
\caption{$\Ext_{E(1)_\star} (L(1), \mM_2)$} \label{fig:ExtL1M2}
\end{figure}
\end{center}

\begin{figure}[ht]
    \centering
\includegraphics[width=5in,height=4in]{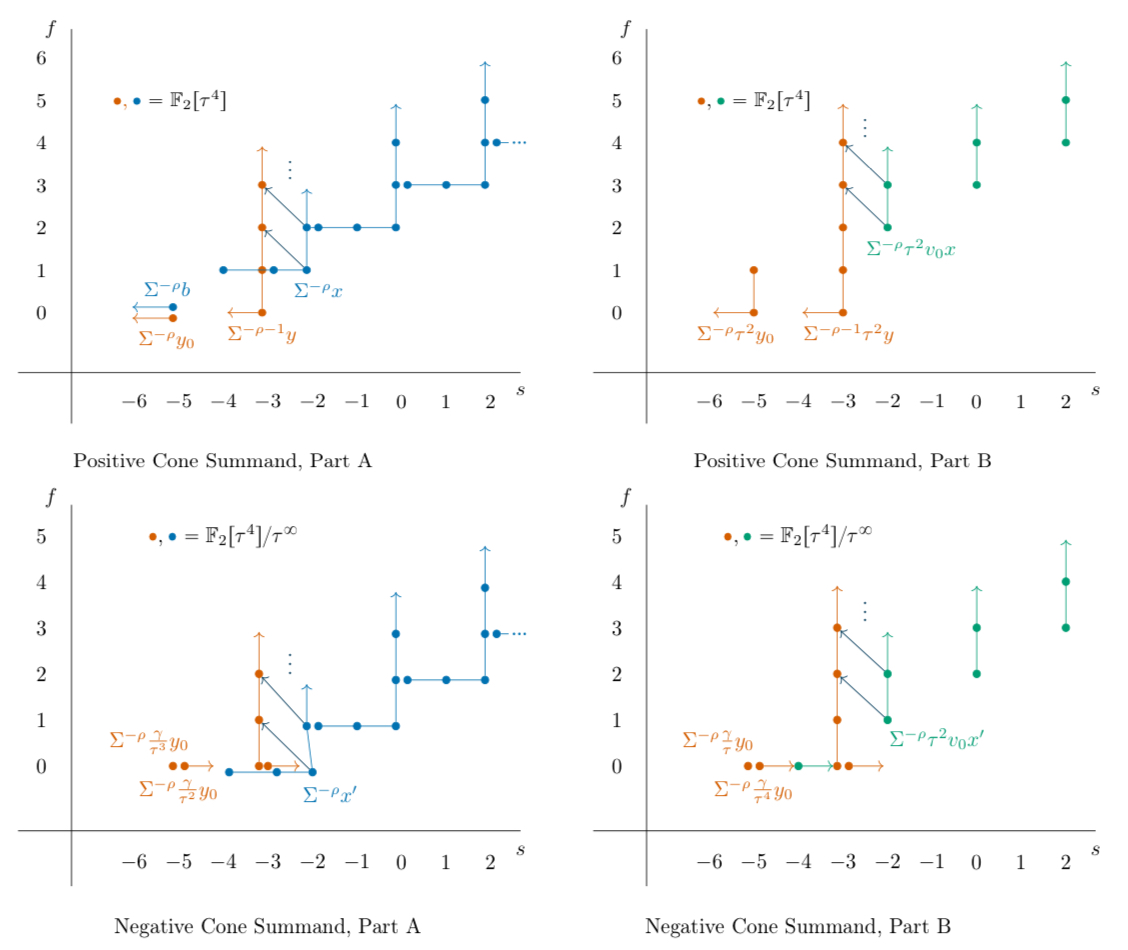}
\caption{$E_1$-page computing $\Ext_{E(1)_\star}(L(2), \mathbb{M}_2)$} \label{fig:E1ExtL2M2}
\end{figure}

\begin{proof}[Proof of \cref{lem:k>m}]
We will compute $\Ext_{\cE(1)_\star}(L(k),L(m))$ for $k > m$ by fixing $m$ and inducting on $k$. We begin with our base case, $\Ext_{\cE(1)_\star}(L(m+1),L(m))$. The reader is encouraged to refer to \cref{fig:ExtL1M2-E1}, which displays the computation of the base case when $m=0$, that is, the computation of $\Ext_{\cE(1)_{\star}}\left(L(1), \mM_{2}\right)$. 

We use the spectral sequence
\begin{align*}
    \Sigma^{-\rho - 1}\Ext_{\cE(0)_{\star}}^{s,f,w}( \mM_{2}, L(m) ) \oplus \Ext_{\cE(1)_{\star}}^{s,f,w}(\Sigma^{\rho}L(m) , L(m))
    \implies  \Ext_{\cE(1)_{\star}}(L(m+1), L(m))
\end{align*}
given in (\ref{eqn: spec seq for k > m}). 

Recall also that
\begin{align*}
\Ext_{\cE(0)_{\star}}\left(\mM_{2}, L(m) \right) \cong \Ext_{\cE(0)_{\star}}\left(\mM_{2}, \mM_{2} \right) \{y \}\oplus W,\end{align*}
where $|y| = (0,0,0,)$ and $W$ is a sum of odd-stem suspensions of $\mM_{2}$. We will not depict the copies of $W$ in our charts, as they are just infinite $\rho$-towers in odd stem and infinitely divisible $\rho$-towers. 

First, observe that the only possible differentials are of the form
\begin{equation} \label{eq:diffPC}
\begin{split}
    & d (\Sigma^{-\rho} v_{0}^{i} \tau^{4j}x_{0} ) = \Sigma^{-\rho - 1} v_{0}^{i+1}\tau^{4i}y, \qquad \qquad i,j \geq 0 \\
    & d(\Sigma^{-\rho} v_0^{i} \tau^{4j - 2}x_{0}) = \Sigma^{-\rho - 1} v_0^{i+1} \tau^{4j - 2}y \qquad \, \, i \ge 1, \, j \geq 1
\end{split}
\end{equation}
in the positive cone summand and  
\begin{align} \label{eq:diffNC}
\begin{split}
    & d \left(\Sigma^{-\rho} v_{0}^{i} \frac{\gamma}{\tau^{3 + 4j}}x_{0} \right) = \Sigma^{-\rho -1}v_{0}^{i+1} \frac{\gamma}{\tau^{3 + 4j}}y \qquad \qquad i\ge 0, \, j \geq 1\\
    & d \left( \Sigma^{-\rho} v_{0}^{i} \frac{\gamma}{\tau^{1 + 4j}}x_{0} \right) = \Sigma^{-\rho - 1} v_{0}^{i+1}\frac{\gamma}{\tau^{1 + 4j}}y \qquad \qquad i \ge 1, \, j \geq 1 
\end{split}
\end{align} 
in the negative cone summand.

Due to the $\Ext_{\cE^{+}_{\star}(1)}(\mM_{2}, \mM_{2})$-module structure, it will suffice to determine $d(\Sigma^{-\rho}x_{0} )$ and $d(\Sigma^{-\rho}\frac{\gamma}{\tau^{3 + 4j}}x_{0} )$.

Comparison with our computation of $\Ext_{\cE(1)_{\star}}\left(L(m+1), L(m+1)\right)$ in \cref{m:0} implies these differentials must be  
\[
d(\Sigma^{-\rho}x_{0} )= \Sigma^{-\rho - 1} v_{0}y. 
\]
\[d\left(\Sigma^{-\rho} v_{0}^{i} \frac{\gamma}{\tau^{3 + 4j}}x_{0} \right) = \Sigma^{-\rho -1}v_{0}^{i+1} \frac{\gamma}{\tau^{3 + 4j}}y.\]
Specifically, suppose towards a contradiction that $d(\Sigma^{-\rho}x_{0} ) = 0$. Then we will have an infinite $v_{0}$-tower in stem $s=-3$ of $\Ext_{\cE(1)_{\star}}\left(L(k), L(m)\right)$ for $k=m+1$ and $k=m$. Combining this with the long exact sequence of (\ref{eqn:les of k<m}) would imply $\Ext_{\cE(1)_{\star}}\left(L(m+1), L(m+1)\right)$ must also have an infinite $v_{0}$-tower in odd stem. But we already have already proven in \cref{m:0} that no such tower exists.

\begin{figure}[ht]
    \centering
\includegraphics[width=5in,height=4in]{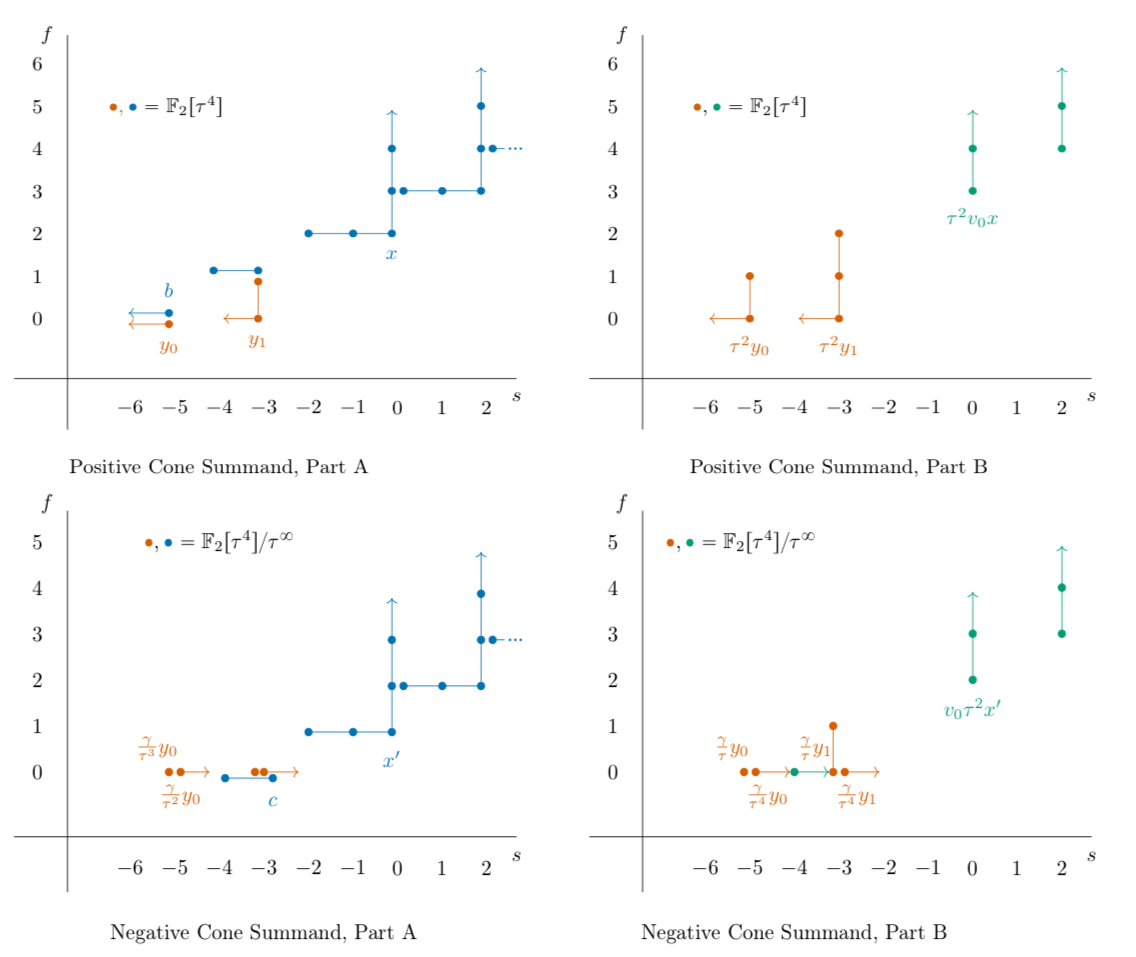}
\caption{$\Ext_{E(1)_\star}(L(2), \mathbb{M}_2)$} \label{fig:ExtL2M2}
\end{figure}

The argument showing 
\[
d \left(\Sigma^{-\rho} v_{0}^{i} \frac{\gamma}{\tau^{3 + 4j}}x_{0} \right) = \Sigma^{-\rho -1}v_{0}^{i+1} \frac{\gamma}{\tau^{3 + 4j}}y
\]
is similar. Hence by $\Ext_{\cE(1)_{\star}}(\mM_{2}, \mM_{2})$-linearity there are nontrivial differentials exactly as described in (\ref{eq:diffPC}) and (\ref{eq:diffNC}). Moreover, there is no room for $\Ext_{\cE(1)_{\star}}(\mM_{2}, \mM_{2})$-module extensions. 

Relabeling $\Sigma^{-\rho -1}y$ to $y_{0}$, $\Sigma^{-\rho}v_{1}$ to $x$, and $\Sigma^{-\rho}\rho$ to $b$, and $\frac{\gamma}{\rho^{2}\tau^{2}}x_{0}$ to $x'$ completes the computation of the base case $\Ext_{\cE(1)_{\star}}\left(L(m+1), L(m)\right)$. 

Now we are ready to proceed with the induction. To aid in understand the general argument, we suggest working through the computation of $\Ext_{\cE(1)_\star}(L(2), \mM_2)$ as illustrated in \cref{fig:ExtL2M2}. We also find it illuminating to refer to \cref{fig:ExtL2M2} while proceeding with the general induction. The computation is very similar to the base case, so we provide fewer details. 

Suppose the description claimed in the lemma holds for all $k'$ where $m+1 \le k' < k$. Consider the spectral sequence
\begin{align*}
    \Sigma^{-\rho - 1}\Ext_{\cE(0)_{\star}}^{s,f,w}( \mM_{2}, L(m) ) \oplus \Ext_{\cE(1)_{\star}}^{s,f,w}(\Sigma^{\rho}L(k-1) , L(m))
    \implies  \Ext_{\cE(1)_{\star}}(L(k), L(m))
\end{align*}
given in (\ref{eqn: spec seq for k > m}).

Using the $\Ext_{\cE(1)_{\star}}(\mM_{2}, \mM_{2})$-module structure, all potential nontrivial differentials are determined by $d(\Sigma^{-\rho}x)$ and $d\left(\Sigma^{-\rho}x'\right)$. For degree reasons, if nontrivial,
\begin{align} \label{eq:diffInduction}
    \begin{split}
        & d(\Sigma^{-\rho}x)  = v_{0}^{k-m}\Sigma^{-\rho - 1}y \\
        & d\left(x' \right) = \Sigma^{-\rho - 1}v_{0}^{k-m}\frac{\gamma}{\tau^3}y.
    \end{split}
\end{align} 
To see these differentials must indeed be nontrivial, suppose towards a contradiction that any one of these differentials is trivial. Then, just as in the computation of the base case, we will get an $v_{0}$-tower in odd stem in $\Ext_{\cE(1)_{\star}}(L(k), L(m))$. Induction on $m$ would then yield a $v_{0}$-tower in odd stem in $\Ext_{\cE(1)_{\star}}(L(m), L(m))$, which by \cref{m:0} cannot exist. Hence $d(\Sigma^{-\rho}x)$ and $d\left(\Sigma^{-\rho}x'\right)$ must be nontrivial as given in (\ref{eq:diffInduction}).

To finish the proof, relabel $\Sigma^{-\rho}v_{1}x$ as $x$, $\Sigma^{-\rho}b$ as $b$, $\Sigma^{-\rho - 1}y$ as $y_{0}$, $\Sigma^{-\rho - 1}y_{i}$ as $y_{i+1}$, and $\Sigma^{\rho}c$ as $c$ (except in the case $k=m+2$, where $c$ is the relabling of $\Sigma^{\rho}\rho x'$). Finally, note that the extensions $v_{1}y_{i} = v_{0}y_{i+1}$ and $v_{1}\tau^{2}y_{i} = v_{0}\tau^{2}y_{i+1}$ must occur by comparison to the underlying \cref{nonequiv: Ext computation}. No other $\Ext_{\cE(1)_{\star}}(\mM_{2}, \mM_{2})$-extensions are possible for degree reasons.
\end{proof}

\subsection{Analyzing Adams differentials}
The goal of this section is to finish proving the Adams spectral sequence (\ref{eq:AdamsSs}) collapses. 
\begin{theorem}\label{prop: spec seq collapse}
    For all $k \ge 0$, the Adams spectral sequence
   \begin{align*}
E_2^{s, f, w}  = Ext_{\mathcal{E}(1)_\star} \left(H_\star \Sigma^{\rho k} \cB_0(k), H_\star ku_\mR\right) 
      \implies \left[ku_\mR \wedge \Sigma^{\rho k} \cB_{0} (k), ku_\mR \wedge ku_\mR\right]^{ku_\mR}\notag
\end{align*}
collapses.
\end{theorem}

In the proof, we will rely on differentials in the relative Adams spectral sequence being $v_0,$ $v_1,$ $\tau^4,$ and $v_0 \tau^2$-linear so we provide some discussion of this fact here.  

Let $E_{r}(X,Y)$ denote the $r^{th}$-page of the relative Adams spectral sequence so 
\[E_{2}(X,Y) = \Ext_{\cE(1)_{\star}}(H_{\star}^{ku_{\mR}}X, H_{\star}^{ku_{\mR}}Y) \Longrightarrow [X,Y]^{ku_{\mR}} .\] 
With this notation, 
\begin{align*}
    E_{2}(ku_{\mR},ku_{\mR}) & = \Ext_{\cE(1)_{\star}}(\mM_{2}, \mM_{2}) \Longrightarrow \pi_{\star}ku_{\mR}, \\
    E_{2}(C_{k},C) = &\bigoplus\limits_{k=0}^{\infty}\Ext_{\cE(1)_{\star}}(L(\nu_{2}(k!), L(m))  \Longrightarrow [C_{k}, C]^{ku_{\mR}}.
\end{align*}
There is a pairing of relative Adams spectral sequences 
\[\lambda: E_{r}(ku_{\mR}, ku_{\mR}) \otimes E_{r}(C_{k}, C) \longrightarrow E_{r}(C_{k}, C) \]
induced by $ku_{\mR} \wedge C_{k} \to C_{k}$ and $ku_{\mR} \wedge C \to C$, such that 
\[d_{r}\left(\lambda(x \otimes y)\right) = \lambda\left(d_{r}(x)\otimes y\right) + \lambda\left(x\otimes d_{r}(y)\right).\]
Furthermore, the pairing on the $E_{2}$-page is the same as the natural $\Ext_{\cE(1)_{\star}}(\mM_{2}, \mM_{2})$-module 
\[\Ext_{\cE(1)_{\star}}(\mM_{2}, \mM_{2}) \otimes \Ext_{\cE(1)_{\star}}(L(k), L(m)) \longrightarrow \Ext_{\cE(1)_{\star}}(L(k), L(m))  \]
structure used above.

Note that the Adams spectral sequence \[ E_{2}(ku_{\mR},ku_{\mR}) = \Ext_{\cE(1)_{\star}}(\mM_{2}, \mM_{2}) \Longrightarrow \pi_{\star}ku_{\mR}  \]
collapses at the $E_{2}$-page, so for any class $v_{0}^{i}v_{1}^{j}\tau^{2j} \in \Ext_{\cE^\mR_\star(1)_{\star}}(\mM_{2}, \mM_{2})$ and $y \in E_{r}(C_{k}, C)$, we can simply write $d_{r}(v_{0}^{i}v_{1}^{j}\tau^{2j}y) = v_{0}^{i}v_{1}^{j}\tau^{2j}d_{r}(y)$. Therefore, all differentials in $E_{r}(C_{k}, C)$ are linear over $\Ext_{\cE(1)_{\star}}(\mM_{2}, \mM_{2})$, and in particular, all differentials are $v_{0}$, $v_{1}$, $\rho$, $\tau^{4}$, and $v_{0}\tau^{2}$-linear.

\begin{proof}[Proof of \cref{prop: spec seq collapse}]
In \cref{prop:splitInto4}, we showed that the Adams spectral sequence splits into a sum of four different Adams spectral sequences, and that all of the summands collapse at the $E_2$-page (and are concentrated on the $(f = 0)$-line) except  
\[ 
\Ext_{\mathcal{E}(1)_\star}^{s,f,w} (H_\star \Sigma^{\rho k}C_{k}, H_\star C) \Longrightarrow [\Sigma^{\rho k}C_{k}, C] .
\]
We also observed
\[ 
\Ext_{\mathcal{E}(1)_\star}^{s,f,w} (H_\star\Sigma^{\rho k} C_{k}, H_\star C) \cong  \bigoplus\limits_{m=0}^\infty \Sigma^{\rho (m-k)} \Ext_{\mathcal{E}(1)_\star} (L (\nu_2(k!)), L(\nu_2(m!))) .
\] 
To show the Adams spectral sequence collapses, we will begin by considering Adams differentials with source a $v_{1}$-torsion class.

\subsubsection{Differentials originating in $v_{1}$-torsion classes} The $v_{1}$-torsion classes in \[
\bigoplus\limits_{m=0}^\infty \Sigma^{\rho (m-k)} \Ext_{\mathcal{E}(1)_\star} (L (\nu_2(k!)), L(\nu_2(m!)))
\]
consist of: 
\begin{enumerate}
    \item $v_0$-towers in the triangle formation (\cref{lem:k>m} (1)),
    \item $\rho$-towers based in odd stem and filtration $f = 0$ (\cref{lem:k>m} (2)),
    \item $\rho$-divisible towers in filtration $f=0$ (\cref{lem:k>m} (2)). 
\end{enumerate} 

By $v_1$-linearity, the only potential targets of differentials with source a $v_1$-torsion class are other $v_1$-torsion classes. All $v_1$-torsion classes in filtration $f > 0$ are contained in the $v_0$-towers in the triangle formation and are $\rho$-torsion. Hence the $\rho$ and $\rho$-divisible towers cannot support differentials. Further, the $v_0$-towers in the triangle formation are all in odd stem, so they cannot support differentials as Adams differentials decrease stem by one. 

Before analyzing the possible differentials originating in $v_{1}$-torsion free classes, it will be helpful to list the generators of $Ext_{\cE(1)_{\star}}(L(k), L(m))$ as a module over the positive cone summand $\Ext_{\cE^{+}_\star}(1)((L(k), L(m))$. 
 
\subsubsection{Listing Generators.} Since differentials with source a $v_1$-torsion free class can have target a $v_1$-torsion class, we consider both $v_1$-torsion and $v_1$-torsion free classes. However, we omit $\rho$ and $\rho$-divisible towers from the remaining analysis since, as they are concentrated in filtration $f=0,$ they cannot be the target of any differentials.

\cref{tab:Extgens k<m pos} lists the generators of the positive cone summand of $\Ext_{\cE (1)_\star} (L(k), L(m))$ as a module over the positive cone summand $\Ext_{\cE^{+}_{\star}(1)}(1)((L(k), L(m))$ when $k \le m$. \cref{tab:Extgens k<m neg} lists the generators of the negative cone summand. It may be helpful to compare these tables with the chart for $\Ext_{\cE (1)_\star} (L(1), L(2))$ in \cref{fig:E1ExtL1L2}). 

\cref{tab:Extgens k>m pos} lists the generators of the positive cone summand of $\Ext_{\cE (1)_\star} (L(k), L(m))$ as a module over the positive cone summand $\Ext_{\cE^{-}_{\star}(1)}(1)((L(k), L(m))$ when $k > m.$ \cref{tab:Extgens k>m neg} lists the generators of the negative cone summand. In both tables, the $v_{1}$-torsion free generators are listed above the horizontal line, and $v_{1}$-torsion generators are below. It may also be helpful to compare these tables with the chart given for $\Ext_{\cE (1)_\star} (L(2), \mM_2)$ in \cref{fig:ExtL2M2}).

\begin{table}[h!] 
    \centering
    \begin{tabular}{l |l |l}
            Generators & $(s,\, f,\, w)$ & \\
            \hline
            $x_{i}$ & $(2i, 0, i)$ & $0 \le i \le m-k$ \\
            $\tau^{2}x_{i},$ & $(2i, 0, i-2)$ & $0 \le i < m-k$ \\
    \end{tabular}
    \caption{Generators of $\Ext_{\cE^{+}_{\star}(1)} (L(k), L(m))$ for $k \le m$, omitting $\rho$ and $\rho$-divisible towers}
    \label{tab:Extgens k<m pos}
\end{table}

\begin{table}[h!] 
    \centering
    \begin{tabular}{l |l |l}
            Generators & $(s,\, f,\, w)$ & \\
            \hline
            $\frac{\gamma}{\tau^{2j+1}}x_{i}$ & $(2i, 0, i + 2j + 2)$ & $0 \le i \le m-k$, $j \ge 0$ \\
            $\frac{\gamma}{\rho^{2}\tau^{2j}}x_{m-k}$ & $(2(m-k+1), 0, m-k + 2j + 3)$ & $j \ge1 $ \\
    \end{tabular}
    \caption{Generators of $\Ext_{\cE^{-}_{\star}(1)} (L(k), L(m))$ for $k \le m$, omitting $\rho$ and $\rho$-divisible towers}
    \label{tab:Extgens k<m neg}
\end{table}

\begin{table}[h!] 
    \centering
    \begin{tabular}{l |l |l}
            Generators & $(s,\, f,\, w)$ & \\
            \hline
            $x$ & $(0, k-m, 0)$ &  \\
            $b$ & $(2(m-k)-1, 1, m-k-1)$ & \\
            \hline
            $y_{i}$ &{$\left(-2(k-m-i) -1, 0, -(k-m-i)\right)$} &{$0 \le i < k-m$} \\
            $\tau^{2}y_{i}$ & {$\left(-2(k-m-i) -1, 0, -(k-m-i)   - 2\right)$} & {$0 \le i < k-m$}\\
    \end{tabular}
    \caption{Generators of $\Ext_{\cE^{+}_{\star}(1)} (L(k), L(m))$ for $k > m$, omitting $\rho$-towers and $\rho$-divisible towers} 
    \label{tab:Extgens k>m pos}
\end{table}

\begin{table}[h!] 
    \centering
    \begin{tabular}{l |l |l}
            Generators & $(s,\, f,\, w)$ & \\
            \hline
            $x'$  & $(0, k-m-1, 4)$ & {}\\
            $\frac{\gamma}{\tau^{4j}}c$
            & $(2(m-k)+1, 0, m-k + 4j+ 1)$ 
            & {} \\
            \hline
            $\frac{\gamma}{\tau^{2j + 1}}y_{i}$ &{$\left(-2(k-m-i) -1, 0, -(k-m-i) + 2j + 2\right)$} &{$0 \le i < k-m$} \\
            
    \end{tabular}
    \caption{Generators of $\Ext_{\cE^{-}_{\star}(1)} (L(k), L(m))$ for $k > m$, omitting $\rho$ and $\rho$-divisible towers}
    \label{tab:Extgens k>m neg}
\end{table}

\subsubsection{Differentials from $v_{1}$-torsion free classes to $v_{1}$-torsion free classes.}
Since the Adams differential decreases stem by $1$ but preserves motivic weight, it will follow immediately from the following proposition that there are no differentials from $v_{1}$-torsion free classes to $v_{1}$-torsion free classes. 

\begin{proposition}
    Let $x$ be a $v_{1}$-torsion free class in $E_{2}^{s,f,w}$. If $x$ is in stem $2n$, then the motivic weight of $x$ is congruent to $n$ mod $2$. If $x$ is in stem $2n-1$, then the motivic weight of $x$ is congruent to $n-1$ mod $2$. 
\end{proposition}
\begin{proof}
    Observe from the tables that every $v_{1}$-torsion free generator in
    \[
    \Ext_{\cE(1)_{\star}}(L(\nu_{2}(k!)), L(\nu_{2}(m!))
    \]
    satisfies the statement. Furthermore, the statement is preserved under action by an element in $\Ext_{\cE^{\mR}_\star(1)}(\mM_{2}, \mM_{2})$ (see \cref{tab:ExtM2M2gens}). Finally, suspension by the $C_2$-regular representation also preserves the statement. So indeed every $v_{1}$-torsion free element of 
    \[
    E_{2}^{s,f,w} \cong \bigoplus\limits_{k=0}^{\infty}\Ext_{\cE(1)_{\star}}(L(\nu_{2}(k!)), L(\nu_{2}(m!))
    \]
    has a motivic weight fitting the desired description.
\end{proof}

\subsubsection{Differentials from $v_{1}$-torsion free classes to $v_{1}$-torsion classes}
We will now show there are no differentials with source a $v_1$-torsion free class and target a $v_{1}$-torsion class. The $v_1$-torsion classes in filtration $f > 0$ are precisely the classes in the $v_0$-towers of the triangle formation (\cref{lem:k>m} (1)) with $f >0$. In particular, these classes occur only in odd stem $s \leq -3$. Since 
\[
\bigoplus\limits_{k \le m}\Sigma^{\rho(m-k)}\Ext_{\cE(1)}( L(\nu_{2}(k!) ), L(\nu_{2}(m!) ) )
\]
is contained entirely in stem $s \ge 0$, it only remains to show that there are no differentials originating in $v_1$-torsion free classes of even stem in 
\[
\bigoplus\limits_{k > m}\Sigma^{\rho(m-k)}\Ext_{\cE(1)}( L(\nu_{2}(k!) ), L(\nu_{2}(m!) ) )
.\]

Fix $n \ge 0$. We will put bounds on the Adams filtration and motivic weight of generators in stem $-2n$. Then we will put bounds on the Adams filtration and motivic weight of potential $v_1$-torsion targets, which are in stem $-2n-1$. This will allow us to rule out the potential for any nonzero Adams differentials. 

The only even-stem $v_1$-torsion free generator in the positive cone summand of 
\[
\Sigma^{\rho(m-k)}\Ext_{\cE(1)}( L(\nu_{2}(k!) ), L(\nu_{2}(m!) ) )
\]
is $\Sigma^{\rho(m - k)} x$, which has stem $2(m-k)$ and filtration $\nu_{2}(k!) - \nu_{2}(m!)$ (see \cref{tab:Extgens k>m pos}). Similarly, the only even-stem $v_1$-torsion free generator in the negative cone summand is $\Sigma^{\rho(m - k)} x'$ (see \cref{tab:Extgens k>m neg}), which has stem $2(m-k)$ and filtration $\nu_{2}(k!) - \nu_{2}(m!) - 1$. So in \[
\bigoplus\limits_{k > m}\Sigma^{\rho(m-k)}\Ext_{\cE(1)}( L(\nu_{2}(k!) ), L(\nu_{2}(m'!) ) ),\]
there are exactly two $v_{1}$-torsion free generators in degree $-2n$: one positive cone generator $\Sigma^{-\rho n}x$, which has filtration $\nu_{2}(k!) - \nu_{2}(k-n)!$, and one negative cone generator $\Sigma^{-\rho n}x'$, which has filtration $\nu_{2}(k!) - \nu_{2}(k-n)!-1$.

One can check in the same manner that at stem $-2n-1$, any $v_{1}$-torsion element in the positive cone component of 
\[
\bigoplus\limits_{m>k}\Sigma^{(m-k)\rho}\Ext_{\cE(1)_{\star}}\left(L(\nu_{2}(k!)), L(\nu_{2}(m!)) \right)
\]
has Adams filtration at most $\nu_{2}(k!) - \nu_{2}\left((k-n+1)! \right)$. Meanwhile, the negative cone summands are shifted down by $1$ in filtration: any $v_{1}$-torsion element in the negative cone component has Adams filtration at most $\nu_{2}(k!) - \nu_{2}\left((k-n+1)! \right)-1$. 

\begin{comment}
\textcolor{blue}{Proving these bounds proceeds similarly to the nonequivariant argument given in \cref{nonequiv: collapse}. Specifically, observe first that in $ \Ext_{\cE(1)_{\star}}\left(L(\nu_{2}(k!), L(\nu_{2}(m!)\right)$, the highest filtration of any $v_{1}$-torsion free class in stem $-3-2i$ is $\nu_{2}(k!)-\nu_{2}(m!)-i$ in the positive cone component. (This is recorded in \cref{lem:k>m}, but it is helpful to compare with \cref{fig:clExtL3Fp}). So in $\Sigma^{\rho(k-m)}\Ext_{\cE(1)_{\star}}\left(L(\nu_{2}(k!), L(\nu_{2}(m!)\right)$, the highest filtration of any $v_{1}$-torsion free class in stem $-3-2i - 2(k-m)$ is $\nu_{2}(k!)-\nu_{2}(m!)-i$ in the positive cone component. So for any $0 \le i \le m < k$ such that 
\[-3 -2i -2(k-m) = -2n -1,\]
the highest filtration class will be $\nu_{2}(k!) - \nu_{2}\left((k-n+1+i)!\right) - i$ in the positive cone. So at any stem $-2n-1$, the highest filtration class will be in filtration $\nu_{2}(k!) - \nu_{2}((k-n+1)!)$ in the positive cone. The upper bound on the Adams filtration of $v_1$-torsion classes in the negative cone component follows from a similar argument. }
\end{comment}

These bounds on Adams filtration rule out the possibility of any remaining nontrivial differentials, except possibly from the negative cone summand to the positive cone summand. However, we also rule these out for degree reasons, specifically by considering motivic weight.

Any generator in stem $-2n$ of the negative cone summand has motivic weight at least $-n + 4$, while any $v_{1}$-torsion class in stem $-2n-1$ of the the positive cone summand with filtration $f > 0$ has  motivic weight at most $-n$. 

\begin{comment}
This can be seen as follows: $\Sigma^{\rho(m-k)}\frac{\gamma}{\tau^{4j}}x'$ has motivic weight $-n+4(j+1)$. So the minimum motivic weight of any negative cone component generator in stem $-2n$ is indeed $-n+4$. Now consider a class $\Sigma^{\rho(m-k)}v_{0}^{i}y_{j}$ in $\Sigma^{\rho(k-m)}\Ext_{\cE(1)_{\star}}\left(L(\nu_{2}(k!), L(\nu_{2}(m!)\right)$. The stem of this class is $2(\nu_{2}(m!)-\nu_{2}(k!) + j + m-k)-1$, and the motivic weight of this class is $\nu_{2}(m!)-\nu_{2}(k!) + j + m -k$. So if a class $\Sigma^{\rho(m-k)}v_{0}^{i}y_{j}$ has stem $-2n-1$, then the motivic weight that class is $-n$. Thus the motivic weight of $\Sigma^{\rho(m-k)}\tau^{2r}v_{0}^{i}y_{j}$ is at most $-n$. 
So indeed no differentials from $v_{1}$-torsion free to $v_{1}$-torsion classes are possible. 
\end{comment}
\end{proof}
Thus the class $\theta_{k}:H_{\star}\Sigma^{\rho k}\cB_{0}(k)\rightarrow H_{\star}\Sigma^{\rho k}ku_{\mR}$ survives the Adams spectral sequence, and we have proved our main theorem.

\begin{theorem} \label{thm:mainTheorem}
    Up to $2$-completion, there is a splitting of $ku_{\mR}$-modules
    \[ku_{\mR} \wedge ku_{\mR}  \simeq \overset{\infty}{\underset{k=0}{\bigvee}} ku_{\mR} \wedge \Sigma^{\rho k}\cB_{0}(k).\]
\end{theorem}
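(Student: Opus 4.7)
The plan is to run the $ku_{\mR}$-relative Adams spectral sequence of \cref{eq:AdamsSs} and show that each of the $\cE(1)_\star$-comodule maps $\theta_k : \Sigma^{\rho k} H_\star \cB_0(k) \to H_\star ku_{\mR}$ produced by \cref{thm:homology:decomp}, regarded as a class on the zero line of the $E_2$-page, is a permanent cycle. Once this is established, each $\theta_k$ lifts to a $ku_{\mR}$-module map $f_k : \Sigma^{\rho k} ku_{\mR} \wedge \cB_0(k) \to ku_{\mR} \wedge ku_{\mR}$, and the wedge sum $\bigvee_k f_k$ realizes the isomorphism of \cref{thm:homology:decomp} on $ku_{\mR}$-relative homology. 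A standard Whitehead-type argument (using that each summand has finite-type homology and convergence of the Adams spectral sequence, which is guaranteed because the $E_2$-page is finite in each tridegree) then upgrades this homology isomorphism to the desired $ku_{\mR}$-module equivalence after $2$-completion.

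The surviving of $\theta_k$ is the content of \cref{prop: spec seq collapse}. The route there is a two-step reduction. First, use \cref{prop: other splitting} to split $ku_{\mR} \wedge \cB_0(k) \simeq C_k \vee V_k$ and $ku_{\mR} \wedge ku_{\mR} \simeq C \vee V$, which via \cref{prop:splitInto4} decomposes the Adams spectral sequence into four pieces, three of which are automatically concentrated on the zero line. Second, for the nontrivial summand $\Ext_{\cE(1)_\star}(H_\star \Sigma^{\rho k} C_k, H_\star C)$, use $H_\star^{ku_{\mR}} C_k \cong L(\nu_2(k!))$ and $H_\star^{ku_{\mR}} C \cong \bigoplus_m L(\nu_2(m!))$ together with the short exact sequence $0 \to \Sigma^\rho L(k-1) \to L(k) \to \cE(1)//\cE(0)_\star \to 0$ and the wrong-side change of rings (\cref{lem:change of rings}) to compute $\Ext_{\cE(1)_\star}(L(k), L(m))$ inductively, bootstrapping from Hill's calculation together with the negative-cone analysis of \cite{GuillouHillIsaksenRavenel2020}. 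This is exactly what Lemmas \ref{k:0}, \ref{m:0}, \ref{lem:k>m}, and \ref{lem:k>mNC} accomplish.

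With the $E_2$-page in hand, collapse is a degree/linearity check. The $v_1$-torsion classes break into two families: the triangle-formation $v_0$-towers (which live in stems $s \le -3$) and the infinite $\rho$-towers and $\rho$-divisible towers on the filtration-zero line. The latter can neither support nor receive differentials by $\rho$-(in)divisibility. The former sit in odd stems so cannot support differentials, and cannot be hit by $v_1$-torsion-free generators because of a $\nu_2$-filtration estimate comparing the Adams filtration of sources $\Sigma^{\rho(m'-k)} x$ in stem $-2r$ (at least $\nu_2(k!) - \nu_2((k-r)!)$) against that of potential targets in stem $-2r-1$ (at most $\nu_2(k!) - \nu_2((k-r+1)!) + 1$). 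Finally, $v_1$-torsion-free generators are concentrated in even stems in motivic weights of one parity class mod $4$, while the $v_1$-torsion-free odd-stem targets have motivic weights of the opposite class, and the Adams differential preserves motivic weight while decreasing stem by $1$, precluding any such differential.

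The main obstacle is the inductive $\Ext$ computation together with the delicate tracking of stem, Adams filtration, and motivic weight needed to exclude differentials; the negative cone contributes infinitely divisible modules whose interaction with the positive cone is intricate and has no straightforward nonequivariant analogue. Once those $\Ext$ groups are pinned down, the rest of the argument is essentially organizational: collapse is forced by degree considerations, and lifting $\theta_k$ followed by summing yields the theorem.
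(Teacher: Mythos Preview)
Your proposal is correct and follows essentially the same route as the paper: reduce via \cref{prop: other splitting} and \cref{prop:splitInto4} to the spectral sequence with $E_2$-page $\bigoplus_m \Sigma^{\rho(m-k)}\Ext_{\cE(1)_\star}(L(\nu_2(k!)),L(\nu_2(m!)))$, compute these $\Ext$ groups inductively using the short exact sequence \eqref{eq:ses} and \cref{lem:change of rings}, and then verify collapse by the $v_1$- and $\rho$-linearity and motivic-weight parity arguments you describe. The paper's proof of \cref{thm:mainTheorem} is exactly this, with the collapse statement packaged as \cref{prop: spec seq collapse}.
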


\subsection{Description in terms of $C_2$-equivariant Adams covers}\label{sec:adams covers}
We now give a description of the splitting  of \cref{thm:mainTheorem} in terms of $C_2$-equivariant Adams covers. Let $ku_{\mR}^{\langle n \rangle}$ denote the $n$-th Adams cover of $ku_{\mR}$, that is, the $n$-th term in a minimal Adams resolution of $ku_{\mR}$ over $H$. (An Adams resolution is said to be minimal if applying $H\mF_{2}^{\star}(-)$ induces a minimal resolution of $\cA$-modules. (see \cite[p.2-3]{lellmann1984operations} for discussion). 
\begin{proposition}
    Let $ku_{\mR}^{\langle n \rangle}$ denote the $n$-th Adams cover of $ku_{\mR}$. Then $H_{\star}^{ku_{\mR}}ku_{\mR}^{\langle n \rangle} \cong L(n)$.
\end{proposition}
\begin{proof}
    Consider a minimal Adams resolution of $ku_{\mR}$ over $H$. 
    \[
    \begin{tikzcd}
        ku_{\mR}^{\langle 0 \rangle }\arrow[d, "i_{0}"] & ku_{\mR}^{\langle 1 \rangle }\arrow[l]\arrow[d, "i_{1}"] & ku_{\mR}^{\langle 2 \rangle}\arrow[d]\arrow[l] & \cdots \arrow[l]\\
        K_{0}\arrow[ur, dashed] & K_{1}\arrow[ur, dashed]  & K_{2}\arrow[ur, dashed] \\
    \end{tikzcd}
    \]

Recall that $ku_{\mR}^{\langle n \rangle}$ is defined inductively: $ku_{\mR}^{\langle 0 \rangle}:= ku_{\mR}$ and $ku_{\mR}^{\langle n \rangle}$ is the fiber of 
\[
ku_{\mR}^{\langle n-1 \rangle} \to K_{n-1}
\]
for all $n>0$. First, note that since $ku_{\mR}$ and $H$ are both $ku_{\mR}$-modules, applying $H_{\star}^{ku_{\mR}}(-)$ yields a minimal resolution of $\mM_{2}$ by $\cE(1)_{\star}$-modules. 

We will proceed by induction. Note that $K_{0} := H$, and $i_{0}: ku_{\mR} \to H$ is the unit map in $H^{*}ku_{\mR}$. 
Applying $H_{\star}^{ku_{\mR}}(-)$ to 
\[ ku_{\mR}^{\langle 1 \rangle} \to ku_{\mR} \to H\] yields a long exact sequence
\[\cdots \to H_{\star}^{ku_{\mR}}ku_{\mR}^{\langle 1 \rangle} \to \mM_{2} \to \cE(1)_{\star} \to \cdots.  \]
By inspection, $H_{\star}^{ku_{\mR}}ku_{\mR}^{\langle 1 \rangle} \cong L(1)$. 

Now suppose that $H_{\star}ku_{\mR}^{\langle k \rangle} \cong L(k)$ for all $k < n$. Then define $K_{n}:= \bigoplus\limits_{i=0}^{n}\Sigma^{i \rho + 1} H$. Recall that $L(n) \cong \cE(1)\{x_{1}, x_{2}, \ldots x_{n}|\ Q_{1}x_{i+1} = Q_{0}x_{i}\ 1 \le i < n \}$. Define a map 
\[i_{n_{\star}}: L(n) \to H_{\star}^{ku_{\mR}} K_{n}\] 
by sending $x_{i} \mapsto \Sigma^{i \rho + 1}1 \in \Sigma^{i \rho + 1}\cE(1)_{\star}$. This map can be realized by $i_{n}:ku_{\mR}^{\langle n \rangle} \to  K_{n}$.
\end{proof}

Note that $H_{\star}^{ku_{\mR}}ku_{\mR}^{\langle n \rangle} \cong L(\nu_{2}(n!))$, and that $ku_{\mR}^{\langle n \rangle}$ must be a $ku_{\mR}$-module by construction. We will make use of the uniqueness of these Adams covers as $k_{\mR}$-modules.
\begin{proposition}\label{prop:uniqueness of adams covers}
    Let $X$ be a $ku_{\mR}$-module such that $H_{\star}^{ku_{\mR}}X \cong L(n)$. Then 
    \[ X \simeq ku_{\mR}^{\langle n \rangle}.\]
\end{proposition}
\begin{proof}
    Consider the $ku_{\mR}$-relative Adams spectral sequence
\[E_2^{s,f,w} = \Ext_{\cE(1)_{\star}}(H_{\star}^{ku_{\mR}}X, H_{\star}^{ku_{\mR}}ku_{\mR}^{\langle n \rangle}) \cong \Ext_{\cE(1)_{\star}}\left(L(n), L(n)\right)\Longrightarrow [X, ku_{\mR}^{\langle n \rangle} ]^{ku_{\mR}}.\]
The proof of our main theorem (\cref{thm:mainTheorem}) shows that this spectral sequence collapses at the $E_{2}$-page, and so we can lift the isomorphism to an equivalence of $ku_{\mR}$-module spectra.
\end{proof}

Now we can prove the following theorem.

\begin{theorem}
    Up to $2$-completion,    \[ ku_{\mR} \wedge \cB_{0}(k) \simeq ku_{\mR}^{\langle \nu_{2}(k!) \rangle} \vee V_{k},\]
where $V_{k}$ is a sum of suspensions of $H$.
\end{theorem}

\begin{cor} \label{thm:C2AdamsCovers}
    Up to $2$-completion,    \[ ku_{\mR} \wedge ku_{\mR} \simeq \bigvee\limits_{k=0}^{\infty} \Sigma^{\rho k}ku_{\mR}^{\langle \nu_{2}(k!) \rangle} \vee V,\]
where $V$ is a sum of suspensions of $H$.
\end{cor}

\begin{proof}
\cref{prop: other splitting} shows $ku_{\mR} \wedge \cB_{0}(k) \cong C_{k} \vee V_{k}$, where $V_k$ is a sum of suspensions of $H$ and $C_k$ contains no $H$-summands. Specifically, $H_{\star}^{ku_{\mR}}C_{k} \cong L(\nu_{2}(n!))$. Thus we can use the Adams spectral sequence
\begin{align*} \label{eq:AdamsSpecSeqC2Covers}
    \Ext_{\cE(1)}(H_{\star}^{ku_{\mR}}ku_{\mR}^{\langle n \rangle}, H_{\star}C_{k}) \Longrightarrow [ku_{\mR}^{\langle n \rangle}, C_{k}]^{ku_{\mR}}
\end{align*}
to lift the isomorphism to an equivalence of $ku_{\mR}$-module spectra. Since the $E_2$-page of this spectral sequence is simply a summand of the $E_{2}$-page of the Adams spectral sequence (\ref{eq:AdamsSs}) which appeared in the proof of \cref{thm:mainTheorem}, the same arguments imply this spectral sequence also collapses.
\end{proof}
    
\subsection{{$ku_\mR$-operations and cooperations}}

\subsubsection{The $ku_\mR$-cooperations algebra.} Another consequence of our proof of \cref{thm:mainTheorem} is a computation of the $ku_\mR$-cooperations algebra.

\begin{theorem} \label{thm:cooperationsAlg}
    The $ku_\mR$-cooperations algebra ${ku_\mR}_\star ku_\mR$ splits as
    \[
    {ku_\mR}_\star ku_\mR \cong \overset{\infty}{\underset{k=0}{\bigoplus}}  {ku_\mR}_{\star - k \rho} \cB_0(k), 
    \]
    where as a ${ku_\mR}_\star$-module 
    \begin{align*}
        {ku_\mR}_{\star}\cB_{0}(k)
        \cong \bigoplus\limits_{k=0}^{\infty} H \umZ_{\star}\{x_{0}, x_{1}, \ldots, x_{\nu_2(2k!)-1} \} \oplus {ku_\mathbb{R}}_{\star}\{x_{\nu_2(2k!)} \} \oplus V_{k},
    \end{align*}
    with extensions $v_1 x_{i - 1} = \rho x_i$, and where $| x_i | = \rho i$ and $V_k$ is a sum of suspensions of $H_\star$. 
\end{theorem}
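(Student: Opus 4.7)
The first splitting is immediate from \cref{thm:mainTheorem}: applying $\pi_\star$ to the equivalence
\[
ku_\mR \wedge ku_\mR \simeq \bigvee_{k \ge 0} ku_\mR \wedge \Sigma^{\rho k} \cB_0(k)
\]
yields ${ku_\mR}_\star ku_\mR \cong \bigoplus_{k \ge 0} {ku_\mR}_{\star - k\rho} \cB_0(k)$, so the remaining work is to identify each $\pi_\star(ku_\mR \wedge \cB_0(k))$ as a ${ku_\mR}_\star$-module.

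To this end, I would run the $H$-based Adams spectral sequence in $ku_\mR$-modules
\[
E_2^{s,f,w} = \Ext_{\cE(1)_\star}^{s,f,w}(\mM_2, H_\star \cB_0(k)) \Longrightarrow \pi_\star(ku_\mR \wedge \cB_0(k))^\wedge_2,
\]
furnished by \cref{prop:relative ASS exists} together with the identification $H_\star^{ku_\mR} H \cong \cE(1)_\star$ of \cref{prop:relHomology}. By \cref{bg lightning split}, $H_\star \cB_0(k) \cong L(\nu_2(k!)) \oplus W_k$ with $W_k$ a sum of suspensions of $\cE(1)_\star$. The $W_k$-summand contributes, via the injectivity of \cref{equiv:free:inj}, only to Adams filtration zero and produces the $\mM_2$-vector-space piece $V_k$. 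The lightning flash piece $\Ext_{\cE(1)_\star}(\mM_2, L(\nu_2(k!)))$ is exactly the $E_2$-page described by \cref{k:0}: generators $x_0, \ldots, x_{\nu_2(k!)}$ with $|x_i| = (2i, 0, i)$, where $x_{\nu_2(k!)}$ carries a copy of $\Ext_{\cE(1)_\star}(\mM_2, \mM_2)$, each earlier $x_i$ carries a copy of $\Ext_{\cE(0)_\star}(\mM_2, \mM_2)$, and there are $E_2$-extensions $v_1 x_{i-1} = v_0 x_i$.

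The spectral sequence collapses at $E_2$ by exactly the Milnor--Witt and trigraded degree arguments used in the proof of \cref{prop: spec seq collapse}: the $v_1$-torsion-free classes all lie in even stems, while the $v_1$-torsion classes split into triangle formations (in odd Milnor--Witt degree) and infinite $\rho$-towers or $\rho$-divisible towers at filtration zero, none of which can support or receive nontrivial Adams differentials. Reading off the $E_\infty$-page then identifies $\pi_\star(ku_\mR \wedge \cB_0(k))$, as a ${ku_\mR}_\star$-module, with $H\umZ_\star\{x_0, \ldots, x_{\nu_2(k!) - 1}\} \oplus {ku_\mR}_\star\{x_{\nu_2(k!)}\} \oplus V_k$, since the coefficient rings $\Ext_{\cE(0)_\star}(\mM_2,\mM_2)$ and $\Ext_{\cE(1)_\star}(\mM_2,\mM_2)$ at $E_\infty$ lift to $\pi_\star H\umZ$ and $\pi_\star ku_\mR$ respectively.

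The main obstacle is promoting the $E_\infty$-relation $v_1 x_{i-1} = v_0 x_i$ to the stated multiplicative extension $v_1 x_{i-1} = \rho x_i$ in $\pi_\star(ku_\mR \wedge \cB_0(k))$: the $E_2$-level factor $v_0$ only determines the product modulo higher Adams filtration, and the correct equivariant replacement for the classical factor $2$ must be pinned down. I would resolve this by smashing with the transfer map $S^{1-\sigma} \to C\tau$ from \cref{sec:C2point} to descend to the underlying nonequivariant comparison $\Phi^e$, so that the $C_2$-equivariant extension necessarily covers Mahowald--Kane's classical relation $v_1 x_{i-1} = 2 x_i$ in $ku_\ast ku$. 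A trigraded degree count then forces the $\rho$-form of the extension at the $C_2$-equivariant level and rules out additional hidden indeterminacy, exactly as extensions are deduced from the transfer in the constructions of \cref{sec:height1split}.
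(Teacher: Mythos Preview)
Your proposal is correct and follows essentially the same approach as the paper: apply $\pi_\star$ to the main splitting, run the $ku_\mR$-relative Adams spectral sequence $\Ext_{\cE(1)_\star}(\mM_2, H_\star \cB_0(k)) \Rightarrow \pi_\star(ku_\mR \wedge \cB_0(k))$, identify the $E_2$-page via \cref{bg lightning split} and \cref{k:0}, and invoke the collapse already established for the larger spectral sequence (\ref{eq:AdamsSs}). The paper's proof is terser than yours on two points: it observes that this spectral sequence is literally a summand of (\ref{eq:AdamsSs}) and so inherits collapse directly, rather than re-arguing the Milnor--Witt degree analysis; and it disposes of hidden extensions in one line by degree reasons, whereas you route the extension through the transfer comparison with the underlying nonequivariant computation---an unnecessary but not incorrect elaboration.
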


While we do not specify the degree of the summands appearing in $V_k$ in the statement of \cref{thm:cooperationsAlg} due to the amount of bookkeeping that would be involved, our computational methods do allow one to precisely deduce them.

\begin{proof}
    Consider the $ku_{\mR}$-Adams spectral sequence
\[\Ext_{\cE(1)_{\star}}(\mM_{2}, H_{\star}\cB_{0}(k))\Longrightarrow {ku_\mR}_\star \cB_{0}(k). \]
This is a summand of the spectral sequence of \cref{eq:AdamsSs} so this spectral sequence collapses. Further, \cref{k:0} gives a complete description of the $E_{2}$-page. Observe there are no hidden extensions for degree reasons.  
\end{proof}

The methods we use to compute the $ku_\mR$-cooperations algebra also yield an analogous $\mR$-motivic calculation. Let $kgl$ be the lift of $ku_\mR$ along the Betti realization functor from the $\mR$-motivic stable homotopy category to the category of genuine $C_2$-spectra. Because there is no interaction between the negative cone and positive cone summands in our spectral sequence computation of ${ku_\mR}_\star ku_{\mR}$, we obtain the following $\mR$-motivic calculation by simply omitting the negative cone summands in our computation of ${ku_\mR}_\star ku_\mR.$

\begin{cor} \label{cor:kglcooperations}
    The $kgl$-cooperations algebra $kgl_{*,*} kgl$ is given by 
    \[
        kgl_{*,*} kgl \cong \bigoplus\limits_{k=0}^\infty BPGL \langle 0 \rangle_{*,*} \{ x_0, x_1, \cdots, x_{\nu_2 (2k!) - 1} \} \oplus kgl_{*, *} \{ x_{ \nu_2 (2k!)} \} \oplus W,
    \]
    with extensions $v_1 x_{i - 1} = \rho x_i,$ and where $\vert x_i \vert = (2i, i)$ and $W$ is a sum of suspensions of $\mM_2^\mR.$
\end{cor}

At this time, it is not know whether motivic analogues of integral Brown--Gitler spectra exist. The construction we use in the $C_{2}$-equivariant setting relies on a Thom spectrum model, which is not known to exist in the $\mR$-motivic setting, so our methods do not currently provide a motivic analogue of \cref{thm:mainTheorem}. However, we do have all the necessary ingredients to construct the following $\mR$-motivic analogue of \cref{thm:C2AdamsCovers}.

\begin{cor}
Let $H\mF_{2}^{\mR}$ denote the mod $2$ motivic Eilenberg Maclane spectrum over $\mR$. Then \[kgl \wedge kgl \simeq \bigvee\limits_{k=0}^{\infty} \Sigma^{2k,k}kgl^{\langle \nu_{2}(k!) \rangle} \vee V,\]
where $kgl^{\langle \nu_{2}(k!) \rangle}$ is the $\nu_{2}(k!)^{th}$-Adams cover in a minimal $H\mF_{2}^{\mR}$-resolution of $kgl$, and $V$ is a sum of suspensions of $H\mF_{2}^{\mR}$.
\end{cor}
\begin{proof}
    This is analogous to the equivariant proof, but we will modify our notation and strategy slightly as we do not have motivic integral Brown--Gitler spectra. Take $V$ to be the sum of suspensions of $H \mF_2^\mR$ realizing the positive cone portion of the summand $W$ in \cref{cor:kglcooperations}, and let 
    \[
    H_{*,*}^{kgl}:=\pi_{*,*}\left(H\mF_{2}^{\mR} \underset{kgl}{\wedge}-\right).
    \]
    
    On the level of homology, $H_{*,*}^{kgl}V$ splits off from $H_{*,*}^{kgl}\left(kgl \wedge kgl\right) \cong H_{*,*}kgl$. So just as in the proof of \cref{prop: other splitting}, we can use the $kgl$-relative $H\mF_{2}^{\mR}$-motivic Adams spectral sequences
    \[ E_{2}^{s,f,w} \cong \Ext_{\cE^{\mR}_\star (1)}\left(H_{*,*}^{kgl}V_{k}, H_{*,*}kgl \right) \Longrightarrow [V, kgl \wedge kgl ]^{kgl} \]
\[ E_{2}^{s,f,w} \cong \Ext_{\cE_\star^{\mR}(1)}\left(H_{*,*}kgl, H_{*,*}^{kgl}V_{k} \right) \Longrightarrow [ kgl \wedge kgl, V ]^{kgl} \]
to lift the homology splitting to 
\[kgl \wedge kgl \simeq C \vee V,\]
where $H_{*,*}C$ contains no $H \mF_2^\mR$-summands. By omitting the negative cone in our computation of the homology of $ku_{\mR}$, we get 
\[
H_{*,*}C \cong \Sigma^{2k,k}L(\nu_{2}(k!)),
\]
where $L(\nu_{2}(k!))$ is the $\mR$-motivic lightning flash module. Likewise, the motivic Adams covers are of the form \[H_{*,*}^{kgl}kgl^{\langle m \rangle} \cong L(m).\] Thus we can consider the $kgl$-relative Adams spectral sequence
 \[ E_{2}^{s,f,w} \cong \Ext_{\cE^{\mR}(1)}\left(\Sigma^{2k,k}L(\nu_{2}(k!)), H_{*,*}kgl \right) \Longrightarrow [\Sigma^{2k,k}kgl^{\langle \nu_{2}(k!) \rangle}, kgl \wedge kgl ]^{kgl} .\]

The collapse of this spectral sequence follows from omitting the negative cone summands in the proof of the main theorem (\cref{thm:mainTheorem}).
\end{proof}

\subsubsection{$ku_\mR$-Operations} Our computational methods further yield an inductive description of the operations algebra for $ku_\mR$, that is $[ku_\mR, ku_\mR]$.

\begin{theorem} \label{thm:operationsAlgebra}
     The operations algebra $[ku_\mR, ku_\mR]$ splits as
     \[ [ku_\mR, ku_\mR] \cong \bigoplus\limits_{k=0}^{\infty}[\Sigma^{\rho k}\cB_{0}(k), ku_\mR] .\]
     The Adams spectral sequence 
     \[
     \Ext_{\cE(1)_{\star}}(H_{\star}\cB_{0}(k), H_{\star}ku_\mR) \Longrightarrow [\Sigma^{\rho k}\cB_{0}(k), ku_\mR]
     \] 
     collapses at the $E_{2}$-page, and its $E_{2}$-page is described in \cref{lem:k>m}.
\end{theorem}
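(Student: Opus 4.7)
The plan is to deduce the theorem as a direct re-reading of the proof of \cref{thm:mainTheorem}. For the splitting, I would apply $F_{ku_\mR}(-, ku_\mR)$ to the $ku_\mR$-module equivalence of \cref{thm:mainTheorem}, using the adjunctions $F_{ku_\mR}(ku_\mR \wedge X, ku_\mR) \simeq F(X, ku_\mR)$ and $F(ku_\mR, ku_\mR) \simeq F_{ku_\mR}(ku_\mR \wedge ku_\mR, ku_\mR)$, to obtain the asserted splitting in homotopy. This mirrors the nonequivariant situation for $[ku, ku]$.

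For the spectral sequence, I would recognize it as the $H$-based, $ku_\mR$-relative Adams spectral sequence of \cref{prop:relative ASS exists} (with $R = ku_\mR$, $E = H$), applied to the $ku_\mR$-modules $\Sigma^{\rho k}\cB_0(k) \wedge ku_\mR$ and $ku_\mR \wedge ku_\mR$; the identification $H_\star^{ku_\mR} H \cong \cE(1)_\star$ from \cref{prop:relHomology} then produces the asserted $E_2$-page, and the abutment $[\Sigma^{\rho k}\cB_0(k), ku_\mR \wedge ku_\mR]$ decomposes via \cref{thm:mainTheorem} into summands containing $[\Sigma^{\rho k}\cB_0(k), ku_\mR]$. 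Combining the lightning-flash decomposition \cref{bg lightning split} with the $n=1$ case of the comodule splitting \cref{thm:homology:decomp} yields
\[
H_\star \cB_0(k) \cong L(\nu_2(k!)) \oplus W_k, \qquad H_\star ku_\mR \cong \bigoplus_{m \ge 0} \Sigma^{\rho m} L(\nu_2(m!)) \oplus W,
\]
with $W_k$ and $W$ free over $\cE(1)_\star$. By \cref{equiv:free:inj}, any Ext contribution involving a free summand is concentrated in Adams filtration zero, while the rest is a direct sum of $\Sigma^{\rho m}\Ext_{\cE(1)_\star}(L(\nu_2(k!)), L(\nu_2(m!)))$ groups, computed by Lemmas \ref{lem:k>m} and \ref{lem:k>mNC}.

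The only real work is the collapse at $E_2$, and I expect no new obstacle: the $E_2$-page here is, up to a suspension by $\Sigma^{\rho k}$, the same as the one analyzed in \cref{prop: spec seq collapse}, so the arguments used there carry over verbatim. Namely, $v_1$-linearity forbids differentials on the $v_1$-torsion classes in the triangle formations; $\rho$-(co)divisibility forbids differentials on the infinite $\rho$-(co)towers sitting in Adams filtration zero; the remaining $v_1$-torsion classes have the wrong stem and filtration to be targets of differentials from $v_1$-torsion-free generators; and the Milnor--Witt parity of the $v_1$-torsion-free generators rules out any differential between them. Thus the spectral sequence collapses at $E_2$, and the $\theta_k$ arguments already developed assemble into the full description of $[ku_\mR, ku_\mR]$.
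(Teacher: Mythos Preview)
Your approach is correct and aligns with the paper's: both derive the splitting from \cref{thm:mainTheorem} and both defer entirely to the collapse argument of \cref{prop: spec seq collapse}. The one substantive difference is which spectral sequence you run. You take the theorem statement literally and use $H_\star ku_\mR$ in the second variable, obtaining the full $E_2$-page of \cref{eq:AdamsSs} (hence needing \cref{m:0} in addition to Lemmas~\ref{lem:k>m} and \ref{lem:k>mNC}), and then project onto the $m=0$ summand. The paper instead passes directly to the $ku_\mR$-relative Adams spectral sequence
\[
\Ext_{\cE(1)_\star}\big(H_\star \cB_0(k),\, \mM_2\big) \Longrightarrow [\cB_0(k), ku_\mR]^{ku_\mR},
\]
which is exactly the $m=0$ slice; this is why only Lemmas~\ref{lem:k>m} and \ref{lem:k>mNC} are invoked for the $E_2$-description. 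In other words, the displayed spectral sequence in the theorem statement should really have $\mM_2 = H_\star^{ku_\mR} ku_\mR$ rather than $H_\star ku_\mR$ in the second variable, and the paper's proof silently uses that corrected form. Your route is slightly longer but arrives at the same place; the paper's is more direct. One small caution on your first paragraph: applying $F_{ku_\mR}(-,ku_\mR)$ to an infinite wedge yields a product rather than a direct sum, so strictly speaking you get $\prod_k[\Sigma^{\rho k}\cB_0(k),ku_\mR]$; the identification with $\bigoplus$ requires a degreewise finiteness remark.
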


\begin{proof}
By \cref{thm:mainTheorem}, \[ [ku_{\mR}, ku_{\mR}] \cong \left[ku_\mR \wedge \bigvee\limits_{k=0}^{\infty} \Sigma^{\rho k}\cB_{0}(k), ku_\mR \right]^{ku_{\mR}} \cong \bigoplus\limits_{k=0}^{\infty}\left[ \Sigma^{\rho k}\cB_{0}(k), ku_\mR \right].\]

So we can use the Adams spectral sequence 
\[ \Ext_{A_{\star}}(H_{\star}\cB_{0}(k), H_{\star}ku_{\mR})  \cong \Ext_{\cE(1)_{\star}}(H_{\star}\cB_{0}(k), \mM_{2}) \Longrightarrow  [\cB_{0}(k), ku_\mR] \] to compute this summand.
Recall that \[\Ext_{\cE(1)_{\star}}(H_{\star}\cB_{0}\left( k \right), H_{\star}ku_\mR) \cong \Ext_{\cE(1)_{\star}}(\Sigma^{\rho k}L(\nu(k!)), H_{\star}ku_\mR) \oplus V_{k},\]
where $V_{k}$ is a sum of suspensions of $\mM_{2}$.
\cref{lem:k>m} gives a description of  \newline $\Ext_{\cE(1)_{\star}}(L(\nu(k!), \mM_{2})$ for all $k \geq 0$. By the same arguments as the proof of the main theorem (\cref{thm:mainTheorem}), no differentials are possible.
\end{proof}

Similarly to the $ku_\mR$-cooperations algebra calculation, there are also no interactions between negative and positive cone summands in our calculation of the $ku_\mR$-operations algebra. Thus these methods also yield an analogous computation of the $kgl$-operations algebra by simply omitting all negative cone summands. 

\subsubsection{The $E_{1}$-page of the $ku_{\mR}$-based Adams spectral sequence}\label{sec:E1kuR}
The purpose of this section is to demonstrate how the splitting of \cref{thm:mainTheorem} allows us to easily compute the $E_{1}$-page of the $ku_{\mR}$-based Adams spectral sequence for the sphere. We also give some information towards computing the $E_2$-page.

The $E_{1}$-page of the $ku_{\mR}$-Adams spectral sequence has the form
\[ E_{1}^{n,*,*}\cong \pi_{\star}\left( ku_{\mR}\wedge \overline{ku_{\mR}}^{\wedge n}\right) \Longrightarrow \pi_{\star}\mathbb{S}_{(2)} ,\]
where $\overline{ku_{\mR}}$ denotes the cofiber of the unit $\mathbb{S} \to ku_{\mR}$. 

It follows immediately from \cref{thm:mainTheorem} and \cref{thm:C2AdamsCovers} that 
\[
ku_{\mR} \wedge \overline{ku_{\mR}} \simeq \bigvee\limits_{k=1}^{\infty} ku_{\mR} \wedge \Sigma^{\rho k} \cB_{0}(k) \simeq \bigvee\limits_{k=1}^{\infty} \Sigma^{\rho k}ku_{\mR}^{\langle \nu_{2}(k!)   \rangle } \vee V,
\]
where $V$ is the sum of suspensions of $H$ found in \cref{thm:C2AdamsCovers}. We can iterate this splitting to describe the entire $E_{1}$-page of the $ku_{\mR}$-Adams spectral sequence in terms of finite $ku_{\mR}$-modules. In particular, the splitting implies
 $$ku_{\mR}\wedge \overline{ku_{\mR}}^{\wedge n} \simeq \bigvee\limits_{i_{1},i_{2}, \ldots, i_{n} \ge 1} ku_{\mR} \wedge \cB_{0}(i_{1}) \wedge \cB_{0}(i_{2}) \wedge \cdots \wedge \cB_{0}(i_{n}) .$$

In this section, we will give a formula for the homotopy groups of each summand 
\[
ku_{\mR} \wedge \cB_{0}(i_{1}) \wedge \cB_{0}(i_{2}) \wedge \cdots \wedge \cB_{0}(i_{n}),
\]
up to suspensions of $\mM_{2}$. Locating the suspensions of $\mM_{2}$ is simply a matter of extensive bookkeeping, so we reserve it for forthcoming work on the $ku_\mR$-Adams spectral sequence. 

\begin{proposition}
Let $I = (i_{1}, i_{2}, \ldots i_{n})$. Then \[ H_{\star}\left(\cB_{0}(i_{1}) \wedge \cB_{0}(i_{2}) \wedge \cdots \wedge \cB_{0}(i_{n}) \right) \cong L(I) \oplus F_{I},\]
where $L(I)$ is the lightning flash module $L(\nu_{2}(i_{1}!) + \nu_{2}(i_{2}!) + \cdots \nu_{2}(i_{n}!))$ and $F_{I}$ is a free $\cE(1)_{\star}$-comodule.
\end{proposition}

\begin{proof}
    In \cref{bg lightning split} we showed that $H_{\star}\cB_{0}(k) \cong L(\nu_{2}(k!)) \oplus F_{k}$, where $F_{k}$ is a sum of suspensions of $\cE(1)_{\star}$. In the proof of \cref{bg lightning split}, we observed that 
    \begin{align*}
        \cM_{*}\left(L(m) /(\rho,\tau), Q_{0}\right) & \cong \mF_{2}\{1\} \\
        \cM_{*}\left( L(m)/(\rho,\tau), Q_{1}\right) & \cong \mF_{2}\{x_{m} \} .
    \end{align*}

    By the K\"{u}nneth theorem for Margolis homology,
    \begin{align*}
         \cM_{*}  \left(L(m) /(\rho,\tau), Q_{0}\right) \otimes \cM_{*}\left( L(n)/(\rho,\tau), Q_{0}\right) & \cong \mF_{2}\{1\}, \\
         \cM_{*}  \left(L(m)/(\rho,\tau), Q_{1}\right) \otimes \cM_{*}\left(  L(\nu_{2}(n))/(\rho,\tau), Q_{1}\right) & \cong \mF_{2}\{x_{m} \otimes x_{n}\} \\
         & \cong \mF_{2}\{x_{m+n}\}.
    \end{align*}
Since  
    \begin{equation}\label{eqn:iffy}
        L(m)/(\rho,\tau) \otimes L(n)/(\rho,\tau) \cong \left( L (m) \otimes L(n)\right)/(\rho,\tau),
    \end{equation}
     we can apply the Whitehead Theorem for Margolis homology (\cref{prop:EquivWhitehead}) and conclude that $L(m) \otimes L(n) \cong L(m + n)$, up to free $\cE(1)_{\star}$ summands. Thus again by the Whitehead theorem, 
    \[
    H_{\star}\cB_{0}(i_{1}) \otimes H_{\star}\cB_{0}(i_{2}) \cong L\left((\nu_{2}(i_{1}!) + \nu_{2}(i_{2}!)\right),
    \]
    up to free $\cE(1)_{\star}$-summands. Iterating this finishes the proof.
\end{proof}

We also describe each summand in terms of an Adams cover. 

\begin{proposition}
    Let $I = (i_{1}, i_{2}, \ldots i_{n})$. Then \[ ku_{\mR}\wedge \cB_{0}(i_{1}) \wedge \cB_{0}(i_{2}) \wedge \cdots \wedge \cB_{0}(i_{n})  \simeq ku_{\mR}^{\langle |I| \rangle} \vee V_{I},\]
where $V_{I}$ is a sum of suspensions of $H$, and $|I| := \nu_{2}(i_{1}!) + \nu_{2}(i_{2}!) + \cdots + \nu_{2}(i_{n}!)$-Adams cover of $ku_{\mR}$. Furthermore, 
\[ \pi_{\star}\left( ku_{\mR}\wedge \cB_{0}(i_{1}) \wedge \cB_{0}(i_{2}) \wedge \cdots \wedge \cB_{0}(i_{n})  \right) \cong  \bigoplus\limits_{k=0}^{\infty} H \umZ_{\star}\{x_{0}, x_{1}, \ldots, x_{m-1} \} \oplus {ku_\mathbb{R}}_{\star}\{x_{m} \} \oplus W_{I},\]
where $m = \nu_{2}(i_{1}!) + \nu_{2}(i_{2}!) + \cdots + \nu_{2}(i_{n}!)$ and $W_{I}$ is a sum of suspensions of $\mM_{2}$.
\end{proposition}
\begin{proof}
First, it follows from the same argument as that of \cref{prop: other splitting} that we can split $ku_{\mR}\wedge \cB_{0}(i_{1}) \wedge \cB_{0}(i_{2}) \wedge \cdots \wedge \cB_{0}(i_{n})  \simeq C_{I} \vee V_{I}$, where $V_{I}$ is a sum of suspensions of $H$ and $H_{\star}^{ku_{\mR}}C_{I} \cong L(m)$. By \cref{prop:uniqueness of adams covers}, $C_{I} \simeq ku_{\mR}^{\langle |I| \rangle}$.  The computation of the homotopy groups is the same as that of \cref{thm:cooperationsAlg}.
\end{proof}

We expect these Adams cover descriptions will be useful for computing differentials to get to the $E_{2}$-page (see for example \cite{BeaudryBehrensBattacharyaCulverXu20}).

\printbibliography
\end{document}